\title{The Furstenberg--Zimmer structure theorem for stationary random walks}
\newcommand{\calH}{\mathcal{H}}
\newcommand{\calK}{\mathcal{K}}
\newcommand{\HS}{\operatorname{HS}}
\newcommand{\Homeo}{\operatorname{Homeo}}
\DeclareMathOperator*{\olim}{o-lim}
\begin{document}

\maketitle

\begin{abstract}
  We prove the following version of the Furstenberg--Zimmer structure theorem for stationary
  actions: Any stationary 
  action of a locally compact second-countable 
  group is a weakly mixing extension of a measure-preserving distal system.
\end{abstract}

One of the fundamental goals in the study of dynamical systems is to 
understand their structure, for its intrinsic interest and as a means for classifying dynamical
systems or understanding their longterm behavior. This has motivated the development 
of several structure theorems such as the Furstenberg--Zimmer theorem, proven 
independently by Furstenberg \cite{Furs1977} and Zimmer \cite{Zimm1976}
and best known for being instrumental to Furstenberg's famous proof of 
Szemer\'{e}di's theorem by means of a multiple recurrence property for arbitrary 
measure-preserving dynamical systems.
This recurrence property can be derived by elementary means for the special classes of 
isometric systems and weakly mixing systems. While it is not true that every measure-preserving system decomposes
into an isometric and weakly mixing system, the key insight behind the Furstenberg--Zimmer structure
theorem is that such a decomposition is possible in terms of \emph{extensions}. Thereby,
the theorem splits a system into a weakly mixing extension and a \emph{distal} part built from isometric extensions.

\begin{theorem*}[Furstenberg--Zimmer]
  Let $G \curvearrowright \uX$ be a measure-preserving group action. Then it is a weakly
  mixing extension of a distal action $G \curvearrowright \uX_\ud$.
\end{theorem*}

Since, the Furstenberg--Zimmer structure theorem has inspired numerous other 
development, including a version for $\sigma$-finite measure spaces \cite{AminiSwid2022},
refinements into sharper structure theorems by Host--Kra \cite{HoKr2005} and Ziegler \cite{Ziegler2007},
the use of related ideas for von Neumann algebras by Popa \cite{Popa2007}, and more recently 
extensions of the Furstenberg--Zimmer structure theorem to the case of arbitrary group actions 
on arbitrary probability spaces, see \cite{Jamn2020pre} and \cite{EHK2021}. Among the numerous 
applications of these structure-theoretic developments is also the existence of infinite sumsets 
$B_1 + \dots + B_k$ in any set of positive density, recently proved in \cite{KMRR2022} to 
expand on the Erd\H{o}s sumset conjecture.
However, there also
are many natural group actions that are not measure-preserving and thus lie beyond the scope of the 
Furstenberg--Zimmer theory.

Many spaces come equipped with natural measures $\mu$ but most maps do not preserve this measure, though 
they often preserve at least the measure class $[\mu]$ of measures mutually absolutely continuous 
to $\mu$ which motivates the study of nonsingular dynamics, see \cite{Danilenko2011}.
For example, two generic transformations of a space will usually have no algebraic relations and 
thus generate a free and hence nonamenable group. Therefore, for a random walk on a compact space $K$,
i.e., a countable continuous group action $G \curvearrowright K$ with an appropriate probability measure 
$m$ on $G$, there does not generally exist an invariant measure. However, one can always find 
\emph{stationary} measures $\nu$ satisfying the on-average invariance
\begin{equation*}
    \sum_{g\in\supp(m)} m(g) g_*\nu = \nu.
\end{equation*}
These are the natural class of measures for random walks commonly employed in probability 
theory and they have been studied 
extensively in different contexts due to their connections to discrete Schrödinger operators and
random difference equations (\cite{BougerolLacroix1985}, \cite{Kesten1973})  
Diophantine approximation (\cite{SimmonsWeiss2019}, \cite{ProhaskaSert2020}, 
  \cite{KhalilLuethi2021}),
  as well as their relevance for
  results on orbit closures, equidistribution, and measure classification (\cite{BenoistQuint2011}, 
  \cite{BenoistQuint2013}, \cite{BenoistQuint2013a}, \cite{EskinLindenstrauss2018}, \cite{BenoistQuint2012}),
  and random matrix products (\cite{Furstenberg1963b}, \cite{FurstenbergKesten1960}) which in turn are 
  connected to different machine learning paradigms \cite{reinforcement}.

As it turns out, isometric and distal systems also may naturally occur as the building blocks 
of stationary actions, as the following example shows.

\begin{example*}
  An important and elementary class of actions that usually do not admit invariant 
  measures are actions on projective spaces.
  Pick some $\gamma \in (0,1)$ and let $\alpha, \beta \in [0,2\pi)$ be angles
  linearly independent over $\Q$. Set
  \begin{equation*}
    A = \begin{pmatrix}
         \gamma & 1 & 1 \\
         & \cos(\alpha) & -\sin(\alpha) \\
         & \sin(\alpha) & \cos(\alpha)
        \end{pmatrix}, \qquad
    B = \begin{pmatrix}
         \gamma & 1 & 1 \\
         & \cos(\beta) & -\sin(\beta) \\
         & \sin(\beta) & \cos(\beta)
        \end{pmatrix},
  \end{equation*}
  let $\Gamma_m$ be the subgroup of $\GL_3(\R)$ generated by $A$ and $B$, and set 
  $m \defeq \frac{1}{3}\delta_A + \frac{1}{3}\delta_B + \frac{1}{6}\delta_{A^{-1}} + \frac{1}{6}\delta_{B^{-1}}$.
  What are the stationary measures for the action of $(\Gamma_m,m)$ on $\P(\R^3)$? It can be shown that 
  apart from the trivial invariant measure corresponding to the unique fixed point of this system,
  there is a unique $m$-stationary measure $\mu$ on $\P(\R^3)$. Moreover, the system $\Gamma_m \curvearrowright (\P(\R^3),\mu)$
  admits as a natural factor the uniquely ergodic rotational action $\Gamma_m \curvearrowright (\P(\R^2),\nu)$; see 
  \cite[Theorem 1.1]{AounSert2022} for a proof thereof in a more general setting and
  \cref{ex:mainexample} for a different proof and a detailled discussion.
\end{example*}

This raises the question: Is there a version of the Furstenberg--Zimmer structure theorem for stationary 
actions?
This seems plausible because in \cite[Theorem 3.1]{Bjorklund2017}, Björklund showed that the equivalence of 
weak mixing and having no 
nontrivial isometric factors is also true for stationary actions. Indeed, the goal of the present article is to 
answer this question affirmatively.

\begin{theorem*}
  Let $(G,m) \curvearrowright \uX$ be a stationary $(G,m)$-action.
  Then it is a weakly mixing extension of a distal and measure-preserving action
  $G \curvearrowright \uX_\ud$.
\end{theorem*}
We prove this more generally for locally compact second countable groups $G$ and probability measures
$m$ on $G$ such that $\supp(m)$ generates a dense subgroup of $G$, see \cref{thm:stationarymainthm}. 
Apart from combining ideas from 
several previous works, this requires two new results that 
are of independent interest:
First, at key moments, it is necessary to 
transfer properties from the subgroup generated by 
$\supp(m)$ to its closure. To do this, 
we developed the following general continuity 
result (see \cref{thm:FEmagic}).

\begin{theorem*}
  Let $G$ be a second countable locally compact group and $G \curvearrowright \uX$ a nonsingular
  measurable action on 
  a standard probability space. Then the induced action of $G$ on $\uL^\infty(\uX)$ is strongly 
  continuous with 
  respect to the $\uL^1$-norm. 
\end{theorem*}

Second, we significantly strengthen this result by proving 
in \cref{factor-algebra-corr} that any nonsingular measurable action of a locally compact second 
countable group on a standard probability space is isomorphic to a \emph{continuous} action on a compact 
metric space. This simplifies several arguments and brings to light hidden continuity properties of nonsingular 
group actions.

The Furstenberg--Zimmer theorem for stationary actions is just one of several existing
structure theorems that have meanwhile been proven for stationary actions. 
For example, in \cite{NevoZimmer2002} it is shown that every stationary action $(G,m) \curvearrowright (X,\mu)$ for 
a connected noncompact semisimple Lie group with finite center admits a maximal projective factor of the form 
$G/Q$ where $Q$ is a parabolic subgroup. In \cite[Theorem 1]{NevoZimmer2002} they show that if all noncompact 
simple factors of $G$ are of real rank at least two, then this maximal projective factor is trivial if and 
only if the measure $\mu$ is not just stationary but $G$-invariant.
An abstract version of this reminiscent of the PI structure theorem \cite[Chapter 14]{Auslander1988}, \cite{EllisGlasnerShapiro1975}
was proven in
\cite[Theorem 4.3]{FurstenbergGlasner2010}: Every stationary action of a locally compact second countable measured 
group $(G,m)$ with an admissible probability measure $m$ is, modulo an $m$-proximal extension, a relatively 
measure-preserving extension of an $m$-proximal system.
This structure theorem was later used in \cite{FurstenbergGlasner2013} to prove a multiple recurrence theorem for 
stationary actions. Whether the generalization of the Furstenberg--Zimmer theorem to stationary actions 
can also be used to derive multiple recurrence properties and related applications is the subject of ongoing work.
We point out that simultaneously with the author, U.\ Bader and Y.\ Vigder derived essentially the same result with 
different techniques in a forthcoming work.

\textbf{Organization.} \cref{sec:classicalFZ} starts with a swift review of the classical 
Furstenberg--Zimmer theorem and the underlying key ideas that serve as a 
reference and roadmap for later sections.
\cref{sec:topmodels} lays the basis for later sections by explaining how topological models allow to 
readily switch between factors
of a nonsingular dynamical system and the $\uC^*$-subalgebras corresponding to these factors. In preparation for 
\cref{sec:stationaryFZ}, \cref{sec:nonsingularFZ} collects some tools required for working with weakly mixing 
and isometric extensions of nonsingular systems. As a side-product, we prove a version of the 
Furstenberg--Zimmer structure theorem for relatively measure-preserving
extensions of nonsingular (not necessarily stationary) actions. \cref{sec:stationarydichotomy}
and \cref{sec:stationaryrelativedichotomy} are respectively dedicated to the Kronecker dichotomy for 
stationary actions and its relative version.
Finally, \cref{sec:stationaryFZ} concludes with a proof of the structure theorem for stationary actions 
and \cref{sec:appendix} gathers relevant background information on Hilbert modules essentially taken from
\cite{EHK2021} on which the current article builds.

\textbf{Terminology \& assumptions.} Unless otherwise specified, all Hilbert spaces are assumed to be separable,
all compact spaces metrizable, all groups $G$ locally compact and second countable (lcsc), 
all Haar measures $\um_G$ to be left-invariant, and all measure spaces to be standard probability spaces. 
We usually abbreviate probability spaces as $\uX = (X, \Sigma, \mu)$ or $(X,\mu)$ when the $\sigma$-algebra is 
inessential. The typical abbreviations for measure spaces will thus be $\uX = (X,\mu)$, $\uY = (Y,\nu)$, and 
$\uZ = (Z, \zeta)$.
If $G$ is a lcsc group, a continuous action 
of $G$ on a compact space $K$ is a group action of $G$ on $K$ such that the induced map
$G\times K \to K$ is continuous. Similarly, a measurable action of $G$ on a 
probability space $\uX$ is a group action of $G$ on $\uX$ such that the induced map 
$G\times\uX\to\uX$ is measurable. If a group $G$ acts on a probability 
space $\uX$ or a compact space $K$, we sometimes write $G\curvearrowright \uX$ or $G \curvearrowright K$ to indicate 
this. A \textbf{measured group} $(G, m)$ consists of a lcsc group $G$ and a Borel probability measure $m$ on $G$ such that 
$\supp(m)$ generates a dense subgroup of $G$. (We explicitly do not require the common assumption that $\supp(m)$ generate 
$G$ as a \emph{semi}group.)
The convolution of two Borel probability measures $\mu$ and $\nu$ on a 
group $G$ is, for Borel subsets $E \subset G$, defined as
\begin{equation*}
  (\mu * \nu)(E) 
  \defeq \int_G\int_G \1_E(gh) \dnu(h) \dmu(g)
  = \int_G (g_*\nu)(E) \dmu(g).
\end{equation*}
Convolutional powers are denoted by $m^{*k}$ for $k\in \N$. If $\mu$ is a probability measure on 
a group $G$, we denote by $\check{\mu}$ the measure defined by $\check{\mu}(E) = \mu(E^{-1})$ for measurable
subsets $E \subset G$. If $(G, m)$ is a measured group and $G \curvearrowright \uX$ is an action on a 
probability space, we define the convolution $m*\mu$ as above by
\begin{equation*}
  (m * \mu)(E) 
  \defeq \int_G (g_*\mu)(E) \dm(g) \qquad \forall E \in \Sigma_X.
\end{equation*}
This can also be seen as the pushforward of $m\otimes\mu_X$ under the action $G\times X \to X$.
In case $m*\mu = \mu$, we call the action of $(G,m)$ on $\uX$ \textbf{stationary} and $\mu$ an $m$-stationary 
measure. By an \textbf{extension} between continuous actions $G \curvearrowright K$ and $G \curvearrowright L$ 
on compact spaces, we mean a $G$-equivariant continuous surjection $q \colon K \to L$. Similarly, 
by an extension between measurable actions $G \curvearrowright \uX$ and $G \curvearrowright \uY$ 
on probability spaces, we mean a measure-preserving map $\pi \colon \uX \to \uY$ 
that for every $g\in G$ 
is $g$-equivariant almost everywhere.

For a compact space $K$ and a probability space $\uX$, we denote by $\uC(K)$ the space of complex-valued
continuous functions on $K$ and by $\uL^p(\uX)$, $p\in [1, \infty]$, the space of equivalence classes
of complex-valued $p$-integrable functions. If $\phi\colon K \to L$ is a continuous map between compact spaces, 
we define the corresponding \textbf{Koopman operator} by
\begin{equation*}
  T_\phi \colon \uC(L) \to \uC(K), \quad f \mapsto f\circ \phi.
\end{equation*}
Similarly, if $\phi\colon \uX \to \uY$ is measure-preserving, we can define the Koopman operator 
on all $\uL^p$-spaces, $p\in [1, \infty]$, via
\begin{equation*}
  T_\phi \colon \uL^p(\uY) \to \uL^p(\uX), \quad f \mapsto f\circ \phi.
\end{equation*}
In this case, $T_\phi$ is a \textbf{Markov operator} ($T_\phi$ is positive and $T_\phi\1_\uX = \1_\uX)$
and even a \textbf{bi-Markov operator} ($T_\phi'\1_\uX = \1_\uX$, i.e., $T_\phi$ preserves the integral). 
If $\phi$ is merely a \textbf{nonsingular} map, i.e., $\phi_*\mu_X$ and $\mu_Y$ are mutually absolutely 
continuous, then we can still define the Koopman operator for $p = \infty$
via the same identity. A general overview about nonsingular dynamics can be found in \cite{Danilenko2011}. 
For a probability space $\uX$, we denote
by $\E\colon \uL^p(\uX) \to \C$ the expectation ($p\in [1, \infty]$), 
i.e., $\E(f) = \int_X f\dmu_X$. If $\pi\colon \uX \to \uY$ is a measure-preserving map between probability 
spaces, we denote by $\E_\uY \colon \uL^p(\uX) \to \uL^p(\uY)$ the corresponding conditional expectation
where $p\in [1,\infty]$.

\textbf{Acknowledgements.} The author thanks A.\ Gorodnik and R.\ Nagel for helpful discussions,
M.\ Björklund for suggesting the question, and U.\ Bader for an interesting discussion around 
the topic.

\section{The classical Furstenberg--Zimmer Structure Theorem} \label{sec:classicalFZ}

In this section, we recall 
the Furstenberg--Zimmer structure theorem from \cite{Furs1977} and \cite{Zimm1976} 
and briefly review the ideas behind it. The first key idea we recall is that there is a 
dichotomy between two useful classes of dynamical systems, 
\emph{isometric} and \emph{weakly mixing} systems.

\begin{definition}
  Let $G \curvearrowright \uX$ be a measure-preserving action. It is called 
  \textbf{isometric} if 
  \begin{equation*}
    \uL^2(\uX) =
    \overline{\bigcup \left\{ M \subset \uL^\infty(\uX) \mmid \begin{matrix}
                                                          M \text{ is a } G\text{-invariant, } \\
                                                          \text{finite-dimensional } \text{subspace}
                                                        \end{matrix}
    \right\}}^{\|\cdot\|_{\uL^2(\uX)}}.
  \end{equation*}
  It is called \textbf{weakly mixing} if the diagonal action $G \curvearrowright \uX\times\uX$
  is ergodic.
\end{definition}

Isometric systems earn their name since they admit isometric topological models;
weakly mixing systems admit a more intuitive characterization that, however,
shall not be relevant in this article, see \cite[Definition 9.13, Theorem 9.19]{EFHN2015}.
Every measure-preserving system has a maximal isometric factor, the so-called 
\textbf{Kronecker factor}, and it satisfies the following dichotomy.

\begin{theorem}[Kronecker dichotomy]\label{thm:krodichotomy}
  Let $G \curvearrowright \uX$ be a measure-preserving action. Then either $G \curvearrowright \uX$ 
  is weakly mixing or its Kronecker factor is nontrivial.
\end{theorem}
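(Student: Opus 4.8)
The plan is to prove the contrapositive: assuming $G \curvearrowright \uX$ is not weakly mixing, I produce a finite-dimensional $G$-invariant subspace of $\uL^\infty(\uX)$ that is not contained in the constants $\C\1$, whence the Kronecker factor, being the maximal isometric factor, is nontrivial. Failure of weak mixing means the diagonal action on $\uX\times\uX$ is not ergodic, so its invariant $\sigma$-algebra is nontrivial and I may take $F = \1_E$ to be the indicator of a non-trivial invariant set; in particular $F$ is a non-constant $G$-invariant element of $\uL^\infty(\uX\times\uX)$. If $\uX$ itself fails to be ergodic the conclusion is immediate, since a non-constant invariant function already spans, together with $\1$, a nontrivial finite-dimensional invariant subspace; I therefore focus on the case where $\uX$ is ergodic.

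The key device is the identification of $\uL^2(\uX\times\uX)$ with the Hilbert--Schmidt operators $\HS(\uL^2(\uX))$, sending a kernel $F$ to the integral operator $T_F h = \int_X F(\,\cdot\,,y) h(y)\dmu_X(y)$. A direct computation with kernels shows that under this identification the diagonal Koopman representation corresponds to conjugation $T \mapsto U_g T U_g^{*}$, so that $G$-invariance of $F$ translates into the commutation relations $U_g T_F = T_F U_g$ for all $g\in G$; moreover the constant kernels correspond precisely to the scalar multiples of the rank-one projection $P_0 \colon h \mapsto \langle h,\1\rangle\1$ onto $\C\1$, so non-constancy of $F$ means $T_F \notin \C P_0$. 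Passing to a self-adjoint representative---either $\tfrac12(T_F + T_F^{*})$, its imaginary counterpart, or $T_F^{*}T_F$---I obtain a nonzero self-adjoint compact operator $A$ that still commutes with every $U_g$ and is not a scalar multiple of $P_0$. The spectral theorem for compact self-adjoint operators then decomposes $\uL^2(\uX)$ into the kernel of $A$ and the finite-dimensional eigenspaces $V_\lambda$ for the nonzero eigenvalues $\lambda$, and commutation forces each $V_\lambda$ to be $G$-invariant. Because $F$ is bounded, the estimate $\|T_F h\|_\infty \le \|F\|_\infty \|h\|_1$ shows that every eigenfunction $h = \lambda^{-1} T_F h$ with $\lambda\ne 0$ lies in $\uL^\infty(\uX)$, so each $V_\lambda$ is a finite-dimensional $G$-invariant subspace of $\uL^\infty(\uX)$.

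It remains to locate such an eigenspace transverse to the constants. Since $A$ commutes with the representation and $\uX$ is ergodic, $A\1$ is a $G$-invariant function, hence a scalar multiple of $\1$, so $\1$ is an eigenvector and $\C\1$ sits inside a single eigenspace. Were all nonzero-eigenvalue eigenspaces contained in $\C\1$, the range of $A$ would be $\C\1$ and $A$ would coincide with a scalar multiple of $P_0$, contradicting $A\notin\C P_0$; therefore some $V_\lambda$ with $\lambda\ne 0$ is not contained in $\C\1$. This $V_\lambda$ is the sought finite-dimensional $G$-invariant subspace of $\uL^\infty(\uX)$ beyond the constants, so the maximal isometric factor is nontrivial and the dichotomy follows. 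I expect the main obstacle to be the bookkeeping in the Hilbert--Schmidt dictionary---verifying that diagonal invariance corresponds exactly to commutation and that constants correspond to $P_0$---together with the final step of guaranteeing an invariant subspace genuinely transverse to $\C\1$ while keeping it inside $\uL^\infty(\uX)$; the compactness of $T_F$, which is what makes the eigenspaces finite-dimensional, is the crucial structural input.
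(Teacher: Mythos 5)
Your argument for the substantive implication --- not weakly mixing implies a nontrivial Kronecker factor --- is correct and is essentially the proof the paper gives: pass from a nonconstant $G$-invariant kernel on $\uX\times\uX$ to a $G$-equivariant Hilbert--Schmidt operator, symmetrize to a self-adjoint operator still commuting with the Koopman representation, apply the spectral theorem, and extract a finite-dimensional $G$-invariant eigenspace transverse to the constants. Two of your choices are in fact small improvements on the paper's write-up. First, by taking the kernel to be an indicator $\1_E$ (hence bounded), you get membership of the nonzero eigenspaces in $\uL^\infty(\uX)$ for free from the estimate $\|T_Fh\|_\infty\le\|F\|_\infty\|h\|_1$ (applied to the symmetrized kernel, which is still bounded), whereas the paper instead invokes the approximation result \cref{lem:moduleseverywhere} to move a finite-dimensional $G$-invariant subspace of $\uL^2(\uX)$ into $\uL^\infty(\uX)$. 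Second, splitting off the non-ergodic case at the outset is clean and is what lets you conclude $A\1\in\C\1$ and rule out $A\in\C P_0$. The dictionary you flag as the main risk (diagonal invariance $\Leftrightarrow$ commutation with the $U_g$; constant kernels $\leftrightarrow$ $\C P_0$) works exactly as you expect and is recorded among the paper's listed ingredients.

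What you have not proved is the other half of the dichotomy: that a nontrivial Kronecker factor obstructs weak mixing (equivalently, a weakly mixing system has trivial Kronecker factor). The ``either \dots or'' is, as the paper's proof makes explicit, intended as a genuine dichotomy, and the converse is obtained by running your dictionary backwards: if $M\subset\uL^\infty(\uX)$ is a finite-dimensional $G$-invariant subspace not contained in $\C\1$, the orthogonal projection $P_M$ is a nonzero finite-rank, hence Hilbert--Schmidt, operator commuting with every $U_g$, so its kernel is a nonconstant $G$-invariant element of $\uL^2(\uX\times\uX)$ and the diagonal action is not ergodic. This direction is short, but it is part of the theorem and should be added to your argument.
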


\begin{proof}
The standard proof of \cref{thm:krodichotomy} requires the following three ingredients that can be found
in the appendix, see \cref{rem:HSbasics}. To state them, let $G\curvearrowright\uX$ and $G\curvearrowright\uY$ 
be measure-preserving $G$-actions.
\begin{itemize}
 \item The assignment
 \begin{align*}
   I\colon \uL^2(\uX\times\uY) \to \HS(\uL^2(\uX), \uL^2(\uY)), \quad (I_kf)(y) \defeq \int_X k(x, y) f(x) \dmu(x)
 \end{align*}
 defines an isomorphism between $\uL^2(\uX\times\uY)$ and the Hilbert space $\HS(\uL^2(\uX), \uL^2(\uY))$ 
 of Hilbert--Schmidt operators between $\uL^2(\uX)$ and $\uL^2(\uY)$.
 \item A function $k\in\uL^2(\uX\times \uY)$ is $G$-invariant if and only if $I_k T_g = T_g I_k$ for every $g\in G$.
 \item A self-adjoint Hilbert--Schmidt operator $K \in \HS(\uL^2(\uX))$ admits (by virtue of the spectral theorem) 
 a canonical decomposition into finite-rank Hilbert--Schmidt operators. By continuous functional calculus, 
 a bounded operator (such as $T_\phi$) that commutes with $K$ also commutes with the finite-rank operators 
 occurring in its canonical decomposition.
\end{itemize}

With these prerequisites, the proof of \cref{thm:krodichotomy} is straight-forward: If $G\curvearrowright \uX$ 
is not weakly mixing,
then there is a nonconstant $G$-invariant function $k\in \uL^2(\uX\times\uX)$. The Hilbert--Schmidt operator $I_k$
is then $G$-equivariant, i.e., $I_k T_g = T_g I_k$ for all $g\in G$, and by decomposing 
\begin{equation*}
  I_k = \frac{I_k + I_k^*}{2} + \frac{I_k - I_k^*}{2}
\end{equation*}
we may assume that $I_k$ is self-adjoint. 
Thus, the action of $G$ commutes with the projections onto the eigenspaces of $I_k$, i.e., it 
leaves the finite-dimensional eigenspaces of $I_k$ invariant. Furthermore, it 
can be shown that every finite-dimensional $G$-invariant subspace
$M \subset \uL^2(\uX)$ can be approximated by finite-dimensional $G$-invariant
subspaces in $\uL^\infty(\uX)$, see \cref{lem:moduleseverywhere}. Consequentially, the Kronecker factor of $G \curvearrowright\uX$ must be nontrivial.

Conversely, if the Kronecker factor of $G\curvearrowright\uX$ is nontrivial, 
then there exists a nontrivial
finite-dimensional $G$-invariant subspace $M \subset \uL^\infty(\uX)$. 
The orthogonal projection 
$P_M \colon \uL^2(\uX) \to \uL^2(\uX)$ onto $M$ then defines a nontrivial $G$-equivariant Hilbert--Schmidt
operator and as such must be of the form $P_M = I_k$ for a nontrivial $G$-invariant function 
$k\in \uL^2(\uX\times\uX)$. Thus, $G \curvearrowright\uX$ cannot be weakly mixing in this case.
\end{proof}

Historically, Furstenberg proved multiple recurrence properties for dynamical systems
to derive Szemerédi's theorem. It is easy to check these multiple recurrence properties 
for isometric and weakly mixing system, but not every system decomposes into these two 
types of systems. However, every system can be decomposed into isometric and weakly mixing
\emph{extensions}. We recall the relevant definitions and, in preparation for later
sections, state them for nonsingular actions (with the exception of weakly mixing extensions which
are discussed in a later section).

\begin{definition}
   Let $\pi\colon \uX \to \uY$ be a measure-preserving map. 
   \begin{enumerate}[(i)]
    \item The \textbf{conditional $\uL^2$-space} of the extension is
    \begin{equation*}
      \uL^2(\uX|\uY) \defeq \left\{ f \in \uL^2(\uX) \mmid \E_\uY(|f|^2) \in \uL^\infty(\uY) \right\}.
    \end{equation*}
    It is a \emph{Hilbert module} over $\uL^\infty(\uY)$ with multiplication and $\uL^\infty(\uY)$-valued
    scalar product given by
    \begin{alignat*}{3}
      \cdot \colon \uL^\infty(\uY)\times\uL^2(\uX|\uY) &\to \uL^2(\uX|\uY), \quad 
      (f, g) \mapsto (g\circ f)\cdot g, \\
      (\cdot|\cdot)_\uY\colon \uL^2(\uX|\uY)\times \uL^2(\uX|\uY) &\to \uL^2(\uX|\uY), \quad 
      (f,g) \mapsto \E_\uY(f\overline{g}).
    \end{alignat*}
  \end{enumerate}
  Now let $\pi\colon \uX \to \uY$ be an extension of nonsingular $G$-actions.
  \begin{enumerate}[resume, label=(\roman{enumi})]
    \item The extension is called \textbf{ergodic} if every a.e.\ $G$-invariant 
    measurable set $A \subset X$
    is, up to some nullset, of the form $A = \pi^{-1}(B)$ for an a.e.\ 
    $G$-invariant measurable set $B \subset Y$.
    \item The extension is called \textbf{isometric} if
    \begin{equation*}
      \uL^2(\uX) = 
      \overline{\bigcup \left\{ \Gamma \subset \uL^\infty(\uX) \mmid \begin{matrix}
                                                            \Gamma \text{ is a } G\text{-invariant, finitely-} \\
                                                            \text{generated } \uL^\infty(\uY)\text{-submodule}
                                                          \end{matrix}
      \right\}}^{\|\cdot\|_{\uL^2(\uX)}}.
    \end{equation*}
    The system $G\curvearrowright\uX$ is called \textbf{isometric} if the extension $\uX \to \mathrm{pt}$ is 
    isometric, i.e., if
    \begin{equation*}
      \uL^2(\uX) = 
      \overline{\bigcup \left\{ F \subset \uL^\infty(\uX) \mmid \begin{matrix}
                                                            F \text{ is a } G\text{-invariant, finite-} \\
                                                            \text{dimensional subspace}
                                                          \end{matrix}
      \right\}}^{\|\cdot\|_{\uL^2(\uX)}}.
    \end{equation*}    \item The extension is called \textbf{distal} if there are an ordinal $\eta_0$ 
    and a projective system $((X_\eta)_{\eta \leq \eta_0}, (\pi_\eta^\sigma)_{\eta\leq \sigma\leq \eta_0})$
    of nonsingular $G$-actions such that 
    \begin{itemize}
     \item $\pi_1^{\eta_0} = \pi$,
     \item $\pi_\eta^{\eta+1}$ is an isometric extension for every $\eta < \eta_0$,
     \item $X_\eta = \lim_{\mu < \eta} \uX_\mu$ for every limit ordinal $\eta \leq \eta_0$.
    \end{itemize}
  \end{enumerate}
  Finally, assume that $\pi\colon \uX \to \uY$ is an extension of measure-preserving $G$-actions.
  \begin{enumerate}
    \item The extension is called \textbf{weakly mixing} if the relatively independent 
    joining $\pi \times_\uY \pi \colon \uX \times_\uY \uX \to \uY$ is an ergodic extension of $\uY$.
  \end{enumerate}
\end{definition}

\begin{remark}
  From a conceptual point of view, the definition of isometric extensions of nonsingular actions
  should not involve $\uL^2(\uX)$ since in the nonsingular realm, there is no induced action 
  on $\uL^2(\uX)$ or $\uL^2(\uX|\uY)$. It would be more appropriate to
  work purely in $\uL^\infty(\uX)$ and use 
  the equivalent definition in terms of 
  \emph{order closure} that requires that
  \begin{equation*}
        \uL^\infty(\uX) = 
      \operatorname{ocl}\bigcup \left\{ \Gamma \subset \uL^\infty(\uX) \mmid \begin{matrix}
                                                            \Gamma \text{ is a } G\text{-invariant, finitely-} \\
                                                            \text{generated } \uL^\infty(\uY)\text{-submodule}
                                                          \end{matrix}
      \right\}.
  \end{equation*}
  See the appendix for a discussion of order-convergence (which, for bounded sequences,
  is the same as almost everywhere convergence, see \cite[Lemma 7.5]{EHK2021}). We shall use 
  $\uL^2$-closures for the sake of accessibility but the reader familiar with 
  order convergence may wish to instead take the less common but 
  conceptually cleaner approach.
\end{remark}

With these notions, the Furstenberg--Zimmer structure theorem, inspired by Furstenberg's earlier 
structure theorem \cite{Furstenberg1963} for distal actions in topological dynamics, can be stated as follows.

\begin{theorem}\label{thm:classicalFZ}
  Let $\pi\colon \uX \to \uZ$ be an extension of measure-preserving $G$-actions. Then there are 
  a weakly mixing extension $\alpha\colon \uX \to \uY$ and a distal extension $\beta\colon \uY \to \uZ$ 
  such that the diagram
  \begin{equation*}
   \xymatrix{
    \uX \ar[rd]_-\alpha \ar[rr]^-\pi && \uZ \\
    & \uY \ar[ru]_-\beta &
   }
  \end{equation*}
  commutes.
\end{theorem}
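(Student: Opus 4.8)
The plan is to prove \cref{thm:classicalFZ} by constructing $\uY$ as the maximal distal extension of $\uZ$ sitting below $\uX$, built through a transfinite tower of isometric extensions, and then verifying that the residual extension $\uX \to \uY$ is weakly mixing. The engine driving the construction is the relative version of the Kronecker dichotomy from \cref{thm:krodichotomy}: for an extension $\uX \to \uZ$ of measure-preserving $G$-actions, either it is weakly mixing, or it admits a nontrivial intermediate isometric extension $\uX \to \uZ' \to \uZ$.

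First I would establish this relative dichotomy by relativizing the Hilbert--Schmidt argument used for \cref{thm:krodichotomy}. The role of $\uL^2(\uX)$ is now played by the conditional $\uL^2$-space $\uL^2(\uX|\uZ)$, viewed as a Hilbert module over $\uL^\infty(\uZ)$, and the role of kernels is played by functions in $\uL^2(\uX \times_\uZ \uX)$ on the relatively independent joining. Failure of weak mixing means the joining $\uX \times_\uZ \uX \to \uZ$ is non-ergodic, which yields a nonconstant relatively $G$-invariant kernel; the associated relatively Hilbert--Schmidt operator commutes with the Koopman representation, and after symmetrizing and invoking the module analogue of the spectral decomposition into finitely-generated invariant submodules, one extracts a nontrivial $G$-invariant, finitely-generated $\uL^\infty(\uZ)$-submodule of $\uL^\infty(\uX)$. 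This submodule generates the sub-$\sigma$-algebra defining the desired nontrivial isometric intermediate factor $\uZ'$.

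With the dichotomy in hand, I would build the distal tower by transfinite recursion. Set $\uZ_0 = \uZ$. At a successor ordinal, if $\uX \to \uZ_\eta$ is not weakly mixing, apply the relative dichotomy to produce a nontrivial isometric extension $\uZ_{\eta+1} \to \uZ_\eta$ with $\uZ_{\eta+1}$ still a factor of $\uX$; if it is weakly mixing, halt. At limit ordinals, take the inverse limit $\uZ_\eta = \lim_{\sigma < \eta} \uZ_\sigma$, which again is a factor of $\uX$. By construction each successor step is isometric and each limit step is an inverse limit, so the resulting extension $\uY \to \uZ$ is distal, while $\uX \to \uY$ is weakly mixing precisely because the recursion halts only when weak mixing holds.

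The main obstacle is showing that the recursion terminates at a countable ordinal rather than running forever. The key point is that each nontrivial isometric step strictly enlarges the closed subspace $\uL^2(\uZ_\eta) \subset \uL^2(\uX)$; since $\uL^2(\uX)$ is separable, a strictly increasing transfinite chain of closed subspaces must stabilize at some countable ordinal $\eta_0$. Care is needed at limit stages to confirm that the increasing union of the $\uL^2(\uZ_\sigma)$ is dense in $\uL^2(\uZ_\eta)$, so that no content is lost or created in passing to inverse limits, and to argue that a step which fails to strictly enlarge the subspace must already be weakly mixing (so halting is forced). Once termination is secured, setting $\uY = \uZ_{\eta_0}$, with $\alpha \colon \uX \to \uY$ the resulting weakly mixing extension and $\beta \colon \uY \to \uZ$ the distal extension furnished by the tower, completes the proof.
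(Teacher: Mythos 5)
Your proposal matches the paper's own strategy: the relative Kronecker dichotomy (\cref{thm:relkrodichotomy}, proved by relativizing the Hilbert--Schmidt argument to the Hilbert module $\uL^2(\uX|\uZ)$ and the relatively independent joining, exactly as sketched in \cref{rem:ingredients}) combined with a transfinite tower of isometric extensions that terminates at a countable ordinal by separability of $\uL^2(\uX)$, just as in the paper's proof of \cref{thm:nonsingularmainthm}. The argument is correct and essentially identical to the paper's.
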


The proof reduces essentially to the following generalization of the Kronecker dichotomy 
\cref{thm:krodichotomy} to extensions.

\begin{theorem}[Relative Kronecker dichotomy]\label{thm:relkrodichotomy}
  An extension $\pi \colon \uX \to \uZ$ of measure-preserving $G$-actions is weakly mixing 
  if and only if there is no 
  isometric intermediate extension of $\uZ$.
\end{theorem}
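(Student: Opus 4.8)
The plan is to prove the relative Kronecker dichotomy, \cref{thm:relkrodichotomy}, by relativizing the argument used for the absolute dichotomy, \cref{thm:krodichotomy}. The central object is the relatively independent joining $\uX \times_\uZ \uX$, and the relative analogue of the Hilbert--Schmidt correspondence: instead of $\uL^2(\uX \times \uY)$ matching up with $\HS(\uL^2(\uX), \uL^2(\uY))$, I expect that $G$-invariant elements of $\uL^2(\uX \times_\uZ \uX)$ correspond to $\uL^\infty(\uZ)$-linear, $G$-equivariant operators on the Hilbert module $\uL^2(\uX|\uZ)$, playing the role that genuine Hilbert--Schmidt operators did in the non-relative case. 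The two directions of the equivalence then become a translation between the existence of a nontrivial such invariant kernel and the existence of a nontrivial isometric intermediate extension.

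First I would spell out the module-theoretic dictionary. The appendix (Hilbert modules, following \cite{EHK2021}) should supply a relative version of the Hilbert--Schmidt isomorphism identifying $\uL^2(\uX\times_\uZ\uX)$ with a space of $\uL^\infty(\uZ)$-linear operators on $\uL^2(\uX|\uZ)$, together with the fact that a kernel $k$ is $G$-invariant precisely when the associated operator $I_k$ commutes with every Koopman operator $T_g$. Given that dictionary, the forward-contrapositive direction runs as follows: if $\pi\colon\uX\to\uZ$ is \emph{not} weakly mixing, the relatively independent joining $\uX\times_\uZ\uX$ is a nonergodic extension of $\uZ$, so there is a nonconstant $G$-invariant $k\in\uL^2(\uX\times_\uZ\uX)$ not coming from $\uZ$. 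Splitting $I_k$ into its self-adjoint and anti-self-adjoint parts, I may take $I_k$ self-adjoint; then relative spectral theory for self-adjoint module maps yields a $G$-invariant finitely generated $\uL^\infty(\uZ)$-submodule of $\uL^\infty(\uX)$, i.e.\ a nontrivial isometric intermediate extension between $\uZ$ and $\uX$.

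For the converse direction, I would assume there is an isometric intermediate extension $\uZ \to \uW \to \uX$ that is nontrivial over $\uZ$, hence a nontrivial $G$-invariant finitely generated $\uL^\infty(\uZ)$-submodule $\Gamma \subset \uL^\infty(\uX)$. The module projection $P_\Gamma$ onto $\Gamma$ is then a nontrivial $G$-equivariant $\uL^\infty(\uZ)$-linear operator on $\uL^2(\uX|\uZ)$, which under the dictionary corresponds to a nontrivial $G$-invariant kernel $k\in\uL^2(\uX\times_\uZ\uX)$ not factoring through $\uZ$. This witnesses nonergodicity of $\uX\times_\uZ\uX$ over $\uZ$, so $\pi$ fails to be weakly mixing. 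Here I would invoke the relative analogue of \cref{lem:moduleseverywhere} to ensure that the invariant submodules can be taken inside $\uL^\infty(\uX)$ rather than merely in $\uL^2(\uX|\uZ)$.

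The main obstacle I anticipate is the relative spectral theory used in the forward direction: in the absolute case one diagonalizes a self-adjoint Hilbert--Schmidt operator into finite-rank eigenprojections, but over the base $\uL^\infty(\uZ)$ the correct replacement is a decomposition of a self-adjoint module map into finitely generated $G$-invariant submodules, and the eigenspaces need not have locally constant dimension. The delicate point is extracting a genuinely \emph{finitely generated} invariant submodule and verifying its $G$-invariance via continuous functional calculus in the module setting; I expect to lean on the Hilbert-module machinery of the appendix (and its Hilbert--Schmidt-type decomposition) to handle this, just as the absolute proof leaned on \cref{rem:HSbasics}.
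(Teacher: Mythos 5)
Your proposal follows essentially the same route as the paper: the paper itself only sketches this theorem (in \cref{rem:ingredients}), reducing it to the same three ingredients you identify — the Kaplansky--Hilbert module version of the kernel/Hilbert--Schmidt correspondence on $\uL^2(\uX|\uZ)$ (\cref{thm:KHiso}), the equivalence of $G$-invariance of the kernel with $G$-equivariance of the operator, and the spectral decomposition into finitely generated invariant submodules (\cref{prop:KHspectralthm}), combined with \cref{lem:moduleprojection} and \cref{lem:moduleseverywhere} exactly as you propose — and the fully written-out argument appears later as the nonsingular generalization \cref{thm:nonsingularrelkrodichotomy}. The only detail worth noting is that the nontriviality of the extracted submodule (i.e.\ that some $M_j$ is not contained in $\uL^\infty(\uZ)$) is obtained in the paper by normalizing $k$ so that $K\1_\uX = 0$ while $K \neq 0$, treating the case of a nonergodic extension $\pi$ separately; your plan would need that small additional step spelled out.
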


\begin{remark}\label{rem:ingredients}
The proof of \cref{thm:relkrodichotomy} can be carried out in complete analogy 
to its special case \cref{thm:krodichotomy}. To that end, the $\uL^2$-space 
$\uL^2(\uX)$ is replaced by the conditional 
$\uL^2$-space $\uL^2(\uX|\uZ)$ 
and the conditionally independent joining $\uX \times_\uZ \uX$ (see \cite[Examples 6.3]{Glasner2003})
replaces the product $\uX\times\uX$.
Since the conditional $\uL^2$-space $\uL^2(\uX|\uZ)$ forms a so-called \emph{Hilbert module} over the 
$\uC^*$-algebra $\uL^\infty(\uZ)$, 
one can make use of results from the theory of Hilbert modules.
Unfortunately, general Hilbert modules fail to \enquote{relativize} Hilbert spaces in 
many important ways, as evidenced by the failure of results such as the Fr\'{e}chet--Riesz
representation theorem, complementability of closed submodules, the spectral theorem, and many other essential
parts of Hilbert space theory. However, $\uL^2(\uX|\uZ)$ belongs to a special class 
of Hilbert modules, so-called \emph{Kaplansky--Hilbert modules}, which do not suffer from these
problems. We do not enter into the details of Kaplansky--Hilbert modules here
and refer the reader to \cite{EHK2021} for proofs and details. In light of this, it is 
not surprising that the following can be shown for extensions.
\begin{itemize}
 \item There is a natural notion of Hilbert--Schmidt homomorphisms on $\uL^2(\uX|\uZ)$
 and the assignment
 \begin{align*}
   I\colon \uL^2(\uX\times_\uZ\uX) \to \HS(\uL^2(\uX|\uZ)), \quad (I_kf)(x) \defeq \int_{X_{\pi(x)}} k(x, y) f(y) \dmu_{\pi(x)}(y)
 \end{align*}
 defines an isomorphism between $\uL^2(\uX\times\uX)$ and the Hilbert module $\HS(\uL^2(\uX|\uZ))$ 
 of Hilbert--Schmidt homomorphisms on $\uL^2(\uX|\uZ)$. The reader may take this isomorphism as a 
 definition for now; a more general version \cref{thm:KHiso} is proven in the appendix.
 \item A function $k\in\uL^2(\uX\times_\uZ \uX)$ is $G$-invariant if and only if $I_k T_g = T_g I_k$.
 \item A self-adjoint Hilbert--Schmidt homomorphism $K\in\uL^2(\uX|\uZ)$ admits (by virtue of a spectral theorem) 
 a canonical decomposition into finite-rank Hilbert--Schmidt homomorphisms. By some functional calculus arguments, 
 a bounded operator (such as $T_g$ for $g\in G$) that commutes with $K$ also commutes with the finite-rank homomorphisms 
 occurring in its canonical decomposition.
\end{itemize}
With these ingredients, the proof of \cref{thm:relkrodichotomy} can be done in complete analogy to the 
proof of \cref{thm:krodichotomy}. In the following sections, we adapt these ideas to prove a similar dichotomy 
and structure theorem for stationary actions.
\end{remark}

Above, we implicitly used the measure disintegration theorem which we quickly recall
for later reference.

\begin{theorem}\label{thm:disintegration}
  Let $\pi\colon (K, \mu) \to (L, \nu)$ be a continuous 
  measure-preserving map between compact metrizable probability spaces.
  Then there exist a $\nu$-a.e.\ uniquely determined family 
  $\{\mu_l \mid l\in F\}$ of probability measures on $K$ with 
  $\supp(\mu_l) \subset K_l \defeq \pi^{-1}(l)$ for $\nu$-a.e.\ $l\in L$ such that
  for any measurable bounded function $f\colon K \to \C$, the 
  assignment
  \begin{equation*}
    l \mapsto \int_{K_l} f\dmu_l
  \end{equation*}
  is measurable and satisfies
  \begin{equation*}
    \int_L \int_{K_l} f\dmu_l \dnu(l) = \int_K f\dmu.
  \end{equation*}
  Moreover, $\pi$ is essentially invertible if and only if for $\nu$-almost
  every $l\in L$ the fiber measure $\mu_l$ is a Dirac mass.
\end{theorem}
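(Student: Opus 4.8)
The plan is to construct the fiber measures $\mu_l$ as the measures that represent, via the Riesz representation theorem, the pointwise evaluations of a single conditional expectation operator, using separability of $\uC(K)$ to turn the almost-everywhere defined objects into a globally coherent family. Since $\pi$ is measure-preserving, $\pi_*\mu=\nu$, and conditioning on the sub-$\sigma$-algebra $\pi^{-1}(\mathcal{B}_L)$ yields a positive, unital $\uL^\infty$-contraction $P\colon \uC(K)\to\uL^\infty(L,\nu)$ characterized by $\int_{\pi^{-1}(B)} f\dmu=\int_B Pf\dnu$ for all Borel $B\subset L$. Heuristically $(Pf)(l)$ ought to equal $\int_{K_l} f\dmu_l$, so the entire proof amounts to promoting the family $\{Pf\}_{f\in\uC(K)}$ of a.e.\ defined functions to an honest family of measures.

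First I would fix a countable $\Q[i]$-linear subspace $\mathcal{D}\subset\uC(K)$ containing $\1_K$, closed under complex conjugation, and dense in $\uC(K)$; this exists because $K$ is compact metrizable and hence $\uC(K)$ is separable. For each $f\in\mathcal{D}$ I choose a fixed representative of $Pf$. As $\mathcal{D}$ is countable, the countably many a.e.\ identities expressing $\Q[i]$-linearity of $f\mapsto(Pf)(l)$, the normalization $(P\1_K)(l)=1$, and the contraction bound $|(Pf)(l)|\leq\|f\|_\infty$ (all inherited from the corresponding a.e.\ properties of $P$) hold simultaneously on one co-null set $L_0\subset L$. For $l\in L_0$ the map $\mathcal{D}\to\C$, $f\mapsto(Pf)(l)$, is then a densely defined functional of norm one with value $1$ at $\1_K$, so it extends uniquely to such a functional $\Phi_l$ on $\uC(K)$; being a norm-one unital functional on the commutative unital $\uC^*$-algebra $\uC(K)$, $\Phi_l$ is a state, and Riesz representation supplies a unique Borel probability measure $\mu_l$ with $\int_K f\dmu_l=\Phi_l(f)$ for all $f\in\uC(K)$.

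Next I would verify the four remaining claims. Measurability of $l\mapsto\int_{K_l}f\dmu_l=(Pf)(l)$ is clear for $f\in\uC(K)$ and passes to all bounded Borel $f$ by a monotone-class argument; equivalently, $l\mapsto\mu_l$ is Borel into the weak-$*$ compact metrizable space $\mathcal{P}(K)$. The integral identity is the case $B=L$, namely $\int_L\int_K f\dmu_l\dnu(l)=\int_L Pf\dnu=\int_K f\dmu$ for $f\in\uC(K)$, again extended by monotone class. For the support statement I would test against pullbacks from $L$: since $g\circ\pi$ is $\pi^{-1}(\mathcal{B}_L)$-measurable, $P(g\circ\pi)=g$ a.e., so $\int_K (g\circ\pi)\dmu_l=g(l)$, i.e.\ $\pi_*\mu_l=\delta_l$; running this over a countable dense set of $g\in\uC(L)$ and intersecting co-null sets forces $\pi_*\mu_l=\delta_l$, hence $\mu_l(K\setminus K_l)=0$, for $\nu$-a.e.\ $l$. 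Uniqueness a.e.\ follows by testing the defining identity against products $h\cdot(g\circ\pi)$ and using $\pi_*\mu_l=\delta_l$ to recover $\int_K h\dmu_l=(Ph)(l)$ a.e.\ for each $h$ in a countable dense subset of $\uC(K)$, so any two such families agree $\nu$-a.e.

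Finally, for the essential-invertibility equivalence I would invoke uniqueness. If $\pi$ is essentially invertible with measure-preserving inverse $\sigma\colon L\to K$, then $l\mapsto\delta_{\sigma(l)}$ satisfies the disintegration formula (because $\sigma_*\nu=\mu$) and is supported on the fibers, so by uniqueness $\mu_l=\delta_{\sigma(l)}$ is a.e.\ a Dirac mass. Conversely, if $\mu_l=\delta_{s(l)}$ a.e., then Borel measurability of $l\mapsto\mu_l$ composed with the inverse of the homeomorphism $x\mapsto\delta_x$ from $K$ onto the Dirac masses in $\mathcal{P}(K)$ exhibits $s$ as a measurable section of $\pi$, and the disintegration formula gives $s_*\nu=\mu$, making $\pi$ a measure isomorphism. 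The main obstacle throughout is the gluing problem in the second paragraph: each $Pf$ is defined only up to a $\nu$-null set, and the crux is that separability of $\uC(K)$ reduces the uncountably many functional-analytic axioms to countably many a.e.\ conditions that can be enforced on a single co-null set before extending by continuity.
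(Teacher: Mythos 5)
Your proposal is correct, but it is substantially more self-contained than the paper's own proof: the paper treats the existence and a.e.-uniqueness of the disintegration as classical, citing \cite[Theorem 5.3.1]{AGS2008}, and only writes out the essential-invertibility equivalence. You instead build the disintegration from scratch via the conditional expectation onto $\pi^{-1}(\mathcal{B}_L)$, a countable $\Q[i]$-linear dense subspace of $\uC(K)$ to enforce the countably many a.e.\ identities on a single co-null set, the automatic positivity of norm-one unital functionals, and Riesz--Markov--Kakutani; this is the standard construction and makes explicit exactly where compact metrizability (separability of $\uC(K)$) is used. On the invertibility equivalence your argument and the paper's essentially coincide: in the forward direction both exhibit $(\delta_{s(l)})_l$ as a disintegration and invoke a.e.-uniqueness, and in the backward direction both recover Borel measurability of $s$ by composing the weak*-measurable map $l \mapsto \mu_l$ with the inverse of the homeomorphism of $K$ onto the set of Dirac masses, then get $\pi\circ s = \id_L$ $\nu$-a.e.\ from $\supp(\mu_l) \subset K_l$. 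The one place you compress is the final step: "the disintegration formula gives $s_*\nu = \mu$, making $\pi$ a measure isomorphism" still requires checking $s\circ\pi = \id_K$ $\mu$-a.e.; the paper does this by integrating $\mu_l([s\circ\pi \neq \id_K]) = \delta_{s(l)}([s\circ\pi \neq \id_K]) = 0$ against $\nu$, while in your setup it follows in one line from $s_*\nu = \mu$ together with $\pi\circ s = \id_L$ a.e.\ --- routine either way, not a gap. In short, your route buys independence from the cited reference at the cost of length; the paper's buys brevity by outsourcing the classical part.
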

\begin{proof}
  The first part is the usual formulation and can be found in many sources, see \cite[Theorem 5.3.1]{AGS2008}; we 
  include a short proof of the invertibility statement for the sake of completeness. First,
  suppose $\pi$ is essentially invertible, i.e., there are sets $A \subset K$ 
  and $B \subset L$ of full measure such that $\pi|_A\colon A \to B$ is bijective
  and its inverse $s\colon B \to A$ is measurable. Then for every $f\in \uC(K)$,
  \begin{equation*}
   \langle f, \mu\rangle = \langle f, s_*\pi_*\mu\rangle = \langle f\circ s, \nu\rangle
  \end{equation*}
  and so it follows that any disintegration $(\mu_l)_{l\in L}$ of $\mu$ must 
  agree $\nu$-almost everywhere with the Dirac measures $(\delta_{s(l)})_{l\in B}$.
  
  Conversely, suppose $\mu$ admits a disintegration of the form $(\mu_l)_{l\in L}$
  such that for some set $B \subset L$ of full measure and some function $s\colon B \to K$,
  $\mu_l = \delta_{s(l)}$ for all $l\in B$. Then $\delta_{s(\cdot)}\colon B \to \uC(K)'$ is weak*-measurable,
  i.e., measurable w.r.t.\ the Borel $\sigma$-algebra of the set $\{\delta_x \mid x\in K\}$ equipped
  with the weak* topology. Since this set is homeomorphic to $K$, it follows that $s\colon B \to K$ 
  is Borel measurable. Now, for all $l \in B$ one has $(\pi\circ s)(l) = l$ since 
  $\supp(\delta_{s(l)}) = \supp(\mu_l) \subset K_l$. Thus, $\pi\circ s = \id_L$ $\nu$-a.e. Finally, 
  \begin{equation*}
   \mu([s\circ \pi \neq \id_K])
   = \int_L \mu_l([s\circ \pi \neq \id_K])\dnu(l) 
   = \int_B \underbrace{\delta_{s(l)}([s\circ \pi \neq \id_K])}_{=0} \dnu(l)
   = 0.
  \end{equation*}
  Thus, also $s\circ \pi = \id_K$ $\mu$-a.e., which shows that $s$ is an essential inverse for $\pi$.
\end{proof}

\begin{remark}\label{rem:stationary_dis}
  The uniqueness property above in particular shows that if $\pi\colon (K, \mu) \to (L, \nu)$ 
  is an extension of measure-preserving $G$-actions for a lcsc group $G$, then 
  the disintegration is also equivariant $\nu$-a.e.\ in the sense that $s_*\mu_{s^{-1}l} = \mu_{l}$ 
  for all $s\in G$ and $\nu$-a.e.\ $l\in L$. If, more generally, $\pi$ is an extension of a 
  measure-preserving $G$-action on $(L,\nu)$ by a $(G,m)$-stationary $G$-action on $(K,\mu)$, then the uniqueness of 
  the disintegration yields that $\int_G s_*\mu_{s^{-1}l}\dm(s) = \mu_l$ for $\nu$-a.e.\ $l\in L$. 
  If both $\mu$ and $\nu$ are merely stationary, then one can still observe that 
  \begin{align*}
    \int_G s_*\mu \dm(s)
    = \int_G \int_L s_*\mu_l \dnu(l) \dm(s)
    = \int_G \int_L s_*\mu_{s^{-1}l}\frac{\ud s_*\nu}{\ud\nu}(l)\dnu(l) \dm(s)
  \end{align*}
  where we used the \textbf{Radon--Nikodym cocycle} (see \cite[Definition 1.3]{NevoZimmer2000})
  \begin{equation*}
    \rho_\nu \colon G\times L \to \R, \quad \rho_\nu(s, l) \defeq \frac{\ud (s^{-1})_*\nu}{\ud\nu}(l).
  \end{equation*}
  Thus, the uniqueness of disintegrations yields
  \begin{equation*}
    \mu_l = \int_G \left(\frac{\ud s_*\nu}{\ud\nu}(l)\right) s_*\mu_{s^{-1}l}\dm(s) \qquad \text{for $\nu$-a.e. } l\in L. 
  \end{equation*}
  This observation will be essential in the proof of \cref{lem:bundle_inv}.
\end{remark}

\section{Topological models for nonsingular actions} \label{sec:topmodels}

For the proof of the main result, it will be necessary to construct 
\emph{topological models}. A standard technique 
for doing this is the correspondence between factors of a 
system $\uX$ and 
$\uC^*$-subalgebras of $\uL^\infty(\uX)$. It allows for elegant constructions 
and arguments involving factors/topological models and shall also be used in later
sections.
The ideas are not novel but usually only presented for measure-preserving actions, 
see, e.g., \cite[Chapter 12]{EFHN2015}. In particular, that every 
nosingular action of a locally compact second countable group $G$ on a probability
space $\uX$ admits a compact metric topological model on which $G$ acts \emph{continuously}
appears to be a new result. It hinges on the following surprising continuity property
of nonsingular actions that is adapted from \cite[Theorem 10.2.3]{HillePhillips}.

\begin{theorem}\label{thm:FEmagic}
  Let $G$ be a second countable locally compact group and $G \curvearrowright \uX$ a nonsingular
  measurable action on 
  a standard probability space. Then the induced action of $G$ on $\uL^\infty(\uX)$ is strongly 
  continuous with 
  respect to the $\uL^1$-norm.
\end{theorem}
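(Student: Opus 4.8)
The plan is to reduce the statement to a convergence-in-measure assertion at the identity and to extract that from the strong continuity of the associated $\uL^2$-Koopman \emph{unitary} representation, which itself comes from an automatic-continuity argument in the spirit of \cite[Theorem 10.2.3]{HillePhillips}. Writing $(T_g f)(x) = f(g^{-1}x)$ for the Koopman operator on $\uL^\infty(\uX)$ (well defined by nonsingularity, and a representation, $T_{gh}=T_gT_h$), the first observation is that $T_{g_0 u}f - T_{g_0}f = T_{g_0}(T_u f - f)$, so that
\begin{equation*}
  \|T_{g_0 u}f - T_{g_0}f\|_{\uL^1(\uX)} = \int_X |T_u f - f|\,\ud\nu_{g_0}, \qquad \nu_{g_0}\defeq (g_0^{-1})_*\mu \sim \mu.
\end{equation*}
Since $G$ is metrizable and $|T_u f - f|\le 2\|f\|_{\uL^\infty(\uX)}$, it suffices by bounded convergence to prove that $T_{u_n}f\to f$ in $\mu$-measure for every $f\in\uL^\infty(\uX)$ and every sequence $u_n\to e$: then the same convergence holds in the equivalent measure $\nu_{g_0}$, and the uniform bound upgrades convergence in $\nu_{g_0}$-measure to $\uL^1(\nu_{g_0})$-convergence, which is exactly continuity of $g\mapsto T_g f$ at the arbitrary point $g_0$.

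To obtain convergence in measure at the identity I would pass to the separable Hilbert space $\uL^2(\uX)$. Nonsingularity together with the multiplicative cocycle identity for Radon--Nikodym derivatives supplies a positive measurable square-root cocycle factor $D_g$ such that $U_g f \defeq D_g^{1/2}\,T_g f$ defines a \emph{unitary} representation of $G$ on $\uL^2(\uX)$; the only structural facts I need are $U_g\1 = D_g^{1/2}$ (because $T_g\1 = \1$) and $T_g f = D_g^{-1/2}\,U_g f$. Using the joint measurability of the action map $G\times X\to X$ and a jointly measurable version of the Radon--Nikodym cocycle, the coefficient maps $g\mapsto \langle U_g f, h\rangle$ are measurable for all $f,h\in\uL^2(\uX)$, so $U$ is weakly measurable.

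The crucial step, and the main obstacle, is upgrading weak measurability of $U$ to strong continuity; this is also the reason one cannot argue directly on $\uL^\infty(\uX)$ or $\uL^1(\uX)$. The operators $T_g$ are sup-norm isometries but are \emph{unbounded} for the $\uL^1$-norm, and sup-norm translation is typically discontinuous, so neither Banach space carries a uniformly bounded representation to which a smoothing argument applies. By contrast the $U_g$ are unitary, hence uniformly bounded, on $\uL^2(\uX)$, and it is precisely here that the averaging/convolution-smoothing technique of \cite[Theorem 10.2.3]{HillePhillips} can be adapted: smoothing $U$ against an approximate identity in $\uC_c(G)$ produces a set of vectors with continuous orbit maps, and uniform boundedness together with separability of $\uL^2(\uX)$ propagates continuity to all vectors, yielding strong continuity of $U$ at $e$ and hence everywhere.

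Finally I would remove the twist. Fix $f\in\uL^\infty(\uX)$ and $u_n\to e$. Strong continuity of $U$ gives $D_{u_n}^{1/2} = U_{u_n}\1 \to \1$ and $U_{u_n}f\to f$ in $\uL^2(\uX)$, hence in $\mu$-measure; consequently $D_{u_n}^{-1/2}\to\1$ in measure, the reciprocal being a continuous function of a measure-convergent sequence with a.e.\ positive limit. Since on a finite measure space products of measure-convergent sequences converge in measure to the product of the limits, we conclude
\begin{equation*}
  T_{u_n}f = D_{u_n}^{-1/2}\,U_{u_n}f \;\longrightarrow\; \1\cdot f = f \qquad \text{in } \mu\text{-measure}.
\end{equation*}
By the reduction of the first paragraph this is exactly the input needed, and it yields the asserted strong $\uL^1$-continuity of the induced action of $G$ on $\uL^\infty(\uX)$.
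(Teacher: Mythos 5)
Your proposal is correct, but it takes a genuinely different route from the paper's proof. The paper argues directly on the $\uL^1(\uX)$-valued orbit maps $g\mapsto \pi(g)f$ for $f\in\uL^\infty(\uX)$: it establishes strong measurability via Pettis, writes $\pi(h)f-f=\int_K\pi(g)\bigl[\pi(g^{-1}h)f-\pi(g^{-1})f\bigr]\,\ud\um_G(g)$ as a Bochner integral over a compact set $K$ of Haar measure one, invokes the strong continuity of the left-regular representation on $\uL^1(G,\um_G;\uL^1(\uX))$ (\cref{lem:Elrrep}), and then absorbs the $\uL^1$-unboundedness of the operators $\pi(g)$ through a subsequence/almost-everywhere argument (\cref{lem:bpcont}) combined with the uniform bound $2\|f\|_{\uL^\infty(\uX)}$ and dominated convergence. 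You instead reduce to convergence in $\mu$-measure at the identity (the translation step via the equivalent measure $(g_0^{-1})_*\mu$ and bounded convergence is fine), pass to the unitary $\uL^2$-Koopman representation $U_g=D_g^{1/2}T_g$, invoke the classical automatic strong continuity of weakly measurable unitary representations on separable Hilbert spaces, and untwist using that products and reciprocals of measure-convergent sequences behave well on a probability space; each of these steps checks out. What your route buys is that the analytic core is outsourced to a textbook theorem and the unboundedness issue never arises; what it costs is the need for a jointly measurable (or at least strongly $\uL^2$-measurable) version of the square-root Radon--Nikodym cocycle $g\mapsto D_g^{1/2}$ to get weak measurability of $U$ --- a standard but not free fact, which you correctly flag, and which the paper's argument avoids entirely because it only ever pairs $\pi(g)f$ against $h\in\uL^\infty(\uX)$ and never touches the cocycle. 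At bottom both proofs run on the same convolution-smoothing engine from \cite[Theorem 10.2.3]{HillePhillips}; yours just applies it in the unitary setting where it is classical.
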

\begin{proof}
  Since the $G$-action is nonsingular, we can define the group homomorphism
  \begin{align*}
     \pi\colon G \to \mathscr{L}(\uL^\infty(\uX)) \qquad g \mapsto T_{g^{-1}}.
  \end{align*}
  Observe that for $f\in \uL^\infty(\uX)$, we can regard the map $\pi(\cdot)f$ 
  as taking values in $\uL^1(\uX)$. Moreover, for every
  $h\in \uL^\infty(\uX)$ the expression
  \begin{align*}
    \langle h, \pi(g)f\rangle = \int_X f(g^{-1}x)h(x) \dmu(x)
  \end{align*}
  depends measurably on $g$ because the action $G\times X\to X$, $(g, x)\mapsto gx$
  is measurable by assumption. (See \cite[Theorem 1.7.15]{TaoMeasure} for the 
  measure-theoretic details.) This means that $\pi(\cdot)f$ is, regarded 
  as a map to $\uL^1(\uX)$, weakly measurable and \cite[Theorem 3.5.3]{HillePhillips} yields that it is even 
  strongly measurable\footnote{This means that 
  for each $f\in \uL^\infty(\uX)$, the map $g\mapsto \pi(g)f$ is the almost-everywhere limit 
  limit of finitely-valued measurable functions.} because 
  $\uL^1(\uX)$ is separable. Thus, regarding $\pi(\cdot)f$ as a 
  map with values in $\uL^1(\uX)$, we may integrate it in the sense of Bochner 
  integrals.
  
  Now, let $K\subset G$ be a compact set of Haar measure 1, $U \subset G$ 
  a compact symmetric unit neighborhood,
  $h\in U$, $f\in \uL^\infty(\uX)$ and note that
  \begin{align*}
    \pi(h)f - f 
    = \int_K \pi(g)\left[\pi(g^{-1}h)f - \pi(g^{-1})f\right] \ud\um_G(g).
  \end{align*}
  Then setting $F\colon UK\to \uL^1(\uX)$, $F(g) = \pi(g^{-1})f$, we obtain that 
  $F$ is strongly measurable and bounded 
  since $\pi$ has these properties on the compact set $UK$. Extending 
  $F$ to all of $G$ by zero, we can regard it as an element of $\uL^1(G,\um;\uL^1(\uX))$ 
  that satisfies $F(h^{-1}g) = \pi(g^{-1}h)f$ for all $h\in U$, $g\in K$.
  Since the left-regular representation of $G$ on $\uL^1(G,\um_G;\uL^1(\uX))$ 
  is strongly continuous (see \cref{lem:Elrrep} below), we know that 
  \begin{equation*}
   \lim_{h\to e} \int_G \left\|F(h^{-1}g) - F(g)\right\|_{\uL^1(\uX)} \ud\um_G(g) = 0.
  \end{equation*}
  Since for $h\in U$
  \begin{equation*}
   \int_K \left\|F(h^{-1}g) - F(g)\right\|_{\uL^1(\uX)} \ud\um_G(g)
   = \int_K \left\|\pi(g^{-1}h)f - \pi(g^{-1})f\right\|_{\uL^1(\uX)} \ud\um_G(g),
  \end{equation*}
  we can conclude that 
  \begin{equation*}
    \lim_{h\to e} \int_K \left\|\pi(g^{-1}h)f - \pi(g^{-1})f\right\|_{\uL^1(\uX)} \ud\um_G(g) = 0.
  \end{equation*}
  Now, fix a sequence $(h_n)_n$ in $G$ with $h_n \to e$. Then the above convergence in $\uL^1(K, \um_G|_K)$ 
  implies that, by passing 
  to a subsequence, we may assume that
  \begin{equation*}
    \lim_{n\to\infty} \left\|\pi(g^{-1}h_n)f - \pi(g^{-1})f\right\|_{\uL^1(\uX)} = 0 \qquad \text{for } \um_G\text{-a.e. } g\in K.
  \end{equation*}
  By \cref{lem:bpcont}, it follows that also 
  \begin{equation*}
    \lim_{n\to\infty} \left\|\pi(g)\left[\pi(g^{-1}h_n)f - \pi(g^{-1})f\right]\right\|_{\uL^1(\uX)} = 0 \qquad \text{for } \um_G\text{-a.e. } g\in K.
  \end{equation*}
  Since 
  \begin{equation*}
    \left\|\pi(g)\left[\pi(g^{-1}h_n)f - \pi(g^{-1})f\right]\right\|_{\uL^\infty(\uX)} \leq 2\|f\|_{\uL^\infty(\uX)},
  \end{equation*}
  we may invoke the dominated convergence theorem to conclude that
  \begin{align*}
    \|\pi(h_n)f - f\|_{\uL^1(\uX)}
    \leq \int_K \left\|\pi(g)\left[\pi(g^{-1}h_n)f - \pi(g^{-1})f\right]\right\|_{\uL^1(\uX)} \ud\um_G(g)
    \xrightarrow[n\to\infty]{}0.
  \end{align*}
  Since the sequence $(h_n)_n$ was chosen arbitrarily, it follows that $\pi(\cdot)f$ is 
  $\uL^1$-continuous at $e\in G$. 
  If $h_n \to h$ is 
  a general convergent sequence in $G$, use the identity 
  \begin{equation*}
    \pi(h_n)f - \pi(h)f = \pi(h)\left(\pi(h^{-1}h_n)f - f\right)
  \end{equation*}
  and \cref{lem:bpcont} to conclude that $\pi(\cdot)f$ is $\uL^1$-continuous on all of $G$.
\end{proof}

\begin{lemma}\label{lem:bpcont}
  Let $\phi \colon \uX \to \uX$ be a nonsingular map. Then the Koopman operator
  \begin{equation*}
    T_\phi\colon \uL^\infty(\uX) \to \uL^\infty(\uX)
  \end{equation*}
  maps $\|\cdot\|_{\uL^1(\uX)}$-convergent sequences that are $\|\cdot\|_{\uL^\infty(\uX)}$-bounded
  to $\|\cdot\|_{\uL^1(\uX)}$-convergent sequences.
\end{lemma}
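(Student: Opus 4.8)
The plan is to exploit nonsingularity through the Radon--Nikodym derivative and to replace the unavailable pointwise convergence by a truncation argument. First I would reduce to a null sequence: if $(f_n)$ converges to $f$ in $\uL^1(\uX)$ and $\sup_n \|f_n\|_{\uL^\infty(\uX)} =: C < \infty$, then $f$ itself lies in the $\uL^\infty$-ball of radius $C$, because this ball is closed in $\uL^1(\uX)$ (an $\uL^1$-convergent sequence has an a.e.-convergent subsequence, and the pointwise bound $|f_n|\le C$ passes to the limit). Hence $T_\phi f$ is defined, and by linearity of $T_\phi$ it suffices to show that $g_n \defeq f_n - f \to 0$ in $\uL^1(\uX)$ with $\sup_n\|g_n\|_{\uL^\infty(\uX)} \le 2C$ forces $\|T_\phi g_n\|_{\uL^1(\uX)} \to 0$; indeed $T_\phi f_n - T_\phi f = T_\phi g_n$, so the image sequence then converges to $T_\phi f$.

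Next I would rewrite the target quantity as an integral against the pushforward measure. Since $\phi$ is nonsingular, $\phi_*\mu_X$ is absolutely continuous with respect to $\mu_X$, hence admits a density $w \defeq \frac{\ud\phi_*\mu_X}{\ud\mu_X} \in \uL^1(\uX)$ with $w \ge 0$. By the change-of-variables formula,
\begin{equation*}
  \|T_\phi g_n\|_{\uL^1(\uX)} = \int_X |g_n\circ\phi|\,\ud\mu_X = \int_X |g_n|\,\ud(\phi_*\mu_X) = \int_X |g_n|\,w\,\ud\mu_X .
\end{equation*}

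The main point---and the only genuine obstacle---is that $\uL^1$-convergence of $g_n$ does not give a.e.\ convergence, so one cannot feed $\int_X |g_n|\,w\,\ud\mu_X$ directly into the dominated convergence theorem. I would circumvent this by truncating the density $w$ at a level $M > 0$:
\begin{equation*}
  \int_X |g_n|\,w\,\ud\mu_X = \int_{\{w\le M\}} |g_n|\,w\,\ud\mu_X + \int_{\{w> M\}} |g_n|\,w\,\ud\mu_X \le M\,\|g_n\|_{\uL^1(\uX)} + 2C\int_{\{w> M\}} w\,\ud\mu_X .
\end{equation*}
Given $\varepsilon > 0$, the integrability of $w$ (equivalently, absolute continuity of the integral $A \mapsto \int_A w\,\ud\mu_X$) lets me first choose $M$ so large that $2C\int_{\{w>M\}} w\,\ud\mu_X < \varepsilon/2$; for that fixed $M$, the first term is at most $M\,\|g_n\|_{\uL^1(\uX)} < \varepsilon/2$ once $n$ is large, since $g_n \to 0$ in $\uL^1(\uX)$. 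This yields $\|T_\phi g_n\|_{\uL^1(\uX)} \to 0$ and finishes the proof. Note that no subsequence extraction or appeal to convergence in measure is required: the only ingredients are nonsingularity (to produce the single fixed density $w$) and the uniform $\uL^\infty$-bound (to control the tail over $\{w > M\}$).
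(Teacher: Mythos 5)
Your proof is correct, but it takes a genuinely different route from the paper. The paper argues softly: it invokes the subsequence principle (every subsequence of $(T_\phi f_n)_n$ must have a further subsequence converging to $T_\phi f$), passes to an almost everywhere convergent subsequence of $(f_n)_n$, uses nonsingularity only in the form \enquote{preimages of null sets are null} to get $T_\phi f_n \to T_\phi f$ almost everywhere along that subsequence, and finishes with dominated convergence. You instead make the nonsingularity quantitative by extracting the density $w = \frac{\ud \phi_*\mu}{\ud\mu} \in \uL^1(\uX)$, rewriting $\|T_\phi g_n\|_{\uL^1(\uX)} = \int_X |g_n|\, w\, \ud\mu$, and splitting over $\{w \le M\}$ and $\{w > M\}$; the uniform $\uL^\infty$-bound controls the tail and the $\uL^1$-convergence controls the bulk. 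Both arguments use the same direction of absolute continuity ($\phi_*\mu \ll \mu$), and your preliminary reduction (closedness of the $\uL^\infty$-ball in $\uL^1$, so that $T_\phi f$ makes sense) is sound. What your version buys is a direct, subsequence-free argument that in fact yields an explicit modulus of continuity: $\|T_\phi g\|_{\uL^1(\uX)} \le M\|g\|_{\uL^1(\uX)} + 2C\int_{\{w>M\}} w\,\ud\mu$ uniformly over the $2C$-ball, i.e., $T_\phi$ is uniformly $\uL^1$-continuous on $\uL^\infty$-bounded sets. The paper's version is shorter and avoids introducing the Radon--Nikodym derivative, but is purely qualitative. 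Either proof serves the role the lemma plays in \cref{thm:FEmagic}.
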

\begin{proof}
   Let $(f_n)_n$ be a sequence in $\uL^\infty(\uX)$ that is uniformly bounded and converges
   to $f\in \uL^\infty(\uX)$ in the $\uL^1$-norm. It suffices to show that every subsequence
   of $(T_\phi f_n)_n$ has a subsquence that converges to $T_\phi f$ in $\uL^1$. 
   Thus, replacing $(f_n)_n$ with a subsequence, we may 
   assume that $f_n \to f$ almost everywhere. Since $\phi$ is nonsingular, it follows that 
   $T_\phi f_n \to T_\phi f$ almost everywhere and the uniform boundedness assumption 
   combined with the dominated convergence theorem yields the claim.
\end{proof}

The following lemma is certainly not new and merely included for the sake of completeness.

\begin{lemma}\label{lem:Elrrep}
  Let $G$ be a locally compact group, $E$ a Banach space and 
  fix a left-invariant Haar measure $\mathrm{m}$ on $G$.
  Let $p\in[1,\infty)$.
  \begin{enumerate}[(i)]
    \item The map 
    \begin{align*}
      L\colon G \to \mathscr{L}\left(\uL^p(G, \um; E)\right), \quad (L_hf)(g) \defeq f(h^{-1}g)
    \end{align*}
    is a strongly continuous representation of $G$.
    \item The map 
    \begin{align*}
      R\colon G \to \mathscr{L}\left(\uL^p(G, \um; E)\right), \quad (R_hf)(g) \defeq f(gh)
    \end{align*}
    is a strongly continuous representation of $G$ (even though
    the measure is only left-invariant).
  \end{enumerate}
  We call these representations the $E$-left-regular and $E$-right-regular
  representations of $G$.
\end{lemma}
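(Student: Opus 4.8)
The plan is to prove strong continuity of both representations; the algebraic homomorphism property is immediate in each case (for instance $(L_hL_kf)(g)=f(k^{-1}h^{-1}g)=(L_{hk}f)(g)$, and likewise $R_hR_k=R_{hk}$), so the entire content lies in the continuity of $h\mapsto L_hf$ and $h\mapsto R_hf$ for fixed $f\in\uL^p(G,\um;E)$. First I would record the relevant norm bounds. Left-invariance of $\um$ gives $\|L_hf\|_p=\|f\|_p$, so each $L_h$ is an isometry; for the right translation a change of variables picks up the modular function $\Delta$ of $G$, yielding $\|R_hf\|_p=\Delta(h)^{-1/p}\|f\|_p$, and since $\Delta$ is continuous with $\Delta(e)=1$ the operator norms $\|R_h\|$ stay bounded on any compact neighbourhood of $e$. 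These bounds reduce continuity everywhere to continuity at the identity: writing $L_hf-L_{h_0}f=L_{h_0}(L_{h_0^{-1}h}f-f)$ and $R_hf-R_{h_0}f=R_{h_0}(R_{h_0^{-1}h}f-f)$, continuity at $h_0$ follows from continuity at $e$ together with the (local) boundedness of the operator norms.

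Second, I would set up a density-and-approximation scheme. Since $p<\infty$, the space $\uC_{c}(G;E)$ of compactly supported continuous $E$-valued functions is dense in $\uL^p(G,\um;E)$ (approximate by $E$-valued simple functions, then use inner/outer regularity of the Radon measure $\um$ to replace indicators of finite-measure sets by compactly supported continuous scalar functions, and tensor with vectors of $E$). Given $f\in\uL^p(G,\um;E)$ and $\varepsilon>0$, choose $\tilde f\in\uC_{c}(G;E)$ with $\|f-\tilde f\|_p<\varepsilon$. Then for $h$ in a fixed compact neighbourhood $U$ of $e$ the triangle inequality gives $\|L_hf-f\|_p\le\|L_h\|\,\varepsilon+\|L_h\tilde f-\tilde f\|_p+\varepsilon$, and similarly for $R$; since $\|L_h\|=1$ and $\sup_{h\in U}\|R_h\|<\infty$, it suffices to prove the claim for $\tilde f\in\uC_{c}(G;E)$.

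Third, for $\tilde f\in\uC_{c}(G;E)$ with compact support $C$ I would exploit uniform continuity. For $h\in U$ the function $L_h\tilde f$ is supported in the fixed compact set $UC$, so $\|L_h\tilde f-\tilde f\|_p^p\le\um(UC)\cdot\sup_{g\in G}\|\tilde f(h^{-1}g)-\tilde f(g)\|_E^p$; because a compactly supported continuous function on a locally compact group is uniformly continuous for the left uniformity, the supremum tends to $0$ as $h\to e$, giving $\|L_h\tilde f-\tilde f\|_p\to0$. The argument for $R$ is identical after replacing the left uniformity by the right one and noting $\supp(R_h\tilde f)\subset CU^{-1}$. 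I expect the main obstacle to be purely the vector-valued bookkeeping: establishing the density of $\uC_{c}(G;E)$ in the Bochner space and the uniform continuity of $E$-valued compactly supported functions, both of which are standard but must be carried out in the Banach-space-valued setting; once these are in place, everything reduces to the scalar computations above.
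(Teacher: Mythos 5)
Your proposal is correct and follows essentially the same route as the paper: reduce to $\tilde f\in\uC_\uc(G;E)$ by density in the Bochner space, run a three-$\varepsilon$ argument using the (locally bounded) operator norms of $L_h$ and $R_h$ (with the modular function $\Delta$ controlling $\|R_h\|$), and handle the compactly supported continuous case via uniform continuity. Your write-up is in fact slightly more careful than the paper's at two points --- the explicit reduction of continuity at arbitrary $h_0$ to continuity at $e$, and the use of the genuine $\uL^p$ triangle inequality rather than summing $p$-th powers --- but the underlying argument is identical.
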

\begin{proof}
  We only need to show that $L$ and $R$ are strongly continuous, so
  pick $f\in\uL^p(G, \um; E)$.
  We cannot directly invoke the dominated convergence theorem to
  conclude that  
  \begin{align*}
    \lim_{h\to0} \int_G \|f(h^{-1}g)-f(g)\|^p\dm(g)
    = \lim_{h\to 0} \|L_hf - f\|_{\uL^p}^p
    = 0
  \end{align*}
  since 
  the function $f$ is not continuous. But note that the space 
  $\uC_\uc(G, E)$ of continuous, compactly supported $E$-valued 
  functions on $G$ is dense in $\uL^p(G,\um; E)$: To see this,
  it suffices to verify that for every measurable set $A \subset G$
  of finite measure and every $e\in E$, the function $e\1_A$ 
  can be approximated by $\uC_\uc(G,E)$ which follows since $\1_A$ 
  can be approximated in $\uL^p(G,\um_G)$ by functions in $\uC_\uc(G)$.
  
  Now, pick $\epsilon> 0$
  and a function $f_\epsilon \in\uC_\uc(G, E)$ such that 
  $\|f - f_\epsilon\|_{\uL^p}^p \leq \epsilon$ and hence 
  \begin{align*}
    \limsup_{h\to 0} \int_G \|f(h^{-1}g) - f(g)\|^p\dg 
    &\leq \limsup_{h\to 0} \int_G \|f(h^{-1}g) - f_\epsilon(h^{-1}g)\|^p\dg \\
    & \qquad +\limsup_{h\to 0} \int_G \|f_\epsilon(h^{-1}g) - f_\epsilon(g)\|^p\dg \\
    & \qquad +\limsup_{h\to 0} \int_G \|f_\epsilon(g) - f(g)\|^p\dg \\
    & \leq \epsilon + 0 + \epsilon \\
    &= 2\epsilon.
  \end{align*}
  Since $\epsilon > 0 $ was arbitrary, $L$ is strongly continuous. As for
  $R$, note that even though $R_g$ is not an isometry for all $g\in G$ in general, 
  $R_g$ still is a well-defined operator on $\uL^p(G, \um; E)$ of norm 
  $\leq \Delta(g^{-1})^{\nicefrac{1}{p}}$ by elementary properties of the modular
  function $\Delta\colon G\to \R_{>0}$. Now the proof is completely analogous to 
  that for $L$, except that the modular function appears in passing in the estimate
  \begin{align*}
    \limsup_{h\to 0} \int_G \|f(gh) - f_\epsilon(gh)\|^p\dg
    \leq \limsup_{h\to 0}\Delta\big(h^{-1}\big)\int_G \|f(g) - f_\epsilon(g)\|^p\dg 
    \leq \epsilon
  \end{align*}
  but disppears again since it is continuous.
\end{proof}

We are now ready to discuss the construction of topological models. Note that our topological
models not merely represent a measurable action on an abstract probability space as a measurable
action on a compact metric probability space. Rather, we show that nonsingular actions by locally compact
second countable groups always can be represented as \emph{continuous} actions on compact metric spaces.

\begin{construction}[Topological Models]\label{factor-algebra-corr}
  Let $G \curvearrowright (X, \mu)$ be a nonsingular action and $\pi\colon (X, \mu) \to (Y, \nu)$ 
  a factor map. Then $\mathcal{A}_\pi \defeq T_\pi(\uL^\infty(Y, \nu))$ defines a $G$-invariant
  unital $\uC^*$-subalgebra of $\uL^\infty(X, \mu)$. Conversely, if $\mathcal{A} \subset \uL^\infty(X, \mu)$
  is a $G$-invariant unital $\uC^*$-subalgebra of $\uL^\infty(X, \mu)$, then by Gelfand's representation 
  theorem there exist a compact space $K$ and a $\uC^*$-isomorphism $\Phi\colon \mathcal{A} \to \uC(K)$, 
  see \cite[Section 1.4]{Dixmier1977} or \cite[Section 4.4]{EFHN2015}. Let $\nu \defeq \Phi_*\mu\in\uC(K)'$ 
  be the linear form 
  that $\mu$ induces on $\uC(K)$ via $\Phi$. By the Riesz--Markov--Kakutani representation theorem,
  we can identify $\nu$ with a unique probability measure on $K$. It is easy to verify that the measure 
  $\nu$ must have full support since the corresponding linear form is strictly positive, i.e., 
  $\langle f, \nu \rangle > 0$ for every nonzero $0 \leq f \in\uC(K)$. Since $\Phi$ preserves the measures, 
  it is an $\uL^1$-isometry and thus we may extend it as $\Phi\colon \uL^1(X,\mu) \to \uL^1(K, \nu)$.
  
  Moreover, for every $g\in G$,
  let $S_g\colon \uC(K) \to \uC(K)$ be the $\uC^*$-automorphism on $\uC(K)$ induced by $T_g$ via $\Phi$.
  Every $\uC^*$-automorphism $S$ of $\uC(K)$ is of the form $S = T_\phi$ for a uniquely determined 
  homeomorphism $\phi\colon K\to K$ and therefore the operators $(S_g)_{g\in G}$ are the Koopman operators
  associated with a uniquely determined continuous action $G \curvearrowright K$. To see that $\nu$ is 
  nonsingular with respect to this $G$-action, we show that for every $g\in G$ there is an $h_g \in \uL^1(K, \nu)$
  such that $g_*\nu = h_g\nu$. To that end, denote for every $g\in G$ by $\xi_g\in\uL^1(X,\mu)$ the 
  unique function with $g_*\mu = \xi_g\mu$ that exists by virtue of the Radon--Nikodym theorem. If we denote
  by $\E_{\mathcal{A}}\colon \uL^1(X,\mu) \to \overline{\mathcal{A}}$ the conditional expectation
  onto the $\uL^1$-closure of $\mathcal{A}$, then for every $f\in\uC(K)$
  \begin{align*}
    \int_K S_g f \dnu 
    &= \int_X T_g \Phi^{-1}(f) \dmu 
    = \int_X \Phi^{-1}(f) \xi_g \dmu \\
    &= \int_X \Phi^{-1}(f) \E_{\mathcal{A}}(\xi_g)\dmu 
    = \int_K f \underbrace{\Phi(\E_{\mathcal{A}}(\xi_g))}_{\eqdef h_g} \dnu.
  \end{align*}
  Thus, for every $g\in G$ there is a unique function $h_g \in \uL^1(K, \nu)$ with $g_*\nu = h_g\nu$
  which shows that the $G$-action on $(K,\nu)$ is nonsingular. 
  
  Finally, we discuss the existence of a point factor 
  map $\pi\colon (X,\mu) \to (K, \nu)$:
  If $\uY$ and $\uZ$ are standard probability spaces, an operator 
  $T\colon \uL^1(\uY) \to \uL^1(\uZ)$ is induced by an (almost everywhere uniquely determined) 
  measure-preserving measurable map $\phi \colon \uZ \to \uY$ if and only if 
  \begin{enumerate}[1)]
   \item $T\1 = \1$ and $\int_\uZ Tf \dmu_\uZ = \int_\uY f \dmu_\uY$ for all $f\in \uL^1(\uY)$,
   \item $T|f| = |Tf|$ for all $f\in \uL^1(\uY)$,
  \end{enumerate}
  see von Neumann's theorem \cite[Proposition 7.19, Theorem 7.20]{EFHN2015}.
  Now, if $\mathcal{A} \cong \uC(K)$ is a separable $\uC^*$-algebra, then $K$ is a compact 
  metrizable space and $(K,\nu)$ a standard probability space, so that von Neumann's theorem 
  shows that $\Phi^{-1}\colon \uL^1(K, \nu) \to \uL^1(X,\mu)$ is induced by a measure-preserving
  measurable map $\pi\colon (X,\mu)\to (K, \nu)$. It is straightforward to verify that $\pi$ 
  intertwines the $G$-actions and so $\pi$ is a factor map. Thus, if we say that to 
  $G$-invariant $\uC^*$-subalgebras $\mathcal{A},\mathcal{B} \subset \uL^\infty(X,\mu)$ are 
  \textbf{equivalent} if they have the same $\uL^1$-closure, then separable $\mathcal{A}$ and 
  $\mathcal{B}$ are equivalent if and only if there is a point isomorphism between the corresponding
  topological models.
  
  This almost completes the correspondence 
  between factors and subalgebras, except for one problem: not every $\uC^*$-subalgebra 
  of $\uL^\infty(X,\mu)$ is separable. Therefore, we would like to know that for every $G$-invariant 
  $\uC^*$-subalgebra of $\uL^\infty(X,\mu)$, there is an equivalent $G$-invariant $\uC^*$-subalgebra
  that is separable with respect to $\|\cdot\|_{\uL^\infty(X,\mu)}$ to conclude that we can always
  construct compact metric models.
  
  Let $\mathcal{A} \subset \uL^\infty(X,\mu)$ be a $G$-invariant $\uC^*$-subalgebra. By
  taking a topological model $G \curvearrowright (K,\nu)$ associated to 
  $\mathcal{A}$ and regarding $\mathcal{A}$ as embedded in $\uL^\infty(K,\nu)$, we may assume
  that $\mathcal{A}$ is dense in $\uL^1(X,\mu)$.
  Since $(X,\mu)$ is a separable measure space,
  $\uL^1(X,\mu)$ is separable. It is not true that $G$ acts strongly continuously on $\uL^\infty(X,\mu)$
  with respect to the norm $\|\cdot\|_{\uL^\infty(\uX)}$ but by 
  \cref{lem:uniformcont} below, 
  the set
  \begin{equation*}
    \mathcal{M} \defeq \left\{ f\in \uL^\infty(\uX) \mmid g \mapsto T_{g^{-1}}f \text{ is } \|\cdot\|_{\uL^\infty(\uX)}\text{-continuous}\right\}
  \end{equation*}
  is a $\uC^*$-subalgebra of $\uL^\infty(\uX)$ that is dense in $\uL^1(\uX)$. Thus,
  if $S \subset G$ is a countable dense subset, then we can find a separable $\uC^*$-subalgebra
  $\mathcal{A}' \subset \mathcal{M}$ that is $S$-invariant and dense in $\uL^1(\uX)$. By the 
  strong continuity, this subalgebra must then also be $G$-invariant.
  Thus, we have shown that $\mathcal{A}$ has an equivalent subalgebra $\mathcal{A}'$ that 
  is separable. 
  There is no canonical choice
  for this subalgebra but if $\mathcal{B}\subset \uL^\infty(X,\mu)$
  is another such subalgebra and $\pi'\colon (X,\mu) \to (K', \nu')$ is the corresponding factor 
  map, then von Neumann's theorem from above shows that there is a unique 
  essentially invertible measure-preserving $G$-equivariant map $\phi\colon (K,\nu) \to (K',\nu')$ such that 
  the following diagram commutes:
  \begin{equation*}
   \xymatrix{
    & (X,\mu) \ar[ld]_{\pi} \ar[rd]^\pi & \\
    (K,\nu) \ar[rr]^{\phi} & & (K',\nu')
   }
  \end{equation*}
  Thus, for every unital $G$-invariant $\uC^*$-subalgebra 
  $\mathcal{A} \subset \uL^\infty(X,\mu)$ there is (up to isomorphy) a unique 
  standard probability factor associated to it.
  
  In the special case that $\mathcal{A} \subset \uL^\infty(X, \mu)$ is a unital 
  $G$-invariant $\uC^*$-subalgebra that is $\|\cdot\|_{\uL^1}$-dense in $\uL^1(X,\mu)$,
  the isomorphism $\Phi\colon \mathcal{A} \to \uC(K)$ above extends to an isometric isomorphism
  $\Phi\colon \uL^1(X, \mu) \to \uL^1(K,\nu)$ and in this case we call the action 
  $G \curvearrowright(K, \nu)$ a \textbf{topological model} for $G\curvearrowright (X, \mu)$.\footnote{Note that $\mathcal{A}$ need \emph{not} be separable, i.e., $K$ need not be metrizable.
  We will mostly work with metrizable models but shall need this flexibility occasionally.}
  As in the discussion of factors, one can also associate an (up to isomorphy) unique 
  \textbf{standard probability model} $(K',\nu')$ to $\mathcal{A}$. Since $(X,\mu)$ is a standard probability
  space, the isomorphism $\Phi\colon \uL^1(X,\mu) \to \uL^1(K',\nu')$ then 
  induces a point isomorphism $\phi\colon(K',\nu') \to (X,\mu)$ by von Neumann's theorem.
  In what follows, we will always clarify whether we talk of the \enquote{topological model} or the \enquote{standard probability model} associated to a $\uC^*$-algebra $\mathcal{A} \subset \uL^\infty(X,\mu)$ by 
  using the corresponding term (though almost all models will be standard). If $\pi\colon \uX \to \uY$ is an extension of nonsingular $G$-actions
  and $\mathcal{A} \subset \uL^\infty(\uX)$ and $\mathcal{B} \subset \uL^\infty(\uY)$ are $\uL^1$-dense unital 
  $G$-invariant $\uC^*$-subalgebras with $T_\pi(\mathcal{B}) \subset \mathcal{A}$, $T_\pi$ induces 
  a canonical extension $q\colon (K, \mu) \to (L,\nu)$ between the respective standard/topological models and we 
  call such an extension $q\colon (K, \mu) \to (L,\nu)$ a \textbf{topological model for the extension} 
  $\pi\colon \uX\to\uY$. Finally, note that one can also talk about topological models without dynamics 
  by assuming that the action of $G$ is trivial in the above discussion.
\end{construction}

\begin{lemma}\label{lem:uniformcont}
  Let $G$ be a locally compact second countable group  and $G \curvearrowright \uX$ a nonsingular 
  action on a probability space. Then 
  \begin{equation*}
    \mathcal{M} \defeq \left\{ f\in \uL^\infty(\uX) \mmid g \mapsto T_{g^{-1}}f \text{ is } \|\cdot\|_{\uL^\infty(\uX)}\text{-continuous}\right\}
  \end{equation*}
  is a $\uC^*$-subalgebra of $\uL^\infty(\uX)$ that is dense in $\uL^1(\uX)$.
\end{lemma}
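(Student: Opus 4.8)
The plan is to treat the two assertions separately, the algebra structure being routine and the $\uL^1$-density the substantive point. To see that $\mathcal{M}$ is a $\uC^*$-subalgebra, observe first that nonsingularity makes each Koopman operator $T_{g^{-1}}$ a $*$-automorphism of $\uL^\infty(\uX)$ that is isometric for $\|\cdot\|_{\uL^\infty(\uX)}$ (null sets are preserved, hence so are essential suprema). Consequently, if $g\mapsto T_{g^{-1}}f$ and $g\mapsto T_{g^{-1}}h$ are $\|\cdot\|_{\uL^\infty(\uX)}$-continuous, then so are the orbit maps of $f+\lambda h$, of $\overline{f}$, and, using $T_{g^{-1}}(fh)=(T_{g^{-1}}f)(T_{g^{-1}}h)$ together with uniform boundedness of the orbits, of $fh$. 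Thus $\mathcal{M}$ is a unital $*$-subalgebra. It is also $\|\cdot\|_{\uL^\infty(\uX)}$-closed by a standard $3\varepsilon$-argument: if $f_n\to f$ uniformly with $f_n\in\mathcal{M}$, then
\begin{equation*}
  \|T_{g^{-1}}f-T_{h^{-1}}f\|_{\uL^\infty(\uX)}\le 2\|f-f_n\|_{\uL^\infty(\uX)}+\|T_{g^{-1}}f_n-T_{h^{-1}}f_n\|_{\uL^\infty(\uX)},
\end{equation*}
and the continuity of the orbit of $f_n$ controls the last term, so $f\in\mathcal{M}$. Hence $\mathcal{M}$ is a $\uC^*$-subalgebra.

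For density I would smooth the orbit by convolving against continuous compactly supported functions, upgrading the $\uL^1$-strong continuity of \cref{thm:FEmagic} to $\uL^\infty$-continuity. Given $f\in\uL^\infty(\uX)$ and $\phi\in\uC_\uc(G)$, set
\begin{equation*}
  A_\phi f\defeq\int_G\phi(g)\,T_{g^{-1}}f\;\ud\um_G(g),
\end{equation*}
interpreted as a Bochner integral in $\uL^1(\uX)$ (legitimate by \cref{thm:FEmagic}). Since $\|T_{g^{-1}}f\|_{\uL^\infty(\uX)}=\|f\|_{\uL^\infty(\uX)}$, the same expression, tested against $\uL^1(\uX)$, defines an element of $\uL^\infty(\uX)=\uL^1(\uX)'$ with $\|A_\psi f\|_{\uL^\infty(\uX)}\le\|\psi\|_{\uL^1(G)}\|f\|_{\uL^\infty(\uX)}$ for every $\psi\in\uC_\uc(G)$.

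The key step is the intertwining relation $T_{h^{-1}}A_\phi f=A_{L_h\phi}f$, where $(L_h\phi)(g)=\phi(h^{-1}g)$. Because $T_{h^{-1}}$ is not bounded on $\uL^1(\uX)$ one cannot simply pull it through the Bochner integral; instead I would test against $k\in\uL^1(\uX)$, use that $T_{h^{-1}}$ is weak*-continuous on $\uL^\infty(\uX)$ with pre-adjoint the transfer operator on $\uL^1(\uX)$, apply $T_{h^{-1}}T_{g^{-1}}=T_{(hg)^{-1}}$, and substitute $g\mapsto hg$ via left-invariance of $\um_G$. This relation gives
\begin{equation*}
  \|T_{h^{-1}}A_\phi f-A_\phi f\|_{\uL^\infty(\uX)}=\|A_{L_h\phi-\phi}f\|_{\uL^\infty(\uX)}\le\|L_h\phi-\phi\|_{\uL^1(G)}\,\|f\|_{\uL^\infty(\uX)},
\end{equation*}
and since left translation is $\uL^1(G)$-continuous (the scalar left-regular representation, \cref{lem:Elrrep}), the right-hand side tends to $0$ as $h\to e$; the same relation yields continuity of $h\mapsto T_{h^{-1}}A_\phi f$ at every point, so $A_\phi f\in\mathcal{M}$. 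Finally, choosing an approximate identity $\phi_n\in\uC_\uc(G)$ with $\phi_n\ge0$, $\int_G\phi_n\,\ud\um_G=1$ and shrinking supports, \cref{thm:FEmagic} gives
\begin{equation*}
  \|A_{\phi_n}f-f\|_{\uL^1(\uX)}\le\int_G\phi_n(g)\,\|T_{g^{-1}}f-f\|_{\uL^1(\uX)}\,\ud\um_G(g)\xrightarrow[n\to\infty]{}0,
\end{equation*}
so every $f\in\uL^\infty(\uX)$, and hence, by $\uL^1$-density of $\uL^\infty(\uX)$ in $\uL^1(\uX)$, every element of $\uL^1(\uX)$, is an $\uL^1$-limit of elements of $\mathcal{M}$.

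The main obstacle I anticipate is precisely the justification of the intertwining relation $T_{h^{-1}}A_\phi f=A_{L_h\phi}f$: the nonsingular Koopman operator lives only on $\uL^\infty(\uX)$ and is not an $\uL^1$-operator, so interchanging $T_{h^{-1}}$ with the integral must be carried out through the weak* pairing and the pre-adjoint, rather than by a naive Fubini or Bochner argument. Everything else follows directly from \cref{thm:FEmagic} and the strong continuity of the left-regular representation.
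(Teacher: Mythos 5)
Your proof is correct and follows essentially the same route as the paper: smoothing the orbit map by convolving against an approximate identity in $\uC_\uc(G)$, deducing membership in $\mathcal{M}$ from the strong continuity of the regular representation on $\uL^1(G)$ (you use the left-regular representation with left Haar measure, the paper the right-regular one with right Haar measure --- a cosmetic difference), and obtaining $\uL^1$-density from \cref{thm:FEmagic}. Your explicit justification of the intertwining relation via the weak* pairing and the pre-adjoint transfer operator is a welcome added care at a point the paper passes over quickly.
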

\begin{proof}
  It is elementary to check that $\mathcal{M}$ is a $\uC^*$-subalgebra; the nontrivial
  part is denseness in $\uL^1(\uX)$.
  To prove this, we will borrow ideas from \cite[Theorem 3.3]{EGK2018} where this was 
  proven for $G = \R$ and measure-preserving actions. Let $(U_n)_n$ be a decreasing neighborhood
  base of $e\in G$ and $(\phi_n)_n$ be a sequence 
  in $\uC_\uc(G)$ with $\phi_n \geq 0$, $\int_G \phi_n \,\ud\um_G = 1$, and $\phi_n \subset U_n$ for every $n\in \N$.
  For notational simplicity, we exceptionally fix a right- instead of left-invariant Haar measure $\um_G$ (the otherwise
  appearing modular function is immaterial to the argument).
  Then for $f\in \uL^\infty(\uX)$ and any sequence $(h_n)_n$ in $G$ that converges to $e\in G$,
  \begin{align*}
     &\left|T_{h_n^{-1}}\left(\int_{U_n} T_{g^{-1}}f \phi_n(g)\,\ud\um_G(g)\right) - \int_{U_n} T_{g^{-1}}f \phi_n(g)\,\ud\um_G(g)\right| \\ 
     &\quad\leq \int_G |T_{g^{-1}}f|\cdot \big|\phi_n\big(gh_n^{-1}\big) - \phi_n(g)\big|\,\ud\um_G(g) \\
     &\quad\leq \|f\|_{\uL^\infty(\uX)}\big\|R_{h_n^{-1}}\phi_n - \phi_n\big\|_{\uL^1(G,\um_G)} \to 0.
  \end{align*}
  This shows that $\int_{U_n} T_{g^{-1}}f \phi_n(g)\,\ud\um_G(g) \in \mathcal{M}$ for every $n\in\N$. 
  Moreover,
  \begin{equation*}
    \left\|f - \int_{U_n} T_{g^{-1}}f \phi_n(g)\,\ud\um_G(g) \right\|_{\uL^1(\uX)}
    \leq \int_{U_n} \|f-T_{g^{-1}}f\|_{\uL^1(\uX)} \phi_n(g)\,\ud\um_G(g) \to 0
  \end{equation*}
  where we used the continuity ensured by \cref{thm:FEmagic} and the dominated 
  convergence theorem. Thus, $\mathcal{M}$ is dense in $\uL^1(\uX)$.
\end{proof}

\begin{lemma}\label{lem:flowcont}
  Let $G$ be a locally compact group, $K$ a compact metric space, and $G \curvearrowright K$ an
  action by continuous maps. Then this is a continuous action of $G$ if and only if the 
  induced action on $\uC(K)$ is strongly continuous.
\end{lemma}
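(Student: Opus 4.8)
The plan is to prove the two implications separately, writing $\phi_g \colon K \to K$, $x \mapsto gx$, for the homeomorphism implemented by $g$ and recalling that the induced action on $\uC(K)$ is the Koopman representation $g \mapsto T_{\phi_{g^{-1}}}$. Since inversion is a homeomorphism of $G$, strong continuity of $g \mapsto T_{\phi_{g^{-1}}}f$ is equivalent to continuity of $g \mapsto T_{\phi_g}f$, and I use whichever is convenient. Throughout I exploit the basic fact that on a compact Hausdorff space $K$ the original topology coincides with the initial topology induced by $\uC(K)$; concretely, a net $(y_\lambda)$ converges to $y$ in $K$ as soon as $f(y_\lambda) \to f(y)$ for every $f \in \uC(K)$, because $\uC(K)$ separates points and $K$ is compact.

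For the forward implication, assume the action $G \times K \to K$ is jointly continuous. Fix $f \in \uC(K)$; then $F \colon G \times K \to \C$, $F(g,x) \defeq f(gx)$, is continuous as a composition of continuous maps. To show that $g \mapsto T_{\phi_g}f$ is $\|\cdot\|_\infty$-continuous at an arbitrary $g_0 \in G$, I would upgrade the pointwise (in $x$) continuity of $F(\cdot, x)$ to uniformity in $x$ by a compactness (tube-lemma) argument: given $\epsilon > 0$, for each $x \in K$ joint continuity of $F$ at $(g_0, x)$ yields open neighborhoods $U_x \ni g_0$ and $V_x \ni x$ on which $|F - F(g_0,x)| < \epsilon/2$; a finite subcover $K = \bigcup_{i=1}^n V_{x_i}$ then produces the neighborhood $U \defeq \bigcap_{i=1}^n U_{x_i}$ of $g_0$ with the property that $|f(gx) - f(g_0 x)| < \epsilon$ for all $g \in U$ and all $x \in K$, that is, $\|T_{\phi_g}f - T_{\phi_{g_0}}f\|_\infty \le \epsilon$. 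This is the only step requiring genuine care, and it is exactly where compactness of $K$ enters.

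For the reverse implication, assume $g \mapsto T_{\phi_g}f$ is $\|\cdot\|_\infty$-continuous for every $f \in \uC(K)$, and let $(g_\lambda, x_\lambda) \to (g,x)$ be a net in $G \times K$. By the separation fact above it suffices to show $f(g_\lambda x_\lambda) \to f(gx)$ for each $f \in \uC(K)$. I would split
\begin{equation*}
  f(g_\lambda x_\lambda) - f(gx)
  = \big[(T_{\phi_{g_\lambda}}f)(x_\lambda) - (T_{\phi_g}f)(x_\lambda)\big]
  + \big[(T_{\phi_g}f)(x_\lambda) - (T_{\phi_g}f)(x)\big].
\end{equation*}
The first bracket is bounded in modulus by $\|T_{\phi_{g_\lambda}}f - T_{\phi_g}f\|_\infty$, which tends to $0$ by the assumed strong continuity; the second tends to $0$ because $T_{\phi_g}f = f \circ \phi_g \in \uC(K)$ and $x_\lambda \to x$. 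Hence $f(g_\lambda x_\lambda) \to f(gx)$ for all $f$, so $g_\lambda x_\lambda \to gx$ in $K$, proving joint continuity.

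I expect the main obstacle to be the uniformization in the forward direction --- passing from continuity of $x \mapsto f(gx)$ for each fixed $g$ to uniform (in $x$) control of $\|T_{\phi_g}f - T_{\phi_{g_0}}f\|_\infty$ as $g \to g_0$ --- whereas the reverse direction is a routine $\epsilon/2$-type splitting once one records that the $\uC(K)$-weak topology on a compact Hausdorff space is its own topology. Everything works verbatim with nets, so no second countability or metrizability of $G$ is needed; if one prefers, since $G$ is lcsc and $K$ is metric, both directions may instead be phrased with sequences.
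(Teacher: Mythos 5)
Your proof is correct and fills in exactly the two verifications that the paper's proof merely asserts to be "easy to verify" and "straight-forward" (the paper's mention of Urysohn functions corresponds to your use of the fact that $\uC(K)$ separates points and hence induces the topology of $K$). The tube-lemma uniformization in the forward direction and the $\epsilon/2$-splitting in the reverse direction are the standard arguments the paper has in mind, so this is essentially the same approach, just written out in full.
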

\begin{proof}
  If the action of $G$ is continuous, the strong continuity is easy to verify. Conversely,
  if the action of $G$ on $\uC(K)$ is strongly continuous, then picking appropriate Urysohn
  functions, the continuity of the action is straight-forward to verify.
\end{proof}

\section{The Furstenberg--Zimmer structure theorem for relatively measure-preserving extensions} \label{sec:nonsingularFZ}

As it turns out, the Furstenberg--Zimmer structure theorem in its \enquote{relative} 
formulation \cref{thm:classicalFZ} for extensions does not require the involved systems to be 
measure-preserving. In this section, we show that virtually the same arguments apply if 
the extension is a \emph{relatively measure-preserving} extension of nonsingular systems.
Such extensions naturally occur, for example, in the structure theorem for stationary actions 
\cite[Theorem 4.3]{FurstenbergGlasner2010} which asserts that every stationary action 
$(G,m) \curvearrowright (X,\mu)$ is, modulo an $m$-proximal extension, a measure-preserving 
extension of an $m$-proximal system. (See the next section for definitions of these terms.)
Another example comes from the fact that every stationary system is a relatively measure-preserving
extension of its \emph{Radon--Nikodym factor}, the smallest factor with the same Furstenberg 
entropy (see \cite[Section 1.2]{NevoZimmer2000}).\footnote{Note, however, that in both cases,
the authors have stronger restrictions on their measured groups $(G,m)$ than we do.}
This \enquote{relative} version of the Furstenberg--Zimmer theorem is of independent interest
but will also be instrumental in the proof of the stationary structure theorem, see the 
proof of \cref{thm:relstatdichotomy}.

\begin{definition}
  Let $\pi\colon (X, \mu) \to (Y, \nu)$ be an extension of nonsingular $G$-actions. 
  \begin{enumerate}[(i)]
   \item The 
    extension $\pi$ is called a
    \textbf{relatively measure-preserving extension} if the disintegration $(\mu_y)_{y\in Y}$
    of $\mu$ w.r.t.\ $\pi$ is $G$-equivariant, i.e., for every $g\in G$ one has $g_*\mu_{y} = \mu_{gy}$ for $\nu$-a.e.\ $y\in Y$.
   \item The extension $\pi$ is called \textbf{weakly mixing} if
   for every ergodic relatively measure-preserving extension $\rho\colon(Z, \zeta) \to (Y, \nu)$ the relatively independent 
   joining $(X\times_Y Z, \mu \otimes_Y \zeta) \to (Y,\nu)$ is an ergodic extension. The system 
   $(X,\mu)$ is called \textbf{weakly mixing} if the extension $(X,\mu) \to \mathrm{pt}$ is weakly mixing,
   i.e., if for every ergodic measure-preserving
    action $G \curvearrowright \uY$ the diagonal action $G \curvearrowright \uX\times\uY$
    is ergodic.
  \end{enumerate}
\end{definition}

\begin{remark}
  The definition of weakly mixing nonsingular actions (and extensions) might seem unexpected 
  because it appears asymmetric in that the systems $G\curvearrowright\uY$ are required to be measure-preserving.
  A more natural first attempt at a definition of weakly mixing nonsingular actions
  might be to require the ergodicity of the product action 
  $G \curvearrowright \uX\times\uX$, 
  called \textbf{double ergodicity} in \cite{GlasnerWeiss2016}. It is shown in \cite[Theorem 1.1]{GlasnerWeiss2016}
  that double ergodicity implies weak mixing but is generally 
  strictly stronger; see \cite[Proposition 6.1]{GlasnerWeiss2016} for an example
  that is weakly mixing but not doubly ergodic.
  
  In fact, \cite{GlasnerWeiss2016} gives an entire list of possible mixing properties
  but settles on the definition given above for weak mixing. The rationale behind this is that 
  the definition of weak mixing should still satisfy that weakly mixing systems are \enquote{orthogonal}
  to isometric systems. As we will see below, the Kronecker factor of a stationary
  action is \emph{always} measure-preserving. Thus, requiring the nonexistence of correlations
  with measure-preserving systems will characterize that the Kronecker factor 
  of a stationary action vanishes, whereas requiring the nonexistence of 
  correlations with the larger class of nonsingular actions (in particular, double ergodicity) 
  would be too strong to yield a characterization. Moreover, when working in the category of stationary 
  actions, an additional reason for 
  requiring measure-preservation in the multiplier property defining weak mixing is to 
  stay within this category: while the product of a stationary and an invariant measure 
  is always stationary, the product of two stationary measures is usually not (see \cite[Section 3]{FurstenbergGlasner2010} 
  for how to adapt the \enquote{product} to the stationary category in terms of joinings).
  Finally, note that the definition of weak mixing for nonsingular actions reduces to 
  the usual notion of weak mixing if the action is measure-preserving and that every 
  weakly mixing extension of nonsingular actions is necessarily ergodic.
\end{remark}

\begin{remark}\label{rem:mRIM}
  Recall that an extension of nonsingular actions is, per definition, a 
  measure-preserving map. However, it need not be a
  relatively measure-preserving extension. As a simple counterexample, let $\lambda$ denote Lebesgue 
  measure on $[0,1]$ and take 
  $(Y,\nu; \psi) \defeq ([0,1], \lambda; x\mapsto x)$, 
  $(X, \mu; \psi) \defeq ([0,1]^2, \lambda^2; (x,y) \mapsto (x, y^2))$, and 
  $\pi\colon X \to Y$, $(x,y) \mapsto x$.
  However, if 
  $\pi\colon (X,\mu) \to (Y,\nu)$ is a relatively measure-preserving extension from 
  a measure-preserving to a nonsingular $G$-action, then the action 
  $G \curvearrowright (X,\mu)$ must also be measure-preserving.
  
  The terminology for extensions with equivariant measure disintegrations is not 
  consistent in the literature. Sometimes a relatively measure-preserving extensions
  (despite being a measure-preserving map) is called a \enquote{measure-preserving factor map/extension}, 
  see \cite{FurstenbergGlasner2010}. We will not use the terminology of 
  \enquote{measure-preserving factor maps}
  to avoid this ambiguity. Rather, our definition is along the lines of 
  \cite[Definition 1.7]{NevoZimmer2000}
  who say that $(X,\mu)$ has \enquote{relatively $G$-invariant measure over $(Y,\nu)$}.

  Finally, observe that if $\pi\colon \uX \to \uZ$ is a relatively measure-preserving 
  extension of nonsingular $G$-actions, then the conditional $\uL^2$-space $\uL^2(\uX|\uZ)$ is 
  $G$-invariant.
\end{remark}

The following are useful characterizations of ergodic and relatively measure-preserving 
extensions that we shall make repeated use of.

\begin{lemma}\label{lem:rmp_char}
  Let $\pi\colon \uX \to \uZ$ be an extension of nonsingular $G$-actions.
  \begin{enumerate}
    \item The 
    extension is relatively measure-preserving if and only if $\uL^2(\uX|\uZ)$ is $G$-invariant 
    and the conditional
    expectation $\E_\uZ \colon \uL^2(\uX|\uZ) \to \uL^\infty(\uZ)$ is $G$-equivariant.
    \item The extension is ergodic if and only if every $G$-invariant 
    function $f \in \uL^2(\uX| \uZ)$ satisfies $f = \E_\uZ(f)\cdot \1_\uX$.
  \end{enumerate}
\end{lemma}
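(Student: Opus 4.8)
The plan is to prove both characterizations by working in the conditional $\uL^2$-space and relating the disintegration-based definitions to the conditional expectation operator $\E_\uZ$. Throughout I would pass to a topological model via \cref{factor-algebra-corr} so that $\pi\colon (X,\mu) \to (Z,\zeta)$ is a continuous extension of compact metric spaces and the disintegration $(\mu_z)_{z\in Z}$ from \cref{thm:disintegration} is available together with its equivariance properties recorded in \cref{rem:stationary_dis}.

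\textbf{Part (1).} The forward direction is the easier one: if the extension is relatively measure-preserving, then $g_*\mu_z = \mu_{gz}$ for $\nu$-a.e.\ $z$, and I would use this to verify directly that the defining condition $\E_\uZ(|f|^2) \in \uL^\infty(\uZ)$ characterizing membership in $\uL^2(\uX|\uZ)$ is preserved by each $T_g$, so that $\uL^2(\uX|\uZ)$ is $G$-invariant. For equivariance of $\E_\uZ$, the key computation is that for $f \in \uL^2(\uX|\uZ)$ one has $(\E_\uZ T_g f)(z) = \int_{X_z} f(g^{-1}x)\,\dmu_z(x) = \int_{X_{g^{-1}z}} f\,\dmu_{g^{-1}z} = (\E_\uZ f)(g^{-1}z) = (T_g \E_\uZ f)(z)$, where the middle equality is exactly the equivariance $g_*\mu_{g^{-1}z} = \mu_z$ of the disintegration. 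For the converse, I would reverse this: given that $\E_\uZ$ is $G$-equivariant on the $G$-invariant module $\uL^2(\uX|\uZ)$, I would test against products $f = (T_\pi \varphi)\psi$ with $\varphi \in \uL^\infty(\uZ)$ and $\psi$ ranging over a countable family dense enough to separate the fiber measures, and use the $\nu$-a.e.\ uniqueness clause of \cref{thm:disintegration} to conclude that $(g_*\mu_{g^{-1}z})_z$ and $(\mu_z)_z$ must be $\nu$-a.e.\ equal disintegrations of $\mu$, giving relative measure-preservation.

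\textbf{Part (2).} Here I would unwind the definition of ergodicity of an extension, namely that every a.e.\ $G$-invariant set $A \subset X$ agrees up to a nullset with $\pi^{-1}(B)$ for a $G$-invariant $B \subset Z$. For the forward implication, given a $G$-invariant $f \in \uL^2(\uX|\uZ)$, I would approximate $f$ by $G$-invariant indicator functions (via superlevel sets $\{\mathrm{Re}\, f > t\}$ and $\{\mathrm{Im}\, f > t\}$, which are themselves $G$-invariant sets), apply ergodicity to each to write them as pullbacks from $\uZ$, and deduce that $f$ itself is (a.e.) of the form $T_\pi h$ for some $h \in \uL^\infty(\uZ)$; since $\E_\uZ T_\pi h = h$ this forces $f = \E_\uZ(f)\cdot \1_\uX$. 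The converse runs the same argument backwards: given a $G$-invariant set $A$, its indicator $\1_A$ lies in $\uL^2(\uX|\uZ)$, is $G$-invariant, so by hypothesis equals $\E_\uZ(\1_A)\cdot\1_\uX$; since $\1_A$ is $\{0,1\}$-valued, the function $\E_\uZ(\1_A)$ must also be essentially $\{0,1\}$-valued, whence $B \defeq \{\E_\uZ(\1_A) = 1\}$ is a measurable subset of $Z$ with $\pi^{-1}(B) = A$ up to a nullset, and $G$-invariance of $B$ follows from equivariance.

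\textbf{Main obstacle.} The subtle point, and the step I expect to require the most care, is the converse of Part (1): deducing the pointwise (in $z$) equivariance of the disintegration from an operator-level statement about $\E_\uZ$. The difficulty is an interchange of the exceptional $\nu$-nullset over $z$ with the continuum of group elements $g \in G$. This is precisely where the continuity results of \cref{sec:topmodels}, especially \cref{thm:FEmagic} and the continuous topological model of \cref{factor-algebra-corr}, become essential: strong $\uL^1$-continuity of the $G$-action lets me restrict to a countable dense subset $S \subset G$, establish the equivariance identity $g_*\mu_{g^{-1}z} = \mu_z$ simultaneously for all $g \in S$ off a single $\nu$-nullset, and then extend to all of $G$ by continuity of $g \mapsto g_*\mu_{g^{-1}z}$ in the weak$^*$ topology. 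Without this countable-reduction-plus-continuity mechanism the $\nu$-a.e.\ qualifier cannot be controlled uniformly in $g$.
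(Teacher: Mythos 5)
Your proposal follows essentially the same route as the paper's proof: pass to a continuous topological model, relate $\E_\uZ T_g$ and $T_g \E_\uZ$ to the fiber measures via a countable dense family of test functions and the uniqueness clause of the disintegration theorem for part (1), and approximate $G$-invariant functions by $G$-invariant level sets for part (2). The one remark worth making is that the \enquote{main obstacle} you single out is not actually present: the definition of a relatively measure-preserving extension quantifies \enquote{for every $g\in G$, for $\nu$-a.e.\ $z$}, so the exceptional nullset is permitted to depend on $g$ and no uniformization over $G$ via \cref{thm:FEmagic} is required (your stronger uniform version is true but unnecessary here).
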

\begin{proof}
  We may assume for simplicity that $\uX = (X,\mu)$ and $\uZ = (Z,\zeta)$ are compact metric
  Borel probability spaces and that $\pi$ is continuous. First, let 
  $\pi\colon \uX \to \uZ$ be relatively measure-preserving. Then
  for $\zeta$-a.e.\ $z\in Z$ and every $f\in \uC(X)$ 
  \begin{align*}
    \E_\uZ(T_gf)(z) 
    = \int_{X_z} f(gx) \dmu_z(x)
    = \int_{X_{gz}} f(x) \dmu_{gz}(x)
    = (T_g\E_{\uZ}f)(z).
  \end{align*}
  Thus, $\E_\uZ T_g f = T_g\E_\uZ f$ for all $f\in \uC(X)$ and thus $\E_\uZ$ is $G$-equivariant. 
  Conversely, suppose $\E_\uZ$ is $G$-equivariant and let $\mathcal{F} \subset \uC(X)$ be a 
  countable dense subset. Then for every $f\in \uC(X)$ and $\zeta$-almost every $z\in Z$
  \begin{equation*}
   \int_{X_{gz}} f(x) \,\mathrm{d}g_*\mu_z(x)
   = \int_{X_z} f(gx) \dmu_z(x)
   = \E_\uZ(T_gf)(z) 
   = (T_g\E_{\uZ}f)(z)
   = \int_{X_{gz}} f(x) \dmu_{gz}(x).
  \end{equation*}
  Since $\mathcal{F}$ is dense in $\uC(X)$, its restriction to $X_z$ is also dense in $\uC(X_z)$
  for every $z\in Z$. Thus, for every $g\in G$ and $\zeta$-almost every $z\in Z$, $g_*\mu_z = \mu_{gz}$. 
  Therefore, the extension is relatively measure-preserving.
  
  Now, if every $G$-invariant function $f\in \uL^2(\uX|\uZ)$ satisfies $f = \E_\uZ(f)\cdot \1_\uX$,
  plugging in characteristic functions shows that the extension $\pi$ must be ergodic.
  Conversely, suppose the extension $\pi$ is ergodic and $f\in \uL^2(\uX|\uZ)$ is $G$-invariant.
  If $f = \1_A$ is a characteristic function, the conclusion $f = \E_\uZ(f)\cdot \1_\uX$ follows 
  swiftly from the definition of ergodic extensions. The general case follows since level sets 
  of a $G$-invariant function are $G$-invariant which allows to approximate $f$ via linear combinations
  of $G$-invariant
  characteristic functions. This concludes the proof.
\end{proof}

\begin{remark}
  If a relatively measure-preserving extension $\pi\colon \uX \to \uZ$ is not ergodic, there thus exists a nonzero $G$-invariant 
  function $f\in \uL^2(\uX|\uZ)$ such that $f \neq \E_\uZ(f) \cdot \1_\uX$. By a 
  cut-off
  argument and replacing $f$ by $f - \E_\uZ(f) \cdot \1_\uX$, one 
  may also arrange that 
  $f\in \uL^\infty(\uX)$ and $\E_\uZ(f) = 0$. (If the extension is not relatively measure-preserving,
  it is not guaranteed that $\E_\uZ(f)$ is $G$-invariant.)
  We will occasionally make 
  repeated use of this characterization 
  of nonergodic extensions.
\end{remark}

\begin{theorem}[Structure Theorem]\label{thm:nonsingularmainthm}
  Let $\pi\colon \uX \to \uZ$ be a relatively measure-preserving extension of nonsingular 
  $G$-actions. Then there are 
  a weakly mixing extension $\alpha\colon \uX \to \uY$ and a distal extension $\beta\colon \uY \to \uZ$ 
  such that the diagram
  \begin{equation*}
   \xymatrix{
    \uX \ar[rd]_-\alpha \ar[rr]^-\pi && \uZ \\
    & \uY \ar[ru]_-\beta &
   }
  \end{equation*}
  commutes.
\end{theorem}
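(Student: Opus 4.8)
The plan is to construct the intermediate system $\uY$ as the \emph{maximal distal extension} of $\uZ$ sitting inside $\uX$, and then show that the remaining extension $\alpha\colon\uX\to\uY$ is automatically weakly mixing by the relative Kronecker dichotomy. This mirrors the classical proof of \cref{thm:classicalFZ} but now in the nonsingular relatively measure-preserving setting, where the requisite dichotomy machinery of \cref{rem:ingredients} (the Hilbert--Schmidt isomorphism $\uL^2(\uX\times_\uZ\uX)\cong\HS(\uL^2(\uX|\uZ))$, the equivariance characterization, and the spectral decomposition over the Kaplansky--Hilbert module $\uL^2(\uX|\uZ)$) is available precisely because $\pi$ is relatively measure-preserving. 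Note that by \cref{rem:mRIM}, relative measure-preservation guarantees that $\uL^2(\uX|\uZ)$ is $G$-invariant, which is exactly what is needed for the Koopman operators $T_g$ to act on it and for the dichotomy arguments to go through.

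First I would set up the transfinite construction. Using the correspondence between factors and $G$-invariant unital $\uC^*$-subalgebras from \cref{factor-algebra-corr}, I would build an increasing transfinite chain of intermediate factors $\uZ = \uX_0 \to \uX_1 \to \cdots$ where at each successor step $\uX_{\eta}\to\uX_{\eta+1}$ is the \emph{maximal isometric extension} of $\uX_\eta$ inside $\uX$, realized concretely as the closed $G$-invariant $\uL^\infty(\uX_\eta)$-submodule of $\uL^\infty(\uX)$ generated by all finitely-generated $G$-invariant submodules (the relative Kronecker factor). At limit ordinals $\eta$, I take $\uX_\eta$ to be the factor corresponding to the $\uC^*$-algebra generated by $\bigcup_{\sigma<\eta}\uL^\infty(\uX_\sigma)$, which is the inverse limit $\lim_{\sigma<\eta}\uX_\sigma$. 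Since each $\uX_\eta$ is a factor of the standard probability space $\uX$, the corresponding subalgebras form an increasing chain inside the separable space $\uL^1(\uX)$; a cardinality argument then forces the chain to stabilize at some countable ordinal $\eta_0$, and I set $\uY \defeq \uX_{\eta_0}$. By construction $\beta\colon\uY\to\uZ$ is a distal extension.

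The key remaining point is that the top extension $\alpha\colon\uX\to\uY$ is weakly mixing. This is exactly where the relative Kronecker dichotomy \cref{thm:relkrodichotomy} enters: if $\alpha$ were not weakly mixing, the dichotomy would produce a nontrivial isometric intermediate extension of $\uY$ inside $\uX$, i.e., a nontrivial finite-rank $G$-invariant Hilbert--Schmidt homomorphism on $\uL^2(\uX|\uY)$ whose eigenmodule gives a finitely-generated $G$-invariant $\uL^\infty(\uY)$-submodule not contained in $\uL^\infty(\uY)$. But this would contradict the maximality that defines $\uY = \uX_{\eta_0}$ as the stabilized endpoint of the construction. Hence $\alpha$ is weakly mixing and the diagram commutes, since $\pi = \beta\circ\alpha$ holds by construction. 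One subtlety I would check carefully is that \cref{thm:relkrodichotomy} is stated for measure-preserving extensions; I need its relatively-measure-preserving analogue, which \cref{rem:ingredients} indicates follows from the same Kaplansky--Hilbert module arguments, since $\uL^2(\uX|\uY)$ is $G$-invariant whenever $\alpha$ is relatively measure-preserving.

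The main obstacle, and the step I expect to require the most care, is verifying that relative measure-preservation is inherited along the tower, so that the relative Kronecker dichotomy is legitimately applicable to $\alpha\colon\uX\to\uY$ rather than only to $\pi\colon\uX\to\uZ$. Concretely, I must show that the composition of relatively measure-preserving extensions is again relatively measure-preserving, and that each isometric (hence relatively measure-preserving) extension $\uX_\eta\to\uX_{\eta+1}$ assembles correctly with the passage to limits so that $\alpha$ is relatively measure-preserving over $\uY$. Using the characterization in \cref{lem:rmp_char} (that relative measure-preservation is equivalent to $G$-invariance of the conditional $\uL^2$-space together with $G$-equivariance of the conditional expectation), together with the tower identity $\E_\uZ = \E_\uZ\circ\E_\uY$ for conditional expectations, this should reduce to a routine but essential transitivity check that I would carry out explicitly to ensure the Hilbert-module machinery applies at the top of the tower.
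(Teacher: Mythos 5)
Your proposal is correct and follows essentially the same route as the paper: a transfinite tower of isometric intermediate extensions produced by the relative Kronecker dichotomy \cref{thm:nonsingularrelkrodichotomy}, inverse limits at limit ordinals, termination after countably many steps by separability of $\uL^2(\uX)$, and weak mixing of the top extension because the dichotomy yields no further isometric intermediate extension at the stabilized stage. You are in fact more explicit than the paper's (very terse) proof about the one delicate point---that relative measure-preservation must persist up the tower for the dichotomy to remain applicable at each step---which the paper leaves implicit.
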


Similarly to the measure-preserving case, the proof reduces to the following generalization of \cref{thm:krodichotomy} 
to extensions.

\begin{theorem}[Relative Kronecker dichotomy]\label{thm:nonsingularrelkrodichotomy}
  A relatively measure-preserving extension $\pi \colon \uX \to \uZ$ of 
  nonsingular $G$-actions is weakly mixing if and only if there is no 
  isometric intermediate extension of $\uZ$.
\end{theorem}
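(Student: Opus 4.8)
The plan is to follow the proof of the classical Kronecker dichotomy \cref{thm:krodichotomy} as outlined in \cref{rem:ingredients}, transposing every step from the Hilbert space $\uL^2(\uX)$ to the Kaplansky--Hilbert module $\uL^2(\uX|\uZ)$ over $\uL^\infty(\uZ)$. Since $\pi$ is relatively measure-preserving, by \cref{rem:mRIM} the conditional $\uL^2$-space $\uL^2(\uX|\uZ)$ is $G$-invariant, and by \cref{lem:rmp_char} the conditional expectation $\E_\uZ$ is $G$-equivariant; these are exactly the structural facts that make the module-theoretic machinery applicable and that justify speaking of ergodicity of the relatively independent joining $\uX\times_\uZ\uX$ in terms of $G$-invariant functions there. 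The two directions of the equivalence are proved separately using the dictionary between $G$-invariant kernels $k\in\uL^2(\uX\times_\uZ\uX)$ and $G$-equivariant Hilbert--Schmidt homomorphisms $I_k$ on $\uL^2(\uX|\uZ)$ recorded in \cref{rem:ingredients}.

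For the forward contrapositive, I would assume $\pi$ is \emph{not} weakly mixing and produce an isometric intermediate extension. By definition of weak mixing, non-weak-mixing means the relatively independent joining $\pi\times_\uZ\pi\colon \uX\times_\uZ\uX \to \uZ$ is not ergodic, so by \cref{lem:rmp_char}(2) (applied to the relatively measure-preserving extension $\uX\times_\uZ\uX\to\uZ$) there is a $G$-invariant function $k\in\uL^2(\uX\times_\uZ\uX)$ with $k \neq \E_\uZ(k)\cdot\1$. Via the isomorphism $I$, this yields a nonzero $G$-equivariant Hilbert--Schmidt homomorphism $I_k$ on the module $\uL^2(\uX|\uZ)$, and decomposing $I_k = \tfrac{1}{2}(I_k + I_k^*) + \tfrac{1}{2}(I_k - I_k^*)$ lets me assume $I_k$ is self-adjoint and still nontrivial after subtracting its scalar part. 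Applying the spectral-theoretic decomposition of self-adjoint Hilbert--Schmidt homomorphisms into finite-rank pieces (the third bullet of \cref{rem:ingredients}) and the functional-calculus argument, the Koopman operators $T_g$ commute with each finite-rank spectral component; the range of such a component is a $G$-invariant finitely generated $\uL^\infty(\uZ)$-submodule of $\uL^\infty(\uX)$ (after the module analogue of \cref{lem:moduleseverywhere} to pass from $\uL^2$ to $\uL^\infty$ representatives). The $\uC^*$-algebra generated by such submodules then defines, via \cref{factor-algebra-corr}, a nontrivial isometric intermediate extension of $\uZ$.

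For the converse, I would assume there is a nontrivial isometric intermediate extension $\uZ \to \uW \to \uX$ and show $\pi$ is not weakly mixing. Isometricity furnishes a nontrivial $G$-invariant finitely generated $\uL^\infty(\uZ)$-submodule $\Gamma \subset \uL^\infty(\uX)$; the orthogonal projection $P_\Gamma$ onto its closure inside $\uL^2(\uX|\uZ)$ is a well-defined $G$-equivariant Hilbert--Schmidt homomorphism precisely because finitely generated submodules over a Kaplansky--Hilbert module are complementable and projections onto them are Hilbert--Schmidt. Pulling $P_\Gamma$ back through the isomorphism $I$ produces a nontrivial $G$-invariant kernel $k\in\uL^2(\uX\times_\uZ\uX)$ with $k\neq\E_\uZ(k)\cdot\1$, which by \cref{lem:rmp_char}(2) witnesses non-ergodicity of the relatively independent joining, hence non-weak-mixing.

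The main obstacle is that every Hilbert space tool invoked in \cref{rem:ingredients} must be available in the Kaplansky--Hilbert module setting, where the generic Hilbert module fails the spectral theorem, complementability of closed submodules, and Fr\'echet--Riesz. The substance of the argument is therefore not the dichotomy logic itself, which is a verbatim transcription of \cref{thm:krodichotomy}, but the verification that $\uL^2(\uX|\uZ)$ is a Kaplansky--Hilbert module and that the $I$-isomorphism, the spectral decomposition of self-adjoint Hilbert--Schmidt homomorphisms, complementability of finitely generated submodules, and the functional calculus all hold there; these are supplied by the theory in \cite{EHK2021} and the appendix (in particular the general isomorphism \cref{thm:KHiso}). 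A secondary technical point is the module version of \cref{lem:moduleseverywhere}, ensuring that the finite-rank ranges, a priori living in $\uL^2(\uX|\uZ)$, can be represented by submodules inside $\uL^\infty(\uX)$ so that \cref{factor-algebra-corr} applies to build the intermediate factor; this is where I expect to spend the most care.
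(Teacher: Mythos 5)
Your overall strategy---relativizing the classical dichotomy by replacing $\uL^2(\uX)$ with the Kaplansky--Hilbert module $\uL^2(\uX|\uZ)$, using the kernel/Hilbert--Schmidt correspondence of \cref{thm:KHiso}, the spectral decomposition of a self-adjoint $G$-equivariant Hilbert--Schmidt homomorphism into finite-rank projections, and then \cref{lem:moduleseverywhere} and \cref{factor-algebra-corr} to convert a finitely generated $G$-invariant submodule into an isometric intermediate extension---is exactly the paper's. The direction ``isometric intermediate extension $\Rightarrow$ not weakly mixing'' is essentially the paper's argument.

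The forward direction, however, has a genuine gap at its very first step. For relatively measure-preserving extensions of \emph{nonsingular} actions, weak mixing is \emph{not} defined by ergodicity of the self-joining $\uX\times_\uZ\uX\to\uZ$; it is the multiplier property that $\uX\times_\uZ\uY\to\uZ$ is ergodic for \emph{every} ergodic relatively measure-preserving extension $\uY\to\uZ$. So ``not weakly mixing'' only hands you a $G$-invariant kernel $k$ on $\uX\times_\uZ\uY$ for some auxiliary $\uY$, on which your symmetrization $I_k\mapsto\frac{1}{2}(I_k+I_k^*)$ does not even typecheck, since $I_k$ maps $\uL^2(\uX|\uZ)$ to $\uL^2(\uY|\uZ)$. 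The reduction to the self-joining is precisely the content of \cref{lem:weakmixchar}, which is proved by passing to $K^*K$ and showing that ergodicity of $\uX\times_\uZ\uX\to\uZ$ would force $K^*K=\E_\uZ(k^{*}*k)\cdot\E_\uZ$ and hence $K=0$; you need either to invoke that lemma or to run the $K^*K$ argument yourself. A second, smaller gap: you assert without justification that the symmetrized operator is ``still nontrivial after subtracting its scalar part,'' i.e., that at least one spectral submodule $M_j$ is not contained in $\uL^\infty(\uZ)$. This is where the paper works hardest: it first splits off the case that $\pi$ itself is not ergodic (handled directly by the algebra generated by $T_\pi(\uL^\infty(\uZ))$ and $\uL^\infty(\uX)^G$), and in the ergodic case normalizes $k$ so that $\E_\uZ(k)=\E_\uX(k)=0$, deduces $K\1_\uX=0$, and shows that if every $M_j$ lay in $\uL^\infty(\uZ)$ then $\lambda_jP_{M_j}\1_\uX=0$ would force $K=0$, a contradiction. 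Without some such argument the spectral decomposition could a priori consist entirely of submodules inside $\uL^\infty(\uZ)$ and produce only the trivial intermediate extension.
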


The proof follows the line of reasoning used to prove \cref{thm:krodichotomy} and employs the ingredients 
listed in \cref{rem:ingredients}: the natural correspondence between Hilbert--Schmidt homomorphisms $K\colon \uL^2(\uX|\uZ) \to \uL^2(\uY|\uZ)$
and their kernels (under which $G$-equivariant homomorphisms correspond to $G$-invariant kernels) and a spectral
theorem for Hilbert--Schmidt homomorphisms that allows for approximation by means of projections onto finitely generated
submodules. We collect both ingredients below to prepare for the proof of \cref{thm:nonsingularrelkrodichotomy}.

Since the definition of Hilbert--Schmidt
homomorphisms between $\uL^2(\uX|\uZ)$ and $\uL^2(\uY|\uZ)$ requires some additional terminology, we 
defer the details to the appendix from which we shall use the following result. For the time 
being, the reader may take it as a definition of Hilbert--Schmidt homomorphisms.

\begin{theorem}\label{thm:KHiso}
  Let $\pi\colon \uX \to \uZ$ and $\rho\colon \uY \to \uZ$ be measure-preserving maps between 
  standard probability spaces. Then the assignment
  \begin{equation*}
    I\colon \uL^2(\uX\times_\uZ \uY|\uZ) \to \HS(\uL^2(\uX|\uZ), \uL^2(\uY|\uZ)), \quad 
    (I_kf)(y) \defeq \int_{X_{\rho(y)}\times X_{\rho(y)}} k(x,y)f(x) \dmu_{\rho(y)} 
  \end{equation*}
  defines an isometric isomorphism between Hilbert--Schmidt homomorphisms and their kernels.
\end{theorem}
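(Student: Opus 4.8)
The plan is to establish the isometric isomorphism by first treating the simpler \enquote{diagonal} case $\uY = \uX$ and then transferring to the general case, reducing everything to a fiberwise application of the classical Hilbert--Schmidt theory. The key conceptual point is that over $\uZ$, the conditional space $\uL^2(\uX|\uZ)$ is a Kaplansky--Hilbert module whose structure is, fiber by fiber, that of an ordinary Hilbert space $\uL^2(X_z)$, and the relatively independent joining $\uX\times_\uZ\uY$ disintegrates over $z$ as the product $X_z\times Y_z$ with product measure $\mu_z\otimes\nu_z$. First I would verify that the formula defining $I_k$ indeed lands in $\HS(\uL^2(\uX|\uZ),\uL^2(\uY|\uZ))$: for $\zeta$-a.e.\ $z$, the kernel $k(\cdot,\cdot)$ restricted to $X_z\times Y_z$ is a genuine $\uL^2$-kernel, so the classical theory gives a Hilbert--Schmidt operator $I_k^{(z)}\colon \uL^2(X_z)\to\uL^2(Y_z)$ with $\|I_k^{(z)}\|_{\HS}^2 = \int_{X_z\times Y_z}|k|^2\,\ud\mu_z\,\ud\nu_z$. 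Integrating the pointwise Hilbert--Schmidt norms against $\zeta$ and using the disintegration identity from \cref{thm:disintegration} recovers $\|k\|_{\uL^2(\uX\times_\uZ\uY|\uZ)}$, which is exactly the module-valued norm on the Hilbert--Schmidt homomorphisms. This simultaneously shows $I_k$ is well defined and that $I$ is isometric.

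Next I would address module-linearity and surjectivity. That $I_k$ is an $\uL^\infty(\uZ)$-module homomorphism follows because multiplication by a function $\phi\in\uL^\infty(\uZ)$ acts fiberwise as multiplication by the constant $\phi(z)$ on $\uL^2(X_z)$, and the defining integral is taken within a single fiber, so scalars pull through. Linearity of $k\mapsto I_k$ is immediate. The isometry statement already gives injectivity, so the crux is surjectivity: every Hilbert--Schmidt homomorphism $K$ must arise as $I_k$ for some kernel $k\in\uL^2(\uX\times_\uZ\uY|\uZ)$. The natural approach is measurable selection: the appendix's definition of $\HS$ should provide, for $\zeta$-a.e.\ $z$, a Hilbert--Schmidt operator $K^{(z)}$ between the fibers, and one invokes the measurable structure of the field of Hilbert spaces $(\uL^2(X_z))_z$ to assemble a measurable kernel $z\mapsto k(\cdot,\cdot;z)$ with $I_{k(\cdot,\cdot;z)}^{(z)} = K^{(z)}$, where fiberwise this is just the classical representation of a Hilbert--Schmidt operator by its kernel (the diagonal case $\uL^2(\uX\times\uY)\cong\HS(\uL^2(\uX),\uL^2(\uY))$ recalled in the proof of \cref{thm:krodichotomy}).

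The main obstacle, I expect, is precisely this measurable assembly of the fiberwise kernels into a single element of the conditional $\uL^2$-space, i.e.\ justifying that the fiberwise classical isomorphisms glue into a module isomorphism in a $\zeta$-measurable way. Concretely, one must ensure that the map $z\mapsto k(\cdot,\cdot;z)$ is jointly measurable on $\uX\times_\uZ\uY$ and square-integrable for the joined measure; this is where the standing standard-probability-space hypotheses are used, via a measurable choice of orthonormal bases for the fields $(\uL^2(X_z))_z$ and $(\uL^2(Y_z))_z$ together with the expansion $k(x,y;z)=\sum_{i,j}\langle K^{(z)}e_i^{(z)},f_j^{(z)}\rangle\,\overline{e_i^{(z)}(x)}\,f_j^{(z)}(y)$, whose partial sums converge in the module norm by the isometry already established. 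Granting the Kaplansky--Hilbert module machinery of \cite{EHK2021} (which furnishes measurable orthonormal systems and the spectral calculus), the remaining steps are the routine fiberwise computations sketched above; the substantive work is organizing these so that all objects are honestly measurable in $z$ and the fiberwise identities promote to the module-level statement $I_k = K$.
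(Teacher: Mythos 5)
Your strategy is workable in principle but genuinely different from the paper's. The paper proves \cref{thm:KHiso} (restated as \cref{thm:appendixmthm} in the appendix) entirely at the level of Kaplansky--Hilbert modules, as a composition of three previously established isomorphisms: the splitting $\uL^2(\uX\times_\uZ\uY|\uZ)\cong\uL^2(\uX|\uZ)\otimes\uL^2(\uY|\uZ)$ of \cref{thm:joiningiso}, the Riesz--Fr\'{e}chet duality $E\cong E^*$ of \cref{thm:rieszfrechet}, and the identification $E^*\otimes F\cong\HS(E,F)$ of \cref{prop:tensorHSiso}, all imported from \cite{EHK2021}. This mirrors the classical Hilbert-space argument ($\uL^2(\uX\times\uY)\cong\uL^2(\uX)\otimes\uL^2(\uY)\cong\uL^2(\uX)^*\otimes\uL^2(\uY)\cong\HS$) and never mentions fibers, so no measurability question ever arises. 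Your route instead disintegrates everything over $\uZ$ and applies the classical kernel theorem fiberwise; this is conceptually transparent and gives the isometry almost for free, but the two steps you defer are exactly where the real work lies, and the machinery you invoke does not supply them in the form you need. First, the appendix defines a Hilbert--Schmidt homomorphism via suborthonormal subsets of the \emph{module}, not as a measurable field of operators; that every such $K$ decomposes into fiberwise operators $K^{(z)}$ is a decomposability theorem that must be proved, not a consequence of the definition. Second, \cite{EHK2021} furnishes suborthonormal bases of the module (order-theoretic objects whose fiber restrictions are orthonormal-or-zero), not ready-made measurable fields of orthonormal bases of $(\uL^2(X_z))_z$; translating between the two and verifying joint measurability and square-integrability of the assembled kernel amounts to re-deriving a piece of direct-integral theory. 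None of this is false over standard probability spaces, but carried out honestly your proof would be considerably longer and more delicate than the paper's one-line composition, whose entire point is to buy the avoidance of these fiberwise measurability issues.
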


A quick computation proves the essential observation that, if the maps $\pi$ and $\rho$ above are relatively 
measure-presreving extensions of $G$-actions, then the Hilbert--Schmidt operator $I_k$ intertwines the 
$G$-actions if and only if $k$ is $G$-invariant for the diagonal $G$-action on $\uX\times_\uZ\uY$.

The following lemma provides a different way to construct certain Hilbert--Schmidt homomorphisms which we 
shall need below.

\begin{lemma}\label{lem:moduleprojection}
  Let $\pi\colon \uX \to \uZ$ be a measure-preserving map between standard probability spaces
  and $M \subset \uL^2(\uX|\uZ)$ an $\uL^\infty(\uZ)$-submodule. Denote by
  \begin{equation*}
    \tilde{M} \defeq \operatorname{cl}_{\|\cdot\|_{\uL^2}}(M) \cap \uL^2(\uX|\uZ)
  \end{equation*}
  the $\uL^2$-closure of $M$ within $\uL^2(\uX|\uZ)$. Then the following assertions are true.
  \begin{enumerate}[(i)]
   \item $M = \tilde{M}$ if and only if $\uL^2(\uX|\uZ) = M \oplus M^\perp$ where
   \begin{equation*}
     M^\perp = \left\{f\in \uL^2(\uX|\uZ) \mmid \forall m\in M\colon \E_\uZ(f\overline{m}) = 0 \right\}.
   \end{equation*}
   \item $M$ is a finitely-generated $\uL^\infty(\uZ)$-submodule if and only if $\tilde{M}$ is. 
   In this case, $\tilde{M}$ admits a finite suborthonormal basis.\footnote{See the appendix 
   for a discussion of this notion.}
   \item If $M$ is a finitely-generated $\uL^\infty(\uZ)$-submodule, the orthogonal projection 
   $P \colon \allowbreak \uL^2(\uX|\uZ) \allowbreak \to  \tilde{M}$ induced by the decomposition $\uL^2(\uX|\uZ) = \tilde{M} \oplus \tilde{M}^\perp$ 
   is a Hilbert--Schmidt homomorphism.
  \end{enumerate}
\end{lemma}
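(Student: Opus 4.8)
The plan is to prove the three assertions about the module $M \subset \uL^2(\uX|\uZ)$ and its $\uL^2$-closure $\tilde{M}$ by exploiting the Kaplansky--Hilbert module structure of $\uL^2(\uX|\uZ)$, which (unlike a general Hilbert module) behaves like a Hilbert space in the crucial respects: closed submodules are orthogonally complemented and finitely generated submodules admit suborthonormal bases. Throughout, the key mechanism is that the $\uL^\infty(\uZ)$-valued inner product $(\cdot|\cdot)_\uZ = \E_\uZ(\cdot\,\overline{\cdot})$ turns module-theoretic statements into statements about orthogonality.

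For assertion (i), the plan is as follows. If $M = \tilde{M}$, then $M$ is $\uL^2$-closed inside $\uL^2(\uX|\uZ)$, and since $\uL^2(\uX|\uZ)$ is a Kaplansky--Hilbert module, every closed submodule is orthogonally complemented, giving $\uL^2(\uX|\uZ) = M \oplus M^\perp$ with $M^\perp$ defined via the $\uL^\infty(\uZ)$-valued scalar product as stated. Conversely, if such an orthogonal decomposition holds, I would argue that $M$ must already be closed: any $f \in \tilde{M}$ decomposes as $f = f_0 + f_1$ with $f_0 \in M$ and $f_1 \in M^\perp$, and since $f$ lies in the $\uL^2$-closure of $M$ while $f_1 \perp M$, approximating $f$ by elements of $M$ forces $\E_\uZ(|f_1|^2) = 0$, hence $f_1 = 0$ and $f \in M$. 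Thus $\tilde{M} = M$. The orthogonal complementability is the feature one gets for free from the Kaplansky--Hilbert setup and should be cited from the appendix rather than reproved.

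For assertion (ii), I would show that finite generation is preserved under closure. If $M$ is finitely generated, say by $g_1, \dots, g_n$, then $\tilde{M}$ is the closure of the finitely generated module; the content is that $\tilde{M}$ is again finitely generated and admits a \emph{finite suborthonormal basis}. The plan is to invoke the Gram--Schmidt-type orthonormalization available in Kaplansky--Hilbert modules, which produces from the generators a finite suborthonormal system spanning the same closed submodule; this is precisely the analogue of the fact that a finite-dimensional subspace of a Hilbert space has an orthonormal basis, and I expect it to be available as a structural result in the appendix. For the converse, if $\tilde{M}$ is finitely generated then $M$, sitting densely inside it, inherits finite generation by a standard approximation argument against the finite suborthonormal basis. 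I anticipate this is the step requiring the most care, since the passage from approximate generators in $M$ to exact generators, using the idempotents (the characteristic functions of measurable subsets of $Z$ that play the role of "local dimensions"), is where the module structure genuinely differs from the Hilbert space case.

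For assertion (iii), given that $M$ is finitely generated, part (ii) furnishes a finite suborthonormal basis $e_1, \dots, e_n$ of $\tilde{M}$, and the orthogonal projection onto $\tilde{M}$ is then explicitly $P f = \sum_{i=1}^n \E_\uZ(f\,\overline{e_i})\cdot e_i$. The plan is to verify directly that this operator has a kernel $k = \sum_{i=1}^n e_i \otimes \overline{e_i} \in \uL^2(\uX\times_\uZ\uX|\uZ)$, so that $P = I_k$ in the sense of \cref{thm:KHiso}, which by definition exhibits $P$ as a Hilbert--Schmidt homomorphism. The suborthonormality of the $e_i$ ensures the kernel is well-defined and lies in the conditional $\uL^2$-space, and the finiteness of the basis is exactly what makes the kernel Hilbert--Schmidt. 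The main obstacle across all three parts is ensuring that the expected Hilbert-space arguments survive the relativization to modules; the safeguard is that $\uL^2(\uX|\uZ)$ is Kaplansky--Hilbert, so complementability of closed submodules, existence of suborthonormal bases, and the kernel correspondence of \cref{thm:KHiso} all hold, and the proof reduces to assembling these imported facts correctly.
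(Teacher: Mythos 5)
Your overall strategy coincides with the paper's: the paper's entire proof of this lemma is a one-line citation of \cite[Propositions 2.12 and 2.18, Lemma 7.5(iii)]{EHK2021} for assertions (i), (ii), (iii) respectively, and what you sketch is essentially a reconstruction of the content of those three results (complementability of closed submodules of a Kaplansky--Hilbert module, a Gram--Schmidt-type orthonormalization producing a finite suborthonormal basis, and the explicit kernel $k=\sum_{i}e_i\otimes\overline{e_i}$ exhibiting the projection as $I_k$ via \cref{thm:KHiso}). Your converse argument in (i) (Cauchy--Schwarz against an approximating sequence to kill the component in $M^\perp$) and your kernel computation in (iii) are the standard arguments and are correct.

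There is one point your sketch passes over, and it is where the actual content of the cited results lies. The complementability and suborthonormal-basis theorems for Kaplansky--Hilbert modules concern submodules that are closed with respect to the $\uL^\infty(\uZ)$-valued norm (equivalently, order-closed), whereas $\tilde{M}$ is defined as the closure in the \emph{scalar} $\uL^2(\uX)$-norm intersected with $\uL^2(\uX|\uZ)$. These are a priori different notions of closure, and the nontrivial step --- supplied by \cite[Lemma 7.5]{EHK2021} --- is that for submodules of $\uL^2(\uX|\uZ)$ they yield the same closed objects; without this identification, your forward implication in (i) (``$M=\tilde{M}$, hence $M$ is closed, hence complemented'') does not literally apply. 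Separately, the converse direction of (ii), which you rightly single out as the delicate step, is genuinely problematic for non-closed submodules: taking $\uX=\uZ$ so that $\uL^2(\uX|\uZ)=\uL^\infty(\uZ)$, the submodule $M=\bigcup_n \1_{[1/n,1]}\uL^\infty([0,1])$ has singly generated closure $\tilde{M}=\uL^\infty([0,1])$ while $M$ itself is not finitely generated. Only the forward direction of (ii) and the existence of the finite suborthonormal basis are ever used in the paper, so this does not affect anything downstream, but your proposed approximation argument for that converse cannot be made to work in general.
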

\begin{proof}
  These follow from \cite[Proposition 2.12, Proposition 2.18, Lemma 7.5(iii)]{EHK2021}
  respectively.
\end{proof}

The next fact we require from the appendix is the following: Given a bounded module homomorphism 
$T \colon \uL^2(\uX|\uZ) \to \uL^2(\uY|\uZ)$, i.e., a bounded operator with 
$T(fg) = fT(g)$ for all $f\in \uL^\infty(\uZ)$ and $g\in \uL^2(\uX|\uZ)$, 
there is a unique module adjoint $T^*$,
i.e., an operator $T^*\colon \uL^2(\uY|\uZ) \to \uL^2(\uX|\uZ)$ with the property that 
\begin{equation*}
  \E_\uZ(T(f)\cdot\overline{g}) = \E_\uZ(f\cdot \overline{T^*(g)}) \qquad \forall f\in \uL^2(\uX|\uZ), g\in \uL^2(\uY|\uZ).
\end{equation*}
As in the Hilbert space case, a module homomorphism $T\colon \uL^2(\uX|\uZ) \to \uL^2(\uX|\uZ)$
is called \textbf{self-adjoint} if $T = T^*$.
For a Hilbert--Schmidt homomorphism $I_k\in \HS(\uL^2(\uX|\uZ), \uL^2(\uY|\uZ))$ 
with $k \in \uL^2(\uX\times_\uZ \uY|\uZ)$ as above, $(I_k)^* = I_{k^*}$ where 
$k^* \in \uL^2(\uY\times_\uZ \uX|\uZ)$ is defined by $k^*(y,x) = \overline{k(x,y)}$ for 
a.e.\ $(y,x) \in \uY\times_\uZ \uX$.
It is straight-forward to verify that the usual properties of Hilbert space adjoints are also 
satisfied in this setting.
In particular, $T \neq 0$ if and only if $T^*T \neq 0$.

The last fact we require about Hilbert--Schmidt homomorphisms is the following spectral theorem.

\begin{proposition}[{\cite[Theorem 4.1, Proposition 6.5]{EHK2021}}] \label{prop:KHspectralthm}
  Let $\pi\colon \uX \to \uZ$ be a relatively measure-preserving extension 
  of nonsingular $G$-actions and let $A\colon \uL^2(\uX|\uZ) \to \uL^2(\uX|\uZ)$ 
  be a self-adjoint $G$-equivariant Hilbert--Schmidt homomorphism. Then $A$ can be written as 
  the order-convergent sum
  \begin{equation*}
    A = \sum_{j\in\Z} \lambda_j P_{M_j}
  \end{equation*}
  for $G$-invariant functions $\lambda_j\in \uL^\infty(\uZ)$ and $G$-equivariant 
  Hilbert--Schmidt projections $P_{M_j}\colon \uL^2(\uX|\uZ) \to \uL^2(\uX|\uZ)$
  onto finitely generated, pairwise orthogonal submodules $M_j$.
\end{proposition}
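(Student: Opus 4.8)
The plan is to prove the decomposition first while ignoring the group action, and then to upgrade it to the equivariant statement using the essential uniqueness of the spectral data together with a functional-calculus argument exactly as in the proof of \cref{thm:krodichotomy}. For the non-equivariant core I would pass to a fiberwise picture. Using the disintegration of \cref{thm:disintegration}, I would realise $\uL^2(\uX|\uZ)$ as the space of essentially bounded measurable fields $z \mapsto f_z \in \uL^2(\uX_z, \mu_z)$ with $\E_\uZ(|f|^2) \in \uL^\infty(\uZ)$, the module structure being pointwise multiplication by functions on the base. Under the identification of \cref{thm:KHiso}, a self-adjoint Hilbert--Schmidt homomorphism $A = I_k$ acts fiberwise as the integral operator $A_z$ on the separable Hilbert space $H_z \defeq \uL^2(\uX_z,\mu_z)$ with kernel $k|_{X_z \times X_z}$; self-adjointness of $A$ forces $A_z = A_z^*$ for $\zeta$-a.e.\ $z$, and the isometry property of $I$ gives $\|A_z\|_{\HS}^2 = \E_\uZ(|k|^2)(z)$, which is essentially bounded, say by $C$. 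The crucial quantitative consequence is that for every $\varepsilon > 0$ the number of eigenvalues of $A_z$ of modulus $\ge \varepsilon$, counted with multiplicity, is at most $C/\varepsilon^2$, uniformly in $z$.

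I would then apply the classical spectral theorem to each $A_z$ and assemble the fiberwise data measurably. Indexing the nonzero eigenvalues of $A_z$ in order of decreasing modulus, distinguishing positive and negative eigenvalues by the sign of $j \in \Z$, produces functions $\lambda_j \colon Z \to \R$; their measurability and that of the associated eigenprojections follows from the (strong) measurability of $z \mapsto A_z$, the continuity of the functional calculus, and a measurable selection argument applied to the band projections $\chi_{\{a \le |\lambda| \le b\}}(A_z)$ for rational $a,b$. The uniform multiplicity bound guarantees that each resulting eigenbundle has a.e.\ fiber dimension bounded by a fixed integer, so by \cref{lem:moduleprojection} it is the range of a Hilbert--Schmidt projection $P_{M_j}$ onto a finitely generated submodule $M_j$, with distinct $M_j$ pairwise orthogonal. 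Fiberwise, the partial sums $\sum_{|j| \le N} \lambda_j(z) P_{M_j(z)}$ converge to $A_z$ in Hilbert--Schmidt norm; since the tails are dominated a.e.\ by $\|A_z\|_{\HS}^2 \le C$ and everything is uniformly bounded, this upgrades to order convergence of $\sum_j \lambda_j P_{M_j}$ to $A$ in the module sense (recall that for bounded sequences order convergence coincides with a.e.\ convergence).

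For the equivariance I would use that $A$ commutes with each honest Koopman operator $T_g$ on $\uL^2(\uX|\uZ)$. Since the extension is relatively measure-preserving, $T_g$ induces fiberwise unitaries $U_g \colon H_{gz} \to H_z$ (the extension being r.m.p.\ by \cref{lem:rmp_char}), and equivariance $A T_g = T_g A$ then reads fiberwise as $A_z = U_g A_{gz} U_g^{-1}$; hence the multiset of eigenvalues of $A_{gz}$ agrees with that of $A_z$. By the essential uniqueness of the sorted eigenvalue functions this yields $\lambda_j \circ g = \lambda_j$, i.e.\ each $\lambda_j$ is $G$-invariant. Moreover, exactly as in \cref{thm:krodichotomy}, continuous functional calculus shows that $T_g$ commutes with the band spectral projections of $A$, so each submodule $M_j$ is $G$-equivariant and $P_{M_j} T_g = T_g P_{M_j}$.

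The step I expect to be the main obstacle is the measurable assembly: extracting genuinely measurable eigenvalue functions and eigenprojections from the fiberwise spectral theorem despite eigenvalue crossings and varying multiplicities, and verifying that the resulting submodules are finitely generated (equivalently, of a.e.\ bounded fiber dimension) rather than merely fiberwise finite-dimensional. The uniform Hilbert--Schmidt bound $\|A_z\|_{\HS}^2 \le C$ is precisely what makes this work, and it is also what forces indexing over all of $\Z$ with multiplicity rather than over a single ordering. Alternatively, one could bypass the fiberwise analysis entirely by transferring the classical spectral theorem for Hilbert--Schmidt operators through the Boolean-valued Kaplansky--Hilbert module machinery of \cite{EHK2021}, which is the route taken there; the fiberwise argument above is its more hands-on counterpart.
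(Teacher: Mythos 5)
The paper does not actually prove this proposition: it is imported wholesale from \cite[Theorem 4.1, Proposition 6.5]{EHK2021}, where it is obtained by transferring the classical spectral theorem for self-adjoint Hilbert--Schmidt operators to Kaplansky--Hilbert modules, and the equivariance of the spectral data is then automatic because the projections $P_{M_j}$ arise from $A$ by (order-limits of) functional calculus, so they commute with every operator commuting with $A$. Your fiberwise route via the disintegration is a genuinely different, more hands-on approach, and most of it is sound: the identification of $A$ with a measurable field of self-adjoint integral operators $A_z$, the uniform bound $\|A_z\|_{\HS}^2=\E_\uZ(|k|^2)(z)\le C$ and the resulting eigenvalue count, the $G$-invariance of the sorted eigenvalue functions via $A_z=U_gA_{gz}U_g^{-1}$, and the upgrade from fiberwise convergence of the partial sums to order convergence are all correct. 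What the abstract route buys is that it sidesteps the measurable-selection issues entirely; what yours buys is a concrete picture that makes the role of the uniform Hilbert--Schmidt bound visible.

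There is, however, one genuine gap, precisely at the point you half-flag: the equivariance of the individual projections $P_{M_j}$. Your argument derives equivariance from the fact that $T_g$ commutes with the \emph{band} spectral projections $\chi_{\{a\le|\lambda|\le b\}}(A)$, but the $M_j$ obtained by sorting eigenvalues \emph{with multiplicity} are strictly finer than the band projections wherever consecutive sorted eigenvalues coincide, and on such sets the choice of ``the $j$-th eigenvector'' is an arbitrary measurable selection that is not a function of $A$ and has no reason to commute with $T_g$. Grouping by full eigenspaces of the sorted eigenvalue functions does not repair this directly, since for fixed $j$ the multiplicity of $\lambda_j(z)$ is only bounded by $C/\lambda_j(z)^2$ and hence the corresponding eigenbundle need not have bounded fiber dimension, i.e.\ need not be finitely generated. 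The fix is to first split $A$ along the canonical (hence equivariant) band projections for the bands $\pm(1/(n+1),1/n]$, within each band sort the at most $C(n+1)^2$ eigenvalues, and then partition the base $Z$ into the finitely many $G$-invariant measurable sets determined by the multiplicity pattern of these sorted, $G$-invariant eigenvalue functions; on each piece the distinct eigenvalues have canonical, finitely generated, functional-calculus eigenspaces, and reassembling gives the asserted decomposition. It is also worth noting that for the only use of this proposition in the paper (the proof of \cref{thm:nonsingularrelkrodichotomy}) a single nonzero equivariant band projection already suffices, which requires none of this bookkeeping.
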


The key take-away from this spectral theorem is that if the operator $A$ 
is nontrivial, there must be some $G$-invariant finitely generated $\uL^\infty(\uZ)$-submodule.
(The notion of order-convergence is discussed in the appendix but is immaterial here.)
Next, an observation that will facilitate the proof of \cref{thm:nonsingularrelkrodichotomy} is
that the characterization of weakly mixing extensions for nonsingular systems simplifies
if the extension is relatively measure-preserving.

\begin{lemma}\label{lem:weakmixchar}
  Let $\pi\colon \uX \to \uZ$ be a relatively measure-preserving extension of nonsingular
  $G$-actions. Then $\pi$ is weakly mixing if and only if the relatively independent 
  joining $\pi\times_\uZ\pi\colon \uX \times_\uZ \uX \to \uZ$ is an ergodic extension.
\end{lemma}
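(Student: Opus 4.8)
The plan is to prove the two implications separately, with the forward implication being essentially immediate from the definition and the reverse implication carrying all the weight.

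For the easy direction, suppose $\pi$ is weakly mixing. First I would observe that $\pi$ is then ergodic: applying the defining multiplier property to the (trivially ergodic and relatively measure-preserving) identity extension $\mathrm{id}_\uZ\colon\uZ\to\uZ$ and using the identification $\uX\times_\uZ\uZ\cong\uX$ shows that $\uX\to\uZ$ is an ergodic extension. Since $\pi$ is ergodic and relatively measure-preserving by hypothesis, it is itself an admissible multiplier, and plugging $\rho=\pi$ into the definition yields that $\pi\times_\uZ\pi\colon\uX\times_\uZ\uX\to\uZ$ is ergodic.

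For the converse, assume $\pi\times_\uZ\pi$ is ergodic and let $\rho\colon\uY\to\uZ$ be an arbitrary ergodic relatively measure-preserving extension; I must show that the joining $\uX\times_\uZ\uY\to\uZ$ is ergodic. Since the relatively independent joining of two relatively measure-preserving extensions is again relatively measure-preserving (its fiber measures are the products $\mu_z\otimes\nu_z$, so equivariance is inherited from that of $\pi$ and $\rho$), \cref{lem:rmp_char} applies, and it suffices to show that every $G$-invariant $F\in\uL^2(\uX\times_\uZ\uY|\uZ)$ satisfies $F=\E_\uZ(F)\cdot\1$. Via the isometric isomorphism of \cref{thm:KHiso}, such an $F$ corresponds to a Hilbert--Schmidt homomorphism $I_F\colon\uL^2(\uX|\uZ)\to\uL^2(\uY|\uZ)$, and the invariance of $F$ makes $I_F$ a $G$-equivariant homomorphism.

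The crux is then to exploit ergodicity of the relative square through the operator $I_F^*I_F$. This is a self-adjoint $G$-equivariant Hilbert--Schmidt homomorphism on $\uL^2(\uX|\uZ)$, so its kernel is a $G$-invariant element $k\in\uL^2(\uX\times_\uZ\uX|\uZ)$; by ergodicity of $\uX\times_\uZ\uX\to\uZ$ and \cref{lem:rmp_char} it must satisfy $k=\E_\uZ(k)\cdot\1$. The operator with such a fiberwise-constant kernel is exactly $c\cdot Q$, where $c=\E_\uZ(k)\in\uL^\infty(\uZ)$ and $Qg=\E_\uZ(g)\cdot\1_\uX$ is the orthogonal projection onto the submodule $\uL^\infty(\uZ)\cdot\1_\uX$. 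Consequently $I_F$ annihilates every $g$ with $\E_\uZ(g)=0$, and module-linearity forces $I_Fg=\E_\uZ(g)\cdot\psi$ with $\psi\defeq I_F\1_\uX\in\uL^2(\uY|\uZ)$; in kernel terms this says that $F(x,y)=\psi(y)$ depends only on the $\uY$-coordinate. Finally I would feed this back into the hypotheses on $\rho$: the $G$-invariance of $F$ translates into $G$-invariance of $\psi$, and since $\rho$ is an ergodic extension \cref{lem:rmp_char} gives $\psi=\E_\uZ(\psi)\cdot\1_\uY$, whence $F=\E_\uZ(F)\cdot\1$, as required.

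I expect the main obstacle to be the module-theoretic bookkeeping in the last step: one must be sure that the composition $I_F^*I_F$ really is Hilbert--Schmidt with a kernel living in $\uL^2(\uX\times_\uZ\uX|\uZ)$, and that the deduction \enquote{fiberwise-constant kernel $\Rightarrow$ scalar multiple of the projection onto constants $\Rightarrow$ rank-one $I_F$} remains valid over the Kaplansky--Hilbert module $\uL^\infty(\uZ)$ rather than merely over $\C$. Everything else is a faithful transcription of Furstenberg's classical relative weak-mixing argument into the relative setting.
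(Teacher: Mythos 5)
Your proof is correct and follows essentially the same route as the paper's: both hinge on passing to the self-adjoint $G$-equivariant homomorphism $I_F^*I_F$, using ergodicity of $\uX\times_\uZ\uX\to\uZ$ to force its kernel to be fiberwise constant, and then concluding via ergodicity of $\rho$. The only (immaterial) difference is organizational --- the paper argues by contradiction, first normalizing $k$ so that $\E_\uY(k)=0$ and deducing $K^*K=0$, whereas you argue directly that $I_F$ has $\uL^\infty(\uZ)$-rank one and invoke ergodicity of $\rho$ at the end; the paper also verifies the $G$-equivariance of $K^*K$ by an explicit kernel computation rather than by formal adjoint bookkeeping, which is the one step you flagged and should indeed be written out using relative measure-preservation of both $\pi$ and $\rho$.
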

\begin{proof}
  It is clear that if $\pi$ is weakly mixing, 
  $\pi\times_\uZ\pi\colon \uX \times_\uZ \uX \to \uZ$ 
  must be ergodic since weak mixing implies that 
  $\pi\colon \uX \to \uZ$ is an ergodic and relatively measure-preserving
  extension. Conversely, assume 
  $\pi\times_\uZ\pi\colon \uX \times_\uZ \uX \to \uZ$ is an ergodic
  extension; in particular, the extension $\uX \to \uZ$ is ergodic. 
  Suppose $\pi$ is not weakly mixing, then we can find an ergodic and 
  relatively measure-preserving extension $\rho\colon \uY \to \uZ$ 
  of nonsingular $G$-actions such 
  that the extension $\pi\times_\uZ\rho\colon \uX\times_\uZ\uY \to \uZ$ 
  is not ergodic. That is,
  there exists a nonzero $G$-invariant 
  $k\in \uL^\infty(\uX\times_\uZ\uY)$ such that $\E_\uZ(k) = 0$. Since 
  the extension $\uX\times_\uZ\uY \to \uZ$ is relatively measure-preserving,
  $\E_\uZ(k)$ is $G$-invariant. Since the extension $\uY \to \uZ$ is ergodic,
  we can conclude that $\E_\uY(k) = \E_\uZ(k)\cdot \1_\uY = 0$.
  
  Let $K\colon \uL^2(\uX|\uZ) \to \uL^2(\uY|\uZ)$ be the 
  Hilbert--Schmidt homomorphism $I_k$ 
  associated to $k$ via the isomorphism $\uL^2(\uX\times_\uZ\uY|\uZ) \cong \HS(\uL^2(\uX|\uZ), \uL^2(\uY|\uZ))$.
  Note that $K\1_\uX = \E_\uY(k) = 0$ and hence $K^*K\1_\uX = 0$. 
  We claim that $K^*K \colon \uL^2(\uX|\uZ) \to \uL^2(\uX|\uZ)$ is 
  $G$-equivariant. To see this, we use, in this order, relative measure-preservation of $\pi$,
  $G$-invariance of $k$, and relative measure-preservation of $\rho$: For $f\in \uL^2(\uX|\uZ)$ 
  and $g\in G$
  \begin{align*}
    (K^*KT_gf)(x) 
    &= \int_{X_{\pi(x)}} f(gx') \int_{Y_{\pi(x)}} k(x,y)\overline{k(x',y)} \dnu_{\pi(x)}(y)\dmu_{\pi(x)}(x') \\
    &= \int_{X_{\pi(gx)}} f(x') \int_{Y_{\pi(x)}} k(x,y)\overline{k(g^{-1}x',y)} \dnu_{\pi(x)}(y)\dmu_{\pi(gx)}(x') \\
    &= \int_{X_{\pi(gx)}} f(x') \int_{Y_{\pi(x)}} k(gx,gy)\overline{k(x',gy)} \dnu_{\pi(x)}(y)\dmu_{\pi(gx)}(x') \\
    &= \int_{X_{\pi(gx)}} f(x') \int_{Y_{\pi(gx)}} k(gx,y)\overline{k(x',y)} \dnu_{\pi(gx)}(y)\dmu_{\pi(gx)}(x') \\
    &= (T_gK^*Kf)(x).
  \end{align*}
  Since $K^*K$ is $G$-intertwining, its kernel $k^* * k$ is $G$-invariant and by ergodicity of 
  $\pi\times_\uZ\pi$, it follows that
  $k^* * k = \E_\uZ(k^* * k) \1_{\uX\times_\uZ\uX}$. Thus, $K^*K = \E_\uZ(k^* * k)\cdot\E_\uZ$. However, 
  \begin{equation*}
    0 = K^*K\1_\uX = \E_\uZ(k^* * k)\cdot\E_\uZ(\1_{\uX}) = \E_\uZ(k^* * k).
  \end{equation*}
  Therefore, $K^*K = 0$, whence $K = 0$. This is a contradiction to the assumption that $k$ is nonzero. Thus,
  our choice of $k$ was impossible and $\pi$ must indeed be weakly mixing.
\end{proof}

The last observation we require is that, given finitely generated submodules, 
we can construct a corresponding isometric intermediate extensions by 
means of topological models. Moreover, it will be essential to know that, if 
the extension is relatively measure-preserving,
it suffices to find the submodule in $\uL^2(\uX|\uZ)$ instead of $\uL^\infty(\uX)$.
For the reader's convenience, we reproduce below the arguments from 
\cite[Lemma 8.3, Proposition 8.5]{EHK2021} where these two claims are proved in the measure-preserving case.

\begin{proposition}\label{prop:isoextmodel}
  Let $\pi\colon \uX \to \uZ$ be an extension of 
  nonsingular $G$-actions. Then
  \begin{equation*}
    \mathcal{A} \defeq \overline{\bigcup\left\{ M \subset \uL^\infty(\uX) \mmid 
    \begin{matrix}
      M \text{ is a finitely generated} \\
      G\text{-invariant } \uL^\infty(\uZ)\text{-submodule} 
    \end{matrix}
    \right\}}^{\|\cdot\|_{\uL^\infty(\uX)}} 
    \subset \uL^\infty(\uX)
  \end{equation*}
  defines a unital $\uC^*$-subalgebra that corresponds to the largest intermediate 
  extension $\uX \to \tilde{\uZ} \to \uZ$ that is an isometric extension of $\uZ$.
\end{proposition}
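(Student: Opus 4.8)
The plan is to follow the measure-preserving blueprint of \cite[Lemma 8.3, Proposition 8.5]{EHK2021}, the only new input being that the topological-model correspondence of \cref{factor-algebra-corr} is now available for nonsingular actions. Write $\mathcal{U}$ for the union appearing inside the closure defining $\mathcal{A}$. First I would check that $\mathcal{A}$ is a unital, $G$-invariant $\uC^*$-subalgebra so that \cref{factor-algebra-corr} applies. Given two finitely generated $G$-invariant $\uL^\infty(\uZ)$-submodules $M_1 = \sum_i \uL^\infty(\uZ)f_i$ and $M_2 = \sum_j \uL^\infty(\uZ)g_j$, the sum $M_1 + M_2$ and the module $\sum_{i,j}\uL^\infty(\uZ)\, f_i g_j$ generated by the products are again finitely generated; they remain $G$-invariant because each $T_g$ is a $\uC^*$-automorphism with $T_g(fg) = T_g(f)\,T_g(g)$ carrying $M_1, M_2$ into themselves. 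Hence $\mathcal{U}$ is directed, closed under products, complex conjugation, and the $\uL^\infty(\uZ)$-action; since $\1_\uX$ generates the singly generated $G$-invariant module $\mathcal{A}_\pi = T_\pi(\uL^\infty(\uZ))$, we have $\mathcal{A}_\pi \subset \mathcal{U}$. Taking the $\|\cdot\|_{\uL^\infty}$-closure turns $\mathcal{U}$ into a unital $\uC^*$-algebra $\mathcal{A}$, and $G$-invariance survives the closure because each $T_g$ is an $\uL^\infty$-isometry. \cref{factor-algebra-corr} then yields a standard probability factor $\tilde{\uZ}$ with $\uX \to \tilde{\uZ} \to \uZ$ (the second map exists since $\mathcal{A}_\pi \subset \mathcal{A}$) and, via its topological model $K$, the identification $T_\pi(\uC(K)) = \mathcal{A}$.

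Next I would verify that $\tilde{\uZ} \to \uZ$ is isometric. Since $\uC(K)$ is $\|\cdot\|_{\uL^2}$-dense in $\uL^2(\tilde{\uZ})$ and $T_\pi$ is an $\uL^2$-isometry, $\mathcal{A}$ is $\uL^2$-dense in $\uL^2(\tilde{\uZ})$. Every $M \in \mathcal{U}$ satisfies $M \subset \mathcal{A} \subset T_\pi(\uL^\infty(\tilde{\uZ}))$ and is therefore a finitely generated $G$-invariant $\uL^\infty(\uZ)$-submodule of $\uL^\infty(\tilde{\uZ})$ (carrying the correct module structure because $\uZ$ is a common factor). As $\|\cdot\|_{\uL^2} \le \|\cdot\|_{\uL^\infty}$ on a probability space, $\mathcal{U}$ is $\uL^\infty$-dense in $\mathcal{A}$ and hence $\uL^2$-dense in $\uL^2(\tilde{\uZ})$; the reverse inclusion being trivial, the defining identity of an isometric extension holds for $\tilde{\uZ} \to \uZ$.

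Maximality is then the mirror image. If $\uX \to \uZ'' \to \uZ$ is any intermediate extension with $\uZ'' \to \uZ$ isometric, then each finitely generated $G$-invariant $\uL^\infty(\uZ)$-submodule $\Gamma \subset \uL^\infty(\uZ'') \subset \uL^\infty(\uX)$ is itself a member of $\mathcal{U}$, so $\Gamma \subset \mathcal{A}$. Taking $\uL^2$-closures and using that $\uZ''\to\uZ$ is isometric gives $\uL^2(\uZ'') \subset \overline{\mathcal{A}}^{\|\cdot\|_{\uL^2}} = \uL^2(\tilde{\uZ})$, so every bounded $\Sigma_{\uZ''}$-measurable function is $\Sigma_{\tilde{\uZ}}$-measurable, i.e.\ $\uL^\infty(\uZ'') \subset \uL^\infty(\tilde{\uZ})$, which by \cref{factor-algebra-corr} means $\uZ''$ is a factor of $\tilde{\uZ}$.

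The individual steps are short; the point requiring the most care is the bookkeeping between the three pictures---the $\uC^*$-subalgebra $\mathcal{A}$, the factor $\tilde{\uZ}$, and the closed subspaces of $\uL^2(\uX)$---together with the passage between the $\uL^\infty$-norm closure defining $\mathcal{A}$ and the $\uL^2$-closure appearing in the definition of an isometric extension. The bridge is the contraction $\uL^\infty(\uX) \hookrightarrow \uL^2(\uX)$ and the $\uL^2$-density of $\uC(K)$, but one must check that the modules genuinely live in $\uL^\infty(\tilde{\uZ})$ (respectively $\uL^\infty(\uZ'')$) and carry the $\uL^\infty(\uZ)$-module structure, which holds precisely because $\uZ$ is a common factor. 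I expect no serious obstacle: unlike the measure-preserving proof, this argument never invokes an inner product or adjoint on $\uL^2(\uX)$, so it transfers verbatim to nonsingular actions.
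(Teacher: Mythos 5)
Your proof is correct and takes essentially the same route as the paper: verify that the union of finitely generated $G$-invariant $\uL^\infty(\uZ)$-submodules is a unital $G$-invariant $*$-subalgebra (closed under sums and products of modules), invoke the subalgebra--factor correspondence of \cref{factor-algebra-corr} to produce the intermediate extension, and read off isometricity and maximality from the definition of $\mathcal{A}$. The paper's own proof is just a terser version of this, leaving the algebra check and the $\uL^\infty$-versus-$\uL^2$ closure bookkeeping (which you carry out explicitly) to the reader.
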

\begin{proof}
  It is straightforwad to check that the product of two finitely generated $G$-invariant 
  $\uL^\infty(\uZ)$-submodules of $\uL^\infty(\uZ)$ is again such a submodule, so 
  $\mathcal{A} \subset \uL^\infty(\uZ)$ is a $G$-invariant $\uC^*$-subalgebra that contains $\uL^\infty(\uZ)$.
  As such, it gives rise to an intermediate extension $\uX \to \tilde{\uZ} \to \uZ$ by means 
  of the correspondence \cref{factor-algebra-corr} between subalgebras and factors.
  It also follows from this correspondence and the definition of $\mathcal{A}$ that this factor
  is the maximal isometric intermediate extension.
\end{proof}

\begin{lemma}\label{lem:moduleseverywhere}
  Let $\pi\colon \uX \to \uZ$ be a relatively measure-preserving extension of 
  nonsingular $G$-actions. Then the following subspaces coincide:
  \begin{align*}
    E &= \overline{\bigcup\left\{ M \subset \uL^2(\uX|\uZ) \mmid 
    \begin{matrix}
      M \text{ is a finitely generated} \\
      G\text{-invariant } \uL^\infty(\uZ)\text{-submodule} 
    \end{matrix}
    \right\}}^{\|\cdot\|_{\uL^2(\uX)}}  \\
    F &= \overline{\bigcup\left\{ M \subset \uL^\infty(\uX) \mmid 
    \begin{matrix}
      M \text{ is a finitely generated} \\
      G\text{-invariant } \uL^\infty(\uZ)\text{-submodule} 
    \end{matrix}
    \right\}}^{\|\cdot\|_{\uL^2(\uX)}} 
  \end{align*}
\end{lemma}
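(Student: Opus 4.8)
The goal is to show that the $\uL^2(\uX)$-closures $E$ and $F$ coincide, where $E$ is built from finitely generated $G$-invariant submodules of $\uL^2(\uX|\uZ)$ and $F$ from such submodules living in $\uL^\infty(\uX)$. Since $\uL^\infty(\uX) \subset \uL^2(\uX|\uZ)$ (bounded functions have bounded conditional $\uL^2$-norm), every submodule of the type defining $F$ is also of the type defining $E$, so the inclusion $F \subset E$ is immediate. The content of the lemma is the reverse inclusion $E \subset F$.

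The plan is to show that any finitely generated $G$-invariant $\uL^\infty(\uZ)$-submodule $M \subset \uL^2(\uX|\uZ)$ can be approximated in the $\|\cdot\|_{\uL^2(\uX)}$-norm by finitely generated $G$-invariant submodules sitting inside $\uL^\infty(\uX)$. First I would fix such an $M$ and, using \cref{lem:moduleprojection}(ii), replace it by its $\uL^2$-closure $\tilde M$ within $\uL^2(\uX|\uZ)$, which remains finitely generated and admits a finite suborthonormal basis $e_1, \dots, e_n$; this does not change the closure defining $E$. The associated orthogonal projection $P \colon \uL^2(\uX|\uZ) \to \tilde M$ is then a $G$-equivariant Hilbert--Schmidt homomorphism by \cref{lem:moduleprojection}(iii), and the $G$-equivariance follows because $M$, and hence $\tilde M$, is $G$-invariant and the extension is relatively measure-preserving so that $\uL^2(\uX|\uZ)$ carries a genuine $G$-action (see \cref{rem:mRIM} and \cref{lem:rmp_char}). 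The key point is that the basis elements $e_i$ lie in $\uL^2(\uX|\uZ)$ but need not be bounded.

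The main technical step is to truncate the generators to bring them into $\uL^\infty(\uX)$ while preserving $G$-invariance and controlling the $\uL^2$-error. For a cutoff level $c > 0$, I would consider the bounded functions $e_i^{(c)} \defeq e_i \cdot \1_{[|e_i| \le c]}$, which converge to $e_i$ in $\uL^2(\uX)$ as $c \to \infty$ by dominated convergence since $e_i \in \uL^2(\uX)$. The problem is that the $\uL^\infty(\uZ)$-submodule $M_c$ generated by the $e_i^{(c)}$ is typically not $G$-invariant, because truncation does not commute with the $G$-action. To repair this, the natural device is to instead build the submodule from the functions obtained by applying $P$ to the truncated generators, or equivalently to average: one sets $f_i^{(c)} \defeq P(e_i^{(c)})$, which lies in $\tilde M$ and is expressible via the $\uL^\infty(\uZ)$-valued inner products $\E_\uZ(e_i^{(c)} \overline{e_j})$, and these coefficients are bounded because the $e_j$ form a suborthonormal basis and $e_i^{(c)}$ is bounded. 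Thus each $f_i^{(c)}$ in fact lies in $\uL^\infty(\uX)$, the submodule they generate is a finitely generated $G$-invariant $\uL^\infty(\uZ)$-submodule of $\uL^\infty(\uX)$ (by $G$-equivariance of $P$), and since $P$ is a contraction on $\uL^2(\uX)$ one has $\|f_i^{(c)} - e_i\|_{\uL^2(\uX)} = \|P(e_i^{(c)}) - P(e_i)\|_{\uL^2(\uX)} \le \|e_i^{(c)} - e_i\|_{\uL^2(\uX)} \to 0$.

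I expect the main obstacle to be verifying that the $f_i^{(c)}$ genuinely land in $\uL^\infty(\uX)$ and generate a $G$-invariant module, rather than merely approximating $M$: one must check that the boundedness of the coefficients $\E_\uZ(e_i^{(c)} \overline{e_j}) \in \uL^\infty(\uZ)$ together with the suborthonormal basis expansion $f = \sum_j \E_\uZ(f \overline{e_j}) e_j$ keeps things bounded, which relies on the suborthonormal basis machinery from the appendix and the fact that each individual $e_j$ is bounded after truncation. Once each generator $e_i$ of $\tilde M$ is approximated in $\uL^2(\uX)$ by an element of a finitely generated $G$-invariant submodule of $\uL^\infty(\uX)$, the whole submodule $\tilde M$ lies in the $\uL^2$-closure $F$, hence $E \subset F$, and combined with the trivial inclusion $F \subset E$ this yields $E = F$, completing the proof.
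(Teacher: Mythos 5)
Your reduction to the inclusion $E \subset F$ and your setup (pass to $\tilde M$, take a finite suborthonormal basis $e_1,\dots,e_d$, use the $G$-equivariant Hilbert--Schmidt projection $P$) agree with the paper, but the repair device you propose for the truncation step does not work, and this is exactly where the content of the lemma lies. The function $f_i^{(c)} \defeq P\big(e_i\1_{[|e_i|\le c]}\big)$ expands as $\sum_j \E_\uZ\big(e_i^{(c)}\overline{e_j}\big)\,e_j$; the coefficients are indeed in $\uL^\infty(\uZ)$, but they multiply the \emph{original} basis elements $e_j$, which lie only in $\uL^2(\uX|\uZ)$ and may be unbounded. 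A bounded-coefficient combination of unbounded functions is in general unbounded: already for $\uZ$ a point and $M = \C e_1$ with $e_1 \in \uL^2\setminus\uL^\infty$, one has $P(e_1^{(c)}) = \langle e_1^{(c)}, e_1\rangle\, e_1 \notin \uL^\infty(\uX)$ whenever the scalar is nonzero. Your hedge that \enquote{each individual $e_j$ is bounded after truncation} does not help, since the $e_j$ appearing in the expansion of $P(e_i^{(c)})$ are not the truncated ones. There is a second problem with $G$-invariance: equivariance of $P$ only gives $T_g f_i^{(c)} = P\big((T_ge_i)\1_{[|T_ge_i|\le c]}\big)$, the projection of the truncation of $T_ge_i$, and there is no reason for this to lie in the $\uL^\infty(\uZ)$-module generated by $f_1^{(c)},\dots,f_d^{(c)}$.

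The paper avoids both problems by cutting off with a single $G$-\emph{invariant multiplier in} $\uL^\infty(\uX)$ rather than by truncating and reprojecting. One first shows that $m \defeq \sum_j |e_j|^2$ is $G$-invariant (it equals $\E_\uX|e|^2$ for the $G$-invariant kernel $e = \sum_j e_j\otimes_\uZ\overline{e_j}$ of $P$), and then sets $\eta_n \defeq (\sqrt m\wedge n)/(\sqrt m + 1/n)$. Since $|e_j|\le\sqrt m$, each $\eta_n e_j$ is bounded by $n$, so the module generated by $\eta_n e_1,\dots,\eta_n e_d$ is a finitely generated $\uL^\infty(\uZ)$-submodule of $\uL^\infty(\uX)$; it is $G$-invariant because $\eta_n$ is, and $\eta_n e_j\to e_j$ in $\uL^2(\uX)$. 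The key point your argument misses is that the approximating modules are \emph{not} submodules of $\tilde M$; insisting on staying inside $\tilde M$ via $P$ is precisely what forces the unbounded $e_j$ back into the picture.
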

\begin{proof}
  We only need to prove that $F \subset E$. So let $M \subset \uL^2(\uX|\uZ)$ be a finitely generated
  $G$-invariant $\uL^\infty(\uZ)$-submodule. Without loss of generality, we may assume that 
  $M = \tilde{M}$ (see \cref{lem:moduleprojection} for this notation), i.e., $M$ equals its $\uL^2$-closure within $\uL^2(\uX|\uZ)$. Pick a $\uZ$-suborthonormal
  basis $\mathcal{B} = \{e_1, \dots, e_d\}$ of $M$ as provided by \cref{lem:moduleprojection}.
  
  We shall construct $G$-invariant functions $\eta_n$ with $0\leq \eta_n \leq 1$
  such that $\mathcal{B}_n \defeq \eta_n \mathcal{B} \subset \uL^\infty(\uX)$ for all $n\in\N$
  and $\eta_n e_j \to e_j$ in $\uL^2(\uX)$ for $j = 1,\dots, d$. Since $\eta_n$ is $G$-invariant, 
  $\mathcal{B}_n$ generates a $G$-invariant submodule and thus this will show that $F \subset E$.
  To find the functions $\eta_n$, let 
  \begin{equation*}
    e \defeq \sum_{j=1}^d e_j\otimes_\uZ \overline{e_j} \in \uL^2(\uX\times_\uZ \uX | \uZ)
  \end{equation*}
  and 
  \begin{equation*}
    m \defeq \sum_{j=1}^d |e_j|^2 \in \uL^2(\uX|\uZ).
  \end{equation*}
  Under the isomorphism $\uL^2(\uX\times_\uZ\uX|\uZ) \cong \HS(\uL^2(\uX|\uZ))$, $e$ is the kernel of the 
  orthogonal projection $P_M$ onto $M$. Since the extension $\uX \to \uZ$ is relatively measure-preserving,
  $G$-invariance of $M$, $G$-equivariance of $P_M$, and $G$-invariance of $e$ are all equivalent. Thus, 
  $e$ is $G$-invariant and so is 
  \begin{equation*}
    |e|^2 = \sum_{j,k = 1}^d e_j \overline{e_k} \otimes \overline{e_j}e_k.
  \end{equation*}
  We claim that also $m$ is $G$-invariant. To see this, 
  take the conditional expectation of $|e|^2$ onto the second factor:
  \begin{align*}
    \E_\uX |e|^2 
    = \sum_{j,k=1}^d  \E_\uZ(e_j\overline{e_k}) \overline{e_j}e_k 
    = \sum_{j,k=1}^d |e_j|_\uZ^2 |e_j|^2 
    = \sum_{j,k=1}^d \left||e_j|_\uZ e_j\right|^2 
    = \sum_{j,k=1}^d |e_j|^2
    = m.
  \end{align*}
  Thus, $m$ is $G$-invariant. Now, the functions 
  \begin{equation*}
    \eta_n \defeq \frac{\sqrt{m}\wedge n}{\sqrt{m} + \frac{1}{n}}
  \end{equation*}
  are $G$-invariant, satify $0\leq \eta_n \leq 1$, $\eta_ne_j \to e_j$ and $\eta_n e_j \in \uL^\infty(\uX)$ 
  for $j=1,\dots, d$ and $n\in\N$. Thus, $F \subset E$.
\end{proof}

\begin{proof}[Proof of \cref{thm:nonsingularrelkrodichotomy}]
   First, suppose 
  \begin{equation*}
   \xymatrix{
    (X, \mu) \ar[r]^{p} & (\tilde{Z}, \tilde{\zeta}) \ar[r]^q & (Z, \zeta)
   }
  \end{equation*}
  is a nontrivial
  isometric intermediate extension of $(Z, \zeta)$. Then there exists a finitely-generated 
  $\uL^\infty(\uZ)$-submodule $M \subset \uL^2(\tilde{\uZ}|\uZ)$ such that $M$ is not contained 
  in $\uL^\infty(\uZ)$. Without loss of generality, assume that $M = \tilde{M}$ and 
  let $P_M\colon \uL^2(\tilde{\uZ}|\uZ) \to M$ be the orthogonal projection 
  onto $M$. By \cref{lem:moduleprojection}, $P_M\in \HS(\uL^2(\tilde{\uZ}|\uZ))$. Since relative 
  measure-preservation is equivalent to equivariance of conditional expectations (see \cref{lem:rmp_char}),
  the decomposition $\uL^2(\tilde{\uZ}|\uZ) = M \oplus M^\perp$ is readily verified to be $G$-invariant. Thus, 
  $P_M$ is $G$-equivariant. Via the isomorphism $\HS(\uL^2(\tilde{\uZ}|\uZ)) \cong \uL^2(\tilde{\uZ}\times_\uZ\tilde{\uZ}|\uZ)$ 
  from \cref{thm:KHiso}, we may write $P_M = I_k$ for a uniquely determined $k \in \uL^2(\tilde{\uZ}\times_\uZ\tilde{\uZ}|\uZ)$.
  As observed above, the $G$-equivariance of $P_M$ is equivalent to invariance of $k$ since the extension 
  is relatively measure-preserving. Since $P_M$ does not map
  into $\uL^\infty(\uZ)$, it cannot be of the form $f \cdot \E_\uZ$ for some $f\in \uL^\infty(\uZ)$ 
  and thus $k\neq \E_\uZ(k)\cdot \1_{\tilde{\uZ}\times_\uZ\tilde{\uZ}}$ since otherwise $P_M = \E_\uZ(k)\cdot\E_\uZ$.
  This shows that the extension 
  $\tilde{\uZ} \times_\uZ \tilde{\uZ} \to \uZ$ is not ergodic. By extension, the 
  composite extension 
  \begin{equation*}
   (X\times_Z X, \mu\otimes_Z \mu) \to (\tilde{Z}\times_Z\tilde{Z}, \tilde{\zeta}\otimes_Z\tilde{\zeta}) \to (Z,\zeta)
  \end{equation*}
  is not ergodic either, which proves that the extension $\pi\colon\uX \to \uZ$ is not weakly mixing.
  
  Conversely, suppose $\pi\colon \uX \to \uZ$ is not weakly mixing. 
  Then by \cref{lem:weakmixchar}, the relatively independent 
  joining $\pi\times_\uZ\pi\colon \uX \times_\uZ \uX \to \uZ$ is not ergodic.
  We distinguish two cases: If the extension $\pi\colon\uX \to \uZ$ is not ergodic, 
  then the $\uC^*$-subalgebra
  \begin{equation*}
    \mathcal{A}\defeq \uC^*\left(T_\pi(\uL^\infty(\uZ)), \uL^\infty(\uX)^G\right) \subset \uL^\infty(\uX) 
  \end{equation*}
  is readily verified to lead to a nontrivial isometric extension which proves
  the claim. So suppose now that the extension $\pi\colon \uX \to \uZ$ is ergodic
  but the extension $\uX\times_\uZ\uX \to \uZ$ is not.
  Then, there exists a $G$-invariant 
  $k \in \uL^\infty(\uX\times_\uZ\uX)$ such that $k$ is nonzero and $\E_\uZ(k) = 0$.
  As in the proof of \cref{lem:weakmixchar}, the ergodicity and relative 
  measure-preservation of the extension 
  $\uX \to \uZ$ imply that also $\E_\uX(k) = 0$.
  
  Let $K \colon \uL^2(\uX|\uZ) \to \uL^2(\uX|\uZ)$
  be the Hilbert--Schmidt homomorphism corresponding to $k$. 
  Since $\pi\colon \uX \to \uZ$ is relatively measure-preserving, 
  $G$-invariance of $k$ translates into $G$-equivariance of $K$. 
  Moreover, the conditions $k \neq 0$ and $K\1_\uX = \E_\uX(k) = 0$ 
  entail that $K \neq 0$ and 
  \begin{equation*}
    K\1_\uX = \E_\uZ(k)\cdot\1_\uX  = 0.
  \end{equation*}
  Therefore, $K$ cannot be an $\uL^\infty(\uZ)$-multiple of $\E_\uZ$ since otherwise $K = 0$,
  a contradiction.
  By \cref{prop:KHspectralthm}, we can write $K$ as an order-convergent series
  \begin{equation*}
    K = \sum_{j\in\Z} \lambda_j P_{M_j}
  \end{equation*}
  for $G$-invariant functions $\lambda_j\in \uL^\infty(\uZ)$ and $G$-equivariant 
  Hilbert--Schmidt projections $P_{M_j}\colon \uL^2(\uX|\uZ) \to \uL^2(\uX|\uZ)$
  onto finitely generated, pairwise orthogonal submodules $M_j$.
  
  Assume for the sake of contradiction that all $M_j$ lie in $\uL^\infty(\uZ)$.
  We know that $K \neq 0$ and $K\1_X = 0$. By orthogonality of the submodules $M_j$,
  the sets $\lambda_j M_j \subset M_j$ are pairwise orthogonal for $j\in\Z$ and thus
  we can conclude that $\lambda_j P_{M_j} \1_X = 0$ for all $j\in \Z$. Since each 
  $P_{M_j}$ is a $\uL^\infty(\uZ)$-module homomorphism, it follows that 
  $\lambda_j P_{M_j}(\uL^\infty(\uZ)) = \{0\}$. However, if $M_j \subset \uL^\infty(\uZ)$ 
  for all $j\in\Z$, then this means 
  \begin{equation*}
    \lambda_j P_{M_k} = 0 \qquad \forall j\in\Z.
  \end{equation*}
  Thus $K = 0$, a contradiction. Therefore, there must be a $j_0\in\Z$ such that 
  $M_{j_0} \not\subseteq \uL^\infty(\uZ)$. As in the first part of the proof, 
  since the projection $P_{M_{j_0}}$ is $G$-equivariant,
  its range $M_{j_0}$ is $G$-invariant and so we have found a nontrivial finitely-generated 
  $G$-invariant submodule in $\uL^2(\uX|\uZ)$. By \cref{lem:moduleseverywhere}, we 
  thus find a nontrivial finitely-generated $G$-invariant $\uL^\infty(\uZ)$-submodule in $\uL^\infty(\uX)$
  and thus \cref{prop:isoextmodel} concludes the proof by providing the desired nontrivial
  isometric intermediate extension.
\end{proof}

\begin{proof}[Proof of \cref{thm:nonsingularmainthm}]
  We proceed by transfinite recursion. For the induction start, set $\uX_0 = \uZ$.
  Now suppose $\mu$ is an ordinal and we have constructed a projective system
  $((\uX_\eta)_{\eta < \mu}, (\pi_\eta^\sigma)_{\eta\leq \sigma < \mu})$ of successive 
  isometric extensions between factors of $\uX$.
  \begin{itemize}
   \item If $\mu$ is a limit ordinal, set $\uX_\mu = \lim_{\from} \uX_\eta$.
   If $\uX \to \uX_\mu$ is weakly mixing, we are done. Otherwise, repeat the induction step.
   \item If $\mu$ is a successor ordinal and the extension $\uX \to \uX_{\mu-1}$ is 
   weakly mixing, we are done. Otherwise, by \cref{thm:nonsingularrelkrodichotomy} there is an isometric intermediate extension 
   $\uX \to \uX_\mu \to \uX_{\mu-1}$ and we repeat the induction step.
  \end{itemize}
  This recursion terminates after countably many steps, for otherwise $\uL^2(\uX)$ 
  would contain an uncountable orthonormal subset. This proves the desired decomposition.
\end{proof}

\section{The Kronecker dichotomy for stationary actions} \label{sec:stationarydichotomy}

In this section, we prove a generalization of the Kronecker dichotomy \cref{thm:krodichotomy}
to stationary actions in \cref{thm:statdichotomy}, essentially following \cite[Section 4]{Bjorklund2017}. 
Apart from a slight change to the proof, the only difference is that we introduce the \emph{Kronecker factor}
of a nonstationary action since it will be important for the structure-theoretic later parts. Therefore, we
formulate the dichotomy in terms of the Kronecker factor, just as in \cref{thm:krodichotomy}.

\textbf{Kronecker factors for stationary actions.} We start by recalling the definitions of weak mixing, 
stationary actions, and discussing the Kronecker factor of a stationary action.
If $(G,m)$ is a measured group, a nonsingular measurable action $G \curvearrowright \uX = (X,\Sigma, \mu)$ of $G$ on a 
probability space is called \textbf{$m$-stationary}, if $m*\mu = \mu$ where
\begin{equation*}
  m*\mu\defeq \int_G g_*\mu \,\mathrm{d}m(g).
\end{equation*}
If $(G,m)$ is a measured group and $G$ acts nonsingularly on a probability space $\uX$,
we may define
\begin{equation*}
  P_m\colon \uL^\infty(\uX) \to \uL^\infty(\uX), \quad 
  f \mapsto \int_G T_g f \dm(g)
\end{equation*}
to be the \textbf{averaging Markov operator} corresponding to the nonsingular action of $(G,m)$ on $\uX$. The 
nonsingular measure 
$\mu$ is $m$-stationary if and only if $\int P_mf \dmu = \int f \dmu$ for all $f\in\uL^\infty(\uX)$, i.e., if 
$P_m$ is a bi-Markov operator. In this case, $P_m$ extends to a linear contraction on $\uL^1(\uX)$.

Since the action $G\curvearrowright \uX$ is nonsingular, for every $s\in G$ there is a 
function $0\leq h_s\in\uL^1(\uX)$ such that $s_*\mu = h_s\mu$. As in the proof of \cref{thm:FEmagic},
one can verify that the function $G \to \uL^1(\uX)$, $s \mapsto h_s$ is strongly measurable which 
will justify integrating it along $G$.
Stationarity of $\mu$ can then be 
rephrased as 
\begin{equation*}
  \int_G  h_s \dm(s) = \1_\uX.
\end{equation*}
A straight-forward computation yields that the adjoint $P_m^*\colon \uL^\infty(\uX) \to \uL^\infty(\uX)$
can be expressed as 
\begin{equation*}
  P_m^* f(x) = \int_G h_s(x) f(s^{-1}x) \dm(s).
\end{equation*}
Clearly, $P_m^*$ is positive and $P_m^*\1_\uX = \1_\uX$, so $P_m^*$ is a Markov operator on $\uL^\infty(\uX)$.
Moreover, since $P_m\1_\uX = \1_\uX$, $P_m^*$ preserves the integral and is thus a bi-Markov operator 
on $\uL^1(\uX)$. As such, it also extends to a linear contraction on $\uL^1(\uX)$ that, using a monotone
approximation argument, is given by the same expression. The reader should note the resemblance of $P_m^*$ to
comments made in \cref{rem:stationary_dis}; this is not a coincidence and will be exploited in the proof 
of \cref{lem:bundle_inv}.

\textbf{Conditional measures.} 
We recall the following basic facts about \textbf{conditional measures} for random walks, see 
\cite[Lemma II.2.1]{BougerolLacroix1985}.
These are measures obtained from the martingale convergence 
theorem by conditioning on the random walk already having followed an infinite trajectory 
$\omega = \dots g_n, g_{n-1}, \dots, g_1$ and allow to naturally decompose a stationary measure according
to the random walks trajectories.

\begin{lemma}\label{lem:condmeasures}
  Let $G$ be a lcsc group acting measurably on a second-countable locally 
  compact space $B$, let $m$ be a probability measure on $G$, and let $\mu$ be an $m$-stationary
  probability measure on $B$. If $X_1,X_2, \dots \colon (\Omega,\mathcal{F}, \P) \to G$ is a sequence 
  of i.i.d.\ $G$-valued random variables on a probability space $(\Omega,\mathcal{F},\P)$ 
  with common distribution $m$, then the following 
  assertions are true.
  \begin{enumerate}[(i)]
    \item Let $f\colon B \to \C$ be a $\mu$-integrable measurable function. Then there is a 
    set $A_f \subset \Omega$ of measure 
    $\P(A_f) = 1$ such that for every $\omega \in A_f$ the limit 
    \begin{equation*}
      (\Gamma f)(\omega) \defeq \lim_{n\to\infty} \underbrace{\int_B f \,\ud (X_1(\omega)\cdots X_n(\omega))_*\mu}_{\eqdef(\Gamma_n f)(\omega)}
    \end{equation*}
    exists and defines a function $\Gamma f \in \uL^1(\Omega, \P)$.
    \item The linear map 
    \begin{equation*}
      \Gamma \colon \uL^1(B,\mu) \to \uL^1(\Omega, \P), \quad f \mapsto \Gamma f
    \end{equation*}
    is a bi-Markov operator, i.e., $\Gamma$ is positive, $\Gamma \1_B = \1_\Omega$, and $\Gamma$ preserves the 
    integral. In particular, $\Gamma$ is an $\uL^1$ contraction.

    \item For almost all $\omega\in\Omega$
      the limit 
      \begin{equation*}
        \mu_\omega \defeq \lim_{n\to\infty} X_1(\omega)_*\cdots X_n(\omega)_*\mu
      \end{equation*}
      exists in the weak* topology, i.e., for every $f\in \uC_0(B)$
      \begin{equation*}
        \lim_{n\to\infty} \int_B f \,\ud(X_1(\omega)\cdots X_n(\omega))_*\mu 
        = \int_B f\dmu_\omega.
      \end{equation*}
    \item More generally, for every countable subset $\mathcal{F}\subset \uL^1(B,\mu)$,
    there is a set $A_\mathcal{F} \subset \Omega$ of measure $\P(A_\mathcal{F}) = 1$
    such that for all $f\in\mathcal{F}$ and $\omega\in A_\mathcal{F}$
      \begin{equation*}
        (\Gamma f)(\omega) = \lim_{n\to\infty} \int_B f \,\ud(X_1(\omega)\cdots X_n(\omega))_*\mu 
        = \int_B f\dmu_\omega.
      \end{equation*}
    \item The conditional measures are natural in the sense that if $\pi\colon (B,\mu) \to (C,\nu)$
    is a measurable, measure-preserving factor map to another (necessarily stationary) 
    action $G \curvearrowright(C, \nu)$ on a second countable locally compact space, then almost 
    surely $\pi_*\mu_\omega = \nu_\omega$.
    \item Conditional measures are almost surely preserved by choosing a different topological model
    $G \curvearrowright (B', \mu')$ for the action $G \curvearrowright (B, \mu)$, i.e., if 
    $\pi\colon (B,\mu) \to (B',\mu')$ is a $G$-equivariant essentially invertible measurable
    map, then almost surely $\pi_*\mu_\omega = \mu_\omega'$.
    \item If $f\colon B \to \C$ is a $\mu$-integrable measurable function, then 
    it is also $\mu_\omega$-integrable almost surely and one has the barycenter equation
    \begin{equation*}
      \int_B f\dmu = \E_\omega\left(\int_B f \dmu_\omega\right).
    \end{equation*}

    \item Let $\pi\colon (B,\mu) \to (C,\nu)$ be a measurable, measure-preserving factor map to another stationary action 
    $G \curvearrowright (C, \nu)$. Suppose $\pi$ is relatively 
    measure-preserving and $(\mu_c)_{c\in C}$ is a disintegration of $\mu$ over $\nu$. Then 
    almost surely 
    \begin{equation*}
      \mu_\omega = \int_C \mu_c \dnu(c).
    \end{equation*}
    \item Suppose $\omega\in\Omega$ is such that the limit $\mu_\omega$ exists, then it 
    satisfies the following invariance property: 
    For $\lambda \defeq \sum_{n=0}^\infty 2^{-(n+1)}m^{*n}$ one has that $\lambda$-almost 
    every $g\in G$ satisfies
    \begin{equation*}
    \mu_\omega = \lim_{n\to\infty} (X_1(\omega)\cdots X_n(\omega))_*g_*\mu.
    \end{equation*}
  \end{enumerate}
\end{lemma}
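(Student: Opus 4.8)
The engine of the whole lemma is a single martingale. Writing $S_n(\omega) \defeq X_1(\omega)\cdots X_n(\omega)$ and fixing the filtration $\mathcal{F}_n \defeq \sigma(X_1,\dots,X_n)$, I would first check that for every bounded measurable $f\colon B\to\C$ the sequence
\[
  M_n^f(\omega) \defeq \int_B f \,\ud(S_n(\omega))_*\mu
\]
is a bounded $\mathcal{F}_n$-martingale: conditioning on $\mathcal{F}_n$ and using the independence of $X_{n+1}$ together with the stationarity identity $m*\mu=\mu$ collapses $\E[M_{n+1}^f\mid\mathcal{F}_n]$ to $M_n^f$. The martingale convergence theorem then yields (i) for bounded $f$, both almost surely and in $\uL^1(\Omega,\P)$, and defines $\Gamma f$. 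Positivity, $\Gamma\1_B=\1_\Omega$, and --- via dominated convergence --- integral preservation are then immediate, so that $\Gamma$ is a bi-Markov $\uL^1$-contraction; this is (ii). Extending $\Gamma$ by continuity to $\uL^1(B,\mu)$ and identifying the extension with the almost-sure limit of the ($\uL^1$-bounded) martingale $M_n^f$ for general $\mu$-integrable $f$ --- approximate by bounded functions $f_j\to f$ and use $\sup_n\|M_n^{f}-M_n^{f_j}\|_{\uL^1(\Omega)}\le\|f-f_j\|_{\uL^1(B,\mu)}$ together with Fatou --- completes (i).

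For (iii) I would apply (i) to a countable $\|\cdot\|_\infty$-dense subset of the separable space $\uC_0(B)$; on the intersection of the corresponding full-measure sets the limits exist for a dense set of test functions, and since the $(S_n(\omega))_*\mu$ are probability measures (hence norm-bounded functionals) the limit extends to all of $\uC_0(B)$ and defines, by Riesz--Markov, a positive measure $\mu_\omega$ of total mass $\le 1$. The barycenter identity (vii) is cleanest proved as an equality of measures: the kernel $\omega\mapsto\mu_\omega$ is measurable, and the measure $A\mapsto\E_\omega\mu_\omega(A)$ agrees with $\mu$ on $\uC_0(B)$ (this is exactly $\E_\omega\Gamma f=\int_B f\dmu$), so the two Radon measures coincide; Tonelli then gives $\int_B f\dmu=\E_\omega\int_B f\dmu_\omega$ for nonnegative measurable $f$, and in particular almost-sure $\mu_\omega$-integrability of every $\mu$-integrable $f$. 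With (vii) in hand, (iv) follows by approximating $f\in\uL^1(B,\mu)$ by functions $f_j\in\uC_0(B)$: then $\Gamma f_j\to\Gamma f$ and $\int f_j\dmu_\omega\to\int f\dmu_\omega$ both hold in $\uL^1(\Omega)$ (the latter because $\E_\omega\int|f_j-f|\dmu_\omega=\int|f_j-f|\dmu\to0$), so along a subsequence the identity $\Gamma f(\omega)=\int f\dmu_\omega$ holds almost surely; intersecting over the countable family gives (iv).

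The naturality statements are then formal. For (v), equivariance of $\pi$ (applied for $m^{*n}$-almost every group element via Fubini) and $\pi_*\mu=\nu$ give the exact identity $\pi_*(S_n)_*\mu=(S_n)_*\nu$, so the martingales for $f\circ\pi$ on $B$ and for $f$ on $C$ coincide for every $n$; passing to the limit and invoking (iv) yields $\pi_*\mu_\omega=\nu_\omega$ almost surely, and (vi) is the special case in which $\pi$ is an essential isomorphism. For (viii) I would use that relative measure-preservation means $g_*\mu_c=\mu_{gc}$, whence after the substitution $c'=S_n c$
\[
  (S_n(\omega))_*\mu=\int_C \mu_{c'}\,\ud\big((S_n(\omega))_*\nu\big)(c').
\]
Consequently, for $f\in\uC_0(B)$ with fibrewise average $\bar f(c)\defeq\int f\dmu_c$, one has $\int f\,\ud(S_n)_*\mu=\int_C\bar f\,\ud(S_n)_*\nu$ for every $n$; letting $n\to\infty$ and using (iv) on both $B$ and $C$ together with $\bar f\circ\pi=\E_\nu(f\mid\pi)\in\uL^1(C,\nu)$ gives $\int f\dmu_\omega=\int_C\bar f\dnu_\omega=\int f\,\ud\big(\int_C\mu_c\dnu_\omega(c)\big)$, i.e.\ $\mu_\omega=\int_C\mu_c\dnu_\omega(c)$.

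Finally, (ix) is the delicate point and where I expect the main obstacle. Averaging over $g$ is harmless --- since $\lambda*\mu=\sum_k 2^{-(k+1)}m^{*k}*\mu=\mu$, one has $\E_{g\sim\lambda}\big[(S_n)_*g_*\mu\big]=(S_n)_*\mu\to\mu_\omega$ --- but the extra steps $g=Y_1\cdots Y_K$ sit at the \emph{inner} end of the product $S_n g$, so $\int f\,\ud(S_n g)_*\mu$ is no longer a martingale in $n$, and the averaged convergence must be upgraded to a $\lambda$-almost-everywhere statement. My plan is to run the argument on the product space $(\Omega\times G,\P\otimes\lambda)$: realising $\Omega$ as the sequence space with shift $\sigma$, one notes that $\mu_\omega=(X_1)_*\mu_{\sigma\omega}$, that the limit $\mu_\omega^g\defeq\lim_n(S_n g)_*\mu$ (once shown to exist) satisfies the same cocycle relation $\mu_\omega^g=(X_1)_*\mu_{\sigma\omega}^g$, and then uses this relation together with the vanishing of the $g$-fluctuations --- obtained from the $\lambda$-average computation and a maximal inequality --- to force $\mu_\omega^g=\mu_\omega$ for $\lambda$-almost every $g$. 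Controlling these fluctuations, equivalently showing that $g\mapsto\mu_\omega^g$ is $\lambda$-almost everywhere constant, is the technical heart of the statement; this is precisely the content of \cite[Lemma II.2.1]{BougerolLacroix1985}, whose argument I would adapt.
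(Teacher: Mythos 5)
Your proposal is correct and follows essentially the same route as the paper: the martingale $\Gamma_n f$ plus martingale convergence for (i)--(ii), a countable dense subset of $\uC_0(B)$ and Riesz representation for (iii), the same disintegration computation for (viii), and a deferral of the delicate invariance statement (ix) to \cite[Lemma II.2.1]{BougerolLacroix1985}. The only local difference is in (iv): the paper runs a functional monotone class argument (starting from $\uC_0(B)$ and closing under bounded pointwise limits, then cutting off) and treats (vii) as a byproduct of (ii) and (iv), whereas you first establish the barycenter identity (vii) as an equality of Radon measures and then deduce (iv) by $\uL^1$-approximation; both arguments are sound.
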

\begin{proof}
  By linearity, we only need to prove (i) for positive functions. For positive 
  and measurable $f\colon B \to \R$,
  \begin{align*}
    \int_\Omega \int_B f \,\ud (X_1(\omega)\cdots X_n(\omega))_*\mu  \dP(\omega)
    = \int_B f \,\ud m^{*n}*\mu 
    = \int_B f \dmu.
  \end{align*}
  This shows that $\Gamma_nf \in \uL^1(\Omega,\P)$ and 
  $\|\Gamma_n f\|_{\uL^1(\Omega,\P)} \leq \|f\|_{\uL^1(B, \mu)}$.
  It is easy to verify that the sequence $(\Gamma_nf)_n$ defines a martingale, see 
  \cite[Lemma II.2.1]{BougerolLacroix1985}. Since its $\uL^1$-norm is bounded by $\|f\|_{\uL^1(B, \mu)}$,
  the martingale convergence theorem shows that there is a function $\Gamma f \in \uL^1(\P, \Omega)$ 
  such that $\Gamma_n f \to \Gamma f$ almost surely.
  
  Assertion (ii) is readily verified using that $\mu$ is $m$-stationary and the inequality 
  $|\Gamma f| \leq \Gamma |f|$ that follows from positivity. For (iii), let $\mathcal{F} \subset \uC_0(B)$
  be a countable dense subset of $\uC_0(B)$ and choose for every $f\in\mathcal{F}$ a set $A_f$ as in (i).
  If we set $A_\mathcal{F} \defeq \bigcap_{f\in\mathcal{F}} A_f$, then for all $\omega \in A_{\mathcal{F}}$
  \begin{equation*}
    \Gamma f(\omega) = \lim_{n\to\infty}\int_B f \,\ud (X_1(\omega)\cdots X_n(\omega))_*\mu 
  \end{equation*}
  exists. Thus, the sequence $(X_1(\omega)\cdots X_n(\omega))_*\mu$ of probability measures 
  converges to some probability measure $\mu_\omega$ on $\mathcal{F}$ and hence on $\uC_0(B)$.
  
  For (iv), let $\mathcal{F} \subset \uL^1(B,\mu)$ be a countable subset and 
  let $A \subset \Omega$ be a subset of measure $\P(A) = 1$ such that for all 
  $\omega \in A$ we have $(X_1(\omega)\cdots X_n(\omega))_*\mu \to \mu_\omega$ in the 
  weak* topology. We may assume without loss of generality that $\mathcal{F} = \{f\}$ 
  and that for all $\omega\in A$
  \begin{equation*}
    (\Gamma f)(\omega) = \lim_{n\to\infty} \int_B f \,\ud (X_1(\omega)\cdots X_n(\omega))_*\mu
  \end{equation*}
  exists. We claim that for almost every $\omega$ one has $(\Gamma f)(\omega) = \int_B f \dmu_\omega$.
  Recall that the smallest subspace of the bounded measurable functions $\uB(B)$ that contains 
  $\uC_0(B)$ and is closed 
  under bounded pointwise convergence of sequences is precisely $\uB(B)$. So suppose 
  $(g_n)_n$ is a sequence of bounded measurable functions such that almost surely
  \begin{equation*}
    (\Gamma g_n)(\omega) = \int_B g_n \dmu_\omega.
  \end{equation*}
  If the $g_n$ are uniformly bounded and converge pointwise to $g\in\uB(B)$,
  it follows from Lebesgue's theorem that $\int_B g_n \dmu_\omega \to \int_B g \dmu_\omega$.
  On the other hand, $g_n \to g$ in $\uL^1(B,\mu)$, so $\Gamma g_n \to \Gamma g$ in $\uL^1(\Omega,\P)$.
  By passing to a subsequence, we may assume that $\Gamma g_n \to \Gamma g$ almost everywhere w.r.t.\ $\P$.
  Thus, almost surely $(\Gamma g)(\omega) = \int_B g\dmu_\omega$. It follows that, for every $g\in \uB(B)$,
  almost surely $(\Gamma g)(\omega) = \int_B g\dmu_\omega$. Repeating a similar argument using cutoffs 
  and the dominated convergence theorem 
  proves the statement for arbitrary $g\in\uL^1(B,\mu)$ and hence for the function $f$ initially considered.
  
  Assertions (v) and (vi) follow from (iv). Part (vii) is a reformulation of (ii) and 
  for statement (ix), see \cite[Lemma II.2.1]{BougerolLacroix1985}.
  It remains to prove (viii), so let $\pi\colon (B,\mu) \to (C,\nu)$ be a measure-preserving and $G$-equivariant measurable
  map to another stationary action $G \curvearrowright (C, \nu)$.
  
  Then by (iii), there is a set $F \subset \Omega$ of measure $\P(F) = 1$ such that 
  for all $\omega \in F$, the following weak*-limits exist:
  \begin{align*}
    \mu_\omega = \lim_{n\to\infty} (\omega_1\cdots \omega_n)_*\mu, \\
    \nu_\omega = \lim_{n\to\infty} (\omega_1\cdots \omega_n)_*\nu.
  \end{align*}
  Let
  $\mathcal{F} \subset \uC_0(B)$ be a countable dense subset and disintegrate 
  $\mu$ over $\nu$ as $(\mu_c)_{c\in C}$. Then applying (iv) to the set 
  \begin{equation*}
   \mathcal{G} \defeq \{c \mapsto \langle f, \mu_c\rangle \mid f \in \mathcal{F}\}
  \end{equation*}
  we obtain a set $G \subset F \subset \Omega$ of measure $\P(G) = 1$ such that 
  for all $\omega \in G$
  \begin{equation*}
    \lim_{n\to\infty} \int_C \langle f, \mu_c\rangle \,\ud (\omega_1 \dots \omega_n)_* \nu(c) 
    = \int_C \langle f, \mu_c\rangle \dnu_\omega(c).
  \end{equation*}
  Thus, the assumption that $\pi\colon (B,\mu) \to (C,\nu)$ is relatively measure-preserving
  implies that for every $\omega \in G$ and $f\in \mathcal{F}$
  \begin{align*}
    \int_B f(x) \dmu(x) 
    &= \lim_{n\to\infty} \int_B f(x) \,\ud(\omega_1\cdots \omega_n)_*\mu \\
    &= \lim_{n\to\infty} \int_B f(\omega_1\cdots\omega_n x) \,\ud\mu(x) \\
    &= \lim_{n\to\infty} \int_C \int_{X_c} f(\omega_1\cdots\omega_n x) \,\ud\mu_c(x) \dnu(c) \\
    &= \lim_{n\to\infty} \int_C \langle f, (\omega_1\cdots\omega_n)_*\mu_{c}\rangle \dnu(c) \\
    &= \lim_{n\to\infty} \int_C \langle f, \mu_{\omega_1\cdots\omega_nc}\rangle \dnu(c) \\
    &= \lim_{n\to\infty} \int_C \langle f, \mu_{c}\rangle \,\ud(\omega_1\cdots\omega_n)_*\nu(z) \\
    &= \int_C \langle f, \mu_c \rangle \dnu_\omega(c) \\
    &= \int_C \int_{X_c} f(x) \dmu_c(x) \dnu_\omega(x).
  \end{align*}
  Thus, for relatively measure-preserving extensions, the disintegration of $\mu_\omega$ w.r.t.\ 
  $\nu_\omega$ is almost surely
  \begin{equation*}
    \mu_\omega = \int_C \mu_c \dnu_\omega(c).
  \end{equation*}
\end{proof}

\begin{remark}\label{rem:condmeas} Before we proceed, we collect the following remarks on conditional measures.
  \begin{enumerate}[(i)]
   \item Per definition, conditional measures can only be considered for actions on (locally)
  compact spaces. However, we can always choose a standard probability model and the lemma
  above shows that the conditional measures do not depend on the choice of topological model.
  We may therefore speak of conditional measures on standard probability spaces without ambiguity.
   \item The action in \cref{lem:condmeasures} above is measure-preserving if and only if 
  all conditional measures $\mu_\omega$ equal $\mu$ and it is called $m$-\textbf{proximal} 
  if $\mu_\omega$ is a Dirac measure for almost every $\omega\in\Omega$. In this case,
  every factor is $m$-proximal and in particular, there can be no nontrivial measure-preserving 
  factor. More generally, a factor map $\pi\colon (B,\mu) \to (C, \nu)$ of $m$-stationary actions
  is called a \textbf{proximal factor map} if the induced maps $(B,\mu_\omega) \to (C,\nu_\omega)$ 
  are almost surely isomorphisms.

  \item Note that our definition of proximal extensions differs slightly from that 
  given in \cite{FurstenbergGlasner2010} (see the moderating text before 
  \cite[Proposition 1.2]{FurstenbergGlasner2010}). There, it is only required
  that the map $\pi\colon (X,\mu_\omega) \to (Y, \nu_\omega)$ is $\P$-almost
  surely $\mu_\omega$-almost everywhere one-to-one; measurability of the inverse
  and hence essential invertibility is not required there. However, they 
  work in the setting of standard Borel spaces and it is a result from descriptive
  set theory that if $f\colon X \to Y$ is a Borel measurable map between standard
  Borel spaces and $A \subset X$ is a Borel measurable subset such that $f|_A$ is
  injective, then $f(A)$ is Borel measurable and $f|_A \colon A \to f(A)$ 
  is a Borel isomorphism (see \cite[Corollary 15.2]{Kechris1995}). 
  Thus, in the setting of standard probability spaces, 
  the two definitions of proximal extensions are equivalent. We choose to work with 
  the more convenient definition in terms of isomorphy to avoid further reference to 
  descriptive set theory.
  \end{enumerate}  
\end{remark}

\textbf{The Halmos--von Neumann representation theorem for stationary actions.} 
Next, we turn towards the stationary version of the classical representation theorem 
for isometric actions.

\begin{lemma}\label{lem:HvN}
  Let $G \curvearrowright \uX$ be a nonsingular ergodic action. Then 
  the following assertions are equivalent.
  \begin{enumerate}[a)]
    \item The action of $G$ on $\uX$ is isometric.
    \item The action of $G$ on $\uX$ is isomorphic to 
    a minimal action of $G$ on a compact homogeneous space via rotations. 
  \end{enumerate}
  If these hold and $\mu$ is $m$-stationary for a probability measure on $G$, 
  then it is $G$-invariant and it is the 
  unique $m$-stationary measure of the action. Moreover, the homogeneous space in b)
  is necessarily metrizable in this case.
\end{lemma}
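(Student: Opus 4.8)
The plan is to prove the equivalence a) $\Leftrightarrow$ b) by a nonsingular adaptation of the Halmos--von Neumann construction, and then deduce the stationary statements from the theory of random walks on compact groups. The direction b) $\Rightarrow$ a) is the soft one and uses neither ergodicity nor stationarity: suppose $G \curvearrowright \uX$ is isomorphic to a minimal rotation action on $K/H$, where $K$ is a compact group, $H\subset K$ a closed subgroup, and the action is induced by a continuous homomorphism $\iota\colon G \to K$ with dense image. By the Peter--Weyl and Stone--Weierstrass theorems the matrix coefficients of finite-dimensional representations of $K$ form a sup-norm-dense unital $*$-subalgebra of $\uC(K/H)$, hence (continuous functions being dense in $\uL^2$ for any Borel probability measure on a compact Hausdorff space) a dense subspace of $\uL^2(\uX)$. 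For each finite-dimensional representation $\pi$ of $K$, the span of its matrix coefficients is a finite-dimensional subspace of $\uC(K/H)\subset \uL^\infty(\uX)$ invariant under left translation, hence under $\iota(G)$ and thus $G$-invariant. This exhibits $\uL^2(\uX)$ as the closed span of finite-dimensional $G$-invariant subspaces of $\uL^\infty(\uX)$, i.e.\ the action is isometric.

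For the converse a) $\Rightarrow$ b), the key observation is that although $G$ does not act unitarily on $\uL^2(\uX)$ in the nonsingular setting, each Koopman operator $T_g$ is a $\uC^*$-automorphism of $\uL^\infty(\uX)$ and hence preserves the sup-norm. Consequently, for every finite-dimensional $G$-invariant subspace $F\subset \uL^\infty(\uX)$ the operators $\{T_g|_F \mid g\in G\}$ form a group of linear isometries of $(F, \|\cdot\|_{\uL^\infty(\uX)})$; since the isometry group of a finite-dimensional normed space is compact, the closure $K_F\subset \GL(F)$ is a compact group and $g\mapsto T_{g^{-1}}|_F$ a continuous homomorphism with dense image (continuity because $g\mapsto T_g f$ is $\uL^1$-continuous by \cref{thm:FEmagic} and $\uL^1$ induces the unique vector topology on the finite-dimensional $F$). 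I would then set $K \defeq \overline{\{(T_{g^{-1}}|_F)_F \mid g\in G\}} \subset \prod_F K_F$, a compact group into which $G$ maps with dense image via a homomorphism $\iota$. Writing $\mathcal{A} \defeq \overline{\bigcup_F F}^{\|\cdot\|_{\uL^\infty(\uX)}}$ for the $\uC^*$-algebra generated by these subspaces (it is an algebra since $T_g$ is multiplicative, so the product of two such subspaces is again one), each $k\in K$ acts on $\bigcup_F F$ as a sup-norm-preserving algebra homomorphism (the compatibility across different $F$ following since limits commute with products) and therefore extends to a $\uC^*$-automorphism of $\mathcal{A}$; via \cref{lem:flowcont} this yields a continuous $K$-action on the Gelfand spectrum $L$ of $\mathcal{A}$ extending the $G$-action through $\iota$. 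As the isometric assumption makes $\bigcup_F F$ dense in $\uL^2(\uX)$ and hence $\uL^1$-dense, \cref{factor-algebra-corr} identifies $(L,\nu)$ with a topological model of $\uX$.

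It remains to show $L$ is homogeneous, which is where ergodicity enters. Because $\iota(G)$ is dense in $K$, every $K$-invariant continuous function on $L$ is $G$-invariant, hence almost everywhere constant by ergodicity, hence constant since the model measure has full support. Thus $\uC(L)^K = \uC(L/K) = \C$, so the compact group $K$ acts transitively and $L\cong K/H$ with $H$ a point stabilizer; the dense image of $\iota$ makes the rotation action minimal, proving b). Metrizability holds because $\uL^2(\uX)$ is separable ($\uX$ being standard), so a countable increasing family of finite-dimensional invariant subspaces already spans; hence $K$ embeds into a countable product of compact metrizable groups $K_F$ and is itself metrizable, as is $K/H$.

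Finally, assume $\mu$ is $m$-stationary and identify $\uX$ with $K/H$ as above. Pushing $m$ forward gives $\bar m \defeq \iota_* m$ on $K$, whose support generates a dense subgroup of $K$ (since $\supp(m)$ generates a dense subgroup of $G$ and $\iota$ has dense image). The classical fact that on a compact group the Cesàro averages $\frac{1}{N}\sum_{n=0}^{N-1}\bar m^{*n}$ converge weak${}^*$ to normalized Haar measure $\lambda_K$ then forces any $m$-stationary measure $\mu'$ on $K/H$ to satisfy $\mu' = \frac{1}{N}\sum_{n=0}^{N-1}\bar m^{*n} * \mu' \to \lambda_K * \mu'$, and since convolving with Haar measure yields the $K$-invariant probability measure $m_{K/H}$ on $K/H$, we get $\mu' = m_{K/H}$. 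This simultaneously shows that the $m$-stationary measure is unique and equal to the $K$-invariant measure, which is in particular $G$-invariant. I expect the main obstacle to be the nonsingular a) $\Rightarrow$ b) construction --- specifically, verifying that the pointwise closure $K$ genuinely acts by homeomorphisms on a topological model of $\uX$ rather than merely on $\uL^2$, and that this model is the correct one --- since the absence of a unitary $\uL^2$-representation is exactly the feature distinguishing the nonsingular theory from the classical measure-preserving Halmos--von Neumann theorem.
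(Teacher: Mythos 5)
Your proof is correct, and the two halves of it relate to the paper's proof differently. For the equivalence a) $\Leftrightarrow$ b) you are doing essentially what the paper does, only in much more detail: the paper passes to the topological model associated with the algebra generated by the finite-dimensional invariant subspaces and invokes the Ellis semigroup of that model, observing that it is a compact group acting transitively (by ergodicity plus full support of the model measure, exactly your $\uC(L)^K = \C$ argument); your explicit realization of this enveloping group as the closure of $\{(T_{g^{-1}}|_F)_F\}$ inside $\prod_F K_F$, together with the verification that it acts by $\uC^*$-automorphisms and hence by homeomorphisms of the Gelfand spectrum, is a worked-out version of the same construction, and your use of \cref{thm:FEmagic} for continuity and of separability for metrizability matches the paper's reliance on the same ingredients. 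Where you genuinely diverge is the stationary statement. The paper runs the random walk, takes conditional measures $\mu_\omega = \lim (X_1\cdots X_n)_*\mu$ from the martingale convergence theorem, and uses the invariance property $\mu_\omega = \lim (X_1\cdots X_n)_*g_*\mu$ for $\lambda$-a.e.\ $g$ (part (ix) of \cref{lem:condmeasures}) together with compactness of $K$ to cancel the limit and conclude $g_*\mu = \mu$; uniqueness then follows from unique ergodicity of the minimal rotation. You instead push $m$ forward to $K$ and invoke the Kawada--It\^{o} theorem that the Ces\`{a}ro averages $\frac{1}{N}\sum_n \bar m^{*n}$ converge weak$^*$ to Haar measure whenever the closed (semi)group generated by $\supp(\bar m)$ is all of $K$ (which holds here because a closed subsemigroup of a compact group is a subgroup and $\iota(\langle\supp(m)\rangle)$ is dense). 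This yields invariance and uniqueness in one stroke, avoids the conditional-measure machinery entirely, and is in fact the same Ces\`{a}ro convergence the paper later needs in \cref{cor:kro_mpr}, where it is justified via Breiman's law \emph{using} \cref{lem:HvN}; your route would let one prove that convergence independently. The trade-off is that you import a classical external theorem, whereas the paper's argument stays within the conditional-measure framework it has already set up and will reuse throughout.
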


\begin{proof}
  Define the $\uC^*$-subalgebra
  \begin{equation*}
    \mathcal{A} \defeq \overline{\bigcup\left\{F \subset \uL^\infty(\uX) \mmid \begin{matrix}
                                                                                F \text{ finite-dimensional } \\
                                                                                \text{and } G\text{-invariant}
                                                                               \end{matrix}
  \right\}}^{\uL^\infty(\uX)}.
  \end{equation*}
  Since $\mathcal{A}$ is $G$-invariant, it corresponds to a topological model $G \curvearrowright (M, \mathcal{B}, \nu)$ 
  for the action with
  \begin{equation*}
    \uC(M) = \overline{\bigcup\left\{F \subset \uC(M) \mmid \begin{matrix}
                                                                                F \text{ finite-dimensional } \\
                                                                                \text{and } G\text{-invariant}
                                                                               \end{matrix}
  \right\}}^{\uC(M)}.
  \end{equation*}
  It follows that the Ellis semigroup $\uE(M, G)$ is a compact group of continuous maps that acts transitively 
  and hence uniquely ergodically on $M$ and so $G \curvearrowright M$ is isomorphic to the action 
  $G \curvearrowright \uE(M,G)/\operatorname{Stab}(x)$
  for any $x\in M$.
  
  Now suppose additionally that $\mu$ is an $m$-stationary ergodic measure and $X = K/H$ is a 
  homogeneous space of some compact group $K$ on which $G$ acts uniquely ergodically via a map 
  $\phi\colon G \to K$. Consider a random walk of $G$ on $X$ with law $m$ and let 
  $\mu_\omega$ be one of the conditional measures. Let 
  $\lambda \defeq \sum_{n=0}^\infty 2^{-(n+1)}m^{*n}$, then \cref{lem:condmeasures} shows that for every $g\in\supp(\lambda)$
  \begin{equation*}
     \lim_{n\to\infty} X_1(\omega)_*\cdots X_n(\omega)_*\mu
    = \lim_{n\to\infty} X_1(\omega)_*\cdots X_n(\omega)_*g_*\mu.
  \end{equation*}
  In other words,
  \begin{equation*}
    \lim_{n\to\infty} \phi\left(X_1(\omega)\right)\cdots \phi\left(X_n(\omega)\right)\mu
   =  \lim_{n\to\infty} \phi\left(X_1(\omega)\right)\cdots \phi\left(X_n(\omega)\right)\phi(g)\mu.
  \end{equation*}
  If we use that $K$ is compact and pass to a subsequence, we may assume that 
  $\phi(X_1(\omega))\cdots \phi(X_n(\omega))$ converges to some element $k\in K$. In that 
  case, we obtain $k\mu = kg\mu$ and hence $\mu = g\mu$. Thus, we have shown that $\mu$ is 
  invariant under $\supp(\lambda)$ and since $\supp(m)$ generates $G$ as a group, it follows
  that $\mu$ is $G$-invariant. Since the action of $G$ on $K/H$ is uniquely ergodic, $\mu$ must be the 
  Haar measure. Finally, in this case, the metrizability of $K/H$ follows purely from the 
  fact that $\uL^1(\uX) \cong \uL^1(K/H)$ is separable, see \cref{lem:metrizable}.
\end{proof}

\begin{lemma}\label{lem:metrizable}
  Let $K$ be a compact group and $H \subset K$ a closed subgroup, let
  $(N_j)_{j\in J}$ be the collection of irreducible $K$-left-invariant subspaces
  of $\uL^2(K/H)$. Then the $N_j$ are finite-dimensional, mutually orthogonal,
  \begin{equation*}
   \uL^2(K/H) = \overline{\lin\left\{ N_j \mmid j \in J\right\}}^{\uL^2(K/H)}
  \end{equation*}
  and 
  \begin{equation*}
   \uC(K/H) = \overline{\lin\left\{ N_j \mmid j \in J\right\}}^{\uC(K/H)}.
  \end{equation*}
  In particular, $\uL^2(K/H)$ is separable if and only if $\uC(K/H)$ is separable.
\end{lemma}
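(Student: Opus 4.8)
The plan is to reduce everything to the Peter--Weyl theorem for the compact group $K$ and then transfer the conclusions to the homogeneous space $K/H$ by pulling functions back along the quotient map. Let $p\colon K \to K/H$ be the canonical projection. Normalizing the Haar measures compatibly (Weil's quotient integral formula), the Koopman operator $T_p\colon f \mapsto f\circ p$ is a $K$-equivariant isometric embedding of $\uL^2(K/H)$ onto the closed subspace $\uL^2(K)^H$ of functions invariant under right translation by $H$, and likewise $T_p$ identifies $\uC(K/H)$ with the right-$H$-invariant continuous functions $\uC(K)^H$; in both cases the left-regular representation of $K$ corresponds to the left-rotation action on $K/H$. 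It therefore suffices to prove the analogous statements inside $\uL^2(K)$ and $\uC(K)$ and intersect with the $H$-invariant part.

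By the Peter--Weyl theorem, every irreducible unitary representation of the compact group $K$ is finite-dimensional, and $\uL^2(K)$ is the orthogonal Hilbert-space direct sum of finite-dimensional $K$-invariant subspaces spanned by matrix coefficients. Since the unitary representation of $K$ on $\uL^2(K/H)\cong\uL^2(K)^H$ is a subrepresentation, complete reducibility of unitary representations of compact groups (every closed invariant subspace admits an invariant orthogonal complement, combined with a Zorn's lemma argument) yields an orthogonal decomposition of $\uL^2(K/H)$ into finite-dimensional irreducible $K$-invariant subspaces. Reading $(N_j)_{j\in J}$ as such a decomposing family (non-isomorphic irreducibles being automatically orthogonal), this gives the asserted finite-dimensionality, mutual orthogonality, and $\uL^2$-density of the span.

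For the uniform density, recall that the span of all matrix coefficients is a conjugation-closed, point-separating, unital subalgebra of $\uC(K)$, hence uniformly dense by the Stone--Weierstrass theorem. Averaging over $H$ via the contraction $A\colon \uC(K)\to\uC(K)^H$, $(Af)(k)=\int_H f(kh)\,\ud\um_H(h)$, maps $\uC(K)$ onto $\uC(K)^H$ and sends a matrix coefficient $k\mapsto\langle \pi(k)v,w\rangle$ to $k\mapsto\langle \pi(k)Pv,w\rangle$, where $P=\int_H \pi(h)\,\ud\um_H(h)$ projects onto the $H$-fixed vectors; thus $A$ preserves the span of matrix coefficients. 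Consequently the right-$H$-invariant matrix coefficients are uniformly dense in $\uC(K)^H$, and since these are precisely $T_p(\lin\{N_j \mmid j\in J\})$, the span of the $N_j$ is dense in $\uC(K/H)$.

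Finally, the separability equivalence follows formally. If $\uL^2(K/H)$ is separable then the orthogonal family $(N_j)$ must be countable, since otherwise the union of orthonormal bases of the $N_j$ would be an uncountable orthonormal set; a countable $\Q$-linear span of these bases is then uniformly dense in $\uC(K/H)$ by the previous paragraph, so $\uC(K/H)$ is separable. Conversely, as $\uC(K/H)$ embeds $\uL^2$-densely into $\uL^2(K/H)$ with $\|\cdot\|_{\uL^2}\le\|\cdot\|_{\uC}$, any uniformly dense countable set is $\uL^2$-dense, so separability of $\uC(K/H)$ implies that of $\uL^2(K/H)$. I expect the one genuinely delicate point to be the uniform (rather than merely $\uL^2$) density in $\uC(K/H)$: this is where the continuous form of Peter--Weyl and the compatibility of the $H$-averaging operator with matrix coefficients are needed, whereas the $\uL^2$-decomposition and finite-dimensionality are immediate from complete reducibility.
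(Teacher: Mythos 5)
Your proposal is correct and follows essentially the same route as the paper: the paper also reduces to the case $H=\{e\}$ via the Peter--Weyl theorem (citing Folland) and then transfers the statements to $K/H$ using the averaging contraction $P_H\colon \uC(K)\to\uC(K/H)$, $(P_Hf)(x)=\int_H f(xh)\,\ud\um(h)$, which is exactly your operator $A$. Your write-up merely supplies the details (the action of $P_H$ on matrix coefficients and the separability bookkeeping) that the paper leaves implicit.
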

\begin{proof}
  If $H = \{e\}$, the statements can be found in \cite[Section 5.2]{Folland2015}. To reduce
  the general case to this, use the conditional 
  expectation 
  \begin{equation*}
    P_H \colon \uL^2(K) \to \uL^2(K/H), \quad (P_Hf)(x) = \int_H f(xh)\dm(h)
  \end{equation*}
  and the fact that $P_H$ restricts to a surjective contraction 
  $P_H \colon \uC(K) \to \uC(K/H)$.
\end{proof}

\begin{corollary}\label{cor:kro_mpr}
  Every nonsingular action $G \curvearrowright (X, \mu)$ has a maximal isometric factor.
  If $(G,m)$ is a measured group and the measure of the maximal isometric factor is $m$-stationary, 
  then it has to be invariant.
\end{corollary}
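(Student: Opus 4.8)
The plan is to prove the two assertions separately, reusing the constructions already set up for \cref{lem:HvN}. For the existence of a maximal isometric factor, I would set
\begin{equation*}
  \mathcal{A} \defeq \overline{\bigcup\left\{F \subset \uL^\infty(\uX) \mmid F \text{ finite-dimensional and } G\text{-invariant}\right\}}^{\uL^\infty(\uX)}
\end{equation*}
and check that $\mathcal{A}$ is a unital $G$-invariant $\uC^*$-subalgebra. The only point requiring an argument is stability of the generating family under the algebra operations: for finite-dimensional $G$-invariant $F_1, F_2$, the span $F_1F_2 \defeq \lin\{f_1f_2 \mmid f_i \in F_i\}$ is again finite-dimensional and $G$-invariant since each $T_g$ is a multiplicative $\uC^*$-automorphism of $\uL^\infty(\uX)$, and $G$-invariance is preserved under complex conjugation; together with $\C\1_\uX$ this makes $\mathcal{A}$ a $\uC^*$-algebra. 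By \cref{factor-algebra-corr} it corresponds to a factor that is isometric by construction, and it is maximal because the algebra of any isometric factor is, by definition, generated by finite-dimensional $G$-invariant subspaces and hence sits inside $\mathcal{A}$.

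For the second assertion, write $(Y,\nu)$ for the maximal isometric factor and assume $\nu$ is $m$-stationary. First I would use \cref{factor-algebra-corr} to pass to a compact metric topological model $G\curvearrowright(M,\nu)$ in which $\uC(M)$ is the closed linear span of its finite-dimensional $G$-invariant subspaces, metrizability coming from separability as in \cref{lem:metrizable}. The structural input I would then establish is that the Ellis semigroup $E\defeq\uE(M,G)$ is a compact group. This holds because on each finite-dimensional $G$-invariant subspace $F\subset\uC(M)$ the Koopman operators $T_g$ act as $\|\cdot\|_{\uC(M)}$-isometries, so the image of $G$ in $\mathrm{GL}(F)$ is bounded with relatively compact closure; since such subspaces densely span $\uC(M)$, the semigroup $E$ consists of homeomorphisms and is a compact group. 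In contrast to \cref{lem:HvN}, I do not need ergodicity here, so $E$ need not act transitively.

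Next I would run the conditional-measure argument of \cref{lem:HvN}. Fixing a random walk $X_1, X_2, \dots$ with law $m$, the conditional measure $\nu_\omega = \lim_n (X_1(\omega)\cdots X_n(\omega))_*\nu$ exists by \cref{lem:condmeasures}. As the partial products lie in the compact group $E$, I pass to a subsequence along which they converge to some $e_\infty\in E$, so that $\nu_\omega = (e_\infty)_*\nu$. On the other hand, \cref{lem:condmeasures}(ix) gives, with $\lambda\defeq\sum_{n=0}^\infty 2^{-(n+1)}m^{*n}$, that $\nu_\omega = \lim_n (X_1(\omega)\cdots X_n(\omega))_*g_*\nu$ for $\lambda$-a.e.\ $g$; evaluating along the same subsequence yields $\nu_\omega = (e_\infty)_*g_*\nu$. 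Since $e_\infty$ is invertible in the group $E$, comparing the two identities gives $\nu = g_*\nu$ for $\lambda$-a.e.\ $g$. Finally, the stabilizer $H\defeq\{g\in G \mmid g_*\nu = \nu\}$ is a subgroup that is closed because $g\mapsto \int_M f\,\ud(g_*\nu) = \int_M T_g f\dnu$ is continuous for every $f\in\uC(M)$ by \cref{thm:FEmagic}; as $H$ contains a $\lambda$-conull (hence dense in $\supp(\lambda)$) set, it contains $\supp(m)$, and since $\supp(m)$ generates a dense subgroup it follows that $H = G$, i.e.\ $\nu$ is $G$-invariant.

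The main obstacle will be the structural claim that $E$ is a compact group in the nonsingular, possibly non-ergodic, setting: everything hinges on the Koopman operators being sup-norm isometric automorphisms, which is what makes each finite-dimensional invariant subspace carry a bounded, hence relatively compact, copy of $G$ inside $\mathrm{GL}(F)$. A secondary technical point is the bookkeeping that identifies the genuine full-sequence limit $\nu_\omega$ with the subsequential limit $(e_\infty)_*\nu$, and the promotion of invariance from the dense subgroup generated by $\supp(m)$ to all of $G$ through the continuity furnished by \cref{thm:FEmagic}.
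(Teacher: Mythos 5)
Your proof is correct, and the first half (existence of the maximal isometric factor via the algebra $\mathcal{A}$ and \cref{factor-algebra-corr}) coincides with the paper's. For the invariance statement, however, you take a genuinely different route. The paper reduces to a single finite-dimensional $G$-invariant subspace $M$, forms the compact group $K=\overline{\Phi(G)}\subset\Aut(M)$, and then invokes \cref{lem:HvN} for the \emph{ergodic} rotation action $G\curvearrowright(K,\um_K)$ to get unique ergodicity, so that Breiman's law yields $\frac{1}{N}\sum_{n\le N}m^{*n}\to\um_K$ and hence $\um_K*\mu|_M=\mu|_M$ forces invariance on each such subspace separately. You instead work globally: you observe that the subsequence-in-a-compact-group trick from the proof of \cref{lem:HvN} (conditional measures, \cref{lem:condmeasures}(ix), invertibility of the limit $e_\infty$) never uses transitivity, only that the Ellis semigroup of the isometric model is a compact group, which you establish from boundedness on the finite-dimensional invariant subspaces. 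Both arguments are sound; yours avoids Breiman's law and the detour through the uniquely ergodic rotation on $K$, at the cost of having to verify the compact-group structure of $\uE(M,G)$ in the non-ergodic setting (a standard equicontinuity argument, and in fact the same structural fact the paper already asserts inside the proof of \cref{lem:HvN}), plus the minor bookkeeping of choosing a \emph{separable} invariant subalgebra still densely spanned by finite-dimensional invariant subspaces so that \cref{lem:condmeasures} applies to a metrizable model. The paper's version leverages the already-proven ergodic case as a black box; yours is more self-contained and makes transparent that ergodicity was never the point.
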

\begin{proof}
  First, let 
  \begin{equation*}
    \mathcal{A} \defeq \overline{\bigcup\left\{F \subset \uL^\infty(\uX) \mmid \begin{matrix}
                                                                                F \text{ finite-dimensional } \\
                                                                                \text{and } G\text{-invariant}
                                                                               \end{matrix}
  \right\}}^{\uL^\infty(\uX)}
  \end{equation*}
  as in the proof of \cref{lem:HvN} and let $G \curvearrowright (K, \nu)$ be a standard
  probability model for this subalgebra. It is clear from the correspondence between 
  factors and subalgebras discussed in \cref{factor-algebra-corr} that this is the 
  maximal isometric factor of $G \curvearrowright (X, \mu)$. For the invariance statement,
  we may assume without loss of generality that $G \curvearrowright (X,\mu)$ itself is 
  isometric. In that case, let $M \subset \uL^\infty(X,\mu)$ be a finite-dimensional 
  $G$-invariant subspace. Then the action of $G$ on $M$ induces a map $\Phi\colon G \to \Aut(M)$
  which maps $G$ to a bounded subgroup of $\Aut(M)$. Thus, $\Phi(G)$ is relatively compact 
  in $\Aut(M)$; denote its closure by $K \defeq \overline{\Phi(G)}$.
  
  Since the Haar measure $\um_K$ is $K$-invariant, any $\um_K$-stationary measure is 
  $K$-invariant. Thus, $G$-invariance of the restriction $\mu|_M$ of $\mu$ to the subspace $M\subset \uL^\infty(X,\mu)$ 
  is equivalent to 
  \begin{equation*}
    \um_K * \mu|_M = \mu|_M.
  \end{equation*}
  To prove this equality, it suffices to prove that
  \begin{equation*}
    \lim_{N\to\infty}\frac{1}{N}\sum_{n=1}^N m^{*n} = \um_K.
  \end{equation*}
  For this, we can employ \cref{lem:HvN}: Since $\Phi(G)$ is dense in $K$, the 
  left-action of $G$ on $(K,\um_K)$ is ergodic and isometric and thus $\um_K$ 
  is the unique stationary measure under the $G$-action. Thus, Breiman's law
  states that for every probability measure $\nu$ on $K$
  \begin{equation*}
    \lim_{N\to\infty}\frac{1}{N}\sum_{n=1}^N m^{*n}*\nu = \um_K.
  \end{equation*}
  Setting $\nu = \delta_e$ shows that $\mu|_M $ is $G$-invariant. Since $G \curvearrowright (X,\nu)$
  is isometric, the $G$-invariant finite-dimensional subspaces $M \subset \uL^\infty(\uX)$ 
  are dense in $\uL^2(\uX)$. Thus, $\mu$ is $G$-invariant.
\end{proof}

We call this maximal isometric factor the \textbf{Kronecker factor} of the system.

\textbf{The Kronecker dichotomy for stationary actions.} The following theorem
and its proof closely follow \cite[Section 4]{Bjorklund2017}
and the proof given there. We include them for the reader's convenience since
we shall generalize the arguments below.

\begin{theorem}\label{thm:statdichotomy}
  A stationary action $(G, m) \curvearrowright (X, \mu)$ is weakly mixing 
  if and only if its Kronecker factor is trivial.
\end{theorem}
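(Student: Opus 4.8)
The plan is to follow the proof of the classical Kronecker dichotomy \cref{thm:krodichotomy} as closely as possible, but to route the Hilbert--Schmidt and spectral arguments through an auxiliary measure-preserving system. The obstruction to copying the classical proof verbatim is that the Koopman operators $T_s$ of the merely nonsingular action $\uX$ are not unitary on $\uL^2(\uX)$, so there is no spectral theorem available on $\uX$ itself. I argue both directions by contraposition. For the direction that a nontrivial Kronecker factor precludes weak mixing, let $\pi\colon\uX\to\uZ$ be the Kronecker factor. By \cref{cor:kro_mpr} the measure on $\uZ$ is $G$-invariant, and by \cref{lem:HvN} the system $G\curvearrowright\uZ$ is an ergodic measure-preserving minimal rotation. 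Picking a finite-dimensional $G$-invariant subspace $0\neq V\subset\uL^\infty(\uZ)$ that contains a nonconstant function, with $\uL^2$-orthonormal basis $\phi_1,\dots,\phi_d$, I would consider $K(x,z)\defeq\sum_{i=1}^d\phi_i(\pi(x))\overline{\phi_i(z)}$; this is a nonconstant $G$-invariant element of $\uL^\infty(\uX\times\uZ)$ (its invariance holds because $G$ acts on $V$ by unitaries, so the reproducing kernel $\sum_i\phi_i\otimes\overline{\phi_i}$ is independent of the basis and hence invariant). Thus $\uX\times\uZ$ is not ergodic while $\uZ$ is ergodic and measure-preserving, so $\uX$ is not weakly mixing.

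For the converse, suppose $\uX$ is not weakly mixing. Then there is an ergodic measure-preserving action $G\curvearrowright\uY$ such that the stationary product $\uX\times\uY$ (with $\mu\otimes\nu$) is not ergodic, so there is a nontrivial $G$-invariant set $A\subset X\times Y$, yielding a nonconstant $G$-invariant function $F\defeq\1_A\in\uL^\infty(\uX\times\uY)$. I would then introduce the bounded kernel operator $I_F\colon\uL^2(\uY)\to\uL^2(\uX)$, $(I_Fg)(x)\defeq\int_Y F(x,y)g(y)\dnu(y)$, and record the intertwining relation $T_s I_F=I_F T_s$ for all $s\in G$, where the left $T_s$ is the nonsingular Koopman operator on $\uX$ and the right one the unitary Koopman operator on $\uY$; this follows at once from the $G$-invariance of $F$ together with the invariance of $\nu$. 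Setting $A_F\defeq I_F^*I_F$, we obtain a nonzero positive self-adjoint Hilbert--Schmidt operator on the Hilbert space $\uL^2(\uY)$, with kernel $\ell(y_1,y_2)=\int_X\overline{F(x,y_1)}F(x,y_2)\dmu(x)$.

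The hard part, and the heart of the argument, is to show that $A_F$ is $G$-equivariant, i.e., that $\ell$ is $G$-invariant on the measure-preserving system $\uY\times\uY$. Here the nonsingularity intervenes: substituting $x\mapsto sx$ in the definition of $\ell$ produces the Radon--Nikodym factor $h_s$ (with $s_*\mu=h_s\mu$), so $\ell$ fails to be invariant under any single $s$. The key observation is that averaging over $m$ and invoking the stationarity identity $\int_G h_s\,\dm(s)=\1_\uX$ gives $\int_G (T_s\otimes T_s)\ell\,\dm(s)=\ell$; that is, $\ell$ is fixed by the averaging operator $P_m$ of the measure-preserving system $\uY\times\uY$. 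Since the Koopman representation there is unitary, the equality $\|P_m\ell\|=\|\ell\|$ forces $T_s\ell=\ell$ for $m$-almost every $s$ by strict convexity of the Hilbert ball, and as $\{s:T_s\ell=\ell\}$ is a closed subgroup while $\supp(m)$ generates a dense subgroup of $G$, I conclude that $\ell$ is genuinely $G$-invariant. Thus $A_F$ commutes with the unitary $G$-action on $\uL^2(\uY)$, and the spectral-theoretic ingredients recalled in the proof of \cref{thm:krodichotomy} produce a nonzero finite-dimensional $G$-invariant subspace $M\subset\uL^\infty(\uY)$ on which $A_F$ acts by a nonzero scalar.

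Finally I would transfer this structure back to $\uX$. Since $T_s I_F=I_F T_s$ and $T_s M=M$, the image $I_F(M)\subset\uL^\infty(\uX)$ is a finite-dimensional $G$-invariant subspace, nonzero because $A_F$ is invertible on $M$. To guarantee that it is not swallowed by the constants, note that $\mathrm{ran}(I_F)\subseteq\C\1_\uX$ would force $F(x,y)$ to be independent of $x$, hence equal to a $G$-invariant function on the ergodic system $\uY$, hence constant, contradicting $F\neq0$. Running this remark over the spectral decomposition shows that some eigenspace $M_{j_0}$ has $I_F(M_{j_0})$ containing a nonconstant function, which exhibits a nontrivial finite-dimensional $G$-invariant subspace of $\uL^\infty(\uX)$ and therefore a nontrivial Kronecker factor of $\uX$. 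I expect the decisive and most delicate step to be the recovery of invariance of $\ell$: in contrast to the measure-preserving setting, the kernel is invariant only after $m$-averaging, and it is precisely the stationarity identity $\int_G h_s\,\dm(s)=\1_\uX$, combined with the density of the subgroup generated by $\supp(m)$, that upgrades averaged invariance to honest $G$-invariance.
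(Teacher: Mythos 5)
Your argument is correct, but it takes a genuinely different route from the paper. The paper turns the nonconstant invariant kernel $k\in\uL^\infty(\uX\times\uY)$ into a \emph{factor map} $p_k\colon X\to B_1\subset\uL^2(\uY)_\uw$, invokes Björklund's invariance lemma (\cref{lem:ball_inv}: a stationary measure for a unitary action is invariant) to see that $(B_1,(p_k)_*\mu)$ is a measure-preserving factor of $\uX$, checks that the scalar product is a nonconstant invariant function on $B_1\times B_1$ via the same $K^*K$-cancellation you use, and then applies the classical dichotomy \cref{thm:krodichotomy} to that auxiliary system as a black box. You instead stay at the level of operators: you prove directly that the kernel $\ell$ of $I_F^*I_F$ is invariant on the measure-preserving system $\uY\times\uY$, run the spectral theorem there, and transport the eigenspaces back to $\uL^\infty(\uX)$ through the intertwiner $I_F$. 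The two proofs rest on the same engine --- averaged invariance plus unitarity plus density of the group generated by $\supp(m)$ upgrades $P$-harmonicity to genuine invariance; your kernel step is exactly the mechanism of \cref{lem:newharmonicinv} and of the proof of \cref{lem:ball_inv}/\cref{lem:bundle_inv} --- but your version avoids constructing the unit-ball factor and makes the role of stationarity more transparent, at the cost of the extra bookkeeping for $T_sI_F=I_FT_s$ and the non-degeneracy of $I_F$ on the eigenspaces (both of which you handle correctly; in particular the intertwining needs no Radon--Nikodym correction because only the invariant measure $\nu$ is integrated). One point to state carefully: with the paper's convention $T_sf=f\circ s$, the change of variables in the $x$-integral produces $h_s=\frac{\ud s_*\mu}{\ud\mu}$ only for the \emph{inverse} translates, so the identity that actually holds is $\int_G(T_{s^{-1}}\otimes T_{s^{-1}})\ell\,\dm(s)=\ell$, i.e.\ $\ell$ is fixed by $P_{\check m}$ rather than $P_m$ on $\uY\times\uY$; this is harmless (on a measure-preserving system $P_{\check m}$ is still an average of unitaries, and $\supp(m)^{-1}$ generates the same dense subgroup), but as written your averaging identity is only correct if $T_s$ denotes $f\mapsto f\circ s^{-1}$.
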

\begin{proof}
  For the easy direction, assume that the action of $G$ on $(X, \mu)$ is 
  weakly mixing (and in particular, ergodic). Suppose there exists a nontrivial isometric factor $(Y, \nu)$  
  of $(X, \mu)$ with factor map $\pi\colon X \to Y$. Then $\nu$ is $m$-stationary 
  and hence $G$-invariant by \cref{cor:kro_mpr}.
  Since $(Y, \nu)$ is a nontrivial isometric factor, there is a nontrivial 
  finite-dimensional $G$-invariant subspace $M \subset \uL^\infty(Y, \nu)$ 
  and we denote by $P_M \colon \uL^2(Y, \nu) \to M$ the corresponding orthogonal projection.
  The projection $P_M\in\mathscr{L}(\uL^2(Y, \nu))$ is a Hilbert--Schmidt operator and 
  under the canonical identification 
  $\operatorname{HS}(\uL^2(Y, \nu)) \cong \uL^2(Y\times Y, \nu\otimes\nu)$, $P_M$ corresponds 
  to a function $k\in \uL^2(Y\times Y, \nu\otimes \nu)$. As explained above, the fact that 
  $P_M$ intertwines with the dynamics on $\uL^2(Y, \nu)$ implies that $k$ is invariant under 
  the diagonal action on $\uL^2(Y\times Y, \nu \otimes \nu)$. We thus obtain a 
  nontrivial 
  fixed function $k\in \uL^2(Y\times Y, \nu \otimes \nu)$ and this lifts to the nontrivial 
  fixed function $k \circ (\pi \times \id_Y) \in \uL^2(X\times Y, \mu \otimes \nu)$. Thus, 
  the action of $G$ on $(X, \mu)$ cannot be weakly mixing, a contradiction. This shows 
  that weak mixing implies that the Kronecker factor is trivial.
  
  Conversely, suppose that the action of $G$ on $(X, \mu)$ is not weakly mixing; we 
  need to show that there is a nontrivial isometric factor. So suppose $G \curvearrowright (Y, \nu)$
  is an ergodic measure-preserving action such that there is a nonconstant $G$-invariant function
  $k\in \uL^\infty(X\times Y, \mu\otimes\nu)$. We may assume that 
  $k \perp \1\otimes\1$, i.e., $\int_{X\times Y} k \,\mathrm{d}\mu\otimes\nu = 0$. 
  Denote by $B_1$ the closed unit ball in $\uL^2(Y, \nu)$ endowed with the 
  weak topology
  and consider the map
  \begin{equation*}
    p_k \colon X \to B_1, \quad x \mapsto k(x, \cdot).
  \end{equation*}
  Since $k$ is $G$-invariant almost everywhere, this map is almost everywhere equivariant
  for every $s\in G$ and thus becomes a factor map if we endow 
  $B_1$ with the push-forward measure. 
  \cref{lem:ball_inv} below shows that since $G$ acts unitarily on $\uL^2(Y,\nu)$
  and hence isometrically on $B_1$, this measure is invariant.
  Thus, if we can show that
  \begin{equation*}
    \Phi\colon B_1\times B_1 \to \R, \quad f, g \mapsto \langle f, g\rangle
  \end{equation*}
  is a nonconstant invariant function on $B_1\times B_1$,
  the usual Kronecker dichotomy
  will imply that there exists a nontrivial isometric factor which concludes the proof. 
  That $\Phi$ is essentially nonconstant is equivalent to proving that 
  \begin{equation*}
    \tilde{\Phi} \defeq \Phi\circ (p_k\times p_k) \colon X\times X \to \R, \quad (x, x') \mapsto \int_Y k(x, y)k(x', y) \dnu(y)
  \end{equation*}
  is not essentially constant. To see this, observe that if $K\colon \uL^2(X,\mu) \to \uL^2(Y,\nu)$ is the Hilbert--Schmidt 
  integral operator corresponding to $k$ under the isomorphism 
  $\uL^2(X\times Y, \mu\otimes\nu) \cong \operatorname{HS}(\uL^2(X,\mu), \uL^2(Y,\nu))$, then 
  the kernel 
  of the Hilbert--Schmidt integral operator $K^*K$ is readily computed to be $\tilde{\Phi}$. Since $k$ is nonzero, $K$ is nonzero. 
  Moreover, since $\ker(K^*) = \overline{\ran}(K)^\perp$, $K^*K \neq 0$. 
  The statement that $\tilde{\Phi}$ is not essentially constant is equivalent to 
  $K^*K$ not being a multiple of the orthogonal projection $\E = \uI_{\1\otimes\1}$ 
  onto the constant functions. Suppose, for the sake of contradiction, that 
  $K^*K = c\E$. Then $K^*K = K^*K\E$ and since $K^*$ is injective on $\ran(K)$,
  we may cancel $K^*$.
  Thus, $K = K\E$ and an elmentary computation shows that thus, $k = \1_\uX\otimes f$
  where $f\in \uL^\infty(\uY)$ is given by $f(y) = \int_\uX k(x,y)\dmu(x)$.
  Invariance of $k$ translates into invariance of $f$. Since $\uY$ was assumed
  to be ergodic, we see that $k$ is constant. This contradicts $k$ being nonconstant,
  so the assumption $K^*K = c\E$ must have been impossible.
  This shows that 
  $\tilde{\Phi}\in \uL^\infty(X\times X, \mu \otimes \mu)$ is a nonconstant fixed function.
  
  Thus, we have found a measure-preserving 
  factor $(B_1, (p_k)_*\mu)$ and a nonconstant invariant function $\Phi$ in 
  $\uL^\infty(B_1\times B_1, (p_k)_*\mu \otimes (p_k)_*\mu)$. By the classical Kronecker dichotomy
  \cref{thm:krodichotomy},
  there is a nontrivial isometric factor of this system which concludes the proof.
\end{proof}

\begin{lemma}[{\cite[Lemma 4.3]{Bjorklund2017}}]\label{lem:ball_inv}
  Let $H$ be a Hilbert space, $H_\uw$ denote $H$ endowed with the weak topology, 
  and $(G,m) \curvearrowright (H_\uw,\mu)$ be  a stationary action 
  by unitary operators. Then $\mu$ is $G$-invariant.
\end{lemma}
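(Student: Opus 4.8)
The strategy is to exploit that the unitaries are isometries for the Hilbert norm, so a stationary measure cannot diffuse; the key device is a \emph{second-moment operator} whose spectral theory produces finite-dimensional $G$-invariant subspaces, reducing the problem to an action through a compact group where stationarity forces invariance exactly as in \cref{cor:kro_mpr}. Write $\rho\colon G \to \mathrm{U}(H)$ for the representation implementing the action.

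First I would reduce to bounded support. Each $g\in G$ acts by a unitary and hence preserves every ball $\overline{B_R}=\{f\in H:\|f\|\le R\}$, which is weakly compact and (as $H$ is separable) metrizable; since restriction to $\overline{B_R}$ commutes with the averaging operator $P_m$, the normalized restrictions $\mu^{(R)}\defeq \mu(\overline{B_R})^{-1}\,\mu|_{\overline{B_R}}$ are again $m$-stationary whenever $\mu(\overline{B_R})>0$. As $\mu(\overline{B_R})\uparrow 1$ and $\mu|_{\overline{B_R}}\to\mu$ weakly, it suffices to treat each $\mu^{(R)}$, so I may assume $\mu$ is supported on a fixed ball and in particular has finite second moment.

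Next I would attach to $\mu$ the positive trace-class operator $S$ on $H$ defined by $\langle Su,u\rangle=\int_H|\langle f,u\rangle|^2\dmu(f)$, for which $\operatorname{tr}S=\int_H\|f\|^2\dmu<\infty$. A short computation gives $S(g_*\mu)=gSg^{-1}$, so stationarity becomes the fixed-point relation
\begin{equation*}
  S=\int_G gSg^{-1}\dm(g).
\end{equation*}
Since conjugation by a unitary preserves the spectrum, a convexity argument applied to the top eigenvalue $\|S\|=\sup_{\|u\|=1}\langle Su,u\rangle$ shows its (finite-dimensional) eigenspace is invariant under $m$-a.e.\ $g$; peeling off eigenspaces inductively, every eigenspace $E_\lambda$ of $S$ is finite-dimensional and invariant under $m$-a.e.\ $g$, while $\mu$ lives on the closed span $\hat H\defeq(\ker S)^\perp=\overline{\bigoplus_\lambda E_\lambda}$ of its own support. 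Crucially, the joint stabilizer $G_0\defeq\{g\in G:gE_\lambda=E_\lambda\ \forall\lambda\}$ is then a subgroup of full $m$-measure, so the random walk $X_1,X_2,\dots$ with law $m$ stays inside $G_0$ almost surely.

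I would finish on $\hat H$. There the restricted representation maps $G_0$ into the compact group $K\defeq\overline{\rho(G_0)}\subset\prod_\lambda\mathrm{U}(E_\lambda)$, so from the partial products $U_n=X_1\cdots X_n$ one may extract $\rho(U_{n_j})\to\kappa\in K$ with $\kappa$ a genuine unitary (this is where finite-dimensionality of the $E_\lambda$ matters, weak limits of unitaries being only contractions in general). The conditional measures of \cref{lem:condmeasures}, together with part~(ix) thereof, give $\mu_\omega=\lim_n (U_n)_*\mu=\lim_n (U_n)_* g_*\mu$ for $\lambda$-a.e.\ $g$, where $\lambda=\sum_{n\ge 0}2^{-(n+1)}m^{*n}$ charges $G_0$ with full measure; passing to the subsequence and using that $\kappa$ is invertible yields $\kappa_*\mu=\kappa_*g_*\mu$, which I cancel to obtain $g_*\mu=\mu$ for $m$-a.e.\ $g$. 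Finally I would upgrade this to every $g$: the set $\{g:g_*\mu=\mu\}$ is a subgroup of full $m$-measure, and it is \emph{closed} because $g\mapsto\int_H F\,\mathrm{d}(g_*\mu)=\int_H (T_gF)\dmu$ is continuous for each $F\in\uL^\infty(H_\uw,\mu)$ by the $\uL^1$-continuity of \cref{thm:FEmagic}; hence it contains $\supp(m)$, the dense subgroup this generates, and therefore all of $G$. The main obstacle is assembling the compact-group structure so that the random-walk limit $\kappa$ stays invertible, and then transferring the resulting almost-everywhere invariance to every group element through the continuity theorem.
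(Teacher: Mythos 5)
Your proof is correct, but it takes a genuinely different route from the paper's. The paper proves \cref{lem:ball_inv} by deferring to the more general bundle version \cref{lem:bundle_inv}: there one forms \emph{all} moment tensors $\int_{B} v\otimes\cdots\otimes v\,\ud\mu(v)$, shows they satisfy the harmonicity identity coming from the uniqueness of disintegrations, deduces their invariance from the $\uL^2$-energy computation of \cref{lem:newharmonicinv} (which is where unitarity enters), and recovers invariance of $\mu$ via Stone--Weierstrass. You instead use only the \emph{second} moment: the covariance operator $S$ is trace-class, conjugation-stationarity $S=\int_G gSg^{-1}\dm(g)$ plus the equality case in $\langle Sv,v\rangle\le\|S\|\|v\|^2$ peels off finite-dimensional eigenspaces invariant under $m$-a.e.\ $g$, and you then finish with the compact-group/conditional-measure argument (convergent subsequence of $\rho(U_n)$ in $\prod_\lambda \mathrm{U}(E_\lambda)$, part (ix) of \cref{lem:condmeasures}, cancellation of the invertible limit $\kappa$) — which is exactly the mechanism the paper uses in \cref{lem:HvN} and \cref{cor:kro_mpr}, just transplanted to the spectral decomposition of $S$. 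All the steps check out: the restriction of a stationary measure to the $G$-invariant balls is stationary, $\mu$ is carried by $(\ker S)^\perp$, the joint stabilizer of the eigenspaces is a conull subgroup so the random walk stays in it, strong convergence of $U_{n_j}$ on $\hat H$ upgrades to weak* convergence of the pushforwards, and the final passage from $m$-a.e.\ to all of $G$ via closedness of $\{g: g_*\mu=\mu\}$ and \cref{thm:FEmagic} matches the paper's own technique. What your approach buys is concreteness — no tensor powers beyond the second, and an explicit Kronecker-type structure (finite-dimensional invariant subspaces carrying the measure); what the paper's approach buys is that the moment-tensor/energy argument relativizes verbatim to Hilbert bundles, which is the form actually needed for \cref{thm:relstatdichotomy}, whereas it is less clear how your spectral argument for $S$ would be carried out fiberwise over a nontrivial base.
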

\begin{proof}
  This is proved in \cite[Lemma 4.3]{Bjorklund2017} for the case that $G$ is countable
  and $\supp(\mu)$ 
  is contained in the unit ball $B_1$ but the general case follows as a consequence.
  We give a more general proof in \cref{lem:bundle_inv} below.
\end{proof}

One example of a weakly mixing action is given by the Poisson boundary $\Pi(G,m)$ of a measured group $(G,m)$
which is the universal $m$-proximal system for $(G,m)$.

\begin{theorem}[{\cite[Theorem 3.1]{Bjorklund2017}}]\label{lem:poisson}
  Let $(G, m)$ be a countable measured group, denote its Poisson boundary by $(Z, \zeta)$, and 
  let $(G, \check{m}) \curvearrowright (X, \check{\mu})$ be an ergodic stationary action. Then the diagonal action 
  $G \curvearrowright (Z\times X, \zeta\otimes\check{m})$ is ergodic.
\end{theorem}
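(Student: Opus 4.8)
The plan is to show that every $G$-invariant $F \in \uL^\infty(Z\times X, \zeta\otimes\check\mu)$ is constant. After passing to topological models via \cref{factor-algebra-corr} I would work with continuous representatives and, splitting into real and imaginary parts, assume $F$ is real-valued. Write $\bar F(x) \defeq \int_Z F(z,x)\,\mathrm{d}\zeta(z)$ for the $Z$-average; this is exactly the orthogonal projection of $F$ onto the $X$-measurable functions inside $\uL^2(\zeta\otimes\check\mu)$, so Pythagoras gives $\|F\|_{\uL^2(\zeta\otimes\check\mu)}^2 = \|\bar F\|_{\uL^2(\check\mu)}^2 + \|F-\bar F\|_{\uL^2(\zeta\otimes\check\mu)}^2$. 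The entire proof reduces to showing $F = \bar F$, i.e.\ that $F$ does not depend on the boundary coordinate $z$: once this is established, $G$-invariance of $F$ forces $\bar F$ to be $G$-invariant on $X$, and ergodicity of the stationary action $(G,\check m)\curvearrowright(X,\check\mu)$ then makes $\bar F$, hence $F$, constant.

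To prove $F=\bar F$ I would set up two martingales that turn out to be the same sequence. Fix the $m$-walk $g_1,g_2,\dots$ (i.i.d.\ with law $m$) on $(\Omega,\P)$ with partial products $b_n \defeq g_1\cdots g_n$. Since $(Z,\zeta)$ is the Poisson boundary it is $m$-proximal, so by \cref{lem:condmeasures} the conditional measures are Dirac, $\zeta_\omega = \lim_n (b_n)_*\zeta = \delta_{z(\omega)}$, and the barycenter formula gives $z_*\P = \zeta$. For fixed $x$ put $\Psi_n(\omega,x)\defeq \int_Z F(z,x)\,\mathrm{d}(b_n)_*\zeta(z)$. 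Using $m$-stationarity of $\zeta$ one verifies that $(\Psi_n)_n$ is a bounded martingale, and proximality yields the almost sure limit $\Psi_n(\omega,x)\to F(z(\omega),x)$. On the other hand, $G$-invariance of $F$ rewrites the same sequence as $\Psi_n(\omega,x) = \bar F(b_n^{-1}x)$, where $x_n \defeq b_n^{-1}x = g_n^{-1}\cdots g_1^{-1}x$ is exactly the $\check m$-random walk on $X$ started at $x$ (at each step one applies the increment $g_k^{-1}$, of law $\check m$). Thus $\bar F(x_n)$ converges almost surely to $\Phi(\omega,x)\defeq F(z(\omega),x)$.

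The decisive step is an $\uL^2$-norm identity. Averaging over $\omega\sim\P$ and $x\sim\check\mu$ taken independently, and using $z_*\P=\zeta$, one gets $\E_{\omega,x}[\Phi^2] = \|F\|_{\uL^2(\zeta\otimes\check\mu)}^2$. Since $\bar F$ is bounded, the martingale $\bar F(x_n)$ converges to $\Phi$ also in $\uL^2$, so $\E_{\omega,x}[\Phi^2] = \lim_n \E_{\omega,x}[\bar F(x_n)^2]$; and because $\check\mu$ is $\check m$-stationary, the law of $x_n$ is $\check m^{*n}*\check\mu = \check\mu$ for every $n$, whence $\E_{\omega,x}[\bar F(x_n)^2] = \|\bar F\|_{\uL^2(\check\mu)}^2$ is independent of $n$. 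Comparing the two evaluations gives $\|F\|_{\uL^2(\zeta\otimes\check\mu)}^2 = \|\bar F\|_{\uL^2(\check\mu)}^2$, and the Pythagorean identity from the first paragraph forces $F=\bar F$.

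The main obstacle, and the one genuinely new observation, is the coupling in the second step: the $Z$-side average $\Psi_n$ (which converges because the Poisson boundary is proximal) and the $X$-side average $\bar F(b_n^{-1}x)$ (whose terms stay $\check\mu$-distributed because $\check\mu$ is $\check m$-stationary) are literally the same sequence once $G$-invariance of $F$ is used. The time reversal $b_n^{-1}=g_n^{-1}\cdots g_1^{-1}$ is precisely what links the $m$-boundary on $Z$ to the $\check m$-dynamics on $X$, and it explains why $\check m$ appears in the hypothesis; the delicate points to check are that $x\mapsto b_n^{-1}x$ really is the step-by-step $\check m$-walk and that its one-dimensional marginals remain $\check\mu$. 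Everything else—the martingale property, almost sure and $\uL^2$ convergence of bounded martingales, and measurability of $z(\cdot)$—is routine given \cref{lem:condmeasures}.
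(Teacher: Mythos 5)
The paper offers no proof of this statement at all: it is imported verbatim from \cite[Theorem 3.1]{Bjorklund2017}, so there is nothing internal to compare your argument against. What you have written is essentially the standard proof (and, up to presentation, Björklund's), and the skeleton is sound: the Pythagorean decomposition $\|F\|^2=\|\bar F\|^2+\|F-\bar F\|^2$; the martingale $\Psi_n(\omega,x)=\int_Z F(z,x)\,\mathrm{d}(b_n)_*\zeta(z)$, whose martingale property uses $m*\zeta=\zeta$; the rewriting $\Psi_n(\omega,x)=\bar F(b_n^{-1}x)$ via diagonal invariance (legitimate a.e.\ since $G$ is countable and the measures are quasi-invariant); the observation that $b_n^{-1}=g_n^{-1}\cdots g_1^{-1}$ is the $\check m$-walk, so that $x_n$ stays $\check\mu$-distributed; and the resulting norm identity $\|F\|^2=\E[\Phi^2]=\|\bar F\|^2$. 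In fact, since the second moments of a bounded martingale are nondecreasing and you show they are constant, the orthogonality of increments forces $\Psi_n=\Psi_0=\bar F(x)$ almost surely, which short-circuits the last step.

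The one place you should tighten is the identification $\Psi_n(\omega,x)\to F(z(\omega),x)$. You justify it by passing to topological models to obtain continuous representatives, but an arbitrary diagonally invariant $F\in\uL^\infty(Z\times X,\zeta\otimes\check\mu)$ need not admit a representative with continuous $Z$-sections on a \emph{product} model: the separable invariant $\uC^*$-subalgebra containing $F$ produced by \cref{factor-algebra-corr} yields a metric model of the joint system, not one respecting the product structure, so the weak$^*$ convergence $(b_n)_*\zeta\to\delta_{z(\omega)}$ cannot be tested directly against $F(\cdot,x)$. The repair is already available in the paper: part (iv) of \cref{lem:condmeasures} identifies the martingale limit of $\int_Z f\,\mathrm{d}(b_n)_*\zeta$ with $\int_Z f\,\mathrm{d}\zeta_\omega$ for arbitrary $f\in\uL^1(Z,\zeta)$ and a fixed Borel representative; apply this to $f=F(\cdot,x)$ for $\check\mu$-a.e.\ fixed $x$, use $\zeta_\omega=\delta_{z(\omega)}$ and $z_*\P=\zeta$, and combine with Fubini. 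With that substitution your proof is complete.
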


As a corollary, one obtains that the action of $(G,m)$ on its Poisson boundary is weakly mixing.

\section{The relative Kronecker dichotomy for stationary actions} \label{sec:stationaryrelativedichotomy}

The goal of this section is to extend the dichotomy \cref{thm:statdichotomy} 
to the setting of extensions in \cref{thm:relstatdichotomy}.

\begin{theorem}\label{thm:relstatdichotomy}
  Let $\pi\colon (X, \mu) \to (Z, \zeta)$ be an extension of stationary $(G,m)$-actions. 
  Then exactly one of the following is true.
  \begin{enumerate}[(i)]
   \item The extension $\pi$ is weakly mixing.
   \item There is a nontrivial intermediate extension 
    $(X, \mu) \to (\tilde{Z},\tilde{\zeta}) \to (Z, \zeta)$
    such that the extension $(\tilde{Z},\tilde{\zeta}) \to (Z, \zeta)$ is isometric
    and relatively measure-preserving.
  \end{enumerate}
\end{theorem}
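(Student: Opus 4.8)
The plan is to relativise the proof of the absolute stationary dichotomy \cref{thm:statdichotomy}: where that argument sent $X$ into the unit ball of the Hilbert space $\uL^2(Y,\nu)$ of a measure-preserving test system, I would send $X$ into the unit ball bundle of the conditional $\uL^2$-module $\uL^2(W|Z)$ of a relatively measure-preserving test \emph{extension} $\rho\colon(W,\omega)\to(Z,\zeta)$, and I would close the argument with the relative Kronecker dichotomy \cref{thm:nonsingularrelkrodichotomy} in place of the classical one.

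\emph{Mutual exclusivity.} To rule out (i) and (ii) holding together, suppose (ii) holds via a nontrivial isometric relatively measure-preserving extension $(\tilde Z,\tilde\zeta)\to(Z,\zeta)$. If this extension is not ergodic, then neither is $X\to Z$, and since weakly mixing extensions are ergodic, $\pi$ is not weakly mixing. If it is ergodic it is an admissible test extension, and the first half of the proof of \cref{thm:nonsingularrelkrodichotomy} shows $\tilde Z\times_Z\tilde Z\to Z$ is not ergodic; pulling this invariant kernel back along the factor map $X\to\tilde Z$ shows $X\times_Z\tilde Z\to Z$ is not ergodic, so again $\pi$ is not weakly mixing.

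\emph{The main construction.} Now suppose $\pi$ is not weakly mixing, so there is an ergodic relatively measure-preserving extension $\rho\colon(W,\omega)\to(Z,\zeta)$ for which $X\times_Z W\to Z$ is not ergodic, witnessed by a nonzero $G$-invariant $k\in\uL^\infty(X\times_Z W)$ that is not $Z$-measurable. Passing to topological models, I would form the map $p_k\colon x\mapsto k(x,\cdot)$, which sends $X$ fiberwise into a conditional ball of $\uL^2(W_{\pi(x)},\omega_{\pi(x)})$; $G$-invariance of $k$ makes $p_k$ equivariant, and because $\rho$ is relatively measure-preserving the group acts on these fibers by isometries. The crucial point is that the pushforward $(p_k)_*\mu$ makes the ball bundle $\mathcal{B}\to Z$ a \emph{relatively measure-preserving} extension. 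This is where the base being only stationary (not invariant) forces real work: it is exactly the relativised ball-invariance \cref{lem:bundle_inv}, whose proof rests on the cocycle-twisted disintegration identity of \cref{rem:stationary_dis}. I expect this relativised invariance to be the main obstacle; the remainder is bookkeeping.

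\emph{Conclusion.} Granting that $\mathcal{B}\to Z$ is relatively measure-preserving, the fiberwise inner product $\Phi\bigl((z,v),(z,v')\bigr)=\langle v,v'\rangle_{\uL^2(W_z)}$ is a bounded $G$-invariant function on $\mathcal{B}\times_Z\mathcal{B}$ (invariance uses that $G$ acts by fiber isometries). Its pullback along $p_k\times_Z p_k$ is the kernel of $K^*K$, where $K\colon\uL^2(X|Z)\to\uL^2(W|Z)$ is the Hilbert--Schmidt homomorphism with kernel $k$ from \cref{thm:KHiso}. As in \cref{thm:statdichotomy}, $k\neq0$ gives $K\neq0$ and hence $K^*K\neq0$; and if $K^*K$ were an $\uL^\infty(Z)$-multiple of $\E_Z$ one could cancel $K^*$ on its range to force $k=\1_X\otimes_Z h$, whereupon ergodicity of $\rho$ together with \cref{lem:rmp_char} would make $k$ be $Z$-measurable, a contradiction. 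Hence $\Phi$ is not $Z$-measurable, so $\mathcal{B}\times_Z\mathcal{B}\to Z$ is not ergodic and, by \cref{lem:weakmixchar}, $\mathcal{B}\to Z$ is not weakly mixing. Applying \cref{thm:nonsingularrelkrodichotomy} yields a nontrivial isometric intermediate extension $\tilde Z\to Z$ with $\mathcal{B}\to\tilde Z\to Z$; being an intermediate factor of the relatively measure-preserving extension $\mathcal{B}\to Z$ it is itself relatively measure-preserving, and since $X\to\mathcal{B}\to\tilde Z\to Z$ it is the required isometric relatively measure-preserving intermediate extension, establishing (ii).
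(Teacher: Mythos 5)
Your proposal is correct and follows essentially the same route as the paper's own proof: mapping $X$ into the weakly compact unit ball bundle of the conditional $\uL^2$-space of the test extension, invoking \cref{lem:bundle_inv} to upgrade the pushforward to a relatively measure-preserving extension, showing via the $K^*K$ kernel computation that the self-joining of the ball bundle over $Z$ is not ergodic, and closing with \cref{thm:nonsingularrelkrodichotomy}. The only (welcome) refinement is your explicit treatment of the case where the intermediate extension in (ii) fails to be ergodic, which the paper's mutual-exclusivity argument passes over silently.
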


The proof of \cref{thm:statdichotomy} used that the closed unit ball in a Hilbert space 
is a weak*-compact subset. In the relative setting, we will need to think about Hilbert bundles
instead of Hilbert spaces to make an analogous statement and then adapt the invariance 
lemma \cref{lem:ball_inv}.

\begin{definition}
  A \textbf{(continuous) Hilbert bundle} $p\colon E \to L$ consists of topological spaces 
  $E$ and $L$ and a continuous, open surjection $p\colon E\to L$ such that $L$ is compact
  and
  \begin{enumerate}
   \item every fiber $E_l \defeq p^{-1}(l)$ is a Hilbert space,
   \item if we define $E\times_L E \defeq \{ (e, f) \in E\times E \mid p(e) = p(f)\}$,
   then the maps
   \begin{alignat*}{3}
     +&\colon E \times_L E \to E, \quad &&(e, f) \mapsto e+f, \\
     \cdot & \colon \C\times E\to E, \quad &&(\lambda, e) \mapsto \lambda e
   \end{alignat*}
   are continuous,
   \item the scalar product 
   \begin{equation*}
    (\cdot|\cdot)\colon E\times_L E \to \C, \quad (e, f) \mapsto (e|f)_{E_{p(e)}} 
   \end{equation*}
   is continuous,
   \item for each $l\in L$ one obtains a neighborhood base of $0_l \in E_l$ 
   by considering the sets
   \begin{equation*}
    \{ e\in E \mid p(e) \in U, \|e\| < \epsilon\}
   \end{equation*}
   for neighborhoods $U\subset L$ of $l$ and $\epsilon > 0$.
  \end{enumerate}
  A map $\sigma\colon K \to E$ is called a \textbf{section} of the Hilbert bundle if $p\circ \sigma = \id_K$.
  We denote the space of all continuous sections by $\Gamma(E)$ which becomes a Banach space with
  the natural norm $\|\sigma\| = \sup_{l\in L} \|\sigma(l)\|_{E_l}$. The bundle $E$ is called \textbf{separable} 
  if there is a countable subset 
  $\mathcal{F} \subset \Gamma(E)$ such that for each $l\in L$ the set $\{\sigma(l)\mid \sigma \in \mathcal{F}\}$
  is dense in $E_l$.
  
  The \textbf{weak topology} on $E$ is the initial topology induced by the maps
  \begin{alignat*}{3}
    p &\colon E \to L, \quad &&e\mapsto p(e) \\
    (\cdot | \sigma(p(\cdot)))&\colon E \to \C, \quad && e \mapsto (e | \sigma(p(e)))
  \end{alignat*}
  for $\sigma \in \Gamma(E)$. We denote $E$ endowed with the weak topology by $E_\uw$. 
  
  A \textbf{unitary representation} or \textbf{unitary action} $\pi$ of a group $G$ on a Hilbert bundle $p\colon E \to L$ 
  is a continuous group action $\pi\colon G \to \Homeo(E)$ such that there exists 
  a continuous action $G \to \Homeo(L)$ 
  that satisfies
  \begin{itemize}
   \item $p(\pi(g)e) = gp(e)$ for every $e\in E$ and $g\in G$ and
   \item $\pi(g)|_{E_l}\colon E_l \to E_{gl}$ is a unitary operator 
   for every $l\in L$ and $g\in G$.
  \end{itemize}
  Such a representation $\pi$ induces an action of $G$ on the space of sections 
  $\sigma$ of $p\colon E\to L$ via $(s.\sigma)_l \defeq \pi(s)\sigma_{s^{-1}l}$.
\end{definition}

\begin{remark}
  All Hilbert bundles that occur will be continuous, so we usually simply speak of Hilbert 
  bundles. One of the standard references for Hilbert bundles, and more generally bundles of 
  topological vector spaces, is \cite{Gierz1982}. Gierz defines Hilbert bundles somewhat
  differently in \cite[Definition 1.5]{Gierz1982} but shows in \cite[Theorems 2.4, 2.9]{Gierz1982}
  that the definition we gave above is equivalent. We emphasize that by \cite[Theorem 2.9]{Gierz1982},
  Hilbert bundles are \enquote{full}, i.e., for every element of a Hilbert bundle there is a continuous 
  section that passes through it. In particular, the norm and weak topology on a Hilbert bundle
  are both Hausdorff.
\end{remark}

We recall the following key examples for Hilbert bundles from 
\cite[Definition 6.2]{EdKr2022}. Part of this is based on unpublished notes 
kindly shared by M.\ Haase in private communication.

\begin{example}\label{ex:canonical_disintegration}
  Suppose $q\colon K \to L$ is a continuous surjection between compact spaces
  and that $(\mu_l)_{l\in L}$ is a weak$^*$-continuous family of probability 
  measures on $K$ such that $\supp(\mu_l) \subset K_l$. Set 
  \begin{equation*}
    \uL^2_q(K, \mu) \defeq \bigcup_{l\in L} \uL^2(K_l, \mu_l)
  \end{equation*}
  and let $p\colon \uL^2_q(K, \mu) \to L$ be the canonical map. Endow 
  $\uL^2_q(K, \mu)$ with the topology generated by the sets 
  \begin{equation*}
    V(F, U, \epsilon) \defeq \left\{ f\in \uL^2_q(K, \mu) \mmid p(f) \in U, \| f - F|_{K_{p(f)}}\|_{\uL^2(K_{p(f)},\mu_{p(f)})} < \epsilon \right\}
  \end{equation*}
  for $F\in \uC(K)$, open $U\subset L$, and $\epsilon > 0$. Then it is standard to verify that 
  $p\colon \uL^2_q(K, \mu)\to L$ defines a continuous Hilbert bundle. 
    
  Given a relatively measure-preserving extension $p\colon \uX \to \uY$ of nonsingular
  actions of a lcsc group $G$,
  one can associate to it a canonical Hilbert bundle. To see this,
  observe that if we set 
  \begin{align*}
    \mathcal{A} &\defeq \left\{ f\in \uL^\infty(\uX) \mmid g \mapsto T_{g^{-1}}f \text{ is } \|\cdot\|_{\uL^\infty(\uX)}\text{-continuous}\right\}, \\
    \mathcal{B} &\defeq \left\{ f\in \uL^\infty(\uY) \mmid g \mapsto T_{g^{-1}}f \text{ is } \|\cdot\|_{\uL^\infty(\uX)}\text{-continuous}\right\},
  \end{align*}
  then since the extension is relatively measure-preserving, $\E_\uY(\mathcal{A}) = \mathcal{B}$ (use \cref{lem:rmp_char})
  and so 
  by \cref{lem:uniformcont}, these $\uC^*$-algebras gives rise to 
  a topological model $q\colon (K, \mu_K) \to (L, \mu_L)$ for the 
  extension $p\colon \uX \to \uY$. The corresponding conditional expectation $\E_q\colon \uL^1(K, \mu_K) \to \uL^1(L, \mu_L)$ then 
  restricts to a continuous operator $\E_q\colon \uC(K) \to \uC(L)$.
  
  For $l\in L$, let $\delta_l\in \uM(L)\cong \uC(L)'$ be the Dirac measure in $l$ and 
  set $\mu_l \defeq \E_q'\delta_l \in \uC(K)'$. Since $\E_q\colon \uC(K) \to \uC(L)$ is continuous, 
  this defines a weak$^*$-continuous family of probability measures on $K$. That $\supp(\mu_l) \subset K_l$
  for every $l\in L$ follows from the fact that $\E_q$ is a $\uC(L)$-module homomorphism. Since 
  $\E_q$ preserves the integral of functions, for $f\in \uC(K)$
  \begin{equation*}
   \int_L \int_{K_l} f \dmu_l \dmu_L(l)
   = \int_L \int_{K_l} \left\langle f, \E_q'\delta_l\right\rangle \dmu_L(l)
   = \int_L (\E_Lf)(l)\dmu_L(l)
   = \int_K f \dmu_K.
  \end{equation*}
  Thus, $(\mu_l)_{l\in L}$ is a disintegration of $\mu_K$
  and, as above, we can associate the Hilbert bundle $\uL^2_q(K, (\mu_l)_{l\in L})$ to it. The upside
  that one obtains a continuous Hilbert bundle comes at the price that this Hilbert bundle is in no 
  way separable, even if the initial probability spaces $\uX$ and $\uY$ were separable. The following 
  lemma provides a remedy to this drawback.
\end{example}

\begin{lemma}\label{lem:disint_model}
  Let $p\colon \uX \to \uY$ be a relatively measure-preserving extension of nonsingular $G$-actions
  where $G$ is a lcsc group.
  Then there is a topological model $q\colon (K, \mu)\to (L,\nu)$ of continuous $G$-actions on
  compact metric spaces such that $\mu$ admits a unique weak$^*$-continuous $G$-equivariant disintegration over $(L, \nu)$.
\end{lemma}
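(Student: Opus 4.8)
The plan is to refine the non-metrizable model of \cref{ex:canonical_disintegration} to a metrizable one while retaining the continuity of the disintegration. The crucial input is that, since $p$ is relatively measure-preserving, the conditional expectation $\E_\uY\colon \uL^\infty(\uX)\to\uL^\infty(\uY)$ is $G$-equivariant by \cref{lem:rmp_char}. Writing $\mathcal{M}_X\subset\uL^\infty(\uX)$ and $\mathcal{M}_Y\subset\uL^\infty(\uY)$ for the $\uC^*$-algebras of $\uL^\infty$-continuous functions from \cref{lem:uniformcont}, this equivariance shows that $\E_\uY$ maps $\mathcal{M}_X$ into $\mathcal{M}_Y$: for $f\in\mathcal{M}_X$ one has $\|T_{g^{-1}}\E_\uY f-\E_\uY f\|_{\uL^\infty(\uY)}=\|\E_\uY(T_{g^{-1}}f-f)\|_{\uL^\infty(\uY)}\le\|T_{g^{-1}}f-f\|_{\uL^\infty(\uX)}\to 0$ as $g\to e$, so $\E_\uY f$ is again $\uL^\infty$-continuous.

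First I would build separable subalgebras that are compatible with $\E_\uY$. Since $\uL^1(\uX)$ is separable, I would start from a countable $\uL^1$-dense subset of $\mathcal{M}_X$ and, fixing a countable dense subgroup $S\subset G$, close it off under the $\uC^*$-operations, under the Koopman operators $T_s$ for $s\in S$, and under $\E_\uY$ (viewing $\E_\uY f\in\uL^\infty(\uY)\subset\uL^\infty(\uX)$), iterating countably many times. This produces a separable $\uC^*$-subalgebra $\mathcal{A}'\subset\mathcal{M}_X$ that is $\uL^1$-dense and $S$-invariant; by the strong $\uL^1$-continuity of \cref{thm:FEmagic} together with separability it is automatically $G$-invariant, exactly as in \cref{factor-algebra-corr}. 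Let $\mathcal{B}'\subset\mathcal{M}_Y$ be the $\uC^*$-subalgebra generated by $\E_\uY(\mathcal{A}')$; it is separable and $G$-invariant, and because $\mathcal{A}'$ is closed under $\E_\uY$ it satisfies $\mathcal{B}'\subset\mathcal{A}'$ under the embedding $\uL^\infty(\uY)\hookrightarrow\uL^\infty(\uX)$. Since $\E_\uY$ is an $\uL^1$-contraction onto $\uL^\infty(\uY)$ and $\mathcal{A}'$ is $\uL^1$-dense, $\mathcal{B}'$ is $\uL^1$-dense in $\uL^\infty(\uY)$.

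By the correspondence of \cref{factor-algebra-corr}, the inclusion $\mathcal{B}'\subset\mathcal{A}'$ yields a topological model $q\colon(K,\mu)\to(L,\nu)$ of the extension $p$ on compact metric spaces, and the $G$-actions on $K$ and $L$ are continuous by \cref{lem:flowcont} since $\mathcal{A}'\subset\mathcal{M}_X$ and $\mathcal{B}'\subset\mathcal{M}_Y$ make the induced actions on $\uC(K)$ and $\uC(L)$ strongly continuous. Under the identifications $\mathcal{A}'\cong\uC(K)$ and $\mathcal{B}'\cong\uC(L)$, the conditional expectation $\E_q\colon\uL^1(K,\mu)\to\uL^1(L,\nu)$ coincides with the transported $\E_\uY$ and hence restricts to a continuous operator $\E_q\colon\uC(K)\to\uC(L)$, precisely because $\E_\uY(\mathcal{A}')\subset\mathcal{B}'$ by construction. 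Setting $\mu_l\defeq\E_q'\delta_l$ then gives, exactly as in \cref{ex:canonical_disintegration}, a weak$^*$-continuous family of probability measures with $\supp(\mu_l)\subset K_l$ that disintegrates $\mu$ over $(L,\nu)$. Its $G$-equivariance follows from the $G$-equivariance of $\E_q$: for $f\in\uC(K)$ one computes $\langle f,\mu_{gl}\rangle=(\E_q f)(gl)=(T_g\E_q f)(l)=(\E_q T_g f)(l)=\langle f,g_*\mu_l\rangle$, so $g_*\mu_l=\mu_{gl}$ for all $g\in G$ and $l\in L$. Finally, uniqueness is immediate: any disintegration is $\nu$-a.e.\ unique by \cref{thm:disintegration}, and since $\nu$ has full support, two weak$^*$-continuous disintegrations agreeing $\nu$-a.e.\ agree everywhere.

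The main obstacle is the simultaneous bookkeeping in the second step: one must arrange separability (for metrizability) together with $\uL^1$-density and $G$-invariance (for a genuine model) \emph{and} closure under $\E_\uY$ (for the continuity of the disintegration), all within the countably many closure operations. Everything hinges on the observation of the first paragraph — that relative measure-preservation makes $\E_\uY$ both $G$-equivariant and preserving of $\uL^\infty$-continuity — which is what permits $\E_\uY$ to be folded into the countable closure without destroying the other properties.
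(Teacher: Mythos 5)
Your proposal is correct and follows essentially the same route as the paper: a countable back-and-forth closure producing separable, $\uL^1$-dense, $G$-invariant $\uC^*$-subalgebras upstairs and downstairs that are compatible with the conditional expectation, followed by defining the disintegration as $\mu_l = \E_q'\delta_l$; the paper merely routes this through the canonical non-metrizable model of \cref{ex:canonical_disintegration} first, which is an immaterial difference. One cosmetic point: the automatic upgrade from $S$-invariance to $G$-invariance of $\mathcal{A}'$ uses the $\|\cdot\|_{\uL^\infty}$-continuity of orbit maps coming from $\mathcal{A}'\subset\mathcal{M}_X$ (norm-closedness of the $\uC^*$-algebra), not the $\uL^1$-continuity of \cref{thm:FEmagic}, though your construction already guarantees this.
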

\begin{proof}
  We start with the canonical (not necessarily metrizable) topological model
  $q\colon (K, \mu) \to (L, \nu)$
  constructed in \cref{ex:canonical_disintegration}. 
  Let $\mathcal{A}_1 \subset \uC(K)$ be a separable $G$-invariant $\uC^*$-subalgebra 
  of $\uC(K)$ that is dense in $\uL^1(K, \mu)$
  and let 
  \begin{equation*}
    \E_{L} \colon \uC(K)\cong \uL^\infty(K, \mu) \to \uL^\infty(L, \nu) \cong \uC(L)
  \end{equation*}
  be the conditional expectation. Let $\mathcal{B}_1 \defeq \uC^*_G(\E_{L}(\mathcal{A}_1)) \subset \uC(L)$ be the 
  $G$-invariant $\uC^*$-subalgebra generated by $\E_{L}(\mathcal{A}_1)$. Recursively define two increasing 
  sequences of separable, $G$-invariant $\uC^*$-subalgebras
  via
  $\mathcal{A}_{n+1} \defeq \uC^*_G(\mathcal{A}_n \cup T_q(\mathcal{B}_n))$ and 
  $\mathcal{B}_{n+1} \defeq \uC^*_G(\E_{L}(\mathcal{A}_{n+1}))$. Set 
  \begin{equation*}
   \mathcal{A} \defeq \overline{\bigcup_{n\in\N} \mathcal{A}_n}^{\|\cdot\|_{\uC(K)}}, \qquad 
   \mathcal{B} \defeq \overline{\bigcup_{n\in\N} \mathcal{B}_n}^{\|\cdot\|_{\uC(L)}}.
  \end{equation*}
  By construction, $\mathcal{A} \subset \uC(K)$ and $\mathcal{B} \subset \uC(L)$ are 
  $G$-invariant, separable $\uC^*$-subalgebras that are dense in $\uL^1(K, \mu)$
  and $\uL^1(L, \nu)$, respectively. Moreover, $T_q(\mathcal{B}) \subset \mathcal{A}$
  and the conditional expectation restricts to a $G$-equivariant map $\E_{L} \colon \mathcal{A} \to \mathcal{B}$.
  Thus, the resulting topological model $q'\colon (K', \mu') \to (L', \nu')$ has a $G$-equivariant 
  weak*-continuous disintegration $(\mu_l')_{l\in L'}$ given by the adjoint of the conditional expectation via
  $\mu_l' \defeq \E_L'\delta_l$. This is unique due to the disintegration theorem \cref{thm:disintegration}.
\end{proof}

It is not hard to show that the closed \enquote{unit ball} in a Hilbert bundle 
is compact in the weak topology. This can be found in \cite[Proposition 2.7]{Renault2012}
and also follows from \cite[Proposition 15.3]{Gierz1982}. Both work with slightly different definitions of the 
weak(*) topology but it is not hard to show that they are all equivalent.

\begin{proposition}\label{prop:weaklycompact}
  Let $p\colon E\to L$ be a Hilbert bundle over a compact space. Then 
  every norm-bounded subset of $E$ is relatively compact in the weak topology. 
  In particular, the weakly closed set $B_1 \defeq \{x\in E \mid \|x\| \leq 1\}$ 
  is weakly compact.
\end{proposition}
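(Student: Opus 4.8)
The plan is to embed $E$ with its weak topology into a product of compact discs and to verify that $B_1$ has compact image there. As a preliminary, I would note that for any continuous section $\rho\in\Gamma(E)$ the function $l\mapsto\|\rho(l)\|_{E_l}$ is continuous, since $\|\rho(l)\|^2=(\rho(l)|\rho(l))$ is the composition of the continuous map $l\mapsto(\rho(l),\rho(l))\in E\times_L E$ with the continuous scalar product. Now consider
\begin{equation*}
  \Psi\colon E_\uw \to L\times\prod_{\sigma\in\Gamma(E)}\C,\qquad e\mapsto\Big(p(e),\big((e|\sigma(p(e)))\big)_{\sigma\in\Gamma(E)}\Big),
\end{equation*}
with the product topology on the target. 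By definition the weak topology on $E$ is the initial topology induced by $p$ and the maps $(\cdot|\sigma(p(\cdot)))$, and since the bundle is full these maps separate points (for $e\neq f$ in a common fiber $E_l$ take a section through $e-f$); hence $\Psi$ is a homeomorphism onto its image, and it suffices to show $\Psi(B_1)$ is compact.

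By Cauchy--Schwarz, $|(e|\sigma(p(e)))|\le\|\sigma\|$ for $e\in B_1$, so $\Psi(B_1)$ is contained in the set
\begin{equation*}
  C\defeq L\times\prod_{\sigma\in\Gamma(E)}\{z\in\C\mmid |z|\le\|\sigma\|\},
\end{equation*}
which is compact by Tychonoff. It remains to prove that $\Psi(B_1)$ is closed in $C$, which is the crux. Given $(l,(c_\sigma)_\sigma)$ in the closure of $\Psi(B_1)$, I would pick a net $(e_\alpha)$ in $B_1$ with $p(e_\alpha)\to l$ and $(e_\alpha|\sigma(p(e_\alpha)))\to c_\sigma$ for all $\sigma$, and define a functional on the fiber by $\phi(\sigma(l))\defeq c_\sigma$. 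Well-definedness is the delicate point: if $\sigma(l)=\tau(l)$ then $\sigma-\tau$ is a continuous section vanishing at $l$, so $\|(\sigma-\tau)(p(e_\alpha))\|\to 0$ by the norm-continuity above, and therefore $|c_\sigma-c_\tau|\le\limsup_\alpha\|(\sigma-\tau)(p(e_\alpha))\|=0$. Additivity and conjugate-homogeneity follow by representing sums and scalar multiples through the sections $\sigma+\tau$ and $\lambda\sigma$, and the same norm-continuity gives $|\phi(\sigma(l))|=|c_\sigma|\le\limsup_\alpha\|\sigma(p(e_\alpha))\|=\|\sigma(l)\|$. Since the bundle is full, $E_l=\{\sigma(l)\mmid\sigma\in\Gamma(E)\}$, so $\phi$ is a bounded conjugate-linear functional on $E_l$ of norm at most one.

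By the Fr\'{e}chet--Riesz theorem there is a vector $e\in E_l$ with $\|e\|\le 1$ and $(e|v)=\phi(v)$ for all $v\in E_l$; then $\Psi(e)=(l,(c_\sigma)_\sigma)$ and $e\in B_1$, so the limit point lies in $\Psi(B_1)$. Hence $\Psi(B_1)$ is closed in the compact set $C$, therefore compact, and $B_1$ is weakly compact. Finally, for an arbitrary norm-bounded set $A$ with $\sup_{a\in A}\|a\|\le R$, the identical argument shows that $B_R\defeq\{x\in E\mmid\|x\|\le R\}$ is weakly compact (and weakly closed), and since $A\subset B_R$ its weak closure is a closed subset of $B_R$, hence weakly compact; thus $A$ is relatively weakly compact. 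I expect the main obstacle to be exactly the closedness of $\Psi(B_1)$---in particular the well-definedness and the norm bound for the limiting functional $\phi$---both of which rest on the continuity of $l\mapsto\|\rho(l)\|$ along continuous sections together with the fullness of the bundle.
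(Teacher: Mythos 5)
Your argument is correct. Note, however, that the paper does not actually prove this proposition: it only cites \cite[Proposition 2.7]{Renault2012} and \cite[Proposition 15.3]{Gierz1982}, so there is no in-text proof to compare against. Your self-contained proof is the natural one and all the delicate points are handled properly: the embedding $\Psi$ into $L\times\prod_{\sigma}\C$ is a homeomorphism onto its image because the weak topology is by definition the initial topology for exactly these maps and fullness of the bundle (which the paper records via \cite[Theorem 2.9]{Gierz1982}) makes them point-separating; the containment of $\Psi(B_1)$ in a Tychonoff-compact product of discs is immediate from Cauchy--Schwarz; and the closedness argument correctly reduces to producing, from a limit point $(l,(c_\sigma)_\sigma)$, a bounded (conjugate-)linear functional on the single fiber $E_l$, where well-definedness and the norm bound both rest on the continuity of $l'\mapsto\|\rho(l')\|$ for continuous sections $\rho$ (itself a consequence of the continuity axioms for $+$ and the scalar product) together with fullness, after which the ordinary fiberwise Fr\'{e}chet--Riesz theorem finishes the job. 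The passage from $B_1$ to arbitrary norm-bounded sets via $B_R$ is also fine, using that $E_\uw$ is Hausdorff (again by fullness, as the paper remarks) so that the weakly compact set $B_R$ is weakly closed. Your use of nets rather than sequences is necessary here since the product is not metrizable, and you got that right. The one thing your write-up leaves implicit is that $\Gamma(E)$ is closed under sums and scalar multiples (needed to form the sections $\sigma-\tau$, $\sigma+\tau$, $\lambda\sigma$), but this follows at once from the continuity of the bundle operations, so it is a cosmetic rather than a substantive gap.
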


\begin{remark}\label{rem:weaktopmetrizable}
  If $p\colon E \to L$ is a separable Hilbert bundle over a compact \emph{metric} space,
  then the weak topology on $E$ is metrizable. To see this, use the Stone--Weierstrasse 
  theorem for Hilbert bundles (see \cite[Theorem 4.2]{Gierz1982}) that states: If $M \subset \Gamma(E)$ 
  is a submodule that is dense in each fiber of $E$, then $M$ is dense in $\Gamma(E)$. From 
  this, it follows that for a separable Hilbert bundle over a compact metric space, 
  $\Gamma(E)$ is a separable Banach space. With this knowledge, writing down a compatible 
  metric for the weak topology on $E$ reduces to standard arguments.
\end{remark}

In \cite[Lemma 3.3]{Bjorklund2017} it is shown that a bounded harmonic function for
a stationary $(G,m)$-action on a probability space is necessarily almost everywhere $G$-invariant. 
Below, we adapt this proof to show that the assumption of boundedness is not required 
and that the same statement is also true for functions that are harmonic under the adjoint 
random walk which we expressed earlier via the operator $P_m^* \colon \uL^1(\uX) \to \uL^1(\uX)$
\begin{equation*}
  P_m^*f(x) = \int_G h_s(x)f(s^{-1}x)\dm(s)
\end{equation*}
where $h_s \defeq \frac{\ud s_*\mu}{\ud\mu}$.
We collect this invariance
statement here as we shall exploit the underlying idea in the proof of 
\cref{lem:bundle_inv} below to prove a similar statement in the case of Hilbert bundles.

\begin{lemma}\label{lem:newharmonicinv}
  Let $(G,m) \curvearrowright \uX$ be a stationary action on a probability space
  and $f\in \uL^1(\uX)$. If $P_mf = f$ or $P_m^*f = f$, then $f$ is $G$-invariant.
\end{lemma}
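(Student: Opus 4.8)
The plan is to prove the claim in three stages: reduce to real-valued $f$, settle the case $f\in\uL^\infty(\uX)$ by a Hilbert-space argument, and then remove the boundedness assumption by truncation. The point that makes both hypotheses $P_mf=f$ and $P_m^*f=f$ amenable to the same treatment is that $P_m$ and $P_m^*$ are both bi-Markov operators, i.e.\ positive, unital and integral-preserving. Since both operators send real functions to real functions, I would first split $f$ into real and imaginary parts and assume $f$ real.

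For the bounded case I would exploit that $P_m$ is an $\uL^2(\uX)$-contraction: stationarity yields $\int_G \|T_gf\|_{\uL^2(\uX)}^2\,\dm(g)=\|f\|_{\uL^2(\uX)}^2$, so Jensen's inequality for the strictly convex map $v\mapsto\|v\|^2$ gives $\|P_mf\|_{\uL^2(\uX)}\le\|f\|_{\uL^2(\uX)}$. Hence $P_m^*$ is also an $\uL^2$-contraction and is the Hilbert-space adjoint of $P_m$, and since a contraction and its adjoint have the same fixed vectors, $P_m^*f=f$ is equivalent to $P_mf=f$ for $f\in\uL^2(\uX)$; it therefore suffices to treat $P_mf=f$. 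Equality in the Jensen estimate then forces $g\mapsto T_gf$ to be $m$-a.e.\ constant in $\uL^2(\uX)$, and averaging identifies this constant with $f$, so $T_gf=f$ for $m$-almost every $g$. Finally, \cref{thm:FEmagic} shows that $g\mapsto T_gf$ is $\uL^1$-continuous, so $H\defeq\{g\in G\mid T_gf=f\}$ is a closed subgroup of full $m$-measure; it thus contains $\supp(m)$, hence the dense subgroup generated by $\supp(m)$, and being closed it equals $G$. This proves that bounded harmonic functions are $G$-invariant.

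To remove boundedness I would use truncation and the operator Jensen inequality, writing $P$ for either $P_m$ or $P_m^*$ and assuming $Pf=f$ with $f\in\uL^1(\uX)$ real. Positivity gives $Pf^+\ge Pf=f$ and $Pf^+\ge 0$, hence $Pf^+\ge f^+$; since $P$ preserves the integral, $\int_X(Pf^+-f^+)\,\dmu=0$ and the integrand's nonnegativity force $Pf^+=f^+$, and likewise $Pf^-=f^-$, so I may assume $0\le f\in\uL^1(\uX)$. For the truncations $f_N\defeq f\wedge N$, concavity of $t\mapsto t\wedge N$ and the operator Jensen inequality for the unital positive map $P$ give $Pf_N\le (Pf)\wedge N=f\wedge N=f_N$, which integral-preservation again upgrades to $Pf_N=f_N$. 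Each $f_N$ is bounded, hence $G$-invariant by the previous step; letting $N\to\infty$ and transporting the almost-everywhere convergence $f_N\uparrow f$ through the nonsingular maps $g$ gives $T_gf=f$ for every $g\in G$.

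The main obstacle is precisely the removal of boundedness: the clean convexity argument lives in $\uL^2(\uX)$, so the new work is the truncation step, where one must verify that harmonicity is inherited both by $f^\pm$ and by the truncations $f\wedge N$ — and this is exactly where positivity, integral-preservation and the operator Jensen inequality enter. A secondary point to handle with care is the final passage $N\to\infty$, which moves an almost-everywhere identity through the action and is legitimate only because the action is nonsingular.
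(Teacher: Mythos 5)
Your proof is correct, and the outer layers coincide with the paper's: the reduction to real-valued bounded functions via positivity and integral-preservation of the bi-Markov operator (so that $f^{\pm}$ and the truncations $f\wedge N$ remain fixed) is exactly the paper's sublattice argument, and the final passage from \enquote{invariant for $m$-a.e.\ $s$} to \enquote{$G$-invariant} via the $\uL^1$-continuity of \cref{thm:FEmagic} and the density of the group generated by $\supp(m)$ is also the paper's. Where you genuinely diverge is the core bounded case. The paper runs Bj\"orklund's variance computation directly: for $P_m^*f=f$ it expands $\int_X\int_G h_s(x)\left(f(x)-f(s^{-1}x)\right)^2\dm(s)\dmu(x)$ and shows it vanishes using stationarity and the fact that the $\uL^\infty$-fixed space is a $\uC^*$-subalgebra (so $f^2$ is again fixed), treating $P_m$ and $P_m^*$ as two separate but analogous computations. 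You instead observe that stationarity makes $P_m$ an $\uL^2$-contraction, invoke $\fix(T)=\fix(T^*)$ for Hilbert-space contractions to collapse the two hypotheses into the single case $P_mf=f$, and extract $T_gf=f$ for $m$-a.e.\ $g$ from the equality case of Jensen (equivalently, the variance identity $\int_G\|T_gf\|^2\dm-\|\int_G T_gf\dm\|^2=\int_G\|T_gf-P_mf\|^2\dm$). This is cleaner: it needs neither the Radon--Nikodym derivatives $h_s$ nor the algebra structure of the fixed space, and it handles both operators at once. What the paper's more pedestrian computation buys is that it is precisely the template that generalizes to the Hilbert-bundle setting of \cref{lem:bundle_inv}, where the sections $\sigma$ live in varying fibers and there is no single ambient Hilbert space on which to run your adjoint-fixed-point argument. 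The only points you should still write out are routine: strong measurability of $g\mapsto T_gf$ as an $\uL^2$-valued map (so the Bochner integral defining $P_mf$ in $\uL^2$ makes sense), which follows from the paper's measurability argument together with $\|T_gf\|_{\uL^\infty}=\|f\|_{\uL^\infty}$, and the fact that $P(f\wedge N)\le (Pf)\wedge N$ needs only positivity and unitality rather than any operator Jensen inequality.
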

\begin{proof}
  Recall the general fact that if $T\colon \uL^1(\uX) \to \uL^1(\uX)$ is a 
  bi-Markov operator, then $\fix(T) = \{f\in \uL^1(\uX)\mid Tf = f\}$ is a 
  closed sublattice, i.e., $f\in \fix(T)$
  implies $|f| \in \fix(T)$. To see this, note that since $T$ is positive, 
  $|Tf| \leq T|f|$ and since $T$ preserves the integral,
  \begin{equation*}
    \int_X |f| \dmu 
    = \int_X |Tf| \dmu
    \leq \int_X T|f| \dmu
    = \int_X |f| \dmu.
  \end{equation*}
  Thus, $|f| = T|f|$ so that $\fix(T) \subset \uL^1(\uX)$ is indeed a sublattice.
  In particular, the sublattice $\fix_{\uL^\infty}(T) = \fix(T) \cap \uL^\infty(\uX)$
  is dense in $\fix(T)$ (use that $\1_\uX \in \fix(T)$ in order to employ a cut-off 
  approximation).
  Moreover, $\fix_{\uL^\infty}(T)$ is a closed sublattice in $\uL^\infty(\uX)$ and thus
  \cite[Theorem 7.23]{EFHN2015} shows that $\fix_{\uL^\infty}(T)$ is a $\uC^*$-subalgebra
  of $\uL^\infty(\uX)$.
  
  As we have seen, both $P_m$ and $P_m^*$ are bi-Markov operators on $\uL^1(\uX)$,
  so it is clear that to prove the claim, we may restrict ourselves to real-valued functions 
  $f\in \uL^\infty(\uX)$. The case $P_mf = f$ can be proved just as the case of countable $G$
  (see \cite[Lemma 3.3]{Bjorklund2017}), so we only prove the completely analogous case $P_m^*f = f$.
  In this case, 
  \begin{align*}
    \int_{\uX} \int_G& h_s(x)\left( f(x) - f(s^{-1}x) \right)^2 \dm(s) \dmu(x) \\
    &= \int_{\uX} \int_G h_s(x)\left( f(x)^2 - 2f(x)f(s^{-1}x) + f(s^{-1}x)^2 \right) \dm(s) \dmu(x) \\
    &= \int_G \int_{\uX} f(x)^2 \,\ud(s_*\mu)(x) \dm(s)
    -2 \int_{\uX} f(x)P_m^*f(x)\dmu(x) + \int_{\uX} \left(P_m^*(f^2)\right)(x) \dmu(x) \\
    &= \int_{\uX} f^2(x) - 2f^2(x) + f^2(x) \dmu(x).
  \end{align*}
  Here, we used stationarity of $\mu$ for the first term, $f\in\fix(P_m^*)$ for the second, and 
  for the third we exploited that $f^2 \in \fix(P_m^*)$. 
  Thus, $f$ is invariant under $m$-almost every $s\in G$. By \cref{thm:FEmagic}, 
  this 
  extends to all $s\in \supp(m)$ and since $\supp(m)$ generates a dense 
  subgroup of $G$, applyingg \cref{thm:FEmagic} again, we conclude that  
  $f$ is $G$-invariant.
\end{proof}

We are now ready to prove the key observation that will enable us to derive that 
the isometric extensions in the structure theorem for stationary actions are,
in fact, relatively measure-preserving. To that end, recall that if $p\colon E \to L$
is a Hilbert bundle, then $E_\uw$ denotes $E$ endowed with the weak topology.

\begin{lemma}\label{lem:bundle_inv}
  Let $\pi\colon(G, m) \curvearrowright (E_\uw,\xi)$ be a unitary and stationary action where  
  $p\colon E \to Z$ is a separable Hilbert bundle over a compact metrizable space $Z$.
  Let $\zeta \defeq p_*\xi$. Then $p\colon (E_\uw, \xi) \to (Z, \zeta)$ is a relatively
  measure-preserving extension of $(G,m)$-actions.
\end{lemma}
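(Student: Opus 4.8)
The plan is to show that the disintegration $(\xi_z)_{z\in Z}$ of $\xi$ over $\zeta$ is $G$-equivariant, first reducing to fiber measures of bounded support, then proving equivariance of conditional \emph{barycenters} by a strict-convexity argument in the spirit of \cref{lem:newharmonicinv}, and finally upgrading from barycenters to the full measures via tensor powers. Since $p$ is $G$-equivariant, $\zeta = p_*\xi$ is again $m$-stationary, so both $\xi$ and $\zeta$ are stationary and \cref{rem:stationary_dis} yields the identity $\xi_z = \int_G \frac{\ud s_*\zeta}{\ud\zeta}(z)\, s_*\xi_{s^{-1}z}\,\dm(s)$ for $\zeta$-a.e.\ $z$; the goal is to promote this to $s_*\xi_{s^{-1}z} = \xi_z$ for all $s\in G$ and $\zeta$-a.e.\ $z$. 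Because $e\mapsto \|e\|_{E_{p(e)}}$ is continuous and $\pi$ is unitary, each ball bundle $B_R = \{\,e\mid \|e\|\le R\,\}$ is $G$-invariant and, by \cref{prop:weaklycompact} and \cref{rem:weaktopmetrizable}, weakly compact and metrizable. One checks that the law of $\|e\|$ under $\xi$ is $G$-invariant and that $\xi_R \defeq \xi|_{B_R}$ is again stationary with bounded support. It therefore suffices to prove equivariance of the normalized fiber measures of each $\xi_R$ and then let $R\to\infty$ along continuity radii, so I may assume every $\xi_z$ has bounded support and hence finite moments of all orders.

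The heart of the argument is the barycenter computation. Let $b(z)\defeq \int_{E_z} e\,\ud\xi_z(e)\in E_z$ be the conditional barycenter, a bounded measurable section. Taking barycenters in the disintegration identity and using linearity and unitarity of $\pi(s)|_{E_{s^{-1}z}}$ gives $b(z) = \int_G \rho(s,z)\,\pi(s)b(s^{-1}z)\,\dm(s)$, where $\rho(s,z)=\frac{\ud s_*\zeta}{\ud\zeta}(z)$ and $\rho(s,z)\,\dm(s)$ is a probability measure on $G$ for $\zeta$-a.e.\ $z$. Convexity of $\|\cdot\|_{E_z}^2$ gives $\|b(z)\|^2 \le \int_G \rho(s,z)\,\|b(s^{-1}z)\|^2\,\dm(s)$, and integrating against $\zeta$ while using the change of variables $\int_Z \rho(s,z)\,\varphi(s^{-1}z)\,\ud\zeta(z) = \int_Z \varphi\,\ud\zeta$ shows that both sides equal $\int_Z \|b\|^2\,\ud\zeta$. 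Hence the convex inequality is an equality $\zeta$-a.e., and strict convexity of the Hilbert norm forces the conditional variance of $s\mapsto \pi(s)b(s^{-1}z)$ to vanish, i.e.\ $\pi(s)b(s^{-1}z)=b(z)$ for $m$-a.e.\ $s$ and $\zeta$-a.e.\ $z$. As in \cref{lem:newharmonicinv}, the set of such $s$ is a closed subgroup of full $m$-measure for the (strongly continuous) $G$-action on $\uL^2$-sections, so it contains $\supp(m)$ and the dense subgroup it generates; by continuity (\cref{thm:FEmagic}) the equivariance $\pi(s)b(s^{-1}z)=b(z)$ then holds for all $s\in G$.

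Finally I would upgrade from barycenters to measures. For each $k$ the fiberwise tensor power $E^{\otimes k}\to Z$ is again a separable Hilbert bundle carrying the unitary $G$-action $\pi^{\otimes k}$, and the $G$-equivariant map $e\mapsto e^{\otimes k}$ pushes $\xi$ forward to a stationary, bounded-support measure. Applying the previous paragraph to this bundle shows that the $k$-th moment section $z\mapsto \int_{E_z} e^{\otimes k}\,\ud\xi_z(e)$ is $G$-equivariant, which via $\pi(s)^{\otimes k}e^{\otimes k}=(\pi(s)e)^{\otimes k}$ says exactly that $s_*\xi_{s^{-1}z}$ and $\xi_z$ have equal moments of all orders. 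Passing to the underlying real Hilbert bundle, so that real tensor powers capture all polynomial moments, boundedness of the supports makes the fiber measures moment-determinate, whence $s_*\xi_{s^{-1}z}=\xi_z$ for every $s$ and $\zeta$-a.e.\ $z$; letting $R\to\infty$ removes the boundedness reduction and shows $p$ is relatively measure-preserving. (This specializes, over a one-point base, to a proof of \cref{lem:ball_inv} with no boundedness hypothesis.)

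I expect the main obstacle to be the interface between the bounded-support reduction and the moment-to-measure step: one must verify carefully that $\xi|_{B_R}$ remains stationary, that $G$-invariance of the law of $\|e\|$ lets the normalizations $\xi_z(B_R)\to 1$ be controlled uniformly enough in $z$ to pass to the limit $R\to\infty$, and that equality of all real tensor moments genuinely determines a compactly supported measure on each Hilbert fiber. The barycenter equality itself is robust, being nothing but a fiberwise version of the strict-convexity identity already used in \cref{lem:newharmonicinv}.
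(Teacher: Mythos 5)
Your proposal is correct and follows essentially the same route as the paper: restrict to the $G$-invariant ball bundles, use the stationarity identity for the disintegration from \cref{rem:stationary_dis}, show that the tensor-moment sections $z\mapsto\int_{E_z} e^{\otimes k}\,\ud\xi_z(e)$ are $G$-invariant by the variance computation of \cref{lem:newharmonicinv} (your Jensen/strict-convexity step is exactly that computation), and then identify the fiber measures by testing against products of sections, extending from $\supp(m)$ to all of $G$ via \cref{thm:FEmagic}. The only cosmetic differences are that the paper runs the moment argument for all $k$ at once on $E^{\otimes k}$ rather than reducing to the $k=1$ barycenter of a pushforward, and phrases your ``moment determinacy'' step as a Stone--Weierstrass argument on the weakly compact ball.
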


\begin{proof}
  First, we assume that $\xi$ is supported on the weakly compact unit cylinder 
  $B \defeq \{ e \in E \mid \|e\| \leq 1\}$.
  Since the Hilbert bundle is separable, \cref{thm:disintegration} combined with 
  \cref{rem:weaktopmetrizable} allows us to disintegrate 
  the measure $\xi$ over $Z$ as $(\xi_z)_{z\in Z}$. 
  We need to show that $\pi(s)_*\xi_{s^{-1}z} = \xi_z$ for all $s\in G$ and 
  $\zeta$-almost every $z\in Z$.
  
  To prove this invariance, it suffices to consider special functions: Since 
  $p\colon E \to Z$ is a separable bundle, there is  
  a countable fiberwise dense sequence $(u_j)_{j\in\N}$ in $\Gamma(E)$. 
  By the Stone--Weierstrass theorem, a dense subspace
  of $\uC(B)$ can be constructed if one takes the linear span of
  the constant function $\1_B$
  and the functions $\phi\colon B\to \C$ of the form
  \begin{equation*}
   \phi(v) = ( u_1(p(v)) | v ) \cdots ( u_k(p(v)) | v ).
  \end{equation*}
  It thus suffices to prove
  \begin{equation*}
    \int_{B_z} \phi(v) \,\mathrm{d}\pi(s)_*\xi_{s^{-1}z}(v) = \int_{B_z} \phi(v) \,\mathrm{d}\xi_{z}(v) \qquad \text{for } \zeta\text{-a.e. } z\in Z
  \end{equation*}
  for each of these generating functions $\phi(v) = ( u_1(p(v)) | v ) \cdots ( u_k(p(v)) | v )$. 
  Denote by $\mathcal{F}$ the countable collection of these functions and fix $\phi \in \mathcal{F}$.
  
  By means of the disintegration, we can 
  expand the expression
  \begin{align*}
   \int_{B_{z}} \phi(v) \,\mathrm{d}\pi(s)_*\xi_{s^{-1}z}(v)
   &= \int_{B_z} (u_1(p(v))|v) \cdots (u_k(p(v))| v) \,\mathrm{d}\pi(s)_*\xi_{s^{-1}z}(v) \\
   &=  \int_{B_{s^{-1}z}} ( u_1(z)| \pi(s)v ) \cdots ( u_k(z)| \pi(s)v ) \dxi_{s^{-1}z}(v)  \\
   &=  \Bigg( (u_1\otimes \cdots \otimes u_k)(z) \,\Bigg|\, \pi^{\otimes k}(s)\underbrace{\int_{B_{s^{-1}z}} v\otimes \cdots \otimes v \dxi_{s^{-1}z}(v)}_{\eqdef\sigma_{s^{-1}z}} \Bigg) \\
   &=  \left( (u_1\otimes \cdots \otimes u_k)(z) \mmid \pi^{\otimes k}(s)\sigma_{s^{-1}z} \right).
  \end{align*}
  Here, we defined 
  a section $\sigma \colon Z \to E^{\otimes k}$ by 
  $\sigma_z \defeq \int_{B_z} v\otimes \cdots \otimes v \dxi_z(v)$ using 
  that $p(v) = z$ for $v\in\supp(\xi_z)$ for $\zeta$-a.e.\ $z\in Z$
  since $\supp(\xi_z) \subset p^{-1}(z)$ for $\zeta$-a.e.\ $z\in Z$.
  Note that the vector-valued integral $\sigma_z = \int_{B_z} v\otimes \cdots \otimes v \dxi_z(v)$ 
  is understood in the weak sense, i.e., it is the unique vector $\sigma_z \in E_z^{\otimes k}$ with
  \begin{equation*}
    \forall e\in E_z^{\otimes k} \colon \quad (e|\sigma_z) = \int_{B_z} (e|v\otimes\cdots\otimes v) \dxi_z(v)
  \end{equation*}
  that exists by virtue of the Riesz--Fr\'{e}ch\'{e}t representation theorem. 
  Our goal is to show that $\zeta$-a.e.\ $\pi^{\otimes k}(s)\sigma_{s^{-1}z} = \sigma_z$ which will prove the equivariance 
  of the disintegration.
  Set $h_s^\zeta \defeq \frac{\ud s_*\zeta}{\ud \zeta}$ for every $s\in G$. As 
  observed in \cref{rem:stationary_dis}, by virtue of the uniqueness statement in \cref{thm:disintegration}
  the disintegration $(\xi_z)_{z\in Z}$ is stationary in the sense that
  \begin{equation}\label{eq:disint_stat}
    \int_G h_s^\zeta(z)\pi(s)_*(\xi_{s^{-1}z})\dm(s) = \xi_z \qquad \text{for } \zeta\text{-a.e.\ } z\in Z.
  \end{equation}
  Combine this with the definition of $\sigma$ to conclude that
  \begin{equation}\label{eq:sigma_id}
    \sigma_z = \int_G h_s^\zeta(z) \pi^{\otimes k}(s) \sigma_{s^{-1}z} \dm(s) \qquad \text{for } \zeta\text{-a.e.\ } z\in Z.
  \end{equation}
  We may therefore attempt to proceed as in the proof of \cref{lem:newharmonicinv} to
  prove that $\sigma$ is $G$-invariant under the action of $G$, i.e.,  $\sigma_z
  = (s.\sigma)_z = \pi^{\otimes k}(s)\sigma_{s^{-1}z}$. We first perform the computation
  mimicking \cref{lem:newharmonicinv} and justify 
  measurability of all occurring functions afterwards.
  \begin{align*}
   \int_\uZ&  \int_G h_s^\zeta(z)
   \left(\sigma_z - (s.\sigma)_z \mmid \sigma_z - (s.\sigma)_z\right) \dm(s) \dzeta(z) \\
   &= \int_\uZ \int_G h_s^\zeta(z) 
   \Big( (\sigma_z|\sigma_z) 
   - 2\Re(\sigma_z | \pi^{\otimes k}(s)\sigma_{s^{-1}z}) \\
   &\qquad\qquad\qquad\qquad + (\pi^{\otimes k}(s)\sigma_{s^{-1}z}|\pi^{\otimes k}(s)\sigma_{s^{-1}z})\Big) \dm(s) \dzeta(z) \\
   &= \int_G\int_\uZ \|\sigma_z\|^2 \,\ud(s_*\zeta)(z)\dm(s)
   -2 \Re\int_\uZ \left(\sigma_z \mmid \int_G h_s^\zeta(z) \pi^{\otimes k}(s)\sigma_{s^{-1}z}\dm(s)\right) \dzeta(z) \\
   & \qquad \qquad + \int_\uZ \int_G h_s^\zeta(z)
   \left(\sigma_{s^{-1}z}\mmid\sigma_{s^{-1}z}\right)  \dm(s)\dzeta(z) \\
   &= \int_\uZ \|\sigma_z\|^2 - 2\|\sigma_z\|^2 \dzeta(z) + \int_G\int_\uZ 
   \left(\sigma_{s^{-1}z}\mmid\sigma_{s^{-1}z}\right) \,\ud (s_*\zeta)(z)\dm(s) \\
   &= -\int_\uZ \|\sigma_z\|^2 \dzeta(z) + \int_\uZ \int_G
   \left(\sigma_{z}\mmid\sigma_{z}\right) \dm(s) \dzeta(z) \\
   &= 0.
  \end{align*}
  Note that it is here that we use the unitarity of $\pi$.
  Thus, if we can prove that the maps
  \begin{alignat*}{3}
   &Z\to \R, \quad z \mapsto ((s.\sigma)_z|(s.\sigma)_z)_{E_z} \qquad &(s \in \supp(m)), \\
   &Z\to \R, \quad z \mapsto ((s.\sigma)_z|\sigma_z)_{E_z} \qquad &(s \in \supp(m)).
  \end{alignat*}
  are measurable, we will have shown that $\sigma$ is invariant under $\supp(m)$ and hence $G$.
  To prove the measurability of $z \mapsto (\sigma_z|\sigma_z)$, observe that
  \begin{align*}
    (\sigma_z|\sigma_z)
    &= \left( \int_{B_z} v\otimes \cdots \otimes v \dxi_z(v) \mmid \int_{B_z} w\otimes \cdots \otimes w \dxi_z(w) \right) \\
    &= \int_{B_z\times B_z} (v\otimes \cdots \otimes v | w\otimes \cdots \otimes w) \dxi_z\otimes \xi_z(v,w).
  \end{align*}
  Since $(\xi_z\otimes \xi_z)_{z\in Z}$ is the disintegration of the relatively independent joining 
  $\xi\otimes_Z\xi$, it follows that the map $z \mapsto (\sigma_z|\sigma_z)$ is measurable. 
  Proving the measurability for the other scalar products requires merely notational adjustments.
  Thus, $s.\sigma = \sigma$ for $m$-almost every $s\in G$.
  
  We now return to our initial computation and thus obtain for $\zeta$-a.e.\ $z\in Z$ and 
  $m$-almost every $s\in G$
  \begin{align*}
   \int_{B_{z}} \phi(v) \,\mathrm{d}\pi(s)_*\xi_{s^{-1}z}(v) 
   &=  \left( u_1 \otimes \cdots \otimes u_k(z) \mmid (s.\sigma)_{z} \right) \\
   &=  \left( u_1 \otimes \cdots \otimes u_k(z) \mmid \sigma_z\right) \\
   &=  \int_{B_{z}} \phi(v) \dxi_z(v).
  \end{align*}
  Since $\phi \in \mathcal{F}$ was arbitrary and $\mathcal{F}$ countable, the disintegration 
  $(\xi_z)_{z\in Z}$ indeed satisfies the desired equivariance for $m$-almost every $s\in G$.
  Using the characterization of relative measure-preservation in terms of conditional expectations,
  see \cref{lem:rmp_char} and the continuity theorem \cref{thm:FEmagic}, this extends to 
  all $s\in \supp(m)$ and the closed subgroup generated by $\supp(m)$ which proves $G$-equivariance.
  
  Finally, assume $\supp(\xi)$ is not necessarily contained in $B$. Since $G$ acts unitarily, the
  weakly compact set 
  $B_n \defeq \{e \in E \mid \|e\| \leq n\}$ is $G$-invariant for every $n\in\N$ and the same arguments
  as above show that any disintegration of $\xi_n \defeq \frac{1}{\xi(B_n)} \xi|_{B_n}$ is 
  $G$-equivariant almost everywhere provided that 
  $\xi(B_n) > 0$. By the disintegration theorem \cref{thm:disintegration}, $(\xi_z|_{B_n})_{z\in Z}$ 
  is a disintegration of $\xi|_{B_n}$ and as such it is $G$-equivariant almost everywhere. Now, for any
  weakly measurable set $A \subset E_\uw$
  \begin{equation*}
    \xi_z(A) 
    = \lim_{n\to \infty} \xi_z|_{B_n}(A) 
    = \lim_{n\to \infty} \frac{1}{\xi(B_n)}\xi_z|_{B_n}(A).
  \end{equation*}
  Thus, the disintegration $(\xi_z)_{z\in Z}$ is also $G$-equivariant almost everywhere.
\end{proof}

\begin{remark}
  U.\ Bader pointed out to the author that the weak and norm Borel $\sigma$-algebras coincide
  for a separable Hilbert space. The same is likely true for separable Hilbert bundles
  over compact metrizable spaces but we do not flesh out the details since we do not require 
  this equality here. Knowing this, $E_\uw$ ($E$ endowed with the weak topology)
  may be replaced by $E$ in \cref{lem:bundle_inv}.
\end{remark}

\textbf{Proof of the dichotomy.} Now that we have proven the generalization \cref{lem:bundle_inv}
of the invariance lemma \cref{lem:ball_inv}, we are ready to prove the dichotomy \cref{thm:relstatdichotomy}. 
Recall from \cref{lem:moduleprojection} that for a subset $M \subset \uL^2(\uX|\uZ)$ we denote 
the $\uL^2$-closure of $M$ within $\uL^2(\uX|\uZ)$ by $\tilde{M}$.

\begin{proof}[Proof of \cref{thm:relstatdichotomy}]
  First, assume the extension $\pi\colon (X,\mu) \to (Z,\zeta)$ 
  is weakly mixing (and in particular, ergodic). 
  Suppose, for the sake of contradiction, that
  \begin{equation*}
   \xymatrix{
    (X, \mu) \ar[r]^{p} & (\tilde{Z}, \tilde{\zeta}) \ar[r]^q & (Z, \zeta)
   }
  \end{equation*}
  is a nontrivial relatively measure-preserving
  isometric intermediate extension of $(Z, \zeta)$. Then there exists a finitely-generated 
  $\uL^\infty(\uZ)$-submodule $M \subset \uL^2(\tilde{\uZ}|\uZ)$ such that $M$ is not contained 
  in $\uL^\infty(\uZ)$. Without loss of generality, assume that $M = \tilde{M}$ and 
  let $P_M\colon \uL^2(\tilde{\uZ}|\uZ) \to M$ be the orthogonal projection 
  onto $M$. By \cref{lem:moduleprojection}, $P_M\in \HS(\uL^2(\tilde{\uZ}|\uZ))$. Since relative 
  measure-preservation is equivalent to equivariance of conditional expectations (see \cref{lem:rmp_char}),
  the decomposition $\uL^2(\tilde{\uZ}|\uZ) = M \oplus M^\perp$ is readily verified to be invariant. Thus, 
  $P_M$ is $G$-equivariant. Via the isomorphism $\HS(\uL^2(\tilde{\uZ}|\uZ)) \cong \uL^2(\tilde{\uZ}\times_\uZ\tilde{\uZ}|\uZ)$ 
  from \cref{thm:KHiso}, we may write $P_M = I_k$ for a uniquely determined $k \in \uL^2(\tilde{\uZ}\times_\uZ\tilde{\uZ}|\uZ)$.
  As observed above, the $G$-equivariance of $P_M$ is equivalent to invariance of $k$. Since $P_M$ does not map
  into $\uL^\infty(\uZ)$, $k\neq \E_\uZ(k)\cdot \1_{\tilde{\uZ} \times_{\uZ} \tilde{\uZ}}$ which shows that the extension 
  $\tilde{\uZ} \times_\uZ \tilde{\uZ} \to \uZ$ is not ergodic. By extension, the 
  composite extension 
  \begin{equation*}
   (X\times_Z\tilde{Z}, \mu\otimes_Z\tilde{\zeta}) \to (\tilde{Z}\times_Z\tilde{Z}, \tilde{\zeta}\otimes_Z\tilde{\zeta}) \to (Z,\zeta)
  \end{equation*}
  is not ergodic either, which proves that the extension $\uX \to \uZ$ is not weakly mixing.

  Conversely, suppose $\pi \colon (X, \mu) \to (Z, \zeta)$ is not weakly mixing. Then there 
  exists an ergodic relatively measure-preserving extension 
  $q\colon (Y, \nu) \to (Z, \zeta)$ such that the relatively independent joining 
  $(X\times_Z Y, \mu\otimes_Z\nu) \to (Z,\zeta)$ is not an ergodic extension. This means
  that there is a nonconstant $G$-invariant function 
  $k \in \uL^\infty(X\times_Z Y, \mu\otimes_Z \nu)$ such that $k$ is not of 
  the form $f \cdot (\1_\uX \otimes_\uZ \1_\uY)$ for any $f\in \uL^\infty(\uZ)$.
  By \cref{lem:disint_model} and \cref{factor-algebra-corr}, we 
  may choose standard probability models and thus assume that $X$, $Y$, and $Z$ are compact metric 
  spaces, that $q\colon (Y,\nu) \to (Z,\zeta)$ is continuous, and that the disintegration of $\nu$ with respect to $q$ and $\zeta$ is 
  weak$^*$-continuous. By \cref{ex:canonical_disintegration}, we 
  thus obtain a natural continuous Hilbert bundle with total space $E = \bigcup_{z\in Z} \uL^2(Y_z, \nu_z)$.
  By \cref{prop:weaklycompact} and \cref{rem:weaktopmetrizable}, 
  $B_E \defeq \{ e \in E \mid \|e\| \leq 1 \}$ is a compact metric space with respect to the weak 
  topology on $E$. We claim that, after identifying $k$ with one of its representatives,
  \begin{equation*}
    p_k \colon X \to B_E, \quad x \mapsto k(x, \cdot)
  \end{equation*}
  is measurable with respect to the weak Borel $\sigma$-algebra on $B_E$. To see this, 
  note that $C(X\times_Z Y)$ is dense in $\uL^\infty(X\times_Z Y, \mu\otimes_Z \nu)$ with 
  respect to the $\uL^1$-norm so that we can find a sequence $(k_n)_{n\in\N}$ in $C(X\times_Z Y)$ 
  that converges to $k$ in $\uL^1(X\times_Z Y, \mu\otimes_Z \nu)$. Using a cut-off argument 
  and passing to a subsequence, we may assume that $|k_n| \leq 1$ and that $k_n(x,y) \to k(x,y)$ 
  for $\mu\otimes_Z \nu$-a.e.\ $(x, y) \in X\times_Z Y$. 
  Let $F \subset X \times_Z Y$ be a set of full measure where this convergence 
  holds. Setting $k_n$ and $k$ to zero on $F^c$, we may assume that $k_n(x,y) \to k(x,y)$ for
  every $(x,y) \in X \times_Z Y$. Therefore, by dominated convergence, $p_{k_n}(x) \to p_k(x)$
  for every $x\in X$. Since $p_{k_n}$ is continuous with respect to the weak topology on $B_E$
  and the pointwise limit of a sequence of measurable functions taking values in a metric space
  is again measurable, $p_k$ is measurable.
  
  Since for every $s\in G$, $k$ is almost everywhere $s$-equivariant, for every $s\in G$
  the map 
  $p_k\colon X \to B_E$ is $s$-equivariant almost everywhere. Thus, 
  $p_k$ yields a factor map $p_k \colon (X, \mu) \to (B_E, (p_k)_*\mu)$ of stationary 
  $(G,m)$-actions. Since the extension
  $q\colon (Y, \nu) \to (Z,\zeta)$ is relatively measure-preserving, the induced $G$-action 
  on $E$ is unitary. This allows for the (only) application of the stationarity assumption: We 
  can now apply \cref{lem:bundle_inv} to see that also the 
  extension $(B_E, (p_k)_*\mu) \to (Z,\zeta)$ is relatively measure-preserving.
  Our goal is to show that $(B_E\times_Z B_E, (p_k)_*\mu \otimes_Z (p_k)_*\mu) \to (Z,\zeta)$ 
  is not an ergodic extension and to derive from this the existence of an isometric 
  intermediate extension by means of the Kronecker dichotomy \cref{thm:nonsingularrelkrodichotomy}
  for nonsingular systems.
  Showing that $(B_E\times_Z B_E, (p_k)_*\mu \otimes_Z (p_k)_*\mu) \to (Z,\zeta)$ is not an ergodic
  extension is equivalent to finding a $G$-invariant function 
  $\Phi \in \uL^\infty(B_E\times_Z B_E, (p_k)_*\mu \otimes_Z (p_k)_*\mu)$ that does not lie in 
  $\uL^\infty(Z,\zeta)$, i.e., is not of the form $\Phi = \E_\uZ(\Phi)\cdot \1_{B_E\times_Z B_E}$.
  
  To find such a $\Phi$,
  let $K\colon \uL^2(X|Z) \to \uL^2(Y|Z)$ be the Hilbert--Schmidt homomorphism $I_k$ induced by 
  $k$ and consider $K^*K \colon \uL^2(X|Z) \to \uL^2(X|Z)$. An explicit computation shows that 
  the kernel of $K^*K$ is given by the convolution of the kernel of $K$ with itself,
  i.e., the $G$-invariant function
  \begin{equation*}
    \tilde{\Phi}\colon X\times_Z X \to \R, \quad 
    (x,x') \mapsto \int_{Y_{q(x)}} k(x, y) \overline{k(x', y)}\dnu_{q(x)} (y).
  \end{equation*}
  Set $\Phi\colon B_E \times_Z B_E \to \C$, $(e, f) \mapsto (e|f)$; then $\tilde{\Phi}$
  factorizes as $\tilde{\Phi} = \Phi \circ (p_k\times_Z p_k)$. Now, 
  \begin{equation*}
   \Phi = \E_\uZ(\Phi) \cdot \1_{B_E\times_Z B_E} \quad \iff \quad
   \tilde{\Phi} = \E_\uZ(\tilde{\Phi}) \cdot \1_{X\times_Z X}.
  \end{equation*}
  Thus, to conclude the proof that the extension
  $(B_E\times_Z B_E, (p_k)_*\mu \otimes_Z (p_k)_*\mu) \to (Z,\zeta)$ is not ergodic,
  we shall prove that $\tilde{\Phi} = \E_\uZ(\tilde{\Phi}) \cdot \1_{X\times_Z X}$ is impossible.
  
  Suppose, for the sake of contradiction, that 
  $\tilde{\Phi} = \E_\uZ(\tilde{\Phi}) \cdot \1_{X\times_Z X}$. Via 
  the isomorphism $\uL^2(X\times_Z X) \cong \HS(\uL^2(\uX|\uZ), \uL^2(\uX|\uZ))$, 
  this means that $K^*K = \E_\uZ(\tilde{\Phi})\cdot\E_\uZ$. 
  We thus see that $K^*K = K^*K\E_\uZ$ and since $K^*$ is 
  injective on the range of $K$, we can cancel $K^*$ and 
  conclude that $K = K\E_\uZ$. Comparing the kernels of these
  two Hilbert--Schmidt homomorphisms, one readily verifies that 
  this implies that
  $k = \1_\uX \otimes_\uZ f$ where 
  \begin{equation*}
    f\colon Y \to \C, \qquad y \mapsto \int_{X_{q(y)}} k(x,y)\dmu_{q(y)}(x).
  \end{equation*}
  Since $k$ is $G$-invariant, $f\in\uL^\infty(\uY)$ must be $G$-invariant.
  However, the extension $q\colon (Y,\nu) \to (Z,\zeta)$ was assumed to be
  ergodic, so $f = \E_\uZ(f)\cdot \1_\uY$. Thus, 
  $k = \E_\uZ(f)\1_\uX\otimes_\uZ \1_\uY$, i.e., $k$ is a fixed function 
  coming from the factor $(Z,\zeta)$. However, we assumed $k$ not to be 
  of this form, a contradiction. Thus, our assumption that 
  $\tilde{\Phi} = \E_\uZ(\tilde{\Phi}) \cdot 1_{X\times_ZX}$ must have 
  been false.

  To summarize: we have found a $G$-invariant function 
  $\Phi\in \uL^\infty( B_E\times_Z B_E)$ that is not of the form
  $\Phi = \E_\uZ(\Phi) \cdot \1_{B_E\times_Z B_E}$, so the extension 
  $(B_E\times_Z B_E, (p_k)_*\mu \otimes_Z (p_k)_*\mu) \to (Z,\zeta)$ 
  is not ergodic. The Furstenberg--Zimmer dichotomy for nonsingular
  actions \cref{thm:nonsingularrelkrodichotomy} now shows the existence of the desired 
  relatively measure-preserving isometric intermediate extension.
\end{proof}

As a corollary, we show that $m$-proximal extensions (defined in \cref{rem:condmeas})
are always weakly mixing.

\begin{corollary}\label{lem:proximalweakmixing}
  Let $\pi\colon \uX \to \uZ$ be an $m$-proximal extension of stationary $(G,m)$-actions. 
  Then $\pi$ is weakly mixing.
\end{corollary}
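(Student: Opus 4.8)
The plan is to argue by contradiction through the relative Kronecker dichotomy \cref{thm:relstatdichotomy}. Write $\uX = (X,\mu)$ and $\uZ = (Z,\zeta)$. If $\pi$ failed to be weakly mixing, that dichotomy would produce a nontrivial intermediate extension $(X,\mu) \to (\tilde Z, \tilde\zeta) \to (Z,\zeta)$, with second factor map $q$, such that $q$ is isometric and relatively measure-preserving. I will derive a contradiction by showing that a relatively measure-preserving extension which is simultaneously $m$-proximal must be essentially invertible, hence trivial.

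First I would check that the intermediate factor $q$ is itself $m$-proximal. Let $p\colon X \to \tilde Z$ denote the first factor map, so $\pi = q\circ p$. Naturality of conditional measures (\cref{lem:condmeasures}, part (v)) gives $p_*\mu_\omega = \tilde\zeta_\omega$ and $q_*\tilde\zeta_\omega = \zeta_\omega$ for almost every $\omega$. Since $\pi$ is $m$-proximal, the map $\pi\colon (X,\mu_\omega) \to (Z,\zeta_\omega)$ is essentially invertible almost surely. Disintegrating $\mu_\omega$ over $\zeta_\omega$ in two stages, first over $\tilde\zeta_\omega$ through $p$ and then over $\zeta_\omega$ through $q$, each fibre measure of $\mu_\omega$ over $\zeta_\omega$ appears as a mixture of the measures $(\mu_\omega)_{\tilde z}$ against the fibre measures of $\tilde\zeta_\omega$ over $\zeta_\omega$. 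As these fibre measures of $\mu_\omega$ are Dirac masses and the $(\mu_\omega)_{\tilde z}$ live on the disjoint sets $p^{-1}(\tilde z)$, the mixing measures must themselves be Dirac; hence $q\colon (\tilde Z, \tilde\zeta_\omega) \to (Z,\zeta_\omega)$ is essentially invertible, i.e.\ $q$ is $m$-proximal.

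Next, since $q$ is relatively measure-preserving, \cref{lem:condmeasures}, part (viii), identifies the conditional measure as $\tilde\zeta_\omega = \int_Z \tilde\zeta_z \,\mathrm{d}\zeta_\omega(z)$, where $(\tilde\zeta_z)_{z\in Z}$ is the $G$-equivariant disintegration of $\tilde\zeta$ over $\zeta$. By uniqueness of disintegrations and $q_*\tilde\zeta_\omega = \zeta_\omega$, the family $(\tilde\zeta_z)_{z\in Z}$ is precisely the disintegration of $\tilde\zeta_\omega$ over $\zeta_\omega$ through $q$. Combining this with the $m$-proximality of $q$ and the invertibility criterion of \cref{thm:disintegration}, the measures $\tilde\zeta_z$ must be Dirac masses for $\zeta_\omega$-almost every $z$, for almost every $\omega$. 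The set $D \defeq \{ z\in Z \mid \tilde\zeta_z \text{ is a Dirac mass} \}$ is measurable, since Dirac masses form a Borel subset of the space of probability measures, and the preceding says $\zeta_\omega(D) = 1$ almost surely. The barycentre formula $\zeta = \E_\omega(\zeta_\omega)$ from \cref{lem:condmeasures}, part (vii), then yields $\zeta(D) = 1$, so $\tilde\zeta_z$ is a Dirac mass for $\zeta$-almost every $z$. By \cref{thm:disintegration} this makes $q$ essentially invertible, contradicting the nontriviality of the intermediate extension and proving that $\pi$ is weakly mixing.

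The step requiring the most care is the reduction in the second paragraph, that an intermediate factor of an $m$-proximal extension is again $m$-proximal, where the two-stage disintegration must be handled cleanly. Conceptually, though, the crux is the opposition between the two structures on $q$: relative measure-preservation spreads the fibre measures $\tilde\zeta_z$ equivariantly across the fibres of $q$, while $m$-proximality collapses the conditional measures to Dirac masses; the barycentre identities (vii) and (viii) are exactly what is needed to play these off against each other and force $q$ to be trivial.
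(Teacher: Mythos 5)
Your proof is correct and follows essentially the same route as the paper: reduce via the dichotomy \cref{thm:relstatdichotomy} to showing that a proximal, relatively measure-preserving extension is essentially invertible, then combine \cref{lem:condmeasures}(viii) with the barycenter identity and the invertibility criterion of \cref{thm:disintegration}. The only difference is that you spell out the step that an intermediate factor of an $m$-proximal extension is again $m$-proximal (via the two-stage disintegration), which the paper leaves implicit; that argument is sound.
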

\begin{proof}
  By the dichotomy \cref{thm:relstatdichotomy}, it suffices to show that a proximal and 
  relatively measure-preserving extension must be an isomorphism. So suppose $\pi\colon (X,\mu) \to (Y,\nu)$ 
  is an extension with these two properties and assume without loss of generality that $X$ and $Y$ are 
  compact metrizable spaces. Since $\pi$ is relatively measure-preserving,
  \cref{lem:condmeasures} shows that 
  if $\mu = \int_Y \mu_y \dnu(y)$ is the disintegration of $\mu$ w.r.t.\ $\nu$, then
  almost surely
  \begin{equation*}
    \mu_\omega = \int_Y \mu_y \dnu_\omega(y).
  \end{equation*}
  Since $\pi$ is proximal, $\mu_y$ is a Dirac measure $\nu_\omega$-almost everywhere for almost every $\omega$.
  Note that the set $\{y\in Y \mid \mu_y \text{ is Dirac}\}$ is measurable since the set of Dirac measures
  in the compact space $Y$ is compact. Now, by the barycenter equation
  \begin{equation*}
    \nu(\{y\in Y \mid \mu_y \text{ is Dirac}\}) = \int_\Omega \nu_\omega(\{y\in Y \mid \mu_y \text{ is Dirac}\})\dP(\omega) = 1.
  \end{equation*}
  Thus, $\pi$ is an isomorphism by \cref{thm:disintegration}.
\end{proof}

\section{Structure theorem for stationary actions}\label{sec:stationaryFZ}

The main structure theorem is now an easy consequence of the stationary Kronecker dichotomy.

\begin{theorem}\label{thm:stationarymainthm}
  Let $\pi\colon \uX \to \uZ$ be an extension of stationary $(G,m)$-actions.
  Then there are 
  a weakly mixing extension $\alpha\colon \uX \to \uX_\ud$ and a distal and relatively 
  measure-preserving extension $\beta\colon \uX_\ud \to \uZ$ 
  such that the diagram
  \begin{equation*}
   \xymatrix{
    \uX \ar[rd]_-\alpha \ar[rr]^-\pi && \uZ \\
    & \uX_\ud \ar[ru]_-\beta &
   }
  \end{equation*}
  commutes.
\end{theorem}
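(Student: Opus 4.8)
The plan is to transcribe the transfinite-recursion argument that proves the nonsingular structure theorem \cref{thm:nonsingularmainthm}, substituting the stationary relative Kronecker dichotomy \cref{thm:relstatdichotomy} for its nonsingular analogue \cref{thm:nonsingularrelkrodichotomy}. Concretely, I would set $\uX_0 = \uZ$ and recursively construct a projective system of factors of $\uX$ lying over $\uZ$. At a successor ordinal, if the extension $\uX \to \uX_\eta$ is already weakly mixing the construction stops; otherwise \cref{thm:relstatdichotomy} furnishes a nontrivial intermediate extension $\uX \to \uX_{\eta+1} \to \uX_\eta$ in which $\uX_{\eta+1} \to \uX_\eta$ is isometric \emph{and} relatively measure-preserving. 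At a limit ordinal I would take $\uX_\eta = \varprojlim_{\sigma < \eta} \uX_\sigma$ and again test for weak mixing. Since every factor of a stationary action is again stationary, the dichotomy \cref{thm:relstatdichotomy} remains applicable at each stage.

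The recursion terminates after countably many steps by exactly the argument used for \cref{thm:nonsingularmainthm}: each strict isometric step enlarges the corresponding closed subspace of the separable space $\uL^2(\uX)$, so uncountably many strict steps would produce an uncountable orthonormal family, which is impossible. Upon termination we obtain a factor $\uX_\ud$ for which $\alpha\colon \uX \to \uX_\ud$ is weakly mixing, while $\beta\colon \uX_\ud \to \uZ$ is by construction a tower of isometric extensions with inverse limits at limit ordinals, hence distal in the sense of the definition from \cref{sec:classicalFZ}; moreover $\pi = \beta\circ\alpha$.

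The one feature absent from the nonsingular case is that here $\beta$ is also relatively measure-preserving, which I would deduce from the characterization of relative measure preservation via $G$-equivariance of conditional expectations (\cref{lem:rmp_char}). Each successor step is relatively measure-preserving by \cref{thm:relstatdichotomy}, and this property is stable under composition, since the disintegration of $\uX_{\eta+1}$ over $\uX_\sigma$ refines through that over $\uX_\eta$ with equivariance preserved. A transfinite induction along the tower then yields relative measure preservation of $\uX_\ud \to \uZ$.

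The step I expect to require the most care is the passage through limit ordinals: one must check that the inverse-limit action $\varprojlim_{\sigma < \eta}\uX_\sigma$ is again a stationary $(G,m)$-action over $\uZ$ and that relative measure preservation survives the limit. This is where \cref{lem:rmp_char} is decisive, since $G$-equivariance of the conditional expectation $\E_\uZ$ is a closed condition that is inherited from the $\|\cdot\|_{\uL^1}$-dense union $\bigcup_{\sigma<\eta}\uL^\infty(\uX_\sigma)$ defining the limit algebra. Granting this bookkeeping, the proof is a direct adaptation of the nonsingular argument.
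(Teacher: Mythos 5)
Your proposal is correct and follows essentially the same route as the paper: a transfinite recursion starting from $\uX_0 = \uZ$, invoking \cref{thm:relstatdichotomy} at each successor step to produce a relatively measure-preserving isometric intermediate extension, taking inverse limits at limit ordinals, and terminating after countably many steps by separability of $\uL^2(\uX)$. The extra bookkeeping you flag (stability of relative measure preservation under composition and inverse limits via \cref{lem:rmp_char}) is left implicit in the paper's proof but is a sound elaboration, not a divergence.
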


\begin{corollary}
  Let $(G,m) \curvearrowright \uX$ be a stationary $(G,m)$-action.
  Then it is a weakly mixing extension of a distal and measure-preserving action
  $G \curvearrowright \uX_\ud$.
\end{corollary}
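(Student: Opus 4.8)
The plan is to specialize the relative structure theorem \cref{thm:stationarymainthm} to the extension $\pi\colon \uX \to \mathrm{pt}$, where $\mathrm{pt}$ denotes the trivial one-point system carrying the trivial action. Since the trivial action on $\mathrm{pt}$ preserves the unique Dirac measure on a point, it is in particular $m$-stationary, so $\pi$ is genuinely an extension of stationary $(G,m)$-actions and \cref{thm:stationarymainthm} applies.

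First I would invoke \cref{thm:stationarymainthm} to obtain a factorization $\pi = \beta \circ \alpha$ through an intermediate system $\uX_\ud$ such that $\alpha\colon \uX \to \uX_\ud$ is a weakly mixing extension and $\beta\colon \uX_\ud \to \mathrm{pt}$ is a distal and relatively measure-preserving extension. By the very definition of distality given in \cref{sec:classicalFZ}, a distal extension of the trivial system $\mathrm{pt}$ is precisely a distal action, so $G \curvearrowright \uX_\ud$ is distal.

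Next I would argue that $\uX_\ud$ is moreover measure-preserving. The base system $\mathrm{pt}$ is trivially measure-preserving, and $\beta\colon \uX_\ud \to \mathrm{pt}$ is relatively measure-preserving, so \cref{rem:mRIM} (which records that a relatively measure-preserving extension of a measure-preserving system is itself measure-preserving) yields that $G \curvearrowright \uX_\ud$ is measure-preserving. Concretely, the disintegration of the measure on $\uX_\ud$ over the single point of $\mathrm{pt}$ is the measure itself, and the $G$-equivariance demanded by relative measure-preservation forces this measure to be $G$-invariant.

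Combining these observations, $\uX$ is a weakly mixing extension (via $\alpha$) of the distal and measure-preserving action $G \curvearrowright \uX_\ud$, which is exactly the assertion of the corollary. I expect no genuine obstacle here: all the substance is carried by \cref{thm:stationarymainthm}, and this corollary is simply the absolute case $\uZ = \mathrm{pt}$ of that relative statement, with the passage from \emph{relatively} measure-preserving over a point to \emph{measure-preserving} being the only point that needs a word of justification.
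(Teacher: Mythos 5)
Your proposal is correct and is exactly how the paper obtains the corollary: it is the absolute case $\uZ = \mathrm{pt}$ of \cref{thm:stationarymainthm}, with the only point needing comment being that a relatively measure-preserving extension of the trivial system is a measure-preserving action (as recorded in \cref{rem:mRIM}). The paper gives no separate proof, and your justification of that last step via the trivial disintegration over a point is precisely the intended one.
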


\begin{remark}
  It is important to remember that if $\pi\colon (X,\mu) \to (Y,\nu)$ is a factor map from 
  a nonsingular to a measure-preserving $G$-action, then $G\curvearrowright(X,\mu)$ need
  not be measure-preserving, even though $\pi$ is by definition a measure-preserving map.
  See \cref{rem:mRIM} for a counterexample.
\end{remark}

Similar to the Furstenberg--Zimmer structure theorem and its nonsingular version \cref{thm:nonsingularmainthm}, 
we will obtain the required tower of isometric extensions 
for stationary actions by iteratively applying the dichotomy proven in \cref{thm:relstatdichotomy}. If a 
stationary action always had a maximal measure-preserving factor, one could circumvent this and directly 
use \cref{thm:nonsingularmainthm} to avoid repeating the same argument´.
However, a system without a maximal measure-preserving factor can be found in \cite[Example 11]{FurstenbergGlasner2010}.

\begin{proof}[Proof of \cref{thm:stationarymainthm}]
  We proceed by transfinite recursion. For the induction start, set $\uX_0 = \uZ$.
  Now suppose $\mu$ is an ordinal and we have constructed a projective system 
  $((\uX_\eta)_{\eta < \mu}, (\pi_\eta^\sigma)_{\eta\leq \sigma < \mu})$ of isometric and
  measure-preserving
  extensions.
  \begin{itemize}
   \item If $\mu$ is a limit ordinal, set $\uX_\mu = \lim_{\from} \uX_\eta$.
   If $\uX \to \uX_\mu$ is weakly mixing, we are done. Otherwise, repeat the induction step.
   \item If $\mu$ is a successor cardinal and the extension $\uX \to \uX_{\mu-1}$ is 
   weakly mixing, we are done. Otherwise, by \cref{thm:relstatdichotomy} 
   there is a relatively measure-preserving isometric intermediate extension 
   $\uX \to \uX_\mu \to \uX_{\mu-1}$ and we repeat the induction step.
  \end{itemize}
  This recursion terminates after countably many steps, for otherwise $\uL^2(\uX)$ 
  would contain an uncountable orthonormal subset. This proves the desired decomposition.
\end{proof}

\begin{remark}
  Furstenberg's multiple recurrence theorem is trivially true for 
  weakly mixing transformations since they are weakly mixing of all orders, 
  see \cite[Theorem 9.31]{EFHN2015}. 
  Since weakly mixing is a generic property (see \cite[Theorem 2]{Halmos1944} or \cite[Theorem 3]{Alpern1976}), 
  this shows that for \enquote{most} measure-preserving 
  transformations, Furstenberg's structure theorem and the multiple recurrence theorem are trivially true.
  However, the Furstenberg structure theorem also holds for arbitrary group actions and in the general setting
  it is not true that generic actions are weakly mixing, i.e., have no Kronecker factor. A trivial example 
  are compact groups for which every action is purely Kronecker. More generally, if a second-countable locally
  compact group $G$ has property (T), then the measure-preserving ergodic (hence, weakly mixing) actions are not only 
  not generic but nowhere
  dense, and if $G$ does not have property (T), then the measure-preserving weakly mixing actions are generic, i.e., a 
  dense $G_\delta$-set (see \cite[Theorem 4.2]{KerrPichot2008}).
  For stationary actions of groups with property (T), the situation is similar, though there even are examples of 
  groups without property (T) for which a generic stationary action is not ergodic, see \cite{BowenHartmanTamuz2017}.
\end{remark}

\begin{example}\label{ex:mainexample}
  Pick some $\gamma \in (0,1)$ and let $\alpha, \beta \in [0,2\pi)$ be angles
  linearly independent over $\Q$. Set
  \begin{equation*}
    A = \begin{pmatrix}
         \gamma & 1 & 1 \\
         & \cos(\alpha) & -\sin(\alpha) \\
         & \sin(\alpha) & \cos(\alpha)
        \end{pmatrix}, \qquad
    B = \begin{pmatrix}
         \gamma & 1 & 1 \\
         & \cos(\beta) & -\sin(\beta) \\
         & \sin(\beta) & \cos(\beta)
        \end{pmatrix},
  \end{equation*}
  let $\Gamma_m$ be the subgroup of $\GL_3(\R)$ generated by $A$ and $B$, and set 
  $m \defeq \frac{1}{3}\delta_A + \frac{1}{3}\delta_B + \frac{1}{6}\delta_{A^{-1}} + \frac{1}{6}\delta_{B^{-1}}$.
  What are the stationary measures for the action of $(\Gamma_m,m)$ on $\P(\R^3)$? Certainly, 
  $[1:0:0]$ is a fixed point and so the corresponding 
  Dirac mass is an invariant measure for $\Gamma_m$. To classify the stationary probability measures
  on the $\Gamma_m$-invariant
  complement $\P(\R^3) \setminus [1:0:0]$, we consider the two-fold $\Gamma_m$-equivariant cover 
  $\R\times\uS^1 \to \P(\R^3) \setminus [1:0:0]$. Being a finite cover, this map is proper and hence 
  every stationary measure on $\P(\R^3) \setminus [1:0:0]$ lifts to a stationary measure on $\R\times \uS^1$
  which justifies passing to the cover.
  The action of $A$ on $\R\times \uS^1$ takes the form
  \begin{equation*}
   A\begin{pmatrix}
      r \\
      \cos(\theta) \\
      \sin(\theta)
    \end{pmatrix}
  = \begin{pmatrix}
     \gamma r + \cos(\theta) + \sin(\theta) \\
     \cos(\theta + \alpha) \\
     \sin(\theta + \alpha)
    \end{pmatrix}
  \end{equation*}
  Similar expressions hold for $B$, $A^{-1}$, and $B^{-1}$. Thus, the action of a 
  general $g\in \Gamma_m$ on $\R\times \uS^1$ is of the form
  \begin{equation*}
    \rho_g \colon \R\times \uS^1 \to \R^1\times\uS^1, \qquad
   \rho_g(r, p) = (a_gr + b_g(p), R_gp)
  \end{equation*}
  where $R_g$ is a rotation on $\uS^1$ (which we usually abbreviate by just $g$), 
  $a\colon \Gamma_m \to \R^\times$ is a group homomorphism, and $b_g\colon \uS^1 \to \R$ 
  is a continuous function for each $g\in\Gamma_m$.
  
  Suppose there is an $m$-stationary probability measure $\mu$ on the 
  $\Gamma_m$-invariant set $\R\times \uS^1$.
  We will derive a concrete description of $\mu$ which, a posteriori, will show that such a measure
  $\mu$ exists and is unique.
  Let $\pi\colon \R\times\uS^1 \to \uS^1$ denote the natural projection.
  Since $\alpha$ and $\beta$ were assumed to be algebraically independent, the action of $\Gamma_m$ on $\uS^1$
  is uniquely ergodic, so the push-forward $\nu \defeq \pi_*\mu$ 
  of $\mu$ is the Lebesgue measure on $\uS^1$.
  
  Disintegrate $\mu$ as
  \begin{equation*}
    \mu = \int_{\uS^1} \mu_p\otimes\delta_p \dnu(p).
  \end{equation*}
  Then due to the invariance of $\nu$, we can use the uniqueness of disintegrations 
  to conclude that for $g\in\Gamma_m$ the disintegration of $g_*\mu$ is given by
  \begin{equation*}
    (g_*\mu)_p = a_g\mu_{g^{-1}p}+ b_g(g^{-1}p) \qquad \text{for }\nu\text{-a.e. } p\in \uS^1
  \end{equation*}
  where the expression $a_g \mu_{g^{-1}p} + b_g(g^{-1}p)$ is to be understood as a dilation followed 
  by a translation.
  Iterating this, we see that for $\omega\in\Omega$ and $n\in\N$
  \begin{align*}
    ((\omega_1\cdots \omega_n)_*\mu)_p 
    = a_{\omega_1}&\cdots a_{\omega_{n}} \mu_{(\omega_1\cdots \omega_n)^{-1}p} \notag \\
    &+ \sum_{k=1}^n a_{\omega_1} \cdots a_{\omega_{k-1}}b_{\omega_k}\left((\omega_1\cdots \omega_k)^{-1}p\right).
  \end{align*}
  Since the affine maps contract on average w.r.t.\ $m$, i.e.,
  \begin{equation*}
    \sum_{g \in \Gamma_m} m(g)\log(a_g) < 1,
  \end{equation*}
  the strong law of large numbers yields that $a_{\omega_1}\cdots a_{\omega_n}\to 0$ for almost every $\omega \in \Omega$.
  As $b_g(p)$ is bounded uniformly 
  in $p \in \uS^1$ and $g\in\supp(m)$, the series 
  \begin{equation*}
   s_\omega(p) \defeq \sum_{k=1}^\infty a_{\omega_1} \cdots a_{\omega_{k-1}}b_{\omega_k}((\omega_1\cdots \omega_k)^{-1}p)
  \end{equation*}
  converges absolutely almost surely. We want to show that
  $a_{\omega_1}\cdots a_{\omega_{n}} \mu_{(\omega_1\cdots \omega_{n})^{-1}p}$ almost surely converges to $\delta_0$ 
  in the weak* topology. However, to determine the conditional measures $\mu_\omega$, it suffices to prove
  this along a subsequence.
  
  Let $\epsilon > 0$. Since for almost every $p\in\uS^1$ one has $\lim_{M\to\infty} \mu_p([-M, M]) = 1$,
  there exists an $M > 0$ such that 
  \begin{equation*}
    \nu\Big(\underbrace{\left\{ p \in \uS^1 \mmid \mu_p([-M, M]) \geq 1-\epsilon  \right\}}_{\eqdef A_\epsilon} \Big) \geq 1 - \epsilon.
  \end{equation*}
  Since the action of $(\Gamma_m,m)$ on $\uS^1$ admits the unique stationary measure $\nu$, Breiman's law shows
  that for almost every $\omega\in\Omega$ and every $p\in\uS^1$, 
  the sequence $(\omega_1\cdots \omega_n p)_n$ equidistributes towards $\nu$. In particular, we can find a 
  subsequence $(n_k)_k$ such that $R_{\omega_1}\cdots R_{\omega_{n_k}} \to \id$ and thus 
  $R_{(\omega_1\cdots \omega_{n_k})^{-1}} \to \id$. Hence, 
  \begin{equation*}
    \1_{A_\epsilon} \circ R_{(\omega_1\cdots \omega_{n_k})^{-1}} \xrightarrow[k\to\infty]{\|\cdot\|_{\uL^1}} \1_{A_\epsilon}
  \end{equation*}
  and by replacing $(n_k)_k$ with a subsequence, we may assume that this convergence holds almost surely. 
  Thus, for almost every $p\in A_\epsilon$, eventually $(\omega_1\cdots \omega_{n_k})^{-1}p$ stays in $A_\epsilon$.

  For $\delta > 0$, let $f_\delta\in\uC_\uc(\R)$ with $f_\delta|_{[-\delta, \delta]} \equiv 1$ and such that $f_\delta$ 
  vanishes outside $[-2\delta, 2\delta]$.
  Then since $a_{\omega_1}\cdots a_{\omega_{n}} \to 0$, we see that along the subsequence $(n_k)_k$, for 
  almost every $p\in A_\epsilon$
  \begin{align*}
    \liminf_{k\to\infty}\left\langle f_\delta, a_{\omega_1}\cdots a_{\omega_{n_k}} \mu_{(\omega_1\cdots \omega_{n_k})^{-1}p}\right\rangle 
    &\geq \liminf_{k\to\infty}\mu_{(\omega_1\cdots \omega_{n_k})^{-1}p}([-M, M]) \\
    &\geq 1-\epsilon.
  \end{align*}
  Sending $\epsilon$ to $0$ and employing a diagonalization argument, we can find a set $A$ of full 
  measure and replace $(n_k)_k$ with a subsequence such that for every $p\in A$
    \begin{align*}
    \liminf_{k\to\infty}\left\langle f_\delta, a_{\omega_1}\cdots a_{\omega_{n_k}} \mu_{(\omega_1\cdots \omega_{n_k})^{-1}p}\right\rangle 
    &=1.
  \end{align*}
  Since $\delta > 0$ was chosen arbitrarily, we see that 
  \begin{equation*}
    \lim_{k\to\infty} a_{\omega_1}\cdots a_{\omega_{n_k}} \mu_{(\omega_1\cdots \omega_{n_k})^{-1}p} = \delta_0.
  \end{equation*}
    
  To summarize: we have shown that along the subsequence $(n_k)_k$, almost surely,  
  $(\omega_1\cdots \omega_{n_k})_*\mu$ does not just converge 
  in the weak* topology but also almost all of it's fiber measures with respect to $\nu$ converge in the weak* topology.
  Necessarily, they converge to the fiber measures of $\mu_\omega$:
  \begin{equation*}
    (\mu_\omega)_p =  \delta_{s_\omega(p)}\otimes \delta_p = \delta_{(s_\omega(p), p)}.
  \end{equation*}
  Thus, if an $m$-stationary measure $\mu$ exists on $\R\times\uS^1$, then
  \begin{equation*}
    \mu = \int_{\Omega} \int_{\uS^1} \delta_{(s_\omega(p), p)} \dnu(p) \dP(\omega)
  \end{equation*}
  which proves that there is at most one such measure. To prove that the measure defined by this integral
  is indeed $m$-stationary, for $g\in\Gamma_m$
  and $\omega \in \Omega$, denote by $g.\omega \in \Omega$ the concatenation $(g, \omega_1, \omega_2, \dots)$.
  A quick computation shows that
  \begin{equation*}
    s_{g.\omega}(p) = a_gs_\omega(g^{-1}p) + b_g(g^{-1}p).
  \end{equation*}
  and therefore
  \begin{align*}
    \sum_{g\in\supp(m)} m(g)g_*\mu 
    &= \int_{\Omega} \int_{\uS^1} \sum_{g\in\supp(m)} m(g) \delta_{(a_gs_\omega(p) + b_g(p), gp)} \dnu(p) \dP(\omega) \\
    &= \int_{\Omega} \int_{\uS^1} \sum_{g\in\supp(m)} m(g) \delta_{(a_gs_\omega(g^{-1}p) + b_g(g^{-1}p), p)} \dnu(p) \dP(\omega) \\
    &= \int_{\Omega} \int_{\uS^1} \sum_{g\in\supp(m)} m(g) \delta_{(s_{g.\omega}(p), p)} \dnu(p) \dP(\omega)\\
    &= \int_{\Omega} \int_{\uS^1} \delta_{(s_{\omega}(p), p)} \dnu(p) \,\ud (m \otimes \P)(\omega)\\
    &= \mu.
  \end{align*}
  Now, let $\overline{\mu}$ denote the pushforward of $\mu$ under the map $\R\times \uS^1 \to \P(\R^3) \setminus [1:0:0]$
  and $\overline{\nu}$ denote the pushforward of $\nu$ under the map $\uS^1 \to \P(\R^2)$.
  We have already seen that for almost every $\omega\in\Omega$, the conditional measure $\mu_\omega$ admits the disintegrations
  \begin{equation*}
    \mu_\omega = \int_{\uS^1} \delta_{(s_\omega(p), p)} \dnu(p).
  \end{equation*}
  To translate this into an expression for $\overline{\mu}_\omega$, observe that the 
  identity
  \begin{equation*}
    s_\omega(p) = \lim_{n\to\infty} \pi_\R\left(\omega_1\cdots \omega_n \big((0,\omega_1\cdots\omega_n)^{-1}p\big)\right)
  \end{equation*}
  shows that $s_\omega(-p) = -s_\omega(p)$, so $s_\omega\colon \uS^1 \to \R$ can 
  be regarded as a map $\P(\R^2) \to \R$. Since conditional measures are natural,
  a quick computation shows that
  \begin{equation*}
    \overline{\mu}_\omega = \int_{\P(\R^2)} \delta_{[s_\omega(p): p]} \,\ud\overline{\nu}([p]).
  \end{equation*}
  By \cref{thm:disintegration}, this shows that the factor map $(\P(\R^3), \overline{\mu}_\omega) \to (\P(\R^2),\overline{\nu})$ 
  is an isomorphism. Thus, the extension $(\P(\R^3), \overline{\mu}) \to (\P(\R^2), \overline{\nu})$ is proximal and hence 
  weakly mixing by \cref{lem:proximalweakmixing}.
\end{example}

\def\appendixname{Appendix}

\appendix \section{Hilbert--Schmidt homomorphisms on conditional \texorpdfstring{$\uL^2$}{L\textsuperscript{2}}-spaces} \label{sec:appendix}

Given probability spaces $\uX$ and $\uY$, one has an isomorphism $\uL^2(\uX\times\uY) \cong \HS(\uL^2(\uX), \uL^2(\uY))$
of Hilbert spaces given by mapping a function $k\in \uL^2(\uX\times\uY)$ to the corresponding Hilbert--Schmidt 
integral operator $I_k \colon \uL^2(\uX) \to \uL^2(\uY)$. The goal of this appendix is to explain the 
proof of a more general version of this theorem for conditional $\uL^2$-spaces.

\begin{reminder}\label{rem:HSbasics}
  We begin by recalling the relevant basic facts on Hilbert--Schmidt operators. Many 
  sources only consider Hilbert--Schmidt operators on a single Hilbert space,
  but we will need to understand operators between different spaces below, so we choose the 
  general setting.
  Let $\calH$ and $\calK$ be Hilbert spaces. A bounded operator $T \in \mathscr{L}(\calH, \calK)$
  is called a \textbf{Hilbert--Schmidt operator} if for some/any orthonormal basis $(e_n)_{n\in \N}$
  of $\calH$
  \begin{equation*}
    \sum_{n\in \N} \|Te_n\|^2 < +\infty.
  \end{equation*}
  This expression is independent of the chosen orthonormal basis and so we can define the 
  \textbf{Hilbert--Schmidt norm} of $T$ as
  \begin{equation*}
    \|T\|_{\mathrm{HS}} \defeq \left(\sum_{n\in \N} \|Te_n\|^2\right)^\frac{1}{2}
  \end{equation*}
  with respect to some orthonormal basis. Denote the collection of all Hilbert--Schmidt operators
  from $\calH$ to $\calK$ by $\HS(\calH, \calK)$ and recall that $\HS(\calH, \calK)$ is again a 
  Hilbert space with the inner product $(T|S) = \sum_{n\in \N} (Te_n|Se_n)$.
  
  If $\uX$ and $\uY$ are probability spaces, then
  every function $k\in \uL^2(\uX\times\uY)$ gives rise to a Hilbert--Schmidt operator 
  \begin{equation*}
    I_k\colon \uL^2(\uX)\to \uL^2(\uY), \quad I_kf(y) \defeq \int_\uX k(x,y)f(x)\dmu_X(x).
  \end{equation*}
  This assignment $k \mapsto I_k$ turns out to be a unitary isomorphism 
  $\uL^2(\uX\times\uY) \cong \HS(\uL^2(\uX), \uL^2(\uY))$. We refer the reader to 
  \cite[Section 3.3.1]{Sunder2016} for proofs of these statements. We also 
  prove a more general version of this for \emph{conditional $\uL^2$-spaces} below, 
  see \cref{thm:KHiso}.
\end{reminder}

As discussed in the main text, in the study of extensions $\pi\colon \uX \to \uZ$ 
of dynamical systems, the Hilbert 
space $\uL^2(\uX)$ is often replaced by the conditional $\uL^2$-space $\uL^2(\uX|\uZ)$. $\uL^2(\uX|\uZ)$ is a 
\textbf{Hilbert module} over $\uL^\infty(\uX)$, i.e., it is an $\uL^\infty(\uZ)$-module via the action 
\begin{equation*}
  \uL^\infty(\uZ)\times\uL^2(\uX|\uZ) \to \uL^2(\uX|\uZ), \quad (f, g) \mapsto (f\circ \pi) \cdot g
\end{equation*}
and has an $\uL^\infty(\uZ)$-valued inner 
product induced by the conditional expectation $\E_\Z\colon \uL^2(\uX|\uZ) \to \uL^\infty(\uZ)$,
\begin{equation*}
  ( \cdot | \cdot) \colon \uL^2(\uX|\uZ) \times \uL^2(\uX|\uZ) \to \uL^\infty(\uZ), \quad 
  (f, g) \mapsto \E_\Z(f\overline{g}).
\end{equation*}
As it turns out, general Hilbert modules are rather ill-behaved and do not satisfy 
many of the classical results from Hilbert space theory such as the Riesz-Fréchet 
representation theorem, existence of orthonormal bases, the spectral theorem, \dots 
Fortunately, $\uL^2(\uX|\uZ)$ belongs to a special class of Hilbert modules, so-called 
\textbf{Kaplansky--Hilbert modules}, which do satisfy counterparts for all classical 
Hilbert space results. Kaplansky--Hilbert modules differ from general Hilbert modules $E$
over a commutative $\uC^*$-algebra $\A$ in that $\A$ is a \textbf{Stone algebra}, i.e., 
every bounded subset of real elements has a supremum in $\A$, and $E$ satisfies a 
completeness property with respect to \emph{order convergence}.
The fact that $\uL^2(\uX|\uZ)$ and $\uL^\infty(\uZ)$ satisfy these completeness properties
corresponds to them satisfying a completeness property with respect to almost everywhere
convergence see \cite[Proposition 7.6 and Lemma 7.5]{EHK2021} and 
\cite[Corollary 7.8]{EFHN2015}. 
We forego the precise definition
of (Kaplansky--)Hilbert modules since the only example we require is the conditional 
$\uL^2$-space $\uL^2(\uX|\uZ)$; we refer the reader to \cite[Sections 1 and 2]{EHK2021}
for the details.

Given a Hilbert module $E$ over a $\uC^*$-algebra $A$, the induced 
\textbf{$\A$-valued norm} given by 
\begin{equation*}
  |\cdot|_\A \colon E \to \A, \qquad x \mapsto \sqrt{(x|x)}.
\end{equation*}
Quoting from \cite[Definition 1.5]{EHK2021}, this $\A$-valued norm induces a notion of 
order-convergence as follows: a net $(f_i)_{\alpha\in \A}$
in $\A$ \textbf{decreases to $0$} if 
\begin{equation*}
  i \leq j \quad \implies \quad 0 \leq  f_j \leq f_i \quad \text{and} \quad \inf\{f_i | i\in I\} = 0.
\end{equation*}
A net $(x_\alpha)_{\alpha \in A}$ in $E$ \textbf{order-converges} to $x\in E$ (in symbols: $\olim_\alpha x_\alpha = x$),
if there is a net $(f_i)_{i\in I}$ in $\A$ decreasing to zero and satisfying 
\begin{equation*}
  \forall i\in I \exists \alpha_i \forall \alpha \geq \alpha_i \colon | x_\alpha - x | \leq f_i.
\end{equation*}
A mapping $f\colon E\to F$ between Hilbert modules is \textbf{order-continuous} if $\olim_\alpha x_\alpha = x$
in $E$ implies $\olim_\alpha f(x_\alpha) = f(x)$ whenever $(x_\alpha)_{\alpha\in A}$ is a net in and $x$ an element 
of $E$.

A \textbf{morphism} of Hilbert modules $E$ and $F$ over a commutative $\uC^*$-algebra $\A$ 
is a bounded linear operator $T\colon E \to F$ such that $T(fx) = fTx$ for all $f\in\A$ 
and $x\in E$. The space of all morphisms from $E$ to $F$ is denoted by $\Hom(E;F)$ and the 
\textbf{dual module} of a Hilbert module $E$ is defined as
$E^* \defeq \Hom(E;\A)$. A morphism of Hilbert modules is called an \textbf{isomorphism} if 
it is bijective. By a theorem of 
Lance \cite{Lance1994}, an isomorphism $T\colon E \to F$ is isometric if and 
only if it is \textbf{$\A$-isometric}, i.e., if it preserves the $\A$-valued norms of $E$ 
and $F$. In particular, isometric isomorphisms of Hilbert modules are automatically order-continuous.

Moreover, for a Hilbert module $E$, there generally 
does not exist an orthonormal basis. To 
account for this, a subset $\mathcal{S} \subset E$ is called \textbf{suborthonormal} if 
it consists of pairwise orthogonal elements with $(e|e)^2 = (e|e)$ for all $e\in \mathcal{S}$
and such a suborthonormal system is called a \textbf{suborthonormal basis} if it is maximal with 
respect to set inclusion. It can be shown that every Kaplansky--Hilbert module admits a 
suborthonormal basis, see \cite[Proposition 2.11]{EHK2021}, and that for every suborthonormal basis 
$\mathcal{B} \subset E$ and every $x\in E$ one has the order limit
\begin{equation*}
  x = \sum_{e\in \mathcal{B}} (x|e)e.
\end{equation*}

We recall the definition of Hilbert--Schmidt homomorphisms between Hilbert--Kaplansky modules 
from \cite[Section 2.6]{EHK2021}.

\begin{definition}
  Let $E$ and $F$ be Kaplansky--Hilbert modules over a Stone algebra $\A$.
  Moreover, let $\mathcal{F}$ be the family of all finite suborthonormal subsets of $E$. A 
  homomorphism $A\in \Hom(E;F)$ is called a \textbf{Hilbert--Schmidt homomorphism} if 
  \begin{equation*}
    |A|_{\mathrm{HS}} \defeq \sup \left\{ \left(\sum_{x\in \mathcal{B}} |Ax|_\A^2\right)^\frac{1}{2} \mmid \mathcal{B} \in \mathcal{F} \right\}
  \end{equation*}
  exists in $\A_+ = \{f \in \A \mid f \geq 0\}$, i.e., if the subset of $\A$ in the above expression is bounded in $\A$.
  We write $\HS(E;F)$ for the $\A$-module of all Hilbert--Schmidt homomorphisms from $E$ to $F$ and $\HS(E)$ if $E = F$.
\end{definition}

\begin{lemma}
  Suppose $E$ and $F$ are Kaplansky--Hilbert modules over a Stone algebra $\A$ 
  and let $\mathcal{F}$ be the collection of finite suborthonormal
  subsets of $E$. Then the map
  \begin{equation*}
   \HS(E, F)\times\HS(E, F) \to \A, \quad 
   (A, B) \mapsto (A|B)_{\HS} \defeq \olim_{\mathcal{S}\in\mathcal{F}} \sum_{x\in \mathcal{S}} (Ax|Bx)
  \end{equation*}
  makes $\HS(E, F)$ a Kaplansky--Hilbert module over $\A$. Moreover, if 
  $\mathcal{B}$ is a suborthonormal basis of $E$, then
  \begin{equation*}
    (A|B)_{\HS} = \sum_{x\in \mathcal{B}} (Ax|Bx) \qquad (A, B \in \HS(E))
  \end{equation*}
  is an order-convergent series whose limit is independent of the choice 
  of suborthonormal basis.
\end{lemma}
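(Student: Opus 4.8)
The plan is to transport the classical proof that $\HS(\calH,\calK)$ is a Hilbert space into the Kaplansky--Hilbert setting, systematically replacing complex scalars by elements of the Stone algebra $\A$, absolute values by the $\A$-valued norm $|\cdot|_\A$, convergent sums by order-limits, and orthonormal bases by suborthonormal bases. The four facts from \cite{EHK2021} that make this possible are: (a) Bessel's inequality and Parseval's identity for suborthonormal bases; (b) the existence of a module adjoint $A^*$ for bounded homomorphisms; (c) order-continuity of bounded homomorphisms and of the $\A$-valued inner product; and (d) the monotone order-completeness of $\A$ as a Stone algebra and of the modules. I would carry out the argument in three stages: first the existence of the inner product together with the module structure, then Kaplansky--Hilbert completeness, and finally the basis-independent series formula.

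For the inner product, the diagonal case is the cornerstone: for fixed $A$, the net $\mathcal{S}\mapsto\sum_{x\in\mathcal{S}}|Ax|_\A^2$ indexed by $\mathcal{F}$ is increasing (enlarging $\mathcal{S}$ only adds positive terms) and bounded above by $|A|_{\HS}^2$ by the very definition of a Hilbert--Schmidt homomorphism, so monotone order-completeness forces order-convergence to the supremum $|A|_{\HS}^2$. To obtain a genuine module I first show $\HS(E,F)$ is an $\A$-submodule of $\Hom(E;F)$: scalar homogeneity is immediate from $|fAx|_\A^2=|f|^2|Ax|_\A^2$, and closure under addition follows from a Minkowski inequality obtained by applying Lance's triangle inequality for the $\A$-valued norm on the finite direct sum $F^{\mathcal{S}}$ to the tuples $(Ax)_{x\in\mathcal{S}}$ and $(Bx)_{x\in\mathcal{S}}$, then passing to the supremum. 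Once $A\pm B$ and $A\pm iB$ are known to be Hilbert--Schmidt, the polarization identity applied termwise in $F$ expresses $\sum_{x\in\mathcal{S}}(Ax|Bx)$ as a fixed linear combination of diagonal sums; since each diagonal sum order-converges and order-limits are $\A$-linear, the defining net for $(A|B)_{\HS}$ order-converges, with $(A|B)_{\HS}=\tfrac14\sum_{k=0}^3 i^{-k}\,|A+i^kB|_{\HS}^2$. The inner-product axioms (sesquilinearity, $\A$-homogeneity, positivity $(A|A)_{\HS}=|A|_{\HS}^2\ge 0$) then follow formally, while definiteness uses that $|A|_{\HS}=0$ forces $Ae=0$ on a suborthonormal basis $\mathcal{B}$ of $E$, whence $A=0$ by order-continuity applied to $y=\sum_{e\in\mathcal{B}}(y|e)e$.

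Kaplansky--Hilbert completeness of $\HS(E,F)$ I would verify directly from the corresponding property of $F$: given a family $(A_i)$ of Hilbert--Schmidt homomorphisms with pairwise orthogonal supports and $\sum_i (A_i|A_i)_{\HS}$ bounded in $\A$, the pointwise series $Ax\defeq\sum_i A_i x$ order-converges in $F$ by completeness of $F$, defines a homomorphism, and a short estimate shows $A\in\HS(E,F)$ with $A=\sum_i A_i$; this is precisely the summability axiom for a Kaplansky--Hilbert module.

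The basis-independent formula is where the real work lies. Fixing suborthonormal bases $\mathcal{B}$ of $E$ and $\mathcal{C}$ of $F$, Parseval gives $|Ae|_\A^2=\sum_{f\in\mathcal{C}}|(Ae|f)|_\A^2$, and the adjoint relation $(Ae|f)=(e|A^*f)$ rewrites everything through $A^*$. The crux is a Tonelli principle for order-convergent double series of positive elements of $\A$: because all summands are nonnegative, the supremum over finite rectangles coincides with either iterated order-limit, yielding $\sum_{e\in\mathcal{B}}|Ae|_\A^2=\sum_{f\in\mathcal{C}}|A^*f|_\A^2$, an expression free of $\mathcal{B}$. Combining this with Bessel's inequality $\sum_{x\in\mathcal{S}}|(y|x)|_\A^2\le|y|_\A^2$ applied to $y=A^*f$ shows that every finite suborthonormal $\mathcal{S}$ satisfies $\sum_{x\in\mathcal{S}}|Ax|_\A^2\le\sum_{f\in\mathcal{C}}|A^*f|_\A^2=\sum_{e\in\mathcal{B}}|Ae|_\A^2$, so the supremum defining $|A|_{\HS}^2$ is attained along $\mathcal{B}$ and is basis-independent; polarization then propagates this to $(A|B)_{\HS}=\sum_{e\in\mathcal{B}}(Ae|Be)$. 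I expect this Tonelli interchange of iterated order-limits---justifying that the two orders of summation agree without an a priori dominated-convergence theorem for nets---to be the main obstacle, with the careful bookkeeping of order-continuity in the polarization step a close second.
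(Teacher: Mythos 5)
The paper does not actually prove this lemma: it is stated as a recollection from \cite[Section 2.6]{EHK2021}, so there is no in-text argument to compare against, and your task was effectively to reconstruct the proof from that reference. Your reconstruction follows the standard route and is correct in outline. The diagonal case via monotone order-completeness of $\A$, the passage to $(A|B)_{\HS}$ by polarization (legitimate because order-limits are $\A$-linear and each of the four diagonal nets is increasing and bounded), and the basis-independence argument via Parseval in $F$, the adjoint relation $(Ax|f)=(x|A^*f)$, and Bessel in $E$ are exactly the ingredients used in \cite{EHK2021}. Your worry that the Tonelli interchange is the hard step is misplaced: since all summands are positive, the three relevant quantities (the sup over finite rectangles and the two iterated sups) are suprema of upward-directed bounded families in a Stone algebra and coincide for order-theoretic reasons alone; no dominated-convergence substitute is needed. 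The genuinely thin spot is instead the completeness verification. A Kaplansky--Hilbert module must satisfy \emph{both} the summability of bounded orthogonal families \emph{and} the gluing axiom along partitions of unity in the idempotents of $\A$ (given orthogonal idempotents $(p_i)$ with supremum $\1$ and a bounded family $(A_i)$ in $\HS(E,F)$, one needs $A$ with $p_iA=p_iA_i$); your sketch addresses only the first, and even there the phrase \enquote{pairwise orthogonal supports} should be \enquote{pairwise orthogonal in the $\HS$-inner product}, with the \enquote{short estimate} spelled out: one checks that the finite partial sums of $\sum_i A_i$ form an order-Cauchy net in the $\HS$-norm and that the fiberwise order-limit $Ax=\olim\sum_i A_ix$ (which exists in $F$ by completeness of $F$) is again Hilbert--Schmidt with the expected norm. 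Also note that the definiteness argument $Ae=0$ for all $e\in\mathcal{B}$ implies $A=0$ requires order-continuity of $A$, which for a bounded module homomorphism between Kaplansky--Hilbert modules is a theorem of \cite{EHK2021} rather than a formality; you correctly list it among your imported facts, but it is doing real work there.
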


The key result of this appendix is the following correspondence between certain fiberwise
integral operators and their kernels.

\begin{theorem}\label{thm:appendixmthm}
  Let $\pi\colon \uX \to \uZ$ and $\rho\colon \uY \to \uZ$ be measure-preserving maps between 
  standard probability spaces. Then the assignment
  \begin{equation*}
    I\colon \uL^2(\uX\times_\uZ \uY|\uZ) \to \HS(\uL^2(\uX|\uZ), \uL^2(\uY|\uZ)), \quad 
    (I_kf)(y) \defeq \int_{X_{\rho(y)}\times X_{\rho(y)}} k(x,y)f(x) \dmu_{\rho(y)} 
  \end{equation*}
  defines an isometric isomorphism of Hilbert modules.
\end{theorem}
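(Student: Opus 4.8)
The plan is to fix once and for all a suborthonormal basis $\mathcal{B}$ of the Kaplansky--Hilbert module $\uL^2(\uX|\uZ)$ (whose existence is guaranteed by \cite[Proposition 2.11]{EHK2021}) and to reduce every claim to a fiberwise computation via the disintegrations $(\mu_z)_{z\in Z}$ and $(\nu_z)_{z\in Z}$ supplied by \cref{thm:disintegration}. The two structural facts I would lean on are that the $\uL^\infty(\uZ)$-valued inner product $(f|g)_\uZ = \E_\uZ(f\overline{g})$ restricts $\zeta$-a.e.\ to the fiber inner product $\int_{X_z} f\overline{g}\,\mathrm{d}\mu_z$, and that $\mathcal{B}$ restricts $\zeta$-a.e.\ to an orthonormal basis of $\uL^2(X_z,\mu_z)$. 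Granting these, the proof splits into two steps: (1) $I$ is a well-defined $\uL^\infty(\uZ)$-linear map into $\HS(\uL^2(\uX|\uZ),\uL^2(\uY|\uZ))$ satisfying the norm identity $|I_k|_{\HS} = |k|_\uZ$; and (2) $I$ is surjective. Since an $\uL^\infty(\uZ)$-isometry is injective and, by Lance's theorem \cite{Lance1994}, an $\uL^\infty(\uZ)$-isometric module isomorphism is automatically an isometric isomorphism, these two steps suffice.

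For step (1), module-linearity of $k\mapsto I_k$ is immediate from linearity of the fiber integral together with the fact that $\uL^\infty(\uZ)$ acts through the $x$-variable. To verify that $I_k$ is Hilbert--Schmidt and to identify its norm, I would compute the order-convergent sum
\[
  |I_k|_{\HS}^2 = \sum_{e\in\mathcal{B}} |I_ke|_\uZ^2 = \sum_{e\in\mathcal{B}} \E_\uZ\bigl(|I_ke|^2\bigr),
\]
evaluating it fiberwise: for $\zeta$-a.e.\ $z$, Parseval's identity in $\uL^2(X_z,\mu_z)$ applied to $x\mapsto k(x,y)$, combined with Fubini on $X_z\times Y_z$, rewrites the right-hand side as $\E_\uZ(|k|^2) = |k|_\uZ^2$. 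In particular the supremum defining $|I_k|_{\HS}$ is order-bounded, so $I_k\in\HS$, and $I$ is $\uL^\infty(\uZ)$-isometric, hence injective.

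For step (2), given $A\in\HS(\uL^2(\uX|\uZ),\uL^2(\uY|\uZ))$ I would construct its kernel explicitly by setting
\[
  k \defeq \sum_{e\in\mathcal{B}} \overline{e}\otimes_\uZ Ae.
\]
The partial sums are pairwise orthogonal and satisfy $|k|_\uZ^2 = \sum_{e\in\mathcal{B}}|Ae|_\uZ^2 = |A|_{\HS}^2$, which is order-bounded by hypothesis; hence the series order-converges and $k\in\uL^2(\uX\times_\uZ\uY|\uZ)$. Writing $p_e \defeq (e|e)_\uZ$ for the defining projection of $e$ and evaluating $I_k$ on $e'\in\mathcal{B}$, the suborthonormality $(e'|e)_\uZ = \delta_{e,e'}p_{e'}$ gives $I_ke' = p_{e'}Ae'$; since $e' = p_{e'}e'$ and $A$ is a module homomorphism, $Ae' = A(p_{e'}e') = p_{e'}Ae'$, so $I_ke' = Ae'$. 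As both $I_k$ and $A$ are order-continuous module homomorphisms agreeing on $\mathcal{B}$, and every $x\in\uL^2(\uX|\uZ)$ is the order limit $x=\sum_{e\in\mathcal{B}}(x|e)_\uZ\,e$, I conclude $I_k=A$, which proves surjectivity.

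The main obstacle I expect is the justification of the fiberwise reductions in step (1): interchanging the order-limit defining the Hilbert--Schmidt norm with the conditional expectation, and applying Parseval on each fiber, both require the order-continuity and order-completeness properties special to Kaplansky--Hilbert modules rather than general Hilbert modules. These are precisely the properties established for $\uL^2(\uX|\uZ)$ in \cite[Sections 1, 2 and 7]{EHK2021}; once they are in hand, the disintegration theorem \cref{thm:disintegration} furnishes the concrete fiber picture that makes the classical Hilbert-space identities applicable on each $\uL^2(X_z,\mu_z)$, and the remaining bookkeeping is routine.
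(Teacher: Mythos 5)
Your proof is correct, but it takes a genuinely different route from the paper. The paper does not argue fiberwise at all: it factors the map $I$ as the composition of three abstract isomorphisms imported from \cite{EHK2021}, namely $\uL^2(\uX\times_\uZ\uY|\uZ)\cong \uL^2(\uX|\uZ)\otimes\uL^2(\uY|\uZ)$ (\cref{thm:joiningiso}), the Kaplansky--Hilbert Riesz--Fr\'{e}chet duality $\uL^2(\uX|\uZ)\cong\uL^2(\uX|\uZ)^*$ (\cref{thm:rieszfrechet}), and $E^*\otimes F\cong\HS(E,F)$ (\cref{prop:tensorHSiso}); the theorem is then a two-line corollary. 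Your argument is instead the direct basis-expansion proof, mimicking the classical Hilbert-space computation on each fiber. What your route buys is an explicit inverse, $k=\sum_{e\in\mathcal{B}}\overline{e}\otimes_\uZ Ae$, and a proof whose only black boxes are the existence of a suborthonormal basis and order-completeness; what it costs is the fiberwise dictionary, and that is where the real work hides. In particular, the assertion that a suborthonormal basis of $\uL^2(\uX|\uZ)$ restricts $\zeta$-a.e.\ to an orthonormal basis of $\uL^2(X_z,\mu_z)$ is true for standard probability spaces but is not a formality: completeness of the restricted system requires knowing that restrictions of elements of $\uL^2(\uX|\uZ)$ are a.e.\ dense in the fibers (take a countable dense subalgebra of $\uL^\infty(\uX)$) and then either a measurable selection of a hypothetical orthogonal vector or, better, the a.e.\ fiberwise Parseval identity deduced from the order-convergent expansion $x=\sum_e(x|e)_\uZ e$. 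You correctly flag this as the main obstacle. Two smaller remarks: the pairwise orthogonality and norm computation for the terms $\overline{e}\otimes_\uZ Ae$ is cleanest if you invoke \cref{thm:joiningiso} (conditional independence of the two coordinates over $\uZ$ gives $\E_\uZ(\overline{e}e'\,(Ae)\overline{Ae'})=\overline{(e|e')_\uZ}\,(Ae|Ae')_\uZ$) rather than arguing on fibers; and the final identification $I_k=A$ needs that bounded module homomorphisms satisfy $|Ay|_{\uL^\infty(\uZ)}\leq\|A\|\,|y|_{\uL^\infty(\uZ)}$, which is what makes them order-continuous and lets you pass from agreement on $\mathcal{B}$ to agreement everywhere. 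With those points supplied, your proof is complete and is essentially the concrete computation that the paper's three cited isomorphisms package abstractly.
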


For the case $\uX = \uY$ and $\pi = \rho$, this was already shown in 
\cite{EHK2021}; we shall closely follow the arguments there and will not
introduce new ideas beyond that.
The usual approach in the Hilbert space case is to observe the 
isomorphisms of Hilbert spaces
\begin{enumerate}[1)]
 \item $\uL^2(\uX\times\uY) \cong \uL^2(\uX)\otimes\uL^2(\uY)$,
 \item $\uL^2(\uX) \cong \uL^2(\uX)^*$, $f\mapsto (\cdot | \overline{f})$ by  the Riesz--Frech\'{e}t representation theorem, and
 \item $\uL^2(\uX)^* \otimes \uL^2(\uY) \cong \HS(\uL^2(\uX), \uL^2(\uY))$.
\end{enumerate}
Combined, these yield the canonical isomorphism $I\colon \uL^2(\uX\times \uY)\to \HS(\uL^2(\uX), \uL^2(\uY))$
of Hilbert spaces. Below, we collect the precise formulation of the 
three isomorphisms 1), 2), and 3) in the more general context of Kaplansky--Hilbert modules. We
start with the first isomorphism.

\begin{theorem}\label{thm:joiningiso}
  Let $\pi\colon \uX \to \uZ$ and $\rho\colon \uY\to \uZ$ be measure-preserving maps between 
  probability spaces. Then there is a unique isometric Hilbert module isomorphism
  \begin{equation*}
    W\colon \uL^2(\uX|\uZ)\otimes \uL^2(\uY|\uZ) \to \uL^2(\uX\times_\uZ \uY | \uZ)
  \end{equation*}
  with $W(f\otimes g) = (T_\pi f) \cdot (T_\rho g)$.
\end{theorem}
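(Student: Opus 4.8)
The plan is to build $W$ on elementary tensors, extend it by $\uL^\infty(\uZ)$-bilinearity and order-continuity, verify that it is $\uL^\infty(\uZ)$-isometric, and then show its range is dense; surjectivity will then follow from order-closedness of the range. Throughout I would work with the intrinsic disintegration of the relatively independent joining: the measure on $\uX\times_\uZ\uY$ disintegrates over $\uZ$ as $\mu_z\otimes\nu_z$ on the fiber $\uX_z\times\uY_z$. First I would define, for $f\in\uL^2(\uX|\uZ)$ and $g\in\uL^2(\uY|\uZ)$, the element $W(f\otimes g)\defeq (T_\pi f)\cdot(T_\rho g)\in\uL^2(\uX\times_\uZ\uY|\uZ)$, where $T_\pi$ and $T_\rho$ are the Koopman operators of the two coordinate projections $\uX\times_\uZ\uY\to\uX$ and $\uX\times_\uZ\uY\to\uY$ (slightly abusing the notation of the statement). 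That this lands in the conditional $\uL^2$-space is immediate from the fiber factorization $\E_\uZ(|W(f\otimes g)|^2)=\E_\uZ(|f|^2)\cdot\E_\uZ(|g|^2)\in\uL^\infty(\uZ)$. The assignment is $\uL^\infty(\uZ)$-bilinear and balanced, hence factors through the algebraic module tensor product.

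The central computation is the isometry property. Using the fiber structure above, for elementary tensors one has $\nu$-almost everywhere
\begin{equation*}
  \bigl(W(f_1\otimes g_1)\mid W(f_2\otimes g_2)\bigr)_\uZ = \E_\uZ(f_1\overline{f_2})\cdot\E_\uZ(g_1\overline{g_2}),
\end{equation*}
since the fiberwise integral of a product that separates the two coordinates factorizes. The right-hand side is exactly the $\uL^\infty(\uZ)$-valued inner product of $f_1\otimes g_1$ and $f_2\otimes g_2$ in the tensor product of Kaplansky--Hilbert modules as defined in \cite{EHK2021}. Thus $W$ is $\uL^\infty(\uZ)$-isometric on the algebraic tensor product; by the cited theorem of Lance it is order-continuous, and so it extends uniquely to an isometric morphism on the completed module $\uL^2(\uX|\uZ)\otimes\uL^2(\uY|\uZ)$.

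It remains to prove surjectivity, which I expect to be the main obstacle. By \cref{factor-algebra-corr} and \cref{lem:disint_model} I would pass to a topological model in which $\uX,\uY,\uZ$ are compact metric, $\pi$ and $\rho$ are continuous, and the disintegrations are weak$^*$-continuous. There the set of finite sums $\sum_i (T_\pi f_i)(T_\rho g_i)$ with $f_i\in\uC(\uX)$, $g_i\in\uC(\uY)$ is a unital $\ast$-subalgebra of $\uC(\uX\times_\uZ\uY)$ that separates points (points of the fiber product are distinguished by their images in $\uX$ and $\uY$), so by the Stone--Weierstrass theorem it is uniformly dense in $\uC(\uX\times_\uZ\uY)$, which is in turn dense in $\uL^2(\uX\times_\uZ\uY|\uZ)$. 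Hence the range of $W$ is dense; being the image of an $\uL^\infty(\uZ)$-isometry between order-complete modules, it is order-closed, so $W$ is onto. Finally, uniqueness of $W$ with the prescribed values on elementary tensors follows at once, since these generate a dense submodule and $W$ is order-continuous.

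The delicate points will be transferring the uniform density from Stone--Weierstrass to order- (equivalently $\uL^2$-) density inside the \emph{conditional} $\uL^2$-space, and verifying that an isometric homomorphism of Kaplansky--Hilbert modules has order-closed range; both rest on the order-completeness properties of $\uL^2(\,\cdot\mid\uZ)$ recorded in \cite{EHK2021}, and everything below the level of these module-theoretic facts reduces fiberwise to the classical Hilbert space isomorphism $\uL^2(\uX_z\times\uY_z)\cong\uL^2(\uX_z)\otimes\uL^2(\uY_z)$.
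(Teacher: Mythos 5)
Your proposal is sound, and it is worth noting up front that the paper does not actually prove \cref{thm:joiningiso}: it defers entirely to \cite[Theorem~7.11]{EHK2021}, so there is no in-paper argument to match against. Your reconstruction — define $W$ on elementary tensors via the coordinate-projection Koopman operators (you are right that the statement's notation $T_\pi f$ has to be read this way), verify $\uL^\infty(\uZ)$-isometry fiberwise using the disintegration $\mu_z\otimes\nu_z$ of the relatively independent joining, extend by order-continuity, and get surjectivity from Stone--Weierstrass on a topological model plus order-closedness of the range of an $\A$-isometry into an order-complete module — is exactly the kind of argument the cited reference runs, and the central identity $\bigl(W(f_1\otimes g_1)\mid W(f_2\otimes g_2)\bigr)_\uZ=\E_\uZ(f_1\overline{f_2})\cdot\E_\uZ(g_1\overline{g_2})$ is correct and does all the work. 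Two small remarks. First, the order-continuity of an $\uL^\infty(\uZ)$-isometric morphism does not need Lance's theorem (which goes the other way, from norm-isometry to $\A$-isometry); it is immediate from the definition of order convergence in terms of the $\A$-valued norm, since $|Tx_\alpha-Tx|_\A=|x_\alpha-x|_\A$. Second, the two ``delicate points'' you defer — that uniform density in $\uC(X\times_Z Y)$ upgrades to order density in the conditional $\uL^2$-space (via cut-offs and the fact that order convergence of bounded sequences is a.e.\ convergence), and that an $\A$-isometric image of an order-complete module is order-closed — are genuine obligations, but they are precisely the lemmas recorded in \cite[Lemma~7.5, Proposition~7.6]{EHK2021} on which this paper already relies, so deferring to them is legitimate. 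With those references filled in, your proof is complete.
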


For a proof, see \cite[Theorem 7.11]{EHK2021} and \cite[Section 2.7]{EHK2021} for background information
on the tensor product of Hilbert--Kaplansky modules. We recall the second isomorphism, a Riesz--Fr\'{e}chet theorem
for Kaplansky--Hilbert modules, from \cite[Theorem 2.13]{EHK2021}.

\begin{theorem}\label{thm:rieszfrechet}
 Let $E$ be a a Kaplansky--Hilbert module over a Stone algebra $\A$. Then 
 the mapping 
 \begin{equation*}
  \Theta\colon E \to E^* = \Hom(E;\A), \quad y \mapsto \overline{y} \defeq (\cdot | y)
 \end{equation*}
 is $\A$-antilinear, bijective, and satisfies $|y| = |\overline{y}|$ for $y\in E$.
\end{theorem}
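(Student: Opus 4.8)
The plan is to dispose of antilinearity, injectivity and the norm identity directly, and to reserve the real work for surjectivity, where the Kaplansky--Hilbert structure becomes indispensable. That $\Theta$ lands in $\Hom(E;\A)$ is immediate from $\overline{y}(fx) = (fx|y) = f(x|y) = f\overline{y}(x)$, and $\A$-antilinearity follows from conjugate-$\A$-linearity of the inner product in its second slot, i.e.\ $\Theta(fy) = (\cdot|fy) = \overline{f}(\cdot|y) = \overline{f}\Theta(y)$. Injectivity is equally cheap: if $\overline{y} = 0$ then in particular $(y|y) = 0$, so $|y| = 0$ and hence $y = 0$. For the norm identity, the ($\A$-valued) Cauchy--Schwarz inequality gives $|\overline{y}(x)| = |(x|y)| \le |x|\,|y|$, whence $|\overline{y}| \le |y|$; conversely $\overline{y}(y) = (y|y) = |y|^2$ together with $|\overline{y}(y)| \le |\overline{y}|\,|y|$ yields $|y| \le |\overline{y}|$, so that $|\overline{y}| = |y|$.

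The heart of the matter is surjectivity. I would fix $\phi \in \Hom(E;\A)$ and a suborthonormal basis $\mathcal{B}$ of $E$, which exists by \cite[Proposition 2.11]{EHK2021}, and recall that every $x \in E$ then admits the order-convergent expansion $x = \sum_{e\in\mathcal{B}}(x|e)e$. Mirroring the Hilbert space construction, the candidate representative is $y \defeq \sum_{e\in\mathcal{B}} \overline{\phi(e)}\,e$, and the crux is to show that this order-converges in $E$. For this I would establish a Bessel inequality. Given a finite suborthonormal subset $\mathcal{S} = \{e_1,\dots,e_n\} \subset \mathcal{B}$, set $x_\mathcal{S} \defeq \sum_i \overline{\phi(e_i)}e_i$; using that each $e_i$ is fixed by its own support projection $p_i \defeq (e_i|e_i)$ (so that $p_i e_i = e_i$ and hence $\phi(e_i) = p_i\phi(e_i)$) together with pairwise orthogonality, a short computation gives both
\begin{equation*}
  |x_\mathcal{S}|^2 = \sum_{i=1}^n |\phi(e_i)|^2 \qquad\text{and}\qquad \phi(x_\mathcal{S}) = \sum_{i=1}^n |\phi(e_i)|^2 = |x_\mathcal{S}|^2 .
\end{equation*}
Boundedness of $\phi$ then forces $\|x_\mathcal{S}\|_E^2 = \||x_\mathcal{S}|^2\|_\A = \|\phi(x_\mathcal{S})\|_\A \le \|\phi\|\,\|x_\mathcal{S}\|_E$, i.e.\ $\|x_\mathcal{S}\|_E \le \|\phi\|$, so that $\sum_{i}|\phi(e_i)|^2 = |x_\mathcal{S}|^2 \le \|\phi\|^2\,\1$ in $\A$.

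Thus the increasing net of finite partial sums $\sum_{e\in\mathcal{S}}|\phi(e)|^2$ is bounded above, and since $\A$ is a Stone algebra its supremum exists; this is precisely the square-summability condition that, by order-completeness of $E$, guarantees that $y = \sum_{e\in\mathcal{B}}\overline{\phi(e)}e$ order-converges. It then remains to check $\phi = \overline{y}$. Both sides are order-continuous module homomorphisms (bounded module maps on Kaplansky--Hilbert modules are order-continuous), and they agree on each basis vector since $(e|y) = \sum_{e'}\phi(e')(e|e') = \phi(e)(e|e) = p_e\phi(e) = \phi(e)$; expanding an arbitrary $x$ in $\mathcal{B}$ and passing to the order limit gives $\overline{y}(x) = (x|y) = \phi(x)$. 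I expect the sole genuine obstacle to be exactly this convergence of $y$: in a general Hilbert module the representing element need not exist and Riesz--Fréchet genuinely fails, so it is the Stone algebra completeness of $\A$ together with the order-completeness of $E$ --- the two defining features of a Kaplansky--Hilbert module --- that rescue the argument via the Bessel inequality above, while the remaining verifications are routine once the inner product conventions and the identity $p_e e = e$ for suborthonormal $e$ are in place.
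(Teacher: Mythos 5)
The paper does not prove this statement at all --- it is imported verbatim from \cite[Theorem 2.13]{EHK2021} --- so there is no in-paper proof to compare against; judged on its own terms, your argument is correct and is essentially the standard (Kaplansky-style) self-duality proof via suborthonormal bases that \cite{EHK2021} itself uses. All the key points are in place: the identity $p_e e=e$ for suborthonormal $e$, the Bessel bound obtained from $\phi(x_{\mathcal S})=|x_{\mathcal S}|^2$ together with scalar boundedness of $\phi$, the Stone-algebra supremum plus order-completeness of $E$ to get convergence of $y=\sum_e\overline{\phi(e)}\,e$, and the cancellation $|y|^2\le|\overline y|\,|y|\Rightarrow|y|\le|\overline y|$ (valid pointwise in $\A\cong\uC(K)$). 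The one step you state as a parenthetical but which actually carries weight is the order-continuity of the given $\phi\in\Hom(E;\A)$: without it, exchanging $\phi$ with the order limit in $x=\sum_e(x|e)e$ is circular, since a priori you only know $\phi=\overline y$ on finite combinations of basis vectors, which are order-dense but not norm-dense. It does hold, via the short rescaling argument $|\phi(z)|=(|z|+\epsilon)\,\bigl|\phi\bigl((|z|+\epsilon)^{-1}z\bigr)\bigr|\le\|\phi\|\,(|z|+\epsilon)$, giving $|\phi(z)|\le\|\phi\|\,|z|$ and hence order-continuity; you should either spell this out or cite the corresponding lemma in \cite{EHK2021} explicitly rather than assert it in passing.
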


As a consequence, there is a canonical isometric isomorphism 
$\uL^2(\uX|\uZ) \cong \uL^2(\uX|\uZ)^*$ of Hilbert modules by 
means of the assignment $f\mapsto (\cdot|\overline{f})$. Moreover, one can also
derive from \cref{thm:rieszfrechet} that every morphism $A\colon E \to F$ between 
Kaplansky--Hilbert modules admits an \textbf{adjoint morphism} $A^*\colon E \to F$ that 
is uniquely determined by the property $(Ae|f) = (e|A^*f)$ for all $e\in E$ and $f\in F$,
see \cite[Corollary 2.14]{EHK2021}.
As for Hilbert spaces, it is immediate that $A^*A$ and $AA^*$ are zero if and only if $A$ is.

The counterpart for the third and final isomorphism $\HS(\uL^2(\uX), \uL^2(\uY)) \cong \uL^2(\uX)^*\otimes\uL^2(\uY)$ 
requires the following simple generalization of finite rank operators:
Given Kaplansky--Hilbert modules $E$ and $F$ as well as elements $x\in E$ and 
$y\in F$, one can define the operator
\begin{equation*}
  A_{y, x}\colon E \to F, \quad z \mapsto (z|x)y.
\end{equation*}
This is a bounded homomorphism of Hilbert modules. A finite sum of 
such operators is called a \textbf{homomorphism of $\A$-finite rank}.

\begin{proposition}\label{prop:tensorHSiso}
  Let $E$ and $F$ be Kaplansky--Hilbert modules over a Stone algebra $\A$. 
  Then there is a unique isometric Hilbert module isomorphism 
  \begin{equation*}
    V\colon E^*\otimes F \to \HS(E, F)
  \end{equation*}
  with $V(\overline{x}\otimes y) = A_{y, x}$ for all $x\in E$ and $y\in F$. 
\end{proposition}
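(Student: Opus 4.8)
The plan is to mirror the classical three-step Hilbert-space argument, replacing orthonormal bases by suborthonormal bases and norm limits by order limits, and leaning on \cref{thm:rieszfrechet}, which provides the $\A$-antilinear isometric bijection $\Theta\colon E \to E^*$, $x\mapsto\overline{x}$. First I would define $V$ on elementary tensors by $V(\overline{x}\otimes y)\defeq A_{y,x}$ and verify compatibility with the balancing relation of the tensor product: for $a\in\A$ one has $\overline{x}\,a=\overline{\bar a x}$ by antilinearity, and both $A_{y,\bar a x}$ and $A_{ay,x}$ send $z$ to $a(z|x)y$, so $V$ descends to a well-defined $\A$-linear map on the algebraic tensor product $E^*\odot F$. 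Its image consists of $\A$-finite rank homomorphisms, which are manifestly Hilbert--Schmidt, so $V$ lands in $\HS(E,F)$.

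The second step is the isometry computation. Using the reconstruction formula $x=\sum_{e\in\mathcal{B}}(x|e)e$ along a suborthonormal basis $\mathcal{B}$ of $E$ together with the identity $(a\cdot u|b\cdot v)=a\overline{b}\,(u|v)$ for $a,b\in\A$, I would compute directly that
\begin{equation*}
  (A_{y,x}|A_{y',x'})_{\HS}
  =\sum_{e\in\mathcal{B}}(e|x)(x'|e)\,(y|y')
  =(x'|x)\,(y|y'),
\end{equation*}
the last equality being the reconstruction identity $\sum_{e}(e|x)(x'|e)=(x'|x)$. On the tensor-product side, antilinearity of $\Theta$ gives $(\overline{x}|\overline{x'})_{E^*}=(x'|x)$, whence $(\overline{x}\otimes y\,|\,\overline{x'}\otimes y')=(x'|x)(y|y')$. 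Thus $V$ preserves the $\A$-valued inner products on elementary tensors, hence on all of $E^*\odot F$; in particular $V$ is $\A$-isometric and injective. Since $\A$-isometric morphisms are order-continuous (Lance's theorem), $V$ extends uniquely to an $\A$-isometric morphism on the completed Kaplansky--Hilbert module $E^*\otimes F$, and this extension remains determined by its values on elementary tensors, yielding uniqueness.

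The main obstacle is surjectivity, that is, showing the $\A$-finite rank homomorphisms are order-dense in $\HS(E,F)$. Given $A\in\HS(E,F)$ and a suborthonormal basis $\mathcal{B}$ of $E$, I would introduce the truncations $A_{\mathcal{S}}\defeq\sum_{e\in\mathcal{S}}A_{Ae,e}$ for finite $\mathcal{S}\subset\mathcal{B}$. The homomorphism property and reconstruction give $A_{\mathcal{S}}z=\sum_{e\in\mathcal{S}}(z|e)Ae\to Az$ in order, while
\begin{equation*}
  |A-A_{\mathcal{S}}|_{\HS}^2=\sum_{e\in\mathcal{B}\setminus\mathcal{S}}|Ae|_\A^2
\end{equation*}
is the tail of the order-convergent sum $|A|_{\HS}^2=\sum_{e\in\mathcal{B}}|Ae|_\A^2$ and hence decreases to $0$. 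Therefore $A=\olim_{\mathcal{S}}A_{\mathcal{S}}$ lies in the order-closure of $\operatorname{ran}V$. Because $E^*\otimes F$ is order-complete and $V$ is an order-continuous $\A$-isometry, its range is order-closed, so $\operatorname{ran}V=\HS(E,F)$ and $V$ is an isometric isomorphism. The only points needing care are that the algebraic tensor product sits order-densely in the completed module $E^*\otimes F$, so that the two density arguments align, and that the suborthonormal-basis manipulations are basis-independent; both are furnished by the Kaplansky--Hilbert module theory recalled above.
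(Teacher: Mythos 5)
Your proof is correct and follows the same standard route (well-definedness on elementary tensors, the $\A$-valued isometry computation via a suborthonormal basis, and order-density of the $\A$-finite-rank homomorphisms) that the paper delegates entirely to \cite[Proposition 2.25]{EHK2021}; you have in effect written out the argument the paper only cites. The one step worth making explicit is that for $e'\in\mathcal{S}$ one has $(A-A_{\mathcal{S}})e'=(1-(e'|e'))Ae'=0$ because $e'=(e'|e')e'$ for suborthonormal elements, which is what makes the tail identity $|A-A_{\mathcal{S}}|_{\HS}^2=\sum_{e\in\mathcal{B}\setminus\mathcal{S}}|Ae|_\A^2$ exact.
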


For a proof and the details of tensor products of Kaplansky--Hilbert modules, 
see \cite[Section 2.7]{EHK2021}, in particular \cite[Proposition 2.25]{EHK2021}. 
There, the statement is proven for the case $E = F$ 
but the proof for the general case differs only in straightforward notational adjustments.

As in the case of Hilbert spaces, simply combine \cref{thm:joiningiso}, \cref{thm:rieszfrechet}, 
and \cref{prop:tensorHSiso} to derive the desired isomorphism in \cref{thm:appendixmthm}.

\printbibliography

\end{document}